\def\ps@pprintTitle{%
 \let\@oddhead\@empty
 \let\@evenhead\@empty
 \def\@oddfoot{\centerline{\thepage}}%
 \let\@evenfoot\@oddfoot}
\newcolumntype{M}[1]{>{\centering\arraybackslash}m{#1}}
\theoremstyle{plain}
\newtheorem{thm}{\protect\theoremname}[section]
\theoremstyle{plain*}
\newtheorem*{thm*}{\protect\theoremname}
\theoremstyle{plain}
\newtheorem{lem}[thm]{\protect\lemmaname}
\theoremstyle{plain*}
\newtheorem*{lem*}{\protect\lemmaname}  
  \theoremstyle{plain}
  \newtheorem{prop}[thm]{\protect\propositionname}
    \theoremstyle{plain*}
  \newtheorem*{prop*}{\protect\propositionname}
    \theoremstyle{remark}
  \newtheorem{question}[thm]{Question}
   \theoremstyle{remark*}
 \newtheorem*{question*}{Question} 
  \theoremstyle{remark}
  \newtheorem{rem}[thm]{\protect\remarkname}
  \theoremstyle{remark*}
 \newtheorem*{rem*}{\protect\remarkname}
  \theoremstyle{plain}
  \newtheorem{cor}[thm]{\protect\corollaryname}
  \providecommand{\corollaryname}{Corollary}
\theoremstyle{definition}
\theoremstyle{plain} % just in case the style had changed
\newcommand{\thistheoremname}{}
\newtheorem{genericthm}[thm]{\thistheoremname}
\newtheorem*{genericthm*}{\thistheoremname}
\newenvironment{namedthm*}[1]
  {\renewcommand{\thistheoremname}{#1}%
   \begin{genericthm*}}
  {\end{genericthm*}}
 \providecommand{\lemmaname}{Lemma}
  \providecommand{\propositionname}{Proposition}
  \providecommand{\remarkname}{Remark}
\providecommand{\theoremname}{Theorem}
\newcommand{\plimgG}[3]{\mathop {#1\text{-lim}}_{#2\in #3}\,}
\newcommand{\R}{\mathbb{R}}
\newcommand{\N}{\mathbb{N}}
\newcommand{\Q}{\mathbb{Q}}
\newcommand{\Z}{\mathbb{Z}}
\newcommand{\rsupseteq}{\rotatebox[origin=c]{90}{$\subseteq$}}
\newcommand{\rsupsetneq}{\rotatebox[origin=c]{90}{$\subsetneq$}}
\newcommand{\requal}{\rotatebox[origin=c]{-90}{$=$}}
\begin{document}

\begin{frontmatter}
\title{Iterated differences sets, diophantine approximations and applications}
\author[add1]{Vitaly Bergelson}
\ead{vitaly@math.ohio-state.edu}
\author[add1]{Rigoberto Zelada}
\ead{zeladacifuentes.1@osu.edu}

\address[add1]{Department of Mathematics. The Ohio State University, Columbus, OH 43210, USA}

\begin{abstract}
Let $v$ be an odd real polynomial (i.e. a polynomial of the form $\sum_{j=1}^\ell a_jx^{2j-1}$). We utilize sets of iterated differences to establish new 
results about sets of the form $\mathcal R(v,\epsilon)=\{n\in\N\,|\,\|v(n)\|{<\epsilon\}}$ where $\|\cdot\|$ denotes the distance to the 
closest integer. We then apply the new diophantine results to obtain applications to ergodic theory and combinatorics. In particular, 
we obtain a new characterization of weakly mixing systems as well as a new variant of Furstenberg-S{\'a}rk{\"o}zy theorem.  
\end{abstract}

\begin{keyword}
Ramsey Theory, Diophantine approximations, Ergodic theory, Ultrafilters. 
\end{keyword}

\end{frontmatter}

\tableofcontents
\section{Introduction}
The goal of this paper is to establish new results pertaining to diophantine inequalities involving odd real polynomials and to obtain some applications to combinatorial number theory and ergodic theory.\\ 

Assume that $v$ is a real polynomial, with $\deg(v)\geq 1$, satisfying $v(0)=0$ and let $\epsilon>0$. Consider the set
\begin{equation}
    \mathcal R(v,\epsilon)\index{$\mathcal R(v,\epsilon)$}=\{n\in\N=\{1,2,...\}\,|\,\|v(n)\|<\epsilon\},
\end{equation}
where $\|\cdot\|$\index{$\|\cdot\|$} denotes the distance to the nearest integer.\\
It is well known that sets of the form $\mathcal R(v,\epsilon)$ are large in more than one sense. For example, it follows from Weyl's  
equidistribution theorem (see \cite{weyl1916Mod1}) that $\mathcal
R(v,\epsilon)$ has positive natural density. One can also show that $\mathcal R(v,\epsilon)$ is syndetic \index{syndetic} (\cite[Theorem 1.21]{FBook}), meaning 
that finitely many translations of $\mathcal R(v,\epsilon)$ cover $\N$ (i.e. $\mathcal R(v,\epsilon)$ has "bounded  gaps"). As a matter of fact, the sets $\mathcal R(v,\epsilon)$ posses a stronger property which is called \text{\rm{IP$^*$}}\index{IP$^*$}. A set $E\subseteq \N$ is called an IP set if it contains a set of the form 
$$\text{FS}((n_k)_{k\in\N})\index{FS$((n_k)_{k\in\N})$}=\{n_{k_1}+\cdots +n_{k_m}\,|\,k_1<\cdots<k_m;\text{ }m\in\N\},$$
for some increasing sequence $(n_k)_{k\in\N}$. A set $E\subseteq\N$ is IP$^*$ if it has a non-trivial intersection with every IP set.\footnote{
 IP$^*$  sets form a dual family in the sense of \cite[Chapter 9]{FBook}.
}\\ 
One can show with the help of Hindman's theorem\footnote{
Hindman's theorem states that if $E\subseteq \N$ is an IP set and $C_1,...,C_r\subseteq \N$ are such that $E = \bigcup_{j=1}^r C_j$, then there exists $s\in\{1,...,r\}$ such that $C_s$ is an IP set (see \cite{HIPPartitionRegular}). 
} 
that IP$^*$ sets have the finite intersection property, meaning that if  $E_1,...,E_m\subseteq \N$ are IP$^*$ sets, then $\bigcap_{j=1}^mE_j$  is also IP$^*$.\\

When $v$ is linear, $\mathcal R(v,\epsilon)$ has an even stronger property than  IP$^*$.   namely that of $\Delta^*$\index{$\Delta^*$}.
A set $E\subseteq \N$ is called a $\Delta^*$\index{$\Delta^*$} set if for any increasing sequence $(n_k)_{k\in\N}$, there exist $i<j$ for which
$$n_j-n_i\in E.$$
It is not hard to show that every $\Delta^*$ set is IP$^*$. Moreover, the family of IP$^*$ sets strictly contains the family of $\Delta^*$  sets. For example, the set 
$$\N\setminus\{2^j-2^i\,|\,i,j\in\N,\,i<j\}$$ 
is IP$^*$ but not $\Delta^*$ (See
\cite[pp. 165]{BergelsonErdosDifferences}).\\
One can show, with the help of Ramsey's Theorem,  that $\Delta^*$ sets have the finite intersection property (see \cite[pp.179]{FBook}). This implies, in particular,  that for any $\alpha_1,...,\alpha_m\in \R$ and any $\epsilon>0$, the set $\bigcap_{j=1}^m\{n\in\N\,|\,\|n\alpha_j\|<\epsilon\}$ is $\Delta^*$.\\

Unfortunately, for polynomials of degree two, the sets $\mathcal R(v,\epsilon)$ are no longer $\Delta^*$ (see, for example, \cite[pp.177-178]{FBook}). One is tempted to conjecture that the $\Delta_2^*$ sets, namely sets intersecting any set of the form
\begin{equation}\label{0.SecondDifferences}
\{(n_{k_4}-n_{k_3})-(n_{k_2}-n_{k_1})\,|\,k_4>k_3>k_2>k_1\},
\end{equation}
could be useful in dealing with polynomials of degree two and the corresponding sets $\mathcal R(v,\epsilon)$.
However, one can show, by using a natural modification of the construction in \cite{FBook}, that there exists $\epsilon>0$ such that for any irrational $\alpha$, the set  $\{n\in\N\,|\,\|n^2\alpha\|<\epsilon\}$ is not a $\Delta_2^*$ set.\\
To see this, fix an irrational number $\alpha$ and let $(n_k)_{k\in\N}$ be an increasing sequence in $\N$ such that 
\begin{equation}\label{0.SquareAndLimitOfSquares}
\lim_{k\rightarrow\infty}\|n_k\alpha\|=0
\text{ and }
\lim_{k\rightarrow\infty}\|n_k^2\alpha-\frac{1}{3}\|=0.\footnote{
The existence of such a sequence $(n_k)_{k\in\N}$ follows from  \cite[Theorem 1.011]{HardyLittlewood1914some}. One can also use, for example,  the two-dimensional version of Weyl's equidistribution theorem \cite{weyl1916Mod1}. Finally, one could also invoke the fact that the transformation $T:\mathbb T^2\rightarrow \mathbb T^2$ defined by $T(x,y)=(x+\alpha,y+2x+\alpha)$ is minimal. See for example \cite[Lemma 1.25]{FBook}.}
\end{equation}
By passing, if needed, to a subsequence, we can also assume that for any $j,k\in\N$ with $j<k$,
\begin{equation}\label{0.TwoTupleLimit}
\|n_jn_k\alpha\|<\frac{1}{k}.
\end{equation}
So, for any large enough and distinct $j,k\in\N$, we have  $\|n_jn_k\alpha\|<\dfrac{\epsilon}{16}$ and $\|n_k^2\alpha-\frac{1}{3}\|<\dfrac{\epsilon}{16}$. It follows by a simple calculation that for large enough $k_4>k_3>k_2>k_1$, 
$$\|[(n_{k_4}-n_{k_3})-(n_{k_2}-n_{k_1})]^2\alpha-\frac{4}{3}\|<\epsilon,$$
which implies that the set $\mathcal R(n^2\alpha,\frac{1}{6})$ is not $\Delta_2^*$.\\

It comes as a pleasant surprise that  $\Delta_2^*$ sets work well with the sets $\mathcal R(n^3\alpha,\epsilon)$. 
\begin{prop}\label{0.CubicCase}
For any real number  $\alpha$ and any $\epsilon>0$, the set 
$$\mathcal R(n^3\alpha,\epsilon)=\{n\in\N\,|\,\|n^3\alpha\|<\epsilon\}$$
is $\Delta_2^*$.
\end{prop}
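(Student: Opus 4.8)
The plan is to pass from the combinatorial statement to a single ultrafilter limit. Fix a non-principal ultrafilter $p$ on $\N$ and let $(n_k)_{k\in\N}$ be an arbitrary strictly increasing sequence; writing $m=(n_{k_4}-n_{k_3})-(n_{k_2}-n_{k_1})$, I must produce indices $k_1<k_2<k_3<k_4$ with $\|m^3\alpha\|<\epsilon$. I will establish the stronger fact that the fourfold iterated $p$-limit
\[
\plimgG{p}{k_1}{\N}\plimgG{p}{k_2}{\N}\plimgG{p}{k_3}{\N}\plimgG{p}{k_4}{\N}\;m^3\alpha \;=\;0
\]
in $\mathbb T=\R/\Z$, with $k_4$ innermost and $k_1$ outermost. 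Once this is known, the quadruple is extracted by the usual localisation of iterated $p$-limits: choosing the indices from the inside out, at each stage the relevant ``$p$-limit is close to $0$'' set lies in $p$ and may be intersected with the cofinite set $\{k>k_{\mathrm{prev}}\}\in p$, so one can always take the next index larger, forcing $k_1<k_2<k_3<k_4$ while keeping $\|m^3\alpha\|<\epsilon$.

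To compute the limit I work in the compact group $\mathbb T$ and use repeatedly that $p\textrm{-}\lim$ is additive and commutes with multiplication by a fixed integer (both maps being continuous on $\mathbb T$). Abbreviate $A_j:=\plimgG{p}{k}{\N} n_k^{\,j}\alpha$ for $j\in\{0,1,2,3\}$ (so $A_0=\alpha$). Taking the limits from the inside out, the first step freezes $c:=n_{k_3}+n_{k_2}-n_{k_1}$ and, by the binomial theorem together with additivity, gives
\[
\plimgG{p}{k_4}{\N} (n_{k_4}-c)^3\alpha \;=\; A_3-3cA_2+3c^2A_1-c^3\alpha .
\]
Substituting $c$ and carrying out the remaining three limits, each stage splits the integer-coefficient monomials and separates off the constants coming from the already-frozen variables; besides the $A_j$ one meets the secondary single-sequence limits $\plimgG{p}{k}{\N} n_k\beta$ and $\plimgG{p}{k}{\N} n_k^2\beta$ for various fixed $\beta\in\mathbb T$. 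Collecting all contributions, every term cancels against a partner and the total is $0$.

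The cancellation is the heart of the matter, and it is exactly where the oddness of $v(t)=t^3$ enters. The second difference has signature $(c_1,c_2,c_3,c_4)=(1,-1,-1,1)$, for which $\sum_i c_i=\sum_i c_i^{\,3}=0$; since $t\mapsto t^3$ is odd one has $c_i^{\,3}=c_i$, so the diagonal contribution $\sum_i c_i\,n_{k_i}^3\alpha$ already tends to $(1-1-1+1)A_3=0$, while the telescoping of the nested limits disposes of the mixed terms. (By contrast $\sum_i c_i^{\,2}=4\neq0$, which is precisely the obstruction exhibited above showing that $\mathcal R(n^2\alpha,\epsilon)$ is not $\Delta_2^*$.) The main obstacle is thus the bookkeeping of the second paragraph: one must verify that the sub-leading mixed contributions — in particular the secondary limits $\plimgG{p}{k}{\N} n_k\beta$ and $\plimgG{p}{k}{\N} n_k^2\beta$ — cancel in pairs. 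This is a finite, if slightly tedious, computation, after which the extraction of the ordered quadruple is routine.
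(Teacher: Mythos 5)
Your proposal is correct, and the cancellation you defer does in fact close up: writing $A_j=\plimgG{p}{k}{\N}n_k^j\alpha$, $B(\beta)=\plimgG{p}{k}{\N}n_k\beta$ and $C(\beta)=\plimgG{p}{k}{\N}n_k^2\beta$, the four expansions produce $A_3$ with signs $(+1,-1,-1,+1)$, the secondary limit $B(A_2)$ with coefficients $-3,-6,+9$, the limit $C(A_1)$ with $+3,-3$, and the iterated secondary limit $B(B(A_1))$ with $+6,-6$, each group summing to zero. However, while your argument is also ultrafilter-based, it is organized quite differently from the paper's. The paper first reduces the $\Delta_2^*$ property to the single identity $\plimgG{p_2}{n}{\N}n^3\alpha=0$ for every non-principal $p$, via \cref{1.DeltaL*characterization} --- a corollary that packages, once and for all, the sequence-extraction argument you re-derive by hand --- and then evaluates this as a \emph{two}-fold limit $\plimgG{p_1}{m}{\N}\plimgG{p_1}{n}{\N}(n-m)^3\alpha$ along the difference ultrafilter $p_1=-p+p$; there the preliminary fact that $\plimgG{(-q+q)}{n}{\N}n\gamma=0$ for every $\gamma\in\mathbb T$ annihilates both cross terms at once, so only two auxiliary quantities ever appear. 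Your fourfold limit over the indices of an arbitrary sequence is in fact the same object: by \cref{1.LemaIteratedDifLimit}, applied to the pushforward $q$ of $p$ under $k\mapsto n_k$, it equals $\plimgG{q_2}{n}{\N}n^3\alpha$. So you are trading the algebra of $\beta\N$ for explicit bookkeeping. What this buys is self-containedness: no difference ultrafilters and no appeal to \cref{1.DeltaCharacterization}, only basic properties of $p$-limits. What it costs is scalability: for an odd polynomial of degree $2\ell-1$ your expansion would involve $2^\ell$ nested limits and a rapidly growing family of secondary limits, whereas the paper's two-level scheme, built on the linear-vanishing lemma, is precisely the template for the induction that proves the general case, \cref{2.OddDegreeRecurrence}.

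Two small repairs to your extraction step. First, the indices must be chosen starting from the \emph{outermost} limit (first $k_1$, then $k_2$, and so on), since the admissible set for each later index depends on the earlier choices; the mechanism you describe --- intersecting a $p$-large ``close to the previous limit'' set with a cofinite set --- is the right one, but it runs outside-in, not ``from the inside out'' as written. Second, membership in $\mathcal R(n^3\alpha,\epsilon)\subseteq\N$ requires not only $\|m^3\alpha\|<\epsilon$ but also $m>0$; this costs one more intersection with a cofinite set when choosing $k_4$, namely $\{k\,|\,n_{k}>n_{k_3}+n_{k_2}-n_{k_1}\}\in p$.
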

It turns out that \cref{0.CubicCase} generalizes nicely to odd real polynomials, namely polynomials of the form
\begin{equation}\label{0.OddPolynomial}
v(x)=\sum_{j=1}^\ell a_j x^{2j-1}.
\end{equation}
(Note that a real polynomial $v$ satisfies $v(-x)=-v(x)$ if and only if $v$ is of the form \eqref{0.OddPolynomial}).\\
Before formulating a generalization of \cref{0.CubicCase} to odd polynomials of arbitrary degree, we have to introduce the family of $\Delta_\ell^*$ sets, $\ell\in\N$.\\
Define  the function $\partial:\bigcup_{\ell\in\N}\Z^{2^\ell}\rightarrow\Z$\index{$\partial(m_1,...,m_{2^\ell})$} recursively by the formulas:
\begin{enumerate}
    \item $\partial(m_1,m_2)=m_2-m_1$.
    \item $\partial(m_1,...,m_{2^\ell})=\partial(m_{2^{\ell-1}+1},...,m_{2^\ell})-\partial(m_1,...,m_{2^{\ell-1}}),$ $\ell>1$.
\end{enumerate}
Given $\ell\in\N$, we will say that a set $E\subseteq \N$ is $\Delta_\ell^*$\index{$\Delta_\ell^*$} if for any 
increasing sequence $(n_k)_{k\in\N}$ in $\N$, there exist $$k_1<k_2<k_3<\cdots<k_{2^\ell}$$ for which
$$\partial (n_{k_1},...,n_{k_{2^\ell}})\in E.\footnote{
 Note that the notion of $\Delta_1^*$ is the same as the notion of $\Delta^*$\index{$\Delta^*$} defined above.
}$$
For example, a set $E\subseteq \N$ is  $\Delta_3^*$ if for any  increasing sequence $(n_k)_{k\in\N}$ in $\N$, there exist $k_1<\cdots<k_8$ for which
$$[(n_{k_8}-n_{k_7})-(n_{k_6}-n_{k_5})]-[(n_{k_4}-n_{k_3})-(n_{k_2}-n_{k_1})]\in E.$$
 One can show  that for each $\ell\in\N$, $\Delta_\ell^*$ sets have the finite intersection property. (See Section \ref{SectionBetaN} for more information on $\Delta_\ell^*$ sets.)\\
 
We are now in position to state a generalization of \cref{0.CubicCase}.
\begin{thm}\label{0.OddDegreeRecurrence}
For any odd real polynomial $v(x)=\sum_{j=1}^{\ell}a_jx^{2j-1}$ and any $\epsilon>0$, the set $$\mathcal R(v,\epsilon)=\{n\in\N\,|\,\|v(n)\|<\epsilon\}$$ 
is $\Delta_\ell^*$.
\end{thm}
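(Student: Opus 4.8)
The plan is to run an induction on $\ell$ (equivalently, on the degree $2\ell-1$), with the engine being a single algebraic consequence of oddness that lowers the degree by two each time a difference is taken.

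\emph{Key identity.} Put $F(x,y):=v(x)-v(y)-v(x-y)$. Since $v(0)=0$ we have $F(x,0)=0$, and since $v$ is odd we have $F(0,y)=-v(y)-v(-y)=0$; hence $F$ vanishes on both coordinate axes and therefore factors as $F(x,y)=xy\,G(x,y)$ with $\deg G\le 2\ell-3$. Equivalently $v(x-y)=v(x)-v(y)-xy\,G(x,y)$, where $G$ carries two fewer degrees. (For $v(x)=\alpha x^3$ one gets $G(x,y)=3\alpha(x-y)$; for linear $v$ one has $F\equiv 0$, which is exactly why the linear case is pure circle-pigeonhole.) For $\ell=1$ the set $\mathcal{R}(v,\epsilon)$ is $\Delta_1^*=\Delta^*$ by the pigeonhole statement recalled in the introduction, so I assume $\ell\ge 2$ and that the theorem, in a suitably strengthened several-polynomial (multivariable) form, holds for all smaller values.

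\emph{Unrolling the difference tree.} Given an arbitrary increasing sequence $(n_k)$, write the target difference as $m=\partial(n_{k_1},\dots,n_{k_{2^\ell}})=B-A$, where $A,B$ are independent $(\ell-1)$-fold iterated differences over the two halves of the index block. Applying the key identity at the root and then recursively at every internal node of the binary difference tree yields
$$v(m)=\partial\bigl(v(n_{k_1}),\dots,v(n_{k_{2^\ell}})\bigr)-\sum_{\nu}\pm\,P_\nu Q_\nu\,G(P_\nu,Q_\nu),$$
the sum running over the internal nodes $\nu$, with $P_\nu,Q_\nu$ the two child differences at $\nu$. The first (``pure'') term is an honest iterated difference of the numbers $v(n_{k_i})$ — this is precisely where oddness is used, since $(\pm1)^{2j-1}=\pm1$ makes the diagonal contributions reassemble into $\partial$. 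I would first pass to a subsequence along which $v(n_k)$ converges in $\R/\Z$ (compactness of the circle); then each adjacent difference $v(n_{k_{2i}})-v(n_{k_{2i-1}})$ lies within $\delta$ of $0$, and since $\partial$ of a $2^\ell$-tuple equals the $(\ell-1)$-fold iterated difference of these adjacent differences, the pure term is $<2^{\ell-1}\delta$. Choosing $\delta$ small disposes of it.

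\emph{The cross terms — the crux.} It remains to make every cross term $P_\nu Q_\nu\,G(P_\nu,Q_\nu)$ small modulo $1$ simultaneously. Each is a polynomial of degree $\le 2\ell-1$ in the two independent iterated differences $P_\nu,Q_\nu$, divisible by $P_\nu Q_\nu$, carrying the degree-$(2\ell-3)$ factor $G$; the degree drop $2\ell-1\to 2\ell-3$ matches the drop from complexity $\ell$ to $\ell-1$, which is what lets the strengthened induction hypothesis apply. The essential subtlety is that one cannot simply invoke equidistribution to force such products to be small: e.g.\ for $v=\tfrac12 x^3$ and a sequence of odd numbers the relevant products are never analytically small, and their smallness modulo $1$ comes instead from divisibility (here $m$ is even). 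Thus the control of the cross terms must combine the analytic mechanism (Weyl equidistribution, to kill irrational parts) with the arithmetic one (divisibility), and it must do so while the indices are drawn from the given, possibly very sparse, sequence. The clean way to organize this is to observe that, with all earlier terms frozen, the admissible values of the next term form a set of the shape $\mathcal{R}(w,\delta)$ with $w(0)=0$, which is IP$^*$, and then to exploit the finite intersection property of the families IP$^*$ and $\Delta_j^*$ (developed via $\beta\N$ in Section~\ref{SectionBetaN}) to satisfy all the constraints at once. I expect this simultaneous, arithmetic-aware control of the cross terms — rather than the algebra of the key identity or the compactness handling of the pure term — to be the main difficulty.
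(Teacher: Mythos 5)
Your skeleton (induction on $\ell$, a degree-lowering identity for differences of odd polynomials, compactness/convergence to dispose of the ``pure'' telescoping term) matches the paper's strategy, and the algebra up to that point is sound. But the step you yourself flag as the crux is genuinely missing, and the tool you propose for it cannot work. You want to satisfy the cross-term constraints by observing that, with earlier data frozen, the admissible values lie in a set of the shape $\mathcal R(w,\delta)$ with $w(0)=0$, hence IP$^*$, and then invoke a ``finite intersection property of the families IP$^*$ and $\Delta_j^*$''. No such mixed property exists, and IP$^*$-ness is the wrong kind of largeness here: the values you must place in those sets are iterated differences drawn from an arbitrary prescribed sequence (a $\Delta_j$-configuration), and for $j\ge 2$ an IP$^*$ set can be disjoint from an entire set $D_j((n_k)_{k\in\N})$. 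This is exactly the ``general position'' phenomenon the paper proves: by \cref{2.LackOfRecurrenceForEvenPowers}, for an even polynomial $w$ with an irrational coefficient and $w(0)=0$, the set $\mathcal R(w,\epsilon)$ --- which is IP$^*$ by \cite{BFM} (cf.\ \cref{3.IP0Returns}) --- fails to be $\Delta_j^*$ for \emph{every} $j$; see also \cref{3.Delta^lRichNoIPs}. And even polynomials are precisely what your decomposition produces: $P_\nu Q_\nu\,G(P_\nu,Q_\nu)$ is not odd in either variable once the other is frozen (for $v=\alpha x^3$ it equals $3\alpha(P^2Q-PQ^2)$, which contains the even term $Q^2$), so neither your odd-polynomial inductive hypothesis nor any IP$^*$ fact applies to it as it stands.

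What the paper does instead is split the binomial expansion by parity: $(n-m)^{2\ell-1}\alpha=n^{2\ell-1}\alpha-m^{2\ell-1}\alpha+v_m(n)\alpha+\sum_{j=1}^{\ell-1}m^{2j-1}w_j(n)\alpha$, where $v_m$ is odd in $n$ of degree $\le 2\ell-3$ and the $w_j(n)$ are even powers of $n$. Since the total degree $2\ell-1$ is odd, each cross monomial is odd in exactly one variable; the part odd in $n$ is killed by the inductive hypothesis in $n$ (for each frozen $m$), while the part even in $n$ is \emph{not} made small at all --- it is first collapsed to a constant (the inner $p_{\ell-1}$-limit in the ultrafilter proof of \cref{2.OddDegreeRecurrence}; Ramsey pre-coloring, \cref{7.FinitisticRamsey}, in the finitary proof \cref{7.FinitisticOddDegreeRecurrence}), and only the resulting odd monomials $m^{2j-1}\beta_j$ are then killed by the inductive hypothesis in $m$. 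The simultaneous coordination that worries you then comes for free: via \cref{1.DeltaL*characterization} and nested limits in the ultrafilter proof, or, in the finitary proof, from the fact that Ramsey's theorem makes each cross term nearly constant over all index choices, so the single good choice supplied by the coefficient-uniform inductive hypothesis --- whose conclusion is $\Delta_{\ell-1}^*$-ness (or $\Delta_{\ell-1,r}^*$-ness), the property that actually meets difference sets of the given sequence --- forces \emph{all} choices to be good. Finally, your worry about rational coefficients needs no separate arithmetic mechanism: the induction is uniform in $\alpha\in\R$, rational or not, because the base case $\plimgG{p_1}{n}{\N}n\alpha=0$ is pure telescoping.
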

\begin{rem}
One can show that for $\ell>1$, the families IP$^*$ and $\Delta_\ell^*$ are, so to say, in general position. Namely, IP$^*\not\subseteq\Delta_\ell^*$ (see \cref{3.Delta^lRichNoIPs}) and $\Delta_\ell^*\not\subseteq\text{IP}^*$ (see  \cref{3.IPWithNoDelta2}).
\end{rem}
The following theorem shows that odd real polynomials are, roughly, the only polynomials for which the sets $\mathcal R(v,\epsilon)$ are always  $\Delta_\ell^*$:
\begin{thm}\label{0.CharacterizationOfOddPolynoials}
Let $\ell\in\N$ and let $v(x)$ be a real polynomial. The set $\mathcal R(v,\epsilon)$ is $\Delta_\ell^*$ for any $\epsilon>0$ if and only if there exists a polynomial $w\in\Q[x]$ with $w(0)\in\Z$ and such that $v-w$ is an odd polynomial of degree at most $2\ell-1$. 
\end{thm}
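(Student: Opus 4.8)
Throughout write $v(x)=\sum_{m=0}^{d}c_mx^m$, and record the elementary fact (obtained by unwinding the recursion defining $\partial$) that for every $\ell\ge 1$ one has $\partial(m_1,\dots,m_{2^\ell})=\sum_{i=1}^{2^\ell}\sigma_i m_i$ with signs $\sigma_i\in\{-1,+1\}$, each sign occurring exactly $2^{\ell-1}$ times; in particular $\sum_i\sigma_i=0$. The first thing I would do is restate the condition on $v$ in terms of its coefficients. Since $v-w$ is odd of degree $\le 2\ell-1$ and $w\in\Q[x]$, matching coefficients shows that the existence of the required $w$ is equivalent to: $c_0\in\Z$; $c_m\in\Q$ for every even $m\ge 2$; and $c_m\in\Q$ for every odd $m>2\ell-1$. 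Equivalently, writing $v=w_0+u_0$, where $u_0$ collects the odd-degree terms of degree at most $2\ell-1$ and $w_0$ collects everything else, the condition is exactly that $w_0\in\Q[x]$ with $w_0(0)\in\Z$. I would prove both implications against this reformulation.

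For the ``if'' direction I would suppose $v=w+u$ with $w\in\Q[x]$, $w(0)\in\Z$, and $u=\sum_{j=1}^\ell a_jx^{2j-1}$ (padding with zero coefficients so the index runs up to $\ell$). Let $q\in\N$ be a common denominator of the coefficients of $w$; then $w(n)\in\frac1q\Z$ for all $n\in\Z$, and $n\equiv n'\pmod q$ forces $w(n)\equiv w(n')\pmod 1$, because $q\mid(n^m-n'^m)$ for every $m$. In particular $w(n)\equiv w(0)\in\Z\pmod1$ whenever $q\mid n$. Now fix $\epsilon>0$ and an increasing sequence $(n_k)_{k\in\N}$; by pigeonhole I pass to an infinite subsequence all of whose terms share a fixed residue modulo $q$. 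For indices $k_1<\cdots<k_{2^\ell}$ inside this subsequence, $N:=\partial(n_{k_1},\dots,n_{k_{2^\ell}})=\sum_i\sigma_i n_{k_i}\equiv 0\pmod q$ since the signs are balanced, whence $w(N)\in\Z$ and $\|v(N)\|=\|u(N)\|$. Applying \cref{0.OddDegreeRecurrence} to $u$ and this subsequence yields such indices with $\|u(N)\|<\epsilon$, so $N\in\mathcal R(v,\epsilon)$; these are legitimate indices of the original sequence, and thus $\mathcal R(v,\epsilon)$ is $\Delta_\ell^*$.

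For the ``only if'' direction I would argue by contraposition: assuming a coefficient condition fails, I would build an increasing sequence $(n_k)$ and an $\epsilon>0$ with $\|v(\partial(n_{k_1},\dots,n_{k_{2^\ell}}))\|\ge\epsilon$ for all $k_1<\cdots<k_{2^\ell}$, generalizing the construction given before \cref{0.CubicCase} for $\|n^2\alpha\|$. The sequence is produced by rapid growth together with joint (Weyl) equidistribution: one chooses the $n_k$ inductively so that (i) every mixed product $c_m\,n_{k_{i_1}}^{e_1}\cdots n_{k_{i_r}}^{e_r}$ with $r\ge 2$ distinct indices appearing in $c_mN^m$ has fractional part tending to $0$, and (ii) each pure power $c_m n_k^m$ has a prescribed limit modulo $1$. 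Expanding $v(N)=\sum_m c_m\bigl(\sum_i\sigma_i n_{k_i}\bigr)^m$ and using $\sum_i\sigma_i=0$, the surviving residue is governed by the diagonal terms: for an even monomial $\sigma_i^m=1$, so the diagonal contributes $2^\ell$ times the prescribed limit and never cancels, while rational coefficients are made to contribute integers via the mod-$q$ subsequence. Hence a non-integer constant term survives as $c_0\bmod1\ne0$, and an irrational even coefficient can be forced to contribute a prescribed $2^\ell\theta\not\equiv0\pmod1$; after passing to a tail so that all tuples lie near the limit, $\|v(N)\|$ is then bounded away from $\Z$ uniformly, defeating the $\Delta_\ell^*$ property.

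The delicate case, which I expect to be the main obstacle, is an irrational odd coefficient $c_m$ with $m>2\ell-1$. Here the diagonal terms cancel (since $\sigma_i^m=\sigma_i$ and $\sum_i\sigma_i=0$), so the obstruction must come from a genuinely mixed monomial surviving the iterated difference. This is already visible for $v=\alpha x^3$ with $\ell=1$, where $(n_{k_2}-n_{k_1})^3$ has no surviving diagonal, yet the cross terms $-3\alpha n_{k_2}^2 n_{k_1}+3\alpha n_{k_2}n_{k_1}^2$ can be driven, via joint equidistribution of $t\mapsto(\alpha n_{k_1}t^2,\alpha n_{k_1}^2t,\alpha t^3)$, to a prescribed nonzero residue for all pairs simultaneously. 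The crux is a combinatorial dichotomy for $\partial$ on its $2^\ell$ arguments: for odd $m\le 2\ell-1$ every mixed term can be forced to cancel or vanish (consistent with the positive \cref{0.OddDegreeRecurrence}), whereas for odd $m\ge 2\ell+1$ some mixed monomial necessarily survives and can be tuned. Making this precise --- identifying the surviving monomial, verifying that the required joint equidistribution has no obstructing rational relations (secured by choosing the $n_k$ growing fast enough to avoid integer linear relations among the relevant products), and ensuring the combined residue over all bad monomials stays nonzero --- is the technical heart of the argument, and mirrors in reverse the degree bookkeeping behind \cref{0.OddDegreeRecurrence}.
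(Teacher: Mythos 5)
Your reformulation of condition (i) in terms of coefficients is correct, and your ``if'' direction is sound; it is in essence the paper's own argument. The paper splits $v=w+u$, handles $w$ by the same pigeonhole-mod-denominator observation (\cref{2.Q[x]Recurrence}), handles $u$ by \cref{0.OddDegreeRecurrence}, and glues the two via the finite intersection property of $\Delta_{\ell,r}^*$ sets; you glue them instead by passing to a single residue class mod $q$ and using that $\partial$ carries $2^{\ell-1}$ signs of each kind, which is a legitimate repackaging.

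The ``only if'' direction, however, has a genuine gap, and it sits exactly where you flag it: an irrational coefficient on an odd power of degree at least $2\ell+1$. Your proposed mechanism --- ``some mixed monomial necessarily survives and can be tuned'' by joint equidistribution --- does not work as stated. A genuinely mixed monomial such as $c_m n_{k_1}^{e_1}n_{k_2}^{e_2}$ has two running indices, and each element of your sequence occurs in infinitely many tuples, in different positions and with different partners; equidistribution lets you prescribe limits only of expressions in the single element currently being chosen, so you cannot pin a two-variable monomial near a fixed nonzero residue for all tuples simultaneously. What the paper does instead (\cref{2.TechnicalLemma} and \cref{2.OddPolyDegreeThm}, i.e.\ all of Section 4) is a cascade converting the two-variable obstruction into a chain of one-variable conditions: choose rationally independent irrationals $\alpha_0,\dots,\alpha_{d-1}$, where $2d-1>2\ell-1$ is the degree of the offending term and $\alpha_{d-1}$ is its coefficient, and (by Hardy--Littlewood) a sequence along which $n_k\alpha_j\to 0$, $n_k^2\alpha_j$ suitably scaled tends to $\alpha_{j-1}$, and $n_k\alpha_0\to\frac{1}{2}$. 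Under these conditions, in one difference step every term in the expansion of $(n-m)^{2j+1}\alpha_j$ dies in the iterated limit except the single cross term $\frac{(2j+1)!}{2!(2j-1)!}m^{2j-1}n^{2}\alpha_j$, whose inner limit over $n$ is (after the arranged scaling) $m^{2j-1}\alpha_{j-1}$; thus each difference lowers the odd degree by exactly two while trading $\alpha_j$ for the new irrational $\alpha_{j-1}$, and after the available number of steps one is left with the limit of $m\alpha_0$, namely $\frac{1}{2}\neq 0$. This cascade is precisely why odd degree $2\ell+1$ defeats $\Delta_\ell^*$ while degree $2\ell-1$ does not, and it is the main technical content of the theorem; your sketch acknowledges but does not supply it.

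Two smaller points. First, rational linear relations among the irrational coefficients cannot be ``avoided by choosing the $n_k$ to grow fast'' --- they are relations among the $c_m$ themselves, independent of any sequence --- so, as in the paper's \cref{2.LackOfRecurrenceForEvenPowers}, one must first rewrite the coefficients over a rationally independent basis before invoking equidistribution. Second, when several bad coefficients (and the harmless low-degree odd part) are present, you need a single sequence controlling all of them at once; the paper gets this for free from the additivity of ultrafilter limits along $p_{\ell}$, whereas in your finitary setup it requires an explicit diagonal/subsequence argument that the sketch leaves implicit.
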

There are two basic approaches to the proof of \cref{0.OddDegreeRecurrence}. The first approach is based on the inductive utilization of (the finite) Ramsey Theorem. The second approach uses a special family of ultrafilters in $\beta\N$ which is of interest in its own right and has not been utilized before in a similar context. Each of these approaches has its own pros and cons.\\
The first approach allows to formulate and prove a finitistic version of \cref{0.OddDegreeRecurrence} (this is a pro), but the proof gets quite cumbersome (this is a con). This approach is carried out in Subsection \ref{SubsectionFinitisticProof}.\\
The second approach, which is implemented in Subsection \ref{SubsectionUltrafilterApproach}, has the advantage of being shorter and much easier to follow. The disadvantage of this approach seems to be mostly lying with the fact that some readers may not be familiar with ultrafilters.  We remedy this by giving detailed definitions and some of the necessary background in Section \ref{SectionBetaN}.\\
It is worth mentioning that we will also utilize  the ultrafilter technique in the proofs of \cref{0.CharacterizationOfOddPolynoials} (see Section \ref{SectionCharPol}) and of a converse to  \cref{0.OddDegreeRecurrence} (see Section \ref{SectionAConverseResult}).\\

In Section \ref{SecHilbert}, we deal with applications to unitary actions. In particular, we establish the following result.
\begin{thm}\label{0.CompactHilbert}
Let $U:\mathcal H\rightarrow \mathcal H$ be a unitary operator and let $v(x)=\sum_{j=1}^\ell a_jx^{2j-1}$ be a non-zero odd polynomial with $v(\Z)\subseteq\Z$. The following are equivalent:
\begin{enumerate}[(i)]
\item $U$ has discrete spectrum (i.e. $\mathcal H$ is spanned by eigenvectors of $U$).
\item For any $f\in\mathcal H$ and any $\epsilon>0$, the set 
$$\{n\in\N\,|\,\|U^{v(n)}f-f\|_{\mathcal H}<\epsilon\}$$
is $\Delta_\ell^*$.
\end{enumerate}
\end{thm}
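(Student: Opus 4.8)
The plan is to prove the two implications separately. The implication (i)$\Rightarrow$(ii) will follow from the spectral theorem together with \cref{0.OddDegreeRecurrence} and the finite intersection property of $\Delta_\ell^*$ sets, while (ii)$\Rightarrow$(i) will be handled by contraposition: given a vector with continuous spectrum I would produce a sequence witnessing the failure of the $\Delta_\ell^*$ property for a suitable recurrence set.

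For (i)$\Rightarrow$(ii) I would first reduce to eigenvectors. If $g$ is a unit eigenvector with $Ug=e^{2\pi i\alpha}g$, then, since $v(\Z)\subseteq\Z$, we have $U^{v(n)}g=e^{2\pi i\alpha v(n)}g$ and hence $\|U^{v(n)}g-g\|_{\mathcal H}=|e^{2\pi i\alpha v(n)}-1|\le 2\pi\|\alpha v(n)\|$. The polynomial $\alpha v(x)=\sum_{j=1}^\ell (\alpha a_j)x^{2j-1}$ is again an odd real polynomial, so \cref{0.OddDegreeRecurrence} shows that $\{n\in\N\,:\,\|\alpha v(n)\|<\delta\}=\mathcal R(\alpha v,\delta)$ is $\Delta_\ell^*$ for every $\delta>0$. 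For a general $f$, discrete spectrum lets me write $f=\sum_{j=1}^N c_jg_j+r$ with $g_j$ unit eigenvectors (eigenvalues $e^{2\pi i\alpha_j}$) and $\|r\|_{\mathcal H}$ as small as desired. Choosing $\delta$ so that both $2\pi\delta\sum_j|c_j|$ and $2\|r\|_{\mathcal H}$ are below $\epsilon/2$, the triangle inequality yields $\bigcap_{j=1}^N\mathcal R(\alpha_j v,\delta)\subseteq\{n:\|U^{v(n)}f-f\|_{\mathcal H}<\epsilon\}$. Since a finite intersection of $\Delta_\ell^*$ sets is $\Delta_\ell^*$ and any superset of a $\Delta_\ell^*$ set is visibly $\Delta_\ell^*$, the recurrence set is $\Delta_\ell^*$.

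For (ii)$\Rightarrow$(i) I would argue by contraposition. If $U$ does not have discrete spectrum, the orthogonal complement of the closed span of the eigenvectors is non-trivial; pick a unit vector $f$ in it. Standard spectral theory then gives that the scalar spectral measure $\mu=\mu_f$ on $\mathbb T$ determined by $\langle U^mf,f\rangle=\int_{\mathbb T}e^{2\pi i m\theta}\,d\mu(\theta)$ is non-atomic, and on the cyclic subspace generated by $f$ one has $\|U^{v(n)}f-f\|_{\mathcal H}^2=\int_{\mathbb T}|e^{2\pi i v(n)\theta}-1|^2\,d\mu(\theta)$. Taking $\epsilon=\|f\|_{\mathcal H}=1$, the goal becomes to exhibit an increasing sequence $(n_k)$ all of whose iterated differences $m=\partial(n_{k_1},\dots,n_{k_{2^\ell}})$ keep $\langle U^{v(m)}f,f\rangle$ away from $\|f\|_{\mathcal H}^2$, i.e. satisfy $\int_{\mathbb T}|e^{2\pi i v(m)\theta}-1|^2\,d\mu(\theta)\ge 1$.

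The engine for this is Weyl's equidistribution theorem (\cite{weyl1916Mod1}): because $v$ is integer-valued its coefficients $a_j$ are rational, so for every irrational $\theta$ the sequence $(v(n)\theta)_n$ is equidistributed mod $1$ and $\tfrac1N\sum_{n\le N}e^{2\pi i v(n)\theta}\to0$; since $\mu$ is non-atomic it assigns full mass to the irrationals, and dominated convergence gives $\tfrac1N\sum_{n\le N}\langle U^{v(n)}f,f\rangle\to0$. I would then upgrade this cancellation into the $\Delta_\ell^*$ framework via the ultrafilter description of $\Delta_\ell^*$ sets developed in \cref{SectionBetaN}: a set fails to be $\Delta_\ell^*$ precisely when its complement lies in one of the distinguished ultrafilters $p$ attached to the operator $\partial$, and for such a $p$ the equidistribution above forces $\lim_{n\to p}\langle U^{v(n)}f,f\rangle=0$, whence $\lim_{n\to p}\|U^{v(n)}f-f\|_{\mathcal H}^2=2\|f\|_{\mathcal H}^2>1$. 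This places the complement of the recurrence set in $p$ and shows the set is not $\Delta_\ell^*$. The main obstacle is exactly this last step: one must verify that the polynomial cancellation coming from continuous spectrum survives along the ultrafilters built from $\partial$ — equivalently, one must carry out the sequence construction (a higher-order analogue of the quadratic example in the introduction) uniformly over all $2^\ell$-fold iterated differences, and it is here that the structure of the ultrafilters from \cref{SectionBetaN} does the real work.
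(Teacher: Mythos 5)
Your proof of (i)$\Rightarrow$(ii) is correct and is essentially the paper's argument: reduce to eigenvectors, apply \cref{0.OddDegreeRecurrence} to the odd polynomials $\alpha_j v$, and finish with the finite intersection property of $\Delta_\ell^*$ sets (the paper routes this through the finitistic \cref{7.FinitisticOddDegreeRecurrence} so as to get the stronger $\Delta_{\ell,0}^*$ conclusion of its amplified statement, but for the statement as given your version is complete).

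The direction (ii)$\Rightarrow$(i) has a genuine gap, and it sits exactly at the step you yourself call ``the main obstacle''. You claim that for the distinguished ultrafilters $p_\ell$ of \cref{1.DeltaL*characterization}, the Weyl/Ces\`aro cancellation $\frac{1}{N}\sum_{n\le N}\langle U^{v(n)}f,f\rangle\to 0$ forces $\plimgG{p_\ell}{n}{\N}\langle U^{v(n)}f,f\rangle=0$. This implication is false, not merely unproved: an ultrafilter limit is insensitive to density, and sets of zero upper Banach density can be members of $p_\ell$ --- indeed, by \cref{3.DensityZeroDeltaL} every $\Delta_\ell$ set contains a $\Delta_\ell$ set of zero upper Banach density, and by \cref{1.DeltaCharacterization} any such set belongs to $p_\ell$ for a suitable non-principal $p$. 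In fact the paper's \cref{5.ExampleProposition} is a counterexample in precisely your setting: it produces a weakly mixing Gaussian system and a non-principal $p$ with $\plimgG{p_\ell}{n}{\N}\mu(A\cap T^{-v(n)}A)=\mu(A)$ for every $\ell$; taking $f=\mathbbm 1_A-\mu(A)\mathbbm 1_X$, this is a vector with non-atomic spectral measure (so your equidistribution statement holds for it) whose correlations nevertheless satisfy $\plimgG{p_\ell}{n}{\N}\langle U_T^{v(n)}f,f\rangle=\mu(A)-\mu^2(A)\neq 0$. So the ultrafilter formalism cannot ``do the real work'' here: to show the recurrence set is not $\Delta_\ell^*$ one must actually exhibit a sequence $(n_k)_{k\in\N}$ all of whose iterated differences avoid it, and that is a genuine construction, not a formal consequence of the characterization in \cref{SectionBetaN}.

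There are two ways to close the gap. One is to repair your density idea: first upgrade your Ces\`aro estimate to absolute values via $|\langle U^{v(n)}f,f\rangle|^2=\int_{\mathbb T}\int_{\mathbb T}e^{2\pi i v(n)(\theta-\theta')}\,\text{d}\mu(\theta)\,\text{d}\mu(\theta')$ (your version without absolute values does not control the set where the correlation is large), concluding that $E=\{n\in\N\,|\,|\langle U^{v(n)}f,f\rangle|\ge \tfrac12\}$ has zero upper Banach density; then invoke the combinatorial construction in the first half of the proof of \cref{3.Delta^lRichNoIPs}, which shows that some $D_\ell((n_k)_{k\in\N})$ avoids any prescribed set of zero upper Banach density. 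The other is the paper's route, which is much softer and needs no construction at all: if (ii) held, then for a nonzero $f\in\mathcal H_{\text{wm}}$ (nonzero because $U$ lacks discrete spectrum, by the Koopman--von Neumann decomposition) the recurrence set would be $\Delta_\ell^*$, hence syndetic by \cref{1.Delta*IsSyndetic}; along it $|\langle U^{v(n)}f,f\rangle|\ge\|f\|_{\mathcal H}^2-\epsilon\|f\|_{\mathcal H}$, so $\limsup_{N\rightarrow\infty}\frac{1}{N}\sum_{n=1}^N|\langle U^{v(n)}f,f\rangle|>0$, contradicting the polynomial weak mixing property \eqref{4.PolynomialWMSet}.
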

 \cref{0.CompactHilbert} has the following ergodic-theoretical corollary.
\begin{cor}\label{0.CompactMeasurable}
Let $(X,\mathcal A,\mu)$ be a probability space\footnote {Throughout this paper we will assume that the probability spaces we deal with are standard,
that is, isomorphic mod 0 to a disjoint union of an interval equipped
with the Lebesgue measure and a countable number of atoms.}
and let $T:X\rightarrow X$ be an ergodic invertible probability measure preserving transformation. The following are equivalent:
\begin{enumerate}[(i)]
\item $(X,\mathcal A,\mu, T)$ is isomorphic to a translation on a compact abelian group.
\item For any odd polynomial $v(x)=\sum_{j=1}^\ell a_jx^{2j-1}$ with $v(\Z)\subseteq\Z$, any  $A\in\mathcal A$ and any $\epsilon>0$, the set 
$$\{n\in\N\,|\, \mu(A\cap T^{-v(n)}A)>\mu(A)-\epsilon\}$$
is $\Delta_\ell^*$.
\item There exists a non-zero odd polynomial $v(x)=\sum_{j=1}^\ell a_jx^{2j-1}$ with $v(\Z)\subseteq\Z$ such that for any  $A\in\mathcal A$ and any $\epsilon>0$, the set 
$$\{n\in\N\,|\, \mu(A\cap T^{-v(n)}A)>\mu(A)-\epsilon\}$$
is $\Delta_\ell^*$.
\end{enumerate}
\end{cor}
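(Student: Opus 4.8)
The plan is to prove \cref{0.CompactMeasurable} by transferring the statement about the measure-preserving system $(X,\mathcal A,\mu,T)$ into a statement about the associated Koopman operator $U:L^2(\mu)\to L^2(\mu)$ defined by $Uf=f\circ T$, and then invoking \cref{0.CompactHilbert}. The essential observation connecting the two formulations is the identity $\mu(A\cap T^{-v(n)}A)=\langle U^{v(n)}\mathbbm 1_A\mid \mathbbm 1_A\rangle$, which converts correlation estimates into estimates on inner products, and hence (after expanding the norm) into statements about $\|U^{v(n)}\mathbbm 1_A-\mathbbm 1_A\|_{L^2(\mu)}$.

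First I would recall the standard dictionary between the dynamical and operator-theoretic worlds. The key point is the Halmos--von Neumann theorem: an ergodic invertible probability measure preserving system is isomorphic to a translation on a compact abelian group if and only if its Koopman operator $U$ has discrete spectrum, i.e. $L^2(\mu)$ is spanned by eigenfunctions of $U$. This is precisely the equivalence of (i) in \cref{0.CompactMeasurable} with (i) in \cref{0.CompactHilbert}, applied to $\mathcal H=L^2(\mu)$. The remaining task is to show that conditions (ii) and (iii) of \cref{0.CompactMeasurable} correspond, under this dictionary, to condition (ii) of \cref{0.CompactHilbert}.

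I would argue the cycle $(i)\Rightarrow(ii)\Rightarrow(iii)\Rightarrow(i)$. For $(i)\Rightarrow(ii)$: assuming $U$ has discrete spectrum, \cref{0.CompactHilbert} gives that for every $f\in L^2(\mu)$ and every $\delta>0$ the set $\{n\mid \|U^{v(n)}f-f\|<\delta\}$ is $\Delta_\ell^*$; specializing to $f=\mathbbm 1_A$ and using the algebraic identity
\[
\|U^{v(n)}\mathbbm 1_A-\mathbbm 1_A\|^2=2\mu(A)-2\mu(A\cap T^{-v(n)}A),
\]
I see that $\|U^{v(n)}\mathbbm 1_A-\mathbbm 1_A\|<\delta$ forces $\mu(A\cap T^{-v(n)}A)>\mu(A)-\delta^2/2$, so choosing $\delta=\sqrt{2\epsilon}$ shows the set in (ii) contains a $\Delta_\ell^*$ set and is therefore itself $\Delta_\ell^*$. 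The implication $(ii)\Rightarrow(iii)$ is immediate, since (ii) asserts the conclusion for all admissible $v$ while (iii) only requires the existence of one non-zero such $v$ (note that non-zero integer-valued odd polynomials exist, e.g. $v(x)=x$). For the hard direction $(iii)\Rightarrow(i)$, I would run the same correlation identity in reverse: given the single polynomial $v$ from (iii), the hypothesis yields that $\{n\mid \|U^{v(n)}\mathbbm 1_A-\mathbbm 1_A\|<\delta\}$ is $\Delta_\ell^*$ for every $A$ and every $\delta>0$. Since indicator functions of measurable sets are dense in the real $L^2$ in the sense that their linear span is dense, and since the family of $\Delta_\ell^*$ sets is closed under finite intersections, a routine approximation argument upgrades this to the statement that $\{n\mid\|U^{v(n)}f-f\|<\delta\}$ is $\Delta_\ell^*$ for every $f\in L^2(\mu)$; this is exactly condition (ii) of \cref{0.CompactHilbert} for the operator $U$, which forces $U$ to have discrete spectrum, and hence $(X,\mathcal A,\mu,T)$ to be isomorphic to a group translation by Halmos--von Neumann.

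The main obstacle I anticipate is the approximation step in $(iii)\Rightarrow(i)$: passing from the $\Delta_\ell^*$ property for indicator functions to the $\Delta_\ell^*$ property for arbitrary $f\in L^2(\mu)$. The subtlety is that $\Delta_\ell^*$ is a largeness property, not a property preserved under arbitrary limits, so one cannot simply take an $L^2$-limit of the relevant sets. The correct route is to fix $f$ and $\delta$, approximate $f$ within $\delta/4$ in $L^2$ by a finite real linear combination $g=\sum_i c_i\mathbbm 1_{A_i}$ of indicators, control $\|U^{v(n)}g-g\|$ by intersecting the finitely many $\Delta_\ell^*$ sets $\{n\mid\|U^{v(n)}\mathbbm 1_{A_i}-\mathbbm 1_{A_i}\|<\delta'\}$ for a suitably small $\delta'$ (using the finite intersection property established earlier in the excerpt, together with $\|U^{v(n)}g-g\|\le\sum_i|c_i|\,\|U^{v(n)}\mathbbm 1_{A_i}-\mathbbm 1_{A_i}\|$ and the unitarity of $U$), and then use the triangle inequality $\|U^{v(n)}f-f\|\le\|U^{v(n)}(f-g)\|+\|U^{v(n)}g-g\|+\|g-f\|=2\|f-g\|+\|U^{v(n)}g-g\|$ to conclude. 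One must also take a small initial care that the whole argument is carried out over the real Hilbert space $L^2_{\mathbb R}(\mu)$ or, alternatively, by treating real and imaginary parts separately, so that the density of indicator functions is genuinely available.
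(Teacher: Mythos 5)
Your proposal is correct and follows essentially the same route as the paper: the paper's proof likewise applies \cref{0.CompactHilbert} (in its amplified form, \cref{4.CompactHilbert}) to the Koopman operator $U_Tf=f\circ T$ and invokes the von Neumann/Halmos--von Neumann fact that an ergodic system is isomorphic to a compact group translation if and only if $L^2(\mu)=\mathcal H_c$. The only difference is that the paper leaves the transfer between correlation sets and norm sets implicit, whereas you correctly supply it via the identity $\|U_T^{v(n)}\mathbbm 1_A-\mathbbm 1_A\|^2=2\mu(A)-2\mu(A\cap T^{-v(n)}A)$, the finite intersection property of $\Delta_\ell^*$ sets, and approximation by simple functions.
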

Another application of \cref{0.CompactHilbert} to measure preserving systems requires the introduction of the notion of an \textit{almost} $\Delta_\ell^*$ set, denoted by A-$\Delta_\ell^*$\index{A-$\Delta_\ell^*$}. Recall that the upper Banach density of a set $E\subseteq \N$, $d^*(E)$, is defined by $$d^*(E)=\limsup_{N-M\rightarrow\infty}\frac{|E\cap\{M+1,...,N\}|}{N-M},$$
where, for a finite $F\subseteq \N$, $|F|$ denotes the cardinality of $F$.
 Given $\ell\in\N$, a set $D\subseteq \N$ is A-$\Delta_\ell^*$ if there exists a set $E\subseteq \N$ with $d^*(E)=0$\index{$d^*(E)$},
such that $D\cup E$ is $\Delta_\ell^*$. 
\begin{thm}\label{0.AlmostMeasurableCase}
Let $(X,\mathcal A,\mu,T)$ be an invertible probability measure preserving system and let  $v(x)=\sum_{j=1}^\ell a_jx^{2j-1}$ be an odd polynomial with $v(\Z)\subseteq\Z$. For any $A\in\mathcal A$ and any $\epsilon>0$, the set
\begin{equation}\label{0.SimpleSet}
\mathcal R_A(v,\epsilon)=\{n\in\N\,|\,\mu(A\cap T^{-v(n)}A)>\mu^2(A)-\epsilon\}
\end{equation}
is A-$\Delta_\ell^*$.
\end{thm}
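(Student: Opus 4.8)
The plan is to analyze the correlation $\mu(A\cap T^{-v(n)}A)$ through the Koopman operator $U\colon L^2(X,\mu)\to L^2(X,\mu)$, $Uf=f\circ T$, and to split $\mathbbm 1_A$ according to the spectral type of $U$. I would write $\mathcal H=L^2(X,\mu)=\mathcal H_c\oplus\mathcal H_{wm}$, where $\mathcal H_c$ is the closed linear span of the eigenvectors of $U$ and $\mathcal H_{wm}$ is its orthogonal complement. Both subspaces are $U$-invariant and mutually orthogonal, the constants lie in $\mathcal H_c$, and $\mathcal H_c$ is closed under complex conjugation (if $U\phi=\lambda\phi$ then $U\bar\phi=\bar\lambda\bar\phi$); hence the orthogonal projection $P$ onto $\mathcal H_c$ preserves real-valued functions. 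I set $f=P\mathbbm 1_A\in\mathcal H_c$ and $g=\mathbbm 1_A-f\in\mathcal H_{wm}$, both real. Since the cross terms vanish by orthogonality and $U$-invariance,
\[
\mu(A\cap T^{-v(n)}A)=\langle f,U^{v(n)}f\rangle+\langle g,U^{v(n)}g\rangle,
\]
and, as $\langle f,1\rangle=\langle\mathbbm 1_A,1\rangle=\mu(A)$, Cauchy--Schwarz gives $\|f\|^2\ge\mu^2(A)$. (If $v\equiv 0$ then $\mathcal R_A(v,\epsilon)=\N$ and there is nothing to prove, so I assume $v\neq 0$.)

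For the compact part I would invoke \cref{0.CompactHilbert}. The restriction $U|_{\mathcal H_c}$ has discrete spectrum by construction and $f\in\mathcal H_c$, so for every $\delta>0$ the set $D_\delta=\{n\in\N\,|\,\|U^{v(n)}f-f\|<\delta\}$ is $\Delta_\ell^*$. For $n\in D_\delta$ one has $\langle f,U^{v(n)}f\rangle\ge\|f\|^2-\|f\|\,\delta\ge\mu^2(A)-\delta$, using $\|f\|\le\|\mathbbm 1_A\|\le 1$.

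The heart of the argument — and the step I expect to be the main obstacle — is to show that the weakly mixing contribution is negligible off a set of upper Banach density zero. Since $g\perp\mathcal H_c$, its spectral measure $\sigma_g$ on $\mathbb T=\R/\Z$ is continuous (atomless), and $\langle g,U^{v(n)}g\rangle=\widehat{\sigma_g}(v(n))=\int_{\mathbb T}e^{2\pi i v(n)\theta}\,d\sigma_g(\theta)$. The key lemma I would establish is that for any continuous measure $\sigma$ on $\mathbb T$, $\frac{1}{N-M}\sum_{M<n\le N}|\widehat{\sigma}(v(n))|^2\to 0$ as $N-M\to\infty$. To prove it I would expand $|\widehat\sigma(v(n))|^2=\int_{\mathbb T}\int_{\mathbb T}e^{2\pi i v(n)(\theta-\phi)}\,d\sigma(\theta)\,d\sigma(\phi)$, average in $n$, and exchange the order of summation and integration. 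The locus $\{(\theta,\phi)\,|\,\theta-\phi\in\Q\}$ is a countable union of graphs $\{\theta=\phi+r\}$, each $\sigma\times\sigma$-null because $\sigma$ is atomless, so off a $\sigma\times\sigma$-null set the difference $\beta=\theta-\phi$ is irrational. As $v$ has rational coefficients (being integer-valued) and a nonzero leading coefficient, the polynomial $n\mapsto v(n)\beta$ has an irrational leading coefficient and is therefore well-distributed $\bmod\,1$ by Weyl's theorem; consequently its uniform (interval) exponential averages tend to $0$. Dominated convergence then yields the claimed vanishing, and Chebyshev's inequality shows that $E_\delta=\{n\in\N\,|\,|\langle g,U^{v(n)}g\rangle|\ge\delta\}$ satisfies $d^*(E_\delta)=0$.

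Finally I would assemble the two parts. Fixing $\epsilon>0$ and taking $\delta=\epsilon/2$, for every $n\in D_\delta\setminus E_\delta$ the displayed identity gives $\mu(A\cap T^{-v(n)}A)>(\mu^2(A)-\delta)-\delta=\mu^2(A)-\epsilon$, so $D_\delta\setminus E_\delta\subseteq\mathcal R_A(v,\epsilon)$. Hence $\mathcal R_A(v,\epsilon)\cup E_\delta\supseteq D_\delta$ is a superset of a $\Delta_\ell^*$ set and is therefore itself $\Delta_\ell^*$, while $d^*(E_\delta)=0$. By the definition of A-$\Delta_\ell^*$, this shows that $\mathcal R_A(v,\epsilon)$ is A-$\Delta_\ell^*$, as required. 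The delicate points to get right are the uniform-over-intervals form of Weyl equidistribution (needed for the Banach-density statement rather than mere Ces\`aro convergence) and the verification that the rational-difference locus is genuinely $\sigma\times\sigma$-null; everything else is bookkeeping.
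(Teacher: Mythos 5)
Your proposal is correct, and its skeleton is the same as the paper's proof (given in amplified form as \cref{4.AlmostMeasurableCase}): the Koopman--von Neumann splitting $L^2(\mu)=\mathcal H_c\oplus\mathcal H_{\text{wm}}$, the compact component controlled by \cref{0.CompactHilbert} together with the Cauchy--Schwarz bound $\|P\mathbbm 1_A\|_{L^2}^2\geq\mu^2(A)$, the weakly mixing component shown to matter only on a set of zero upper Banach density, and the same final assembly $D_\delta\setminus E_\delta\subseteq\mathcal R_A(v,\epsilon)$.

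The one genuine divergence is how the weakly mixing term is dispatched. The paper does not prove that $E_\delta=\{n\in\N\,|\,|\langle U_T^{v(n)}f_{\text{wm}},f_{\text{wm}}\rangle|\geq\delta\}$ has $d^*(E_\delta)=0$; it quotes this as \eqref{4.PolynomialWMSet}, a special case of a known result on weak mixing along polynomial times (the reference \cite{WMPet}). You instead prove it from scratch: pass to the spectral measure $\sigma_g$, which is continuous because $g\perp\mathcal H_c$; expand $|\widehat{\sigma_g}(v(n))|^2$ as a double integral over $\mathbb T^2$; observe that the rational-difference locus is $(\sigma_g\times\sigma_g)$-null (a countable union of graphs, each null by atomlessness); use that $v\in\Q[x]$ (forced by $v(\Z)\subseteq\Z$) with nonzero leading coefficient, so $v(n)\beta$ has irrational leading coefficient for $\beta\notin\Q$ and is therefore \emph{well}-distributed mod $1$, which is exactly the uniformity over intervals $[M+1,N]$ that upper Banach density requires; and finish with dominated convergence and Chebyshev. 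Both of the delicate points you flag are real and your handling of them is sound, so your write-up is self-contained precisely where the paper leans on a citation. Two minor observations: the cross terms vanish because $\mathcal H_c$ and $\mathcal H_{\text{wm}}$ are invariant under both $U$ and $U^{-1}$ (needed since $v(n)$ may be negative), which is part of the Koopman--von Neumann statement you invoke; and the paper's Section 6 version proves the slightly stronger conclusion that $\mathcal R_A(v,\epsilon)$ is A-$\Delta_{\ell,0}^*$ by running the same argument through \cref{4.CompactHilbert}, whereas your argument, invoking only the $\Delta_\ell^*$ form of \cref{0.CompactHilbert}, yields exactly the A-$\Delta_\ell^*$ statement asked for.
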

\begin{rem}
It was shown in \cite{BFM} that the "sets of large returns" $\mathcal R_A(v,\epsilon)$ have the IP$^*$ property for any polynomial $v$ with $v(\Z)\subseteq\Z$ and satisfying $v(0)=0$.  It will be shown in Section \ref{SectionNotionsOfLargness} that for each $\ell\in\N$, there exists an IP$^*$ set which is not A-$\Delta_\ell^*$. So, \cref{0.AlmostMeasurableCase} provides  new information about sets of large returns when $v$ is an odd polynomial.
\end{rem}
We remark that the quantity $\mu^2(A)$ in \eqref{0.SimpleSet} is optimal (consider any strongly mixing system\footnote
{A probability measure preserving system $(X,\mathcal A,\mu,T)$ is strongly mixing if for any $A,B\in\mathcal A$,
$$\lim_{n\rightarrow\infty}\mu(A\cap T^{-n}B)=\mu(A)\mu(B).$$
}).\\
The following corollary of \cref{0.AlmostMeasurableCase} is a result in additive combinatorics which might be seen as a variant of the Furstenberg-S{\'a}rk{\"o}zy theorem (see \cite{sarkozy1978difference} and \cite[Theorem 3.16]{FBook}).
\begin{cor}\label{0.SarkozyLike}
Let $E\subseteq\N$ be such that $d^*(E)>0$
and let $v(x)=\sum_{j=1}^\ell a_jx^{2j-1}$ be an odd polynomial with $v(\Z)\subseteq\Z$. Then the set 
$$\{n\in\N\,|\,v(n)\in E-E\}$$
is A-$\Delta_\ell^*$.
\end{cor}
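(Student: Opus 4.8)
The plan is to deduce this directly from \cref{0.AlmostMeasurableCase} by means of the Furstenberg correspondence principle. First I would invoke the correspondence principle for the set $E$: since $d^*(E) > 0$, there exist an invertible probability measure preserving system $(X, \mathcal{A}, \mu, T)$ and a set $A \in \mathcal{A}$ with $\mu(A) = d^*(E) > 0$ such that for every $n \in \N$ one has $d^*(E \cap (E - n)) \geq \mu(A \cap T^{-n} A)$. The feature I want to exploit is that positivity of the dynamical correlation forces positivity of the combinatorial correlation, and hence nonemptiness of the corresponding intersection.

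Next I would apply \cref{0.AlmostMeasurableCase} to this system, to the set $A$, and to the threshold $\epsilon = \mu^2(A)$. Since then $\mu^2(A) - \epsilon = 0$, this yields that the set
$$\mathcal{R}_A(v, \mu^2(A)) = \{n \in \N \mid \mu(A \cap T^{-v(n)} A) > 0\}$$
is A-$\Delta_\ell^*$. I would then establish the inclusion $\mathcal{R}_A(v, \mu^2(A)) \subseteq \{n \in \N \mid v(n) \in E - E\}$: if $\mu(A \cap T^{-v(n)} A) > 0$, then by the correspondence inequality $d^*(E \cap (E - v(n))) > 0$, so in particular $E \cap (E - v(n)) \neq \emptyset$; choosing $m$ in this intersection gives $m \in E$ and $m + v(n) \in E$, and since the hypothesis $v(\Z) \subseteq \Z$ guarantees $v(n) \in \Z$, we obtain $v(n) = (m + v(n)) - m \in E - E$.

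Finally, I would invoke the fact that the family of A-$\Delta_\ell^*$ sets is upward closed: any superset of a $\Delta_\ell^*$ set is again $\Delta_\ell^*$ (intersecting the relevant configurations is preserved under enlargement), so enlarging an A-$\Delta_\ell^*$ set cannot destroy the property. Hence $\{n \in \N \mid v(n) \in E - E\}$, being a superset of the A-$\Delta_\ell^*$ set $\mathcal{R}_A(v, \mu^2(A))$, is itself A-$\Delta_\ell^*$, as claimed. The substantive work is entirely contained in \cref{0.AlmostMeasurableCase}; the passage to $E - E$ through correspondence and the upward closure of A-$\Delta_\ell^*$ are routine, so I do not anticipate a genuine obstacle beyond correctly matching the threshold $\epsilon$ to the quadratic quantity $\mu^2(A)$, which is precisely what makes the return-set positive and forces the combinatorial intersection to be nonempty.
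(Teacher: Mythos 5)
Your proposal is correct and follows essentially the same route as the paper: Furstenberg's correspondence principle, then \cref{0.AlmostMeasurableCase} applied with $\epsilon=\mu^2(A)$ so that the return set becomes $\{n\in\N\,|\,\mu(A\cap T^{-v(n)}A)>0\}$, followed by the inclusion into $\{n\in\N\,|\,v(n)\in E-E\}$ and upward closure of A-$\Delta_\ell^*$ (the paper phrases this via the intermediate set $\{n\in\N\,|\,d^*(E\cap(E-v(n)))>0\}$ and records the slightly stronger A-$\Delta_{\ell,0}^*$ conclusion). One cosmetic point: state the correspondence inequality for all $n\in\Z$ rather than $n\in\N$, since $v(n)$ may be negative when the leading coefficient is negative.
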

We also have a new recurrence property for weakly mixing systems. A probability measure preserving system $(X,\mathcal A,\mu,T)$ is weakly mixing if for any $A,B\in\mathcal A$,
$$\lim_{N\rightarrow\infty}\frac{1}{N}\sum_{j=1}^N|\mu(A\cap T^{-n}B)-\mu(A)\mu(B)|=0.$$
\begin{cor}\label{0.WeaklyMixingCase}
Let $v(x)=\sum_{j=1}^\ell a_jx^{2j-1}$ be a non-zero odd polynomial with $v(\Z)\subseteq\Z$. An invertible probability measure preserving system $(X,\mathcal A,\mu,T)$ is  weakly mixing if and only if  for any $A,B\in\mathcal A$ and any $\epsilon>0$, the set
$$\mathcal R_{A,B}(v,\epsilon)=\{n\in\N\,|\,|\mu(A\cap T^{-v(n)}B)-\mu(A)\mu(B)|<\epsilon\}$$
is A-$\Delta_\ell^*$.
\end{cor}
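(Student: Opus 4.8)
The plan is to prove the two implications separately, in each case reducing the problem to the scalar correlation sequence. Writing $U=U_T$ for the Koopman operator and setting $f=\mathbf 1_B-\mu(B)$, $g=\mathbf 1_A-\mu(A)$, one has $\mu(A\cap T^{-m}B)-\mu(A)\mu(B)=\langle U^{m}f,g\rangle$, so that $\mathcal R_{A,B}(v,\epsilon)=\{n\in\N : |\langle U^{v(n)}f,g\rangle|<\epsilon\}$ with $f,g$ of zero integral. I would also record at the outset that $\N$ is itself $\Delta_\ell^*$: for any increasing $(n_k)$, fixing the first $2^\ell-1$ chosen indices and taking $k_{2^\ell}$ so large that $n_{k_{2^\ell}}$ dominates the (bounded) contribution of the others makes $\partial(n_{k_1},\dots,n_{k_{2^\ell}})>0$, since $n_{k_{2^\ell}}$ occurs in $\partial$ with coefficient $+1$. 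For the forward direction, suppose $(X,\mathcal A,\mu,T)$ is weakly mixing and set $E:=\N\setminus\mathcal R_{A,B}(v,\epsilon)$. Then $\mathcal R_{A,B}(v,\epsilon)\cup E=\N$ is $\Delta_\ell^*$ by the preceding remark, so it suffices to prove $d^*(E)=0$; this collapses the whole implication to the uniform decorrelation estimate
\[
\text{UC-}\lim_{n}\,|\langle U^{v(n)}f,g\rangle|^{2}=0,
\]
where $\text{UC-}\lim$ denotes the uniform Ces\`aro limit $\lim_{N-M\to\infty}\frac1{N-M}\sum_{n=M+1}^{N}$, since the estimate forces $d^*\{n:|\langle U^{v(n)}f,g\rangle|\ge\epsilon\}=0$ for every $\epsilon>0$.

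I expect this estimate to be the main obstacle. I would derive it from the spectral theorem: writing $\langle U^{m}f,g\rangle=\int_{\mathbb T}e^{2\pi i mt}\,d\sigma(t)$ for the complex spectral measure $\sigma=\sigma_{f,g}$, weak mixing guarantees that $U$ has no non-constant eigenfunctions, whence $\sigma$ is non-atomic (note $f,g\perp 1$). Expanding and averaging gives
\[
\frac{1}{N-M}\sum_{n=M+1}^{N}|\langle U^{v(n)}f,g\rangle|^{2}=\int_{\mathbb T}\int_{\mathbb T}\Big(\frac{1}{N-M}\sum_{n=M+1}^{N} e^{2\pi i v(n)(t-s)}\Big)\,d\sigma(t)\,d\overline{\sigma}(s).
\]
By the uniform form of Weyl's equidistribution theorem the inner average tends to $0$ as $N-M\to\infty$ whenever $t-s\notin\Q$ (here $\deg v\ge 1$ and $v(\Z)\subseteq\Z$ enter), while $\{(t,s):t-s\in\Q\}$ is $(\sigma\times\overline\sigma)$-null because $\sigma$ is non-atomic; dominated convergence then yields the estimate. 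Alternatively one may invoke the uniform polynomial weak mixing theorem applied to $T\times T$ and the mean-zero function $f\otimes\overline f$.

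For the reverse direction I argue by contraposition, exhibiting a single pair $A=B$ and an $\epsilon>0$ for which $\mathcal R_{A,A}(v,\epsilon)$ fails to be A-$\Delta_\ell^*$. If $T$ is not ergodic, choose an invariant set $A$ with $0<\mu(A)<1$; then $\mu(A\cap T^{-v(n)}A)-\mu(A)^{2}=\mu(A)(1-\mu(A))$ for all $n$, so $\mathcal R_{A,A}(v,\epsilon)=\emptyset$ once $\epsilon<\mu(A)(1-\mu(A))$. If $T$ is ergodic with an eigenvalue $e^{2\pi i\alpha}\ne 1$, then $T$ admits a factor which is the rotation by $\alpha$ on a compact abelian group, with factor map $\phi$; taking $A=\phi^{-1}(J)$ for a short arc (or a point-class when $\alpha\in\Q$) $J$ of measure $s$, the elementary computation of the overlap of $J$ with its rotate shows that for suitably small $s,\delta,\epsilon>0$ (say $\delta+\epsilon\le s-s^{2}$) one has $\{n\in\N:\|v(n)\alpha\|<\delta\}\subseteq\N\setminus\mathcal R_{A,A}(v,\epsilon)$.

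In both cases set $D=\mathcal R_{A,A}(v,\epsilon)$ and let $G\subseteq\N\setminus D$ be the $\Delta_\ell^*$ set just produced: $G=\N$ in the non-ergodic case, and $G=\{n:\|v(n)\alpha\|<\delta\}$ in the rotation case, which is $\Delta_\ell^*$ by \cref{0.OddDegreeRecurrence} applied to the odd real polynomial $\alpha\,v(x)$. Were $D$ A-$\Delta_\ell^*$, say $D\cup E$ is $\Delta_\ell^*$ with $d^*(E)=0$, then by the finite intersection property of $\Delta_\ell^*$ sets $(D\cup E)\cap G$ would be $\Delta_\ell^*$; but $G\cap D=\emptyset$ forces $(D\cup E)\cap G=E\cap G$, a $\Delta_\ell^*$ set of upper Banach density $0$. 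This contradicts the fact, belonging to the $\Delta_\ell^*$ theory of Section~\ref{SectionNotionsOfLargness}, that a set of upper Banach density zero cannot be $\Delta_\ell^*$ (the very fact that makes A-$\Delta_\ell^*$ a non-vacuous strengthening). Hence $D$ is not A-$\Delta_\ell^*$, completing the contrapositive and with it the proof. The two points requiring the most care are the uniform, Banach-density decay in the forward direction and, in the reverse direction, the overlap computation on the Kronecker factor together with the reliance on the positivity of the upper Banach density of $\Delta_\ell^*$ sets.
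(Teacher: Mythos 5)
Your proposal is correct, but it takes a genuinely different route from the paper's, chiefly in the reverse direction, so a comparison is in order. For the forward direction the two arguments are structurally identical: both reduce to showing that the exceptional set $\{n\in\N\,:\,|\mu(A\cap T^{-v(n)}B)-\mu(A)\mu(B)|\geq\epsilon\}$ has zero upper Banach density and then union it with $\mathcal R_{A,B}(v,\epsilon)$ to get $\N$; the paper obtains the density statement by quoting the uniform polynomial weak mixing fact \eqref{4.PolynomialWMSet} (a special case of a theorem from \cite{WMPet}), whereas you re-derive it from the spectral theorem, non-atomicity of the spectral measure, and uniform Weyl equidistribution --- same content, just self-contained. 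The substantive divergence is in the reverse direction. The paper picks a non-constant $f\in\mathcal H_c$, builds $A$ from its level sets (so that $\mathbbm 1_A$ lies in the compact part), and invokes \cref{4.CompactHilbert} to conclude that the large-returns set $D_\epsilon=\{n\,:\,\mu(A\cap T^{-v(n)}A)>\mu(A)-\epsilon\}$ is $\Delta_\ell^*$; this handles the non-ergodic and ergodic cases uniformly and actually delivers the stronger A-$\Delta_{\ell,0}^*$ statement recorded in \cref{4.WeaklyMixingCase}. You bypass \cref{4.CompactHilbert} entirely: you split into the non-ergodic case (invariant set, $\mathcal R_{A,A}(v,\epsilon)=\emptyset$) and the ergodic case with an eigenvalue $e^{2\pi i\alpha}\neq 1$, pass to the rotation factor, do the arc-overlap computation there, and apply the diophantine result \cref{0.OddDegreeRecurrence} to the odd real polynomial $\alpha v(x)$ to produce a $\Delta_\ell^*$ set $G$ disjoint from $\mathcal R_{A,A}(v,\epsilon)$. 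Both proofs then close with the same argument: the finite intersection property of $\Delta_\ell^*$ together with the fact that $\Delta_\ell^*$ sets have positive upper Banach density (which follows from syndeticity, \cref{1.Delta*IsSyndetic} --- note this is proved in Section 2, not in \Cref{SectionNotionsOfLargness} as you suggest). What your route buys is independence from the Hilbert-space theorem \cref{4.CompactHilbert} and a very explicit witness set; what it costs is the case analysis, reliance on classical structure theory (an ergodic non-weakly-mixing system has a group-rotation factor, with rational eigenvalues needing separate treatment), and the loss of the sharper $\Delta_{\ell,0}^*$-flavored conclusion that the paper's argument yields at no extra expense.
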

In Section \ref{SecExample}, we provide an example of a weakly mixing system $(X,\mathcal A,\mu,T)$ which shows that in the statement of \cref{0.WeaklyMixingCase},  A-$\Delta_\ell^*$ can not be replaced  by $\Delta_\ell^*$.\\

We conclude the introduction with formulating a recent result \cite{BerZelMixingOfAllorders} which demonstrates yet another connection between $\Delta_\ell^*$ sets and ergodic theory.  
\begin{thm}[Cf. \cite{KuangYeDeltaMixing}]
Let $(X,\mathcal A,\mu,T)$ be an invertible probability measure preserving system. The following are equivalent:
\begin{enumerate}[(i)]
    \item $(X,\mathcal A,\mu, T)$ is strongly mixing.
    \item There exists an $\ell\in\N$ such that for any  $A\in\mathcal A$ and any $\epsilon>0$, the set
    $$\{n\in\N\,|\, |\mu(A\cap T^{-n}A)-\mu^2(A)|<\epsilon\}$$
    is $\Delta_\ell^*$.
    \item For any $\ell\in\N$, any $A_0,...,A_{\ell+1}\in\mathcal A$ and any $\epsilon>0$, the set 
     $$\{n\in\N\,|\, |\mu(A_0\cap T^{-n}A_1 \cdots\cap T^{-(\ell+1)n}A_{\ell+1})-\prod_{j=0}^{\ell+1}\mu(A_j)|<\epsilon\}$$
    is $\Delta_\ell^*$.
\end{enumerate}
\end{thm}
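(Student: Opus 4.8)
The plan is to establish the cycle (i) $\Rightarrow$ (iii) $\Rightarrow$ (ii) $\Rightarrow$ (i). The link (iii) $\Rightarrow$ (ii) is immediate: applying (iii) with $\ell=1$, $A_0=X$ and $A_1=A_2=A$, the invariance of $\mu$ gives $\mu(X\cap T^{-n}A\cap T^{-2n}A)=\mu(T^{-n}A\cap T^{-2n}A)=\mu(A\cap T^{-n}A)$ and $\mu(X)\mu(A)^2=\mu^2(A)$, so the set in (ii) is literally one of the sets in (iii) and is therefore $\Delta_1^*$; thus (ii) holds. It remains to treat (ii) $\Rightarrow$ (i) and the substantial implication (i) $\Rightarrow$ (iii), both of which I would phrase through the ultrafilter description of $\Delta_\ell^*$ sets from Section \ref{SectionBetaN}: a set is $\Delta_\ell^*$ precisely when it belongs to every ultrafilter obtained by applying the order-$\ell$ iterated-difference operation $\partial$ to an ultrafilter concentrated on an increasing sequence. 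Consequently, writing $c_A(n)=\mu(A\cap T^{-n}A)-\mu^2(A)=\langle f,U^nf\rangle$ with $f=\mathbf 1_A-\mu(A)$ and $U$ the Koopman operator, the $\Delta_\ell^*$-ness of all the sets $\mathcal R_A(v,\epsilon)$, $\epsilon>0$, is equivalent to $q\text{-}\lim_n c_A(n)=0$ for every order-$\ell$ difference ultrafilter $q$, and similarly for the multiple correlations appearing in (iii).

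For (ii) $\Rightarrow$ (i) I would argue the contrapositive. First reduce to self-correlations: by the spectral theorem, strong mixing is equivalent to $c_A(n)\to0$ for all $A$, the passage from self- to cross-correlations being the standard Rajchman-band argument (a cross-spectral measure is absolutely continuous with respect to the diagonal ones, and absolute continuity with respect to a Rajchman measure is inherited). So suppose $T$ is not mixing; then $U^nf\not\rightharpoonup0$ for some such $f$, and choosing an ultrafilter $p$ along which $U^nf\rightharpoonup g\ne0$ weakly (possible since $\langle f,U^{n_k}f\rangle$ stays bounded away from $0$ along a suitable sequence), a nested weak-limit computation shows that the value of $c_A$ along the order-$\ell$ difference ultrafilter built from $p$ equals $\|g_\ell\|^2$, where $g_0=f$ and $g_{j+1}=p\text{-}\lim_m U^m g_j$. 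Since every step uses only a genuine weak limit in a fixed slot of the inner product, no illegitimate interchange with the spectral integral occurs. Provided $g_\ell\ne0$ for all $\ell$, this contradicts (ii) for every $\ell$ at once. The one delicate point here is the non-degeneracy of the $g_\ell$, which I would secure by choosing $p$ inside the recurrence structure of the (non-Rajchman) spectral measure of $f$ — e.g. by taking $p$ idempotent whenever the enveloping semigroup furnishes an idempotent not annihilating $f$, and otherwise exploiting the rigidity of the $\limsup$ of $|c_A|$ directly.

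The heart of the matter is (i) $\Rightarrow$ (iii), and this is where I expect the main obstacle. I would induct on $\ell$, the base case $\ell=0$ being exactly strong mixing, since $\Delta_0^*$ means cofinite and $\mu(A_0\cap T^{-n}A_1)\to\mu(A_0)\mu(A_1)$. Expanding $\mathbf 1_{A_j}=\mu(A_j)+f_j$ and multiplying out $\mu\big(\bigcap_{j=0}^{\ell+1}T^{-jd}A_j\big)$, the constant term is the desired product, every term with a single mean-zero factor vanishes, and every term with exactly two mean-zero factors tends to $0$ as $d\to\infty$ by two-fold mixing. The genuinely problematic terms are those with three or more factors $f_j$: these are higher-order correlations which are not governed by strong mixing along $d\to\infty$ — this is precisely the phenomenon behind the still-open question of whether strong mixing implies mixing of all orders. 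The resolution is that $\Delta_\ell^*$ only asks for a single iterated difference $d=\partial(n_{k_1},\dots,n_{k_{2^\ell}})$ realizing the near-product value, and the recursion $\partial(\cdots)=\partial(\text{second half})-\partial(\text{first half})$ allows a van der Corput / Cauchy--Schwarz differencing that mirrors this recursion, trading each level of iterated difference for a reduction to correlations with strictly fewer mean-zero factors, until after $\ell$ steps one reaches the two-fold (base) case. The hard part, and the reason the finitistic version is unavoidably technical, is to upgrade the resulting averaged (Cesàro) control into the combinatorial $\Delta_\ell^*$ conclusion: one must select, inside an arbitrary prescribed increasing sequence, a configuration of $2^\ell$ indices whose iterated difference simultaneously drives all the lower-order error terms below $\epsilon$. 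I would carry out this selection by an inductive application of the finite Ramsey theorem interleaved with the Hilbert-space estimates, exactly as in the finitistic proof of \cref{0.OddDegreeRecurrence} in Subsection \ref{SubsectionFinitisticProof}.
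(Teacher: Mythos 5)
First, a point of order: the paper you were given does not prove this theorem at all --- it is stated in the Introduction as an imported recent result, cited to \cite{BerZelMixingOfAllorders} (cf.\ \cite{KuangYeDeltaMixing}) --- so there is no internal proof to compare against, and your proposal has to stand on its own. Your cycle is organized sensibly and (iii)$\implies$(ii) is correct, but each of the two substantive implications has a genuine gap.

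The implication (i)$\implies$(iii) is the heart of the matter and is precisely the main theorem of the cited companion paper, where it requires substantial machinery; your sketch names the difficulty but does not resolve it. In \cref{SubsectionFinitisticProof} the Ramsey-plus-estimates scheme works because of an algebraic identity: the binomial expansion of $\big(\partial(n_{k_1},\dots,n_{k_{2^\ell}})\big)^{2j-1}$ splits an odd power of an iterated difference into pieces that are of lower order in each half, and Ramsey's theorem is only used to stabilize finitely many explicit exponential sums. For an abstract correlation sequence $n\mapsto\mu(A_0\cap T^{-n}A_1\cap\cdots\cap T^{-(\ell+1)n}A_{\ell+1})$ of a strongly mixing system there is no such identity, and van der Corput differencing controls the terms with three or more mean-zero factors only in Ces\`aro average over the differencing parameter. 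Nothing in your proposal converts that averaged control into control at the \emph{single} iterated difference $\partial(n_{k_1},\dots,n_{k_{2^\ell}})$ that $\Delta_\ell^*$ demands, selected inside an \emph{arbitrary} prescribed sequence; that conversion is exactly where the 2-mixing versus 3-mixing obstruction you yourself invoke has to be circumvented, and it is the content of \cite{BerZelMixingOfAllorders}. As written, this implication is asserted, not proved.

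In (ii)$\implies$(i) the skeleton is right: by \cref{1.DeltaL*characterization} it suffices to produce, for each $\ell$, a non-principal $p$ with $\plimgG{p_\ell}{n}{\N}\langle U^nf,f\rangle\neq0$ for some $f=\mathbbm 1_A-\mu(A)$, and the nested weak-limit computation gives $\plimgG{p_\ell}{n}{\N}\langle U^nf,f\rangle=\|g_\ell\|^2$ --- though by \cref{1.LemaDeltaEquality} the correct recursion is $g_{j+1}=\text{w-}\plimgG{p_j}{n}{\N}U^nf=\text{w-}\plimgG{p}{m}{\N}U^{-m}g_j$, not a weak limit of $U^mg_j$ along $p$ itself. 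The step you flag as delicate, the non-vanishing of the $g_\ell$, is however left to devices (idempotents in the enveloping semigroup, ``rigidity of the limsup'') that are not arguments: idempotency of $p$ has no simple relation to the operation $p\mapsto-p+p$, and no construction is given. This gap is real, but it closes with one line you are missing: with the correct recursion,
$$\langle g_{\ell+1},f\rangle=\plimgG{p_\ell}{n}{\N}\langle U^nf,f\rangle=\|g_\ell\|^2,$$
so Cauchy--Schwarz yields $\|g_{\ell+1}\|\geq\|g_\ell\|^2/\|f\|$, and non-degeneracy propagates automatically from $\|g_1\|\geq\delta/\|f\|>0$ once $p$ contains a sequence along which $|\langle U^{n}f,f\rangle|\geq\delta$ (such a sequence exists because $T$ is not strongly mixing, after your --- correct --- Rajchman reduction to self-correlations). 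With that identity inserted, (ii)$\implies$(i) becomes a complete proof needing no idempotents and no case analysis; without it, it is not one.
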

The structure of the paper is as follows. In \Cref{SectionBetaN}, we provide the necessary background on ultrafilters and establish the connection between ultrafilters and $\Delta_\ell^*$ sets. In Section 3, we prove \cref{0.OddDegreeRecurrence} as well as its finitistic version. In  \Cref{SectionAConverseResult}, we  prove a converse to \cref{0.OddDegreeRecurrence}. In  \Cref{SectionCharPol}, we prove \cref{0.CharacterizationOfOddPolynoials}. In \Cref{SecHilbert}, we focus on  applications to unitary actions. In \Cref{SecExample}, we provide an example of a weakly mixing system which demonstrates that  \cref{0.WeaklyMixingCase} can not be improved. In \Cref{SectionNotionsOfLargness}, we discuss  the relations between the various families of subsets of $\N$ that we deal with throughout this paper.
%%%%%%%%%%%%%%%%%%%%%%%%%%%%%%%%%%%%%%%%%%%%%%%%%%%%%%%%%%%%%%%%%%%%%%%%%%%%%%%%%%%%%%%%%%%%%%%%%%%%%%%%%%%%%%%%%%%%%%%%%%%%%%%%%%%%%%%%%%%%%%%%%%%%%%%%%%%%%%%%%%%%%%%%%%%%%%%%%%%%%%%%%%%%%%%%%%%%%%%%%%%%%%%%%%%%%%%%%%%%%%%%%%%%%%%%%%%%%%%%%%%%%%%%%%%%%%%%%%%%%%%%%%%%%%%%%%%%%%%%%%%%%%%%%%%%%%%%%%%%%%%%%%%%%%%%%%%%%%%%%%%%%%%%%%%%%%%%%%%%%%%%%%%%%%%%%%%%%%%%%%%%%%%%%%%%%%%%%%%%%%%%%%%%%%%%%%%%%%%%%%%%%%%%%%%%%%%%%%%%%%%%%%%%%%%%%%%%%%%%%%%%%%%%%%%%%%%%%%%%%%%%%%%%%%%%%%%              SECTION 2
\section{$\beta\N$ and $\Delta_\ell^*$ sets}\label{SectionBetaN}
In this section we provide some background on the space of ultrafilters $\beta\N$ and connect the notion of  $\Delta_\ell^*$ with a natural family in $\beta\N$.\\
Let $p$ be a family of subsets of $\N$. We say that $p$ is a \textbf{filter} if it has the following properties:
\begin{enumerate}[(i)]
\item $\emptyset\not\in p$ and $\N\in p$.
\item If $A,B\in p$, then $A\cap B\in p$.
\item If $A\in p$ and $A\subseteq B$, then $B\in p$.
\end{enumerate}
If, in addition, $p$ satisfies
\begin{enumerate}[(iv)]
    \item  for any  $A,B\subseteq \N$, if $A\cup B\in p$ and $A\not\in p$,  then $B\in p$.
\end{enumerate}
then we say that $p$ is an \textbf{ultrafilter}.  
 It is not hard to show that an ultrafilter $p$ is a \textit{maximal filter}, meaning that $p$ is not properly contained in any other filter. The set of all ultrafilters on $\N$ is denoted by $\beta\N$.\\
One can introduce a natural topology on $\beta\N$: given $A\subseteq \N$, let $$\overline A=\{p\in\beta\N\,|\,A\in p\}.$$ The family $\{\overline A\,|\,A\subseteq\N\}$ forms a basis for the open sets (and a basis for the closed sets) for this topology. With this topology, $\beta\N$ becomes a compact Hausdorff space. Identifying $n\in\N$ with the \textbf{principal ultrafilter}\index{principal ultrafilter} $\overline n=\{A\subseteq\N\,|\,n\in A\}$ allows us to interpret $\beta\N$ as a representation of the Stone-\v{C}ech compactification of $\N$. We remark in passing that the cardinality of $\beta\N$ is that of $\mathcal P(\mathcal P(\N))$ (and so, $\beta\N$ is a non-metrizable topological space).\\
An alternative way of looking at $\beta\N$ is to identify each ultrafilter $p\in\beta\N$ with a finitely additive, $\{0,1\}$-valued probability measure $\mu_p$ on the power set $\mathcal P (\N)$. The measure $\mu_p$ is naturally defined by the condition $\mu_p(A)=1$ if and only if $A\in p$. In this way, we can say that $A\subseteq \N$ is $p$-large whenever $\mu_p(A)=1$ (or equivalently, if $A\in p$).\\  
One can naturally extend the operation + from $\N$ to an associative binary operation $+:\beta\N\times\beta\N\rightarrow\beta\N$ by defining $p+q$ to be the unique ultrafilter such that $A\in p+q$ if and only if
\begin{equation}\label{1.SumFormula}
\{n\in\N\,|\,-n+A\in q\}\in p
\end{equation}
(the set $-n+A$ is defined by $m\in(-n+A)$ if and only if $n+m\in A$).\\
With the operation $+$, $\beta\N$ becomes a compact right topological semigroup (meaning that the function $\rho_p:\beta\N\rightarrow\beta\N$, defined by $\rho_p(q)=q+p$ is continuous). \\ 
In a similar way, one can define $(\beta\Z,+)$ (This kind of construction actually  works for any discrete semigroup. For more on the Stone-\v{C}ech compactification of a discrete semigroup see \cite{HBook}). Note that $(\beta\N,+)$ is a closed sub-semigroup of $(\beta\Z,+)$.\\

For each non-principal ulltrafilter $p\in\beta\N$, the family of subsets of $\N$
\begin{equation}\label{1.DeltaPDef}
\{A\subseteq\N\,|\,\{n\in\N\,|\,n+A\in p\}\in p\}
\end{equation}
is again a non-principal ultrafilter, which we denote by $-p+p$. Note that the notation $-p+p$ for the ultrafilter defined by \eqref{1.DeltaPDef} has to be taken with a grain of caution. To justify the notation $-p+p$ observe that given a non-principal 
ultrafilter $p\in\beta\N$, one can naturally  define the ultrafilter $-p\in\Z^*=\beta\Z\setminus\Z$ by the rule 
$-A\in p$ if and only if $A\in-p$. Now, it is not hard to check that 
$\N^*=\beta\N\setminus\N$ is a left ideal of the semigroup $(\beta\Z,+)$, and so, if $p\in\N^*$, then $-p+p\in\N^*$.\\

Let $X$ be a topological space and let $p\in\beta\N$ be an ultrafilter. Given a sequence $(x_k)_{k\in\N}$, we will write
$$\plimgG{p}{n}{\N}x_n=x\index{$\plimgG{p}{n}{\N}x_n$}$$
if for any neighborhood $U$ of $x$
$$\{n\in\N\,|\,x_n\in U\}\in p.$$
It is easy to see that $\plimgG{p}{n}{\N} x_n$ exists and is unique in any compact Hausdorff space.\\ 
The proof of the following useful lemma is similar to the proof of Theorem 3.8 in \cite{ERTaU}. 
\begin{lem}\label{1.LemaDeltaEquality}
 Let $X$ be a compact Hausdorff space, let $p\in \beta \N$ be a non-principal ultrafilter and let $(x_k)_{k\in \N}$ be a sequence in $X$. Then 
\begin{equation}\label{1.EquationDeltaEquality}
\plimgG{(-p+p)}{n}{\N} x_n=\plimgG{p}{m}{\N} \plimgG{p}{n}{\N} x_{n-m}.
\end{equation}
\end{lem}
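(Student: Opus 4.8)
The plan is to show that the single $(-p+p)$-limit on the left of \eqref{1.EquationDeltaEquality} equals the iterated limit on the right by verifying, directly from the defining formula \eqref{1.DeltaPDef} of $-p+p$, that every open neighborhood of the iterated limit captures a $(-p+p)$-large set of indices; uniqueness of $p$-limits in compact Hausdorff spaces then forces the two quantities to coincide. First I would note that both sides of \eqref{1.EquationDeltaEquality} exist and are unique: the inner limits $y_m=\plimgG{p}{n}{\N}x_{n-m}$ exist for each $m$ (since $p$ is non-principal, only the cofinite tail $\{n\mid n>m\}\in p$ matters, so the terms $x_{n-m}$ with $n-m\le 0$ are irrelevant and may be ignored), the outer limit $x=\plimgG{p}{m}{\N}y_m$ exists, and the left-hand limit exists as well, all by compactness and the Hausdorff property.

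The key step is an \emph{openness} observation. Fix an open neighborhood $U$ of $x=\plimgG{p}{m}{\N}y_m$. By the definition of the $p$-limit, the set $M=\{m\mid y_m\in U\}$ belongs to $p$. For each $m\in M$ the point $y_m$ lies in the open set $U$, so $U$ is a neighborhood of $y_m=\plimgG{p}{n}{\N}x_{n-m}$, and hence $\{n\mid x_{n-m}\in U\}\in p$. Thus I obtain, for every $m\in M$, that $\{n\mid x_{n-m}\in U\}\in p$.

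It remains to translate this into membership in $-p+p$. Writing $A=\{k\mid x_k\in U\}$ and unwinding the shift convention $-n+A=\{k\mid n+k\in A\}$ (so that $n+A=\{k\mid x_{k-n}\in U\}$), the defining condition $A\in -p+p$ from \eqref{1.DeltaPDef} reads $\{n\mid\{k\mid x_{k-n}\in U\}\in p\}\in p$. After renaming the outer variable $m\mapsto n$ and the inner variable $n\mapsto k$ in the fact derived above, this is exactly the assertion that the set in question contains $M\in p$, whence it lies in $p$ and $A\in -p+p$. Since every neighborhood contains an open one and ultrafilters are upward closed, this shows $\plimgG{(-p+p)}{k}{\N}x_k=x$, which by uniqueness of $p$-limits is precisely \eqref{1.EquationDeltaEquality}. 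The main obstacle I anticipate is purely bookkeeping: aligning the sign and shift convention in \eqref{1.DeltaPDef} with the argument $n-m$ appearing in the iterated limit, so that the two index sets coincide rather than being off by a reflection. Getting this matching right is the crux, whereas the topological input (the openness observation together with uniqueness) is routine.
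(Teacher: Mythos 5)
Your proposal is correct and takes essentially the same route as the paper's proof: both arguments rest on the identification $m+A_U=\{n\in\N\,|\,n>m,\ x_{n-m}\in U\}$ (your $n+A$, the paper's $B_U(m)$) combined with the defining formula \eqref{1.DeltaPDef} of $-p+p$. You merely spell out the topological bookkeeping (existence and uniqueness of the limits, and the passage from open neighborhoods of the iterated limit to membership in $-p+p$) that the paper's terse ``Hence'' leaves implicit.
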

\begin{proof}
For a non-empty open set $U\subseteq X$, let
$$A_U=\{n\in \N\,|\,x_n\in U\}.$$
For any $m\in \N$, let 
$$B_U(m)=\{n\in \N\,|\,n>m\text{ and }x_{n-m}\in U\}.$$
Note that for each $m\in \N$, 
$$B_U(m)=(m+A_U).$$
So, by \eqref{1.DeltaPDef}, $A_U\in  -p+p$ if and only if  $\{m\in \N\,|\, B_U(m)\in p\}\in p$. Hence,
$$\plimgG{(-p+p)}{n}{\N} x_n=\plimgG{p}{m}{\N} \plimgG{p}{n}{\N} x_{n-m}.$$
\end{proof}
In what follows we will need an extension of \cref{1.LemaDeltaEquality} for "iterated differences" of ultrafilters which are defined for any $\ell\in\N$ and any non-principal ultrafilter $p\in\beta\N$ by  
$$p_\ell=-p_{\ell-1}+p_{\ell-1},\index{$p_\ell$}$$
where, by convention, $p_0=p$. (Note that for any $\ell\in\N$ and any $t\leq \ell$, $p_\ell=(p_{\ell-t})_{t}$
).
Before formulating this extension, let us recall the recursive definition of the map  $\partial:\bigcup_{\ell\in\N} \Z^{2^\ell}\rightarrow \Z$ which was introduced in the Introduction:
\begin{enumerate}
    \item For $(n_1,n_2)\in\Z^2$, $\partial(n_1,n_2)=n_2-n_1$.
    \item For each $\ell>1$ and any $(n_1,...,n_{2^\ell})\in\Z^{2^\ell}$, $$\partial(n_1,...,n_{2^\ell})=\partial(n_{2^{\ell-1}+1},...,n_{2^\ell})-\partial(n_1,...,n_{2^{\ell-1}}).$$
\end{enumerate}
For instance $\partial(1,2)=2-1=1$, $\partial(5,3)=3-5=-2$ and $\partial(1,2,5,3)=\partial(5,3)-\partial(1,2)=-3$.\\
By induction on $\ell\geq 2$, one can show that for any $n_1,...,n_{2^\ell}\in\Z$,
\begin{equation}\label{1.TechnicalIdentity}
\partial(n_1,...,n_{2^\ell})=\partial(\partial(n_1,n_2),...,\partial(n_{2^\ell-1},n_{2^\ell})).
\end{equation}
To verify \eqref{1.TechnicalIdentity}, one just needs to note that for any $\ell\geq 3$,
\begin{multline*}
\partial(\partial(n_1,n_2),...,\partial(n_{2^\ell-1},n_{2^\ell}))\\
=\partial(\partial(n_{2^{\ell-1}+1},n_{2^{\ell-1}+2}),...,\partial(n_{2^\ell-1},n_{2^\ell}))-\partial(\partial(n_1,n_2),...,\partial(n_{2^{\ell-1}-1},n_{2^{\ell-1}})).
\end{multline*}
We are in position now to formulate the desired extension of \cref{1.LemaDeltaEquality}, the proof of which can be done by routine  induction with the help of \eqref{1.TechnicalIdentity}.
\begin{lem}\label{1.LemaIteratedDifLimit}
Let $X$ be a compact Hausdorff space, let  $p\in \beta \N$ be a non-principal ultrafilter and let $(x_k)_{k\in \N}$ be a sequence in $X$. Then for each $\ell\in\N$,
$$\plimgG{p_\ell}{m}{\N}x_m=\plimgG{p}{m_1}{\N}\cdots\plimgG{p}{m_{2^\ell}}{\N}x_{\partial(m_1,...,m_{2^\ell})}.$$
\end{lem}
\begin{proof}
\begin{multline*}
\plimgG{p_\ell}{m}{\N}x_m=\plimgG{p_{\ell-1}}{m_1}{\N}\plimgG{p_{\ell-1}}{m_2}{\N}x_{\partial(m_1,m_2)}\\
=\plimgG{p_{\ell-2}}{m_1}{\N}\plimgG{p_{\ell-2}}{m_2}{\N}\plimgG{p_{\ell-2}}{m_3}{\N}\plimgG{p_{\ell-2}}{m_4}{\N}x_{\partial(\partial(m_1,m_2),\partial(m_3,m_4))}\\
=\plimgG{p_{\ell-2}}{m_1}{\N}\plimgG{p_{\ell-2}}{m_2}{\N}\plimgG{p_{\ell-2}}{m_3}{\N}\plimgG{p_{\ell-2}}{m_4}{\N}x_{\partial(m_1,m_2,m_3,m_4)}\\
=\cdots=\plimgG{p}{m_1}{\N}\cdots\plimgG{p}{m_{2^\ell}}{\N}x_{\partial(m_1,...,m_{2^\ell})}.
\end{multline*}
\end{proof}

Now we turn our attention to $\Delta_\ell$ sets, $\ell\in\N$. When $\ell=1$, $E\subseteq \N$ is a $\Delta_1$ set (or $\Delta$\index{$\Delta$} set for simplicity) if  there exists an increasing sequence $(n_k)_{k\in\N}$ in $\N$ with the property that
$$\{n_j-n_i\,|\,i<j\}\subseteq E.$$
The following result, which establishes the connection between ultrafilters of the form $-p+p$ and $\Delta$ sets, is a version of Lemma 3.12 in \cite{BerHindQuotientSets}.
\begin{prop}\label{1.DeltaIfandonlyIfDifferenceUltra}
Let $A\subseteq\N$. There exists a non-principal ultrafilter $p\in\beta\N$ such that $A\in-p+p$ if and only if there exists an increasing sequence $(n_k)_{k\in\N}$ in $\N$ such that 
$$\{n_j-n_i\,|\,i<j\}\subseteq A.$$
\end{prop}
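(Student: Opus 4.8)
The plan is to prove both implications by unwinding the definition \eqref{1.DeltaPDef} of $-p+p$, according to which $A\in -p+p$ holds precisely when the set
$$D=\{m\in\N\,|\,m+A\in p\}$$
belongs to $p$, where $m+A=\{m+a\,|\,a\in A\}$ denotes the forward translate. Thus the entire argument reduces to clarifying when such a $p$-large set $D$ can exist, and the point is that the collection of finite differences of an increasing sequence is exactly what governs this. No appeal to \cref{1.LemaDeltaEquality} or \cref{1.LemaIteratedDifLimit} is needed, as this is the base case $\ell=1$.

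For the implication $(\Leftarrow)$, I would begin with an increasing sequence $(n_k)_{k\in\N}$ satisfying $\{n_j-n_i\,|\,i<j\}\subseteq A$ and take $p$ to be \emph{any} non-principal ultrafilter containing the range $B=\{n_k\,|\,k\in\N\}$; such $p$ exists because $B$ is infinite. The key observation is that for each fixed $i$ the translate $n_i+A$ contains $\{n_i+(n_j-n_i)\,|\,j>i\}=\{n_j\,|\,j>i\}$, which is a cofinite subset of $B$ and therefore lies in $p$, since non-principal ultrafilters contain every cofinite set. Hence $n_i+A\in p$ for every $i$, i.e. $B\subseteq D$, so $D\in p$ and $A\in -p+p$.

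For the converse $(\Rightarrow)$, I would build the sequence by induction while maintaining the invariant that every chosen term lies in $D$. From $A\in -p+p$ we have $D\in p$, so I pick $n_1\in D$ arbitrarily. Having selected $n_1<\cdots<n_k$ all belonging to $D$, I consider the set
$$D\cap\bigcap_{i=1}^{k}(n_i+A)\cap\{m\in\N\,|\,m>n_k\},$$
which is a finite intersection of members of $p$: indeed $D\in p$, each $n_i+A\in p$ precisely because $n_i\in D$, and the tail $\{m\,|\,m>n_k\}\in p$ by non-principality. Consequently this set lies in $p$ and in particular is non-empty, so I may choose $n_{k+1}$ from it; then $n_{k+1}>n_k$, the membership $n_{k+1}\in D$ keeps the induction alive, and $n_{k+1}\in n_i+A$ forces $n_{k+1}-n_i\in A$ for all $i\le k$. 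The step I would watch most carefully is exactly this one: besides arranging $n_{k+1}-n_i\in A$, it is essential that the new point again lie in $D$, for it is this extra membership that preserves $n_i+A\in p$ at every stage and lets the finite-intersection argument be repeated indefinitely. Iterating produces the desired sequence.
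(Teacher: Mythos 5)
Your proof is correct: both directions are sound, the use of forward translates agrees with the paper's definition \eqref{1.DeltaPDef}, and the key invariant in the forward direction (keeping every chosen $n_k$ inside $D=\{m\,|\,m+A\in p\}$ so that the sets $n_i+A$ remain available as members of $p$) is exactly what makes the inductive selection go through. However, your route differs from the paper's. The paper never proves this proposition directly -- it cites it as a version of Lemma 3.12 of \cite{BerHindQuotientSets} -- and the statement is instead subsumed by \cref{1.DeltaCharacterization} (the case $d=1$, $A_0=\N$, $A_1=A$), whose proof runs through topological dynamics: one takes $p_\ell$-limits of shifts of the indicator function $\mathbbm 1_{A_\ell}$ on $X=\{0,1\}^{\N\cup\{0\}}$, converts them into iterated $p$-limits via \cref{1.LemaIteratedDifLimit}, and only then performs an inductive selection from the resulting nested families of $p$-large sets; the converse direction (iii)$\implies$(i) there is essentially identical to your $(\Leftarrow)$ argument (finiteness of $I_\ell\setminus(n+I_{\ell+1})$ playing the role of your cofinite tails). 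What your approach buys is brevity and self-containment: for $\ell=1$ no dynamical formalism or $p$-limit bookkeeping is needed, and the argument is a clean Galvin-style selection. What the paper's heavier machinery buys is uniformity: the same proof handles all iterated difference sets $D_\ell$ and simultaneous membership $A_\ell\in p_\ell$ for $\ell=0,\dots,d$, which is what later results such as \cref{1.WorkingDfn} and \cref{1.DeltaL*characterization} actually require, and which a naive iteration of your hands-on argument would not deliver without essentially reconstructing that bookkeeping.
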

Given $\ell\in\N$, a set $E\subseteq \N$ is a \textbf{$\Delta_\ell$\index{$\Delta_\ell$} set} if it contains the $\ell$-th differences set generated by an increasing sequence $(n_k)_{k\in\N}$ in $\N$. Given a sequence $(n_k)_{k\in\N}$ in $\Z$, the \textbf{$\ell$-th differences set generated by $(n_k)_{k\in\N}$} is the set defined by
\begin{equation}\label{1.DeltaSetDef}
D_\ell((n_k)_{k\in\N})=\{\partial(n_{j_1},...,n_{j_{2^\ell}})\,|\,j_1<\cdots<j_{2^\ell}\}.\index{$D_\ell((n_k)_{k\in\N})$}
\end{equation}
The following theorem forms a natural extension of \cref{1.DeltaIfandonlyIfDifferenceUltra} to $\Delta_\ell$ sets.
\begin{thm}\label{1.DeltaCharacterization}
Let $d\in\N$ and let $A_0,...,A_d\subseteq \N$. The following are equivalent:
\begin{enumerate}[(i)]
\item There exists a non-principal ultrafilter $p\in \beta \N$ such that for each $\ell\in \{0,...,d\}$, $A_{\ell}\in p_\ell$.
\item There is an increasing sequence $(n_k)_{k\in\N}$ in $\N$ such that for each $\ell\in\{0,...,d\}$, 
$$D_\ell((n_k)_{k\in\N})\subseteq A_\ell,$$
where by convention $D_0((n_k)_{k\in\N})=\{n_k\,|\,k\in\N\}$.
\item There exist infinite sets $I_0,...,I_d\subseteq \N$ such that for each $\ell\in\{0,...,d\}$, $I_{\ell}\subseteq A_\ell$ and if $\ell<d$, we have that for all $n\in I_\ell$, the cardinality of $$I_{\ell}\setminus(n+ I_{\ell+1})$$
is finite\footnote{Cf. Definition 1.5 in \cite{BerHindAbundant}.
}.
\end{enumerate}
\end{thm}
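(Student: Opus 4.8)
The plan is to prove the three conditions equivalent in the cycle (i)$\Rightarrow$(ii)$\Rightarrow$(iii)$\Rightarrow$(i), with \cref{1.LemaIteratedDifLimit} as the main tool. The logically cleanest step is (iii)$\Rightarrow$(i), so I would set it up first as an anchor; the genuinely technical work lives in the other two arrows. Throughout I use that each $p_\ell\in\N^*$ is non-principal, as recalled just before \cref{1.LemaIteratedDifLimit}.

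For (iii)$\Rightarrow$(i) I would pick \emph{any} non-principal ultrafilter $p$ with $I_0\in p$ (possible since $I_0$ is infinite) and show by induction on $\ell$ that $I_\ell\in p_\ell$; as $I_\ell\subseteq A_\ell$ and $p_\ell$ is a filter, this gives $A_\ell\in p_\ell$, i.e. (i). The inductive step is where the shift hypothesis does everything. Fix $n\in I_\ell$; by hypothesis $I_\ell\setminus(n+I_{\ell+1})$ is finite, so $n+I_{\ell+1}\supseteq I_\ell\setminus F$ for some finite $F$, and since $p_\ell$ is non-principal we have $I_\ell\setminus F\in p_\ell$, whence $n+I_{\ell+1}\in p_\ell$. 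Thus $I_\ell\subseteq\{n : (n+I_{\ell+1})\in p_\ell\}$, so the latter set is in $p_\ell$, which by \eqref{1.DeltaPDef} applied to $p_\ell$ is precisely the statement $I_{\ell+1}\in -p_\ell+p_\ell=p_{\ell+1}$.

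For (i)$\Rightarrow$(ii), \cref{1.LemaIteratedDifLimit} rewrites the hypothesis $A_\ell\in p_\ell$ as
\[
\plimgG{p}{m_1}{\N}\cdots\plimgG{p}{m_{2^\ell}}{\N}\chi_{A_\ell}\bigl(\partial(m_1,\dots,m_{2^\ell})\bigr)=1 ,
\]
an iterated $p$-largeness of nested slices. I would build $(n_k)$ one term at a time. Suppose $n_1<\cdots<n_k$ are chosen; for each $\ell\in\{1,\dots,d\}$ and each increasing tuple $J=(j_1<\cdots<j_s)$ from $\{1,\dots,k\}$ with $s<2^\ell$, set
\[
G_\ell(J)=\Bigl\{x\in\N : \plimgG{p}{m_{s+2}}{\N}\cdots\plimgG{p}{m_{2^\ell}}{\N}\chi_{A_\ell}\bigl(\partial(n_{j_1},\dots,n_{j_s},x,m_{s+2},\dots,m_{2^\ell})\bigr)=1\Bigr\}.
\]
The key (routine once unwound) point is that, setting $n_{k+1}=x$, one has $x\in G_\ell(J)$ if and only if $G_\ell$ of the extended tuple $(j_1,\dots,j_s,k+1)$ lies in $p$; hence maintaining the invariant ``$G_\ell(J)\in p$ for all admissible $J$'' requires only choosing $n_{k+1}$ in the finite intersection $\bigcap_{\ell,J}G_\ell(J)\cap A_0\cap(n_k,\infty)$, which is in $p$ (finite intersection of $p$-sets, the tail being cofinite). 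Because the new index $k+1$ is always the largest, it occupies the top slot of every tuple it completes, so each completed length-$2^\ell$ tuple satisfies $\partial(n_{j_1},\dots,n_{j_{2^\ell}})\in A_\ell$; this yields $D_\ell((n_k))\subseteq A_\ell$ for all $\ell\le d$. The base case $G_\ell(\emptyset)\in p$ is exactly the displayed identity, and $n_1$ is chosen in $\bigcap_\ell G_\ell(\emptyset)\cap A_0$.

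Finally, (ii)$\Rightarrow$(iii) is where I expect the real difficulty. Starting from the sequence of (ii) I would first pass to a very rapidly increasing subsequence (harmless, since $D_\ell$ of a subsequence is contained in $D_\ell((n_k))\subseteq A_\ell$) so that each value $\partial(n_{j_1},\dots,n_{j_{2^\ell}})$ essentially determines its index tuple, and then define the $I_\ell$ from the resulting iterated difference sets and check that $I_\ell\setminus(n+I_{\ell+1})$ is finite. Using the block recursion $\partial(j_1,\dots,j_{2^{\ell}},j'_1,\dots,j'_{2^\ell})=\partial(j'_1,\dots)-\partial(j_1,\dots)$ behind \eqref{1.TechnicalIdentity}, fixing $n\in I_\ell$ with top index $N$ one sees that every element of $I_\ell$ whose defining tuple lies entirely above $N$ is automatically of the form $n+(\text{element of }I_{\ell+1})$. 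The hard part, and the step I would weigh most carefully, is guaranteeing that the \emph{remaining} elements (those whose tuples dip below $N$) are only finitely many: for the full, unthinned difference set this fails at the intermediate levels, so both the rapid-growth thinning and a judicious definition of the $I_\ell$ (not merely $I_\ell=D_\ell$) are needed to force the finiteness simultaneously at all levels. Once that is secured the inclusions $I_\ell\subseteq A_\ell$ are immediate and (iii) follows, closing the cycle.
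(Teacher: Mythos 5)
Your proofs of (iii)$\Rightarrow$(i) and (i)$\Rightarrow$(ii) are correct and essentially identical to the paper's: the first coincides with the paper's argument, and your sets $G_\ell(J)$ are exactly the paper's sets $B^\ell_{s+1}(n_{j_1},\dots,n_{j_s})$, the only cosmetic difference being that the paper packages the iterated $p$-limits through the shift acting on $\mathbbm 1_{A_\ell}\in\{0,1\}^{\N\cup\{0\}}$ rather than through $p$-limits of $\chi_{A_\ell}$ directly.

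The genuine gap is (ii)$\Rightarrow$(iii), which you sketch but never actually carry out, and here the situation is worse than you anticipate: no ``judicious'' choice of the $I_\ell$ can exist, because the implication (ii)$\Rightarrow$(iii) is false as stated. Take $d=2$, $n_k=100^k$, and $A_\ell=D_\ell((n_k)_{k\in\N})$ for $\ell=0,1,2$; then (ii) holds trivially. In base $100$, every element of $D_1((n_k)_{k\in\N})$ equals $100^b-100^a=\sum_{i=a}^{b-1}99\cdot 100^i$, so all of its nonzero digits equal $99$, whereas every element of $D_2((n_k)_{k\in\N})$ equals
\[
100^d-100^c-100^b+100^a=\sum_{i=c+1}^{d-1}99\cdot 100^i+98\cdot 100^c+\sum_{i=b}^{c-1}99\cdot 100^i+100^a
\qquad(a<b<c<d),
\]
with empty sums read as zero, and so has a digit equal to $1$ (at position $a$). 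In particular no integer of the form $100^k-100^{k_0}$, $k\neq k_0$, lies in $D_2((n_k)_{k\in\N})$. Now suppose $I_0,I_1,I_2$ satisfied (iii). Write $I_0=\{100^k\,|\,k\in S\}$ with $S$ infinite and put $j_1=\min S$. The level-$0$ condition applied to $100^{j_1}\in I_0$ forces $100^k-100^{j_1}\in I_1$ for all but finitely many $k\in S$; fix one such $k_0$ and set $n=100^{k_0}-100^{j_1}\in I_1$. The level-$1$ condition applied to this $n$ requires all but finitely many of the infinitely many elements $m=100^k-100^{j_1}$ of $I_1$ to satisfy $m-n\in I_2\subseteq D_2((n_k)_{k\in\N})$; but $m-n=100^k-100^{k_0}$ is never in $D_2((n_k)_{k\in\N})$. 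Hence $I_1\setminus(n+I_2)$ is infinite, a contradiction: (iii) fails although (ii) holds. Note that your two proposed remedies point in the wrong direction: rapid growth and ``values determine tuples'' are precisely the features of this example that kill (iii), and the obstruction uses nothing about the $I_\ell$ beyond what conditions (iii) at levels $0$ and $1$ force into them.

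You should also know that the paper's own proof of (ii)$\Rightarrow$(iii) takes $I_\ell=D_\ell((n_k)_{k\in\N})$ and asserts, without justification, exactly the finiteness you flagged as problematic; your diagnosis of that step is correct (it is fine for $\ell=0$ and breaks down at every intermediate level), and the example shows the defect lies in statement (iii) itself, not in anyone's choice of $I_\ell$. What survives is the equivalence (i)$\Leftrightarrow$(ii), which is all the paper uses later (e.g.\ in \cref{1.WorkingDfn} and \cref{1.DeltaL*characterization}); since your only route from (ii) back to (i) passes through (iii), you would need to replace it by a direct argument. One works: if $p$ is any non-principal ultrafilter containing $\{n_k\,|\,k\in\N\}$, then by induction on $\ell$ one shows $D_\ell((n_k)_{k\geq N})\in p_\ell$ for every $N$, using that for $n=\partial(n_{j_1},\dots,n_{j_{2^\ell}})$ one has the formal inclusion $n+D_{\ell+1}((n_k)_{k\geq N})\supseteq D_\ell((n_k)_{k>j_{2^\ell}})$. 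This nested-tails structure is also the natural way condition (iii) would have to be reformulated for the three-way equivalence to become true.
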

\begin{proof}
(i)$\implies$(ii): We will consider the compact Hausdorff space $X=\{0,1\}^{\N\cup\{0\}}$ with the continuous map $\sigma:X\rightarrow X$ given by 
$$(\sigma(x))(n)=x(n+1).$$
Note that for each $\ell\in\{0,...,d\}$ there exists $x_\ell\in X$ such that 
$$\plimgG{ p_\ell}{n}{\N}\sigma^{n} \mathbbm 1_{A_\ell} =x_\ell.$$
So, since for any $n\in A_\ell$,
$$\sigma^n\mathbbm 1_{A_\ell}(0)=\mathbbm 1_{A_\ell}(n)=1$$
and  $A_\ell\in p_\ell$, we have that $x_\ell(0)=1$.\\
Now observe that by \cref{1.LemaIteratedDifLimit} we have 
\begin{equation*}\label{1.Delta()Limit1}
\plimgG{p}{n_1}{\N}\cdots\plimgG{p}{n_{2^\ell}}{\N} \sigma^{\partial(n_1,...,n_{2^\ell})}\mathbbm 1_{A_\ell}=x_\ell,
\end{equation*}
which implies that for any $\ell\in\{0,...,d\}$ there exists a $B_1^\ell\in p$ such that for  any $r\in\{2,...,2^\ell\}$, there exists a set $B_r^\ell(n_1,...,n_{r-1})\in p$ defined recursively for $n_1\in B_1^\ell$, $n_2\in B_2^\ell(n_1)$,..., $n_{r-1}\in B_{r-1}^\ell(n_1,...,n_{r-2})$ with the property that if $n_1\in B_1^\ell$ and for each $r\in\{2,...,2^\ell\}$, $n_r\in B_r^\ell(n_1,...,n_{r-1})$, then
$$\sigma^{\partial(n_1,...,n_{2^\ell})}\mathbbm 1_{A_\ell}(0)=x_{\ell}(0)=1.$$
So, $\partial(n_1,...,n_{2^\ell})\in A_\ell$.\\

With the definition of $B_r^\ell(n_1,...,n_{r-1})$ in mind and adhering to the convention that $\bigcap _{j\in\emptyset}F_j=\N$, we can pick the sequence $(n_k)_{k\in\N}$ inductively as follows:\\
First, pick
\begin{equation}\label{1.ProofLemmaFirstInduction}
n_1\in\bigcap_{\ell=0}^d B_1^\ell\in p.
\end{equation}
For $t\leq 2^d-1$, pick 
\begin{equation}\label{1.ProofLemmaFiniteInduction}
n_{t+1}\in\bigcap_{\ell=0}^d\left[B_1^\ell\cap \bigcap_{s\in\{1,...,\min\{2^\ell,t\}\}\setminus\{2^\ell\}}\bigcap_{1\leq j_1<\cdots<j_s\leq t}B_{s+1}^\ell(n_{j_1},...,n_{j_s})\right]\in p.
\end{equation}
Finally, for $t\geq 2^d$, pick
\begin{equation}\label{1.ProofLemmaUnboundedInduction}
n_{t+1}\in\bigcap_{\ell=0}^d\left[B_1^\ell\cap \bigcap_{s\in\{1,...,2^\ell\}\setminus\{2^\ell\}}\bigcap_{1\leq j_1<\cdots<j_s\leq t}B_{s+1}^\ell(n_{j_1},...,n_{j_s})\right]\in p.
\end{equation}
Since the sets in \eqref{1.ProofLemmaFirstInduction}, \eqref{1.ProofLemmaFiniteInduction} and \eqref{1.ProofLemmaUnboundedInduction} are members of $p$ and $p$ is a non-principal ultrafilter, we can assume without loss of generality, that for each $k\in\N$, $n_{k+1}>n_{k}$; which completes the proof of (i)$\implies$(ii).\\ 

(ii)$\implies$(iii): Let $(n_k)_{k\in\N}$ be an increasing  sequence of natural numbers
such that for each $\ell\in\{0,...,d\}$,
$$D_\ell((n_k)_{k\in\N})\subseteq A_\ell.$$
For each $\ell\in \{0,...,d\}$, let 
\begin{equation}\label{1.defI_l}
    I_{\ell}=D_\ell((n_k)_{k\in\N}).
\end{equation}
Let $\ell\in\{0,...,d-1\}$. It follows from \eqref{1.defI_l} that 
$$I_{\ell+1}=\{\partial(n_{j_{2^\ell+1}},...,n_{j_{2^{\ell+1}}})-\partial(n_{j_1},...,n_{j_{2^\ell}})\,|\,j_1<\cdots<j_{2^{\ell+1}}\},$$
so, for any $n\in I_{\ell}$, $$I_{\ell}\setminus(n+ I_{\ell+1})$$
is a finite set. In particular, if $I_\ell$ is infinite, $I_{\ell+1}$ is infinite as well. So, since $(n_k)_{k\in\N}$ is increasing, we have that for each $\ell\in\{0,...,d\}$, $I_\ell$ is infinite. Hence (ii)$\implies$(iii) follows from the observation that, by \eqref{1.defI_l}, for each $\ell\in\{0,...,d\}$,
$$I_\ell=D_\ell((n_k)_{k\in\N})\subseteq A_\ell.$$
(iii)$\implies$(i): First note that since $I_{\ell}\subseteq A_{\ell}$ for any $\ell\in\{0,...,d\}$, it
suffices to show that there exists a non-principal $p\in \beta \N$ with the property that for each $\ell\in\{0,...,d\}$, $I_\ell\in p_\ell$.\\
There exists a non-principal ultrafilter $p\in\beta \N$ such that
$$I_0\in p_0.$$
We claim that for any $\ell\in \{1,...,d\}$, $I_\ell\in p_\ell$. To see this,  assume that 
$I_{\ell-1}\in p_{\ell-1}$.
Then, by (iii), we have that 
$$\{n\in \N\,|\,n+ I_{\ell}\in p_{\ell-1}\}\in p_{\ell-1}.$$
So, by the definition of $p_{\ell}$, $I_{\ell}\in p_{\ell}$. We are done.
\end{proof}
Recall that, given $\ell\in\N$,  a set $E\subseteq \N$ 
is a $\Delta_\ell^*$\index{$\Delta_\ell^*$} set if for any increasing sequence $(n_k)_{k\in\N}$, there exists $j_1<\cdots<j_{2^\ell}$ for which $\partial(n_{j_1},...,n_{j_{2^\ell}})\in E$. We have the following characterization of $\Delta_\ell^*$ sets.
\begin{prop}\label{1.WorkingDfn}
Given $\ell\in\N$, a set $E\subseteq\N$ is a $\Delta_\ell^*$ set if and only if $E$ has a non-trivial intersection with any $\Delta_\ell$ set.
\end{prop}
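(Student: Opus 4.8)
The plan is to unfold both sides of the claimed equivalence into statements about the existence of a single increasing sequence, and then to invoke \cref{1.DeltaCharacterization} to mediate between them. Recall that by definition a set $E\subseteq\N$ is $\Delta_\ell^*$ precisely when, for \emph{every} increasing sequence $(n_k)_{k\in\N}$ in $\N$, there exist $j_1<\cdots<j_{2^\ell}$ with $\partial(n_{j_1},\dots,n_{j_{2^\ell}})\in E$. On the other hand, a set $A\subseteq\N$ is a $\Delta_\ell$ set precisely when $D_\ell((n_k)_{k\in\N})\subseteq A$ for some increasing sequence $(n_k)_{k\in\N}$. So the proposition asserts that ``$E$ meets the $\ell$-th differences set of every increasing sequence'' is equivalent to ``$E$ meets every $\Delta_\ell$ set.'' I will prove the two directions separately.

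First I would prove the easy direction, that meeting every $\Delta_\ell$ set implies $\Delta_\ell^*$. Given an arbitrary increasing sequence $(n_k)_{k\in\N}$, its differences set $A=D_\ell((n_k)_{k\in\N})$ is by definition a $\Delta_\ell$ set, so by hypothesis $E\cap A\neq\emptyset$; any element of the intersection is of the form $\partial(n_{j_1},\dots,n_{j_{2^\ell}})$ with $j_1<\cdots<j_{2^\ell}$, which is exactly what $\Delta_\ell^*$ demands. For the converse, suppose $E$ is $\Delta_\ell^*$ and let $A$ be any $\Delta_\ell$ set. By the definition of a $\Delta_\ell$ set there is an increasing sequence $(n_k)_{k\in\N}$ with $D_\ell((n_k)_{k\in\N})\subseteq A$. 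Applying the $\Delta_\ell^*$ property to this very sequence produces indices $j_1<\cdots<j_{2^\ell}$ with $\partial(n_{j_1},\dots,n_{j_{2^\ell}})\in E$; but this element lies in $D_\ell((n_k)_{k\in\N})\subseteq A$, so $E\cap A\neq\emptyset$, establishing that $E$ meets every $\Delta_\ell$ set.

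I do not expect a genuine obstacle here: once the definitions of $\Delta_\ell^*$ and of $\Delta_\ell$ are written out side by side, the equivalence is essentially a matter of reading off that ``intersecting the differences set of a sequence'' and ``intersecting a superset of that differences set'' quantify over the same family of sequences. The only point requiring a little care is the logical bookkeeping of the quantifiers---that $\Delta_\ell^*$ quantifies universally over sequences while $\Delta_\ell$ quantifies existentially, and that these match up correctly through the containment $D_\ell((n_k)_{k\in\N})\subseteq A$. In fact, one need not even invoke the full strength of \cref{1.DeltaCharacterization} with its ultrafilter formulation; the argument goes directly through the definition \eqref{1.DeltaSetDef} of $D_\ell$. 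Thus the proposition is best viewed as an immediate corollary recording the duality between the two families, and the proof is short.
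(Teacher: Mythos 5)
Your second direction (that $\Delta_\ell^*$ implies non-trivial intersection with every $\Delta_\ell$ set) is correct, and it coincides with the paper's one-line argument for that implication. The other direction, however, contains a genuine gap, located exactly at the step you dismiss as ``reading off definitions'': you claim that for an arbitrary increasing sequence $(n_k)_{k\in\N}$ the set $A=D_\ell((n_k)_{k\in\N})$ ``is by definition a $\Delta_\ell$ set''. This is false for $\ell\geq 2$. A $\Delta_\ell$ set is, by definition, a subset of $\N$ containing a full differences set, but the iterated differences $\partial(n_{j_1},\dots,n_{j_{2^\ell}})$ of an increasing sequence need not be positive: for $n_k=k$ one has $\partial(1,2,3,4)=(4-3)-(2-1)=0$ and $\partial(1,4,5,6)=(6-5)-(4-1)=-2$, so $D_2((k)_{k\in\N})\not\subseteq\N$, and no subset of $\N$ can contain it; it is not a $\Delta_2$ set. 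Consequently the hypothesis that $E$ meets every $\Delta_\ell$ set gives no information about such a sequence, and your argument produces no indices $j_1<\cdots<j_{2^\ell}$ with $\partial(n_{j_1},\dots,n_{j_{2^\ell}})\in E$ for it. The mismatch --- $\Delta_\ell^*$ quantifies over \emph{all} increasing sequences, while $\Delta_\ell$ sets arise only from those sequences whose differences sets happen to lie in $\N$ --- is precisely the content of the proposition, not a matter of quantifier bookkeeping.

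What is missing is a subsequence-extraction step. The paper's proof takes a non-principal ultrafilter $p$ with $\{n_k\,|\,k\in\N\}\in p$, notes that $\N\in p_\ell$, and invokes \cref{1.DeltaCharacterization} to produce a subsequence $(n_{k_j})_{j\in\N}$ with $D_\ell((n_{k_j})_{j\in\N})\subseteq\N$; this \emph{is} a genuine $\Delta_\ell$ set, so the hypothesis yields a point of $E\cap D_\ell((n_{k_j})_{j\in\N})$, and such a point has the required form $\partial(n_{i_1},\dots,n_{i_{2^\ell}})$ because indices of the subsequence are indices of the original sequence. You are right that the full ultrafilter machinery is not strictly necessary here, but \emph{some} argument is: for instance, since $\partial(m_1,\dots,m_{2^\ell})$ is a $\pm1$-combination of its entries in which the entry of largest index always carries coefficient $+1$, any subsequence in which each term exceeds the sum of all preceding terms has all of its iterated differences positive, and one can then run your argument on that subsequence. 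Without some such step, the proof does not go through.
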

\begin{proof}
If $E$ is a $\Delta_\ell^*$ set, it is clear that it has a  non-trivial intersection 
with every $\Delta_\ell$ set.\\
Now suppose that $E$ has a non-trivial intersection with any $\Delta_\ell$ set. Let $(n_k)_{k\in\N}$ be an increasing sequence in $\N$. By passing to a subsequence, if needed, we can assume that for all $k\in\N$, $n_{k+1}\geq 2n_k$. It follows that 
$$D_\ell((n_{k})_{k\in\N})\subseteq\N.$$
But $D_\ell((n_{k})_{k\in\N})$ is itself a $\Delta_\ell$ set. Thus, since $E\cap D_\ell((n_{k})_{k\in\N})\neq \emptyset$, there exist $j_1<\cdots<j_{2^\ell}$ for which 
$\partial(n_{j_1},...,n_{j_{2^\ell}})\in E$, completing the proof.
\end{proof}
We record for the future use two immediate corollaries of \cref{1.DeltaCharacterization} and  \cref{1.WorkingDfn}:
\begin{cor}\label{1.DeltaL*characterization} Let $E\subseteq \N$ and let $\ell\in\N$. $E$ is a  $\Delta_\ell^*$ set if and only if $E\in p_{\ell}$ for any non-principal ultrafilter $p\in\beta\N$.
\end{cor}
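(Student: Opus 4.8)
The plan is to deduce \cref{1.DeltaL*characterization} directly from two earlier results: the reformulation of $\Delta_\ell^*$ in \cref{1.WorkingDfn} (a set is $\Delta_\ell^*$ exactly when it meets every $\Delta_\ell$ set) and the equivalence (i)$\Leftrightarrow$(ii) in \cref{1.DeltaCharacterization}. The one structural fact I would invoke repeatedly is that, by construction, $p_\ell=-p_{\ell-1}+p_{\ell-1}$ is again a non-principal ultrafilter for every $\ell$; in particular $p_\ell$ is a filter and $\emptyset\notin p_\ell$, so ``$E\in p_\ell$'' is equivalent to ``$\N\setminus E\notin p_\ell$''. This dichotomy is precisely what lets me translate each direction of the claim into a statement about $\Delta_\ell$ sets.

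For the forward implication I would argue by contraposition. Assume $E$ is not $\Delta_\ell^*$. By \cref{1.WorkingDfn} there is a $\Delta_\ell$ set disjoint from $E$, i.e.\ an increasing sequence $(n_k)_{k\in\N}$ with $D_\ell((n_k)_{k\in\N})\subseteq\N\setminus E$. Now apply \cref{1.DeltaCharacterization} with $d=\ell$, taking $A_\ell=\N\setminus E$ and $A_j=\N$ for $0\le j<\ell$; condition (ii) holds since $D_j((n_k)_{k\in\N})\subseteq\N=A_j$ for $j<\ell$ and $D_\ell((n_k)_{k\in\N})\subseteq A_\ell$. The implication (ii)$\Rightarrow$(i) then yields a non-principal ultrafilter $p$ with $\N\setminus E\in p_\ell$, hence $E\notin p_\ell$. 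Thus ``$E\in p_\ell$ for every non-principal $p$'' fails, as required.

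For the converse, suppose $E\in p_\ell$ for every non-principal ultrafilter $p$, and let $F$ be an arbitrary $\Delta_\ell$ set, say $D_\ell((n_k)_{k\in\N})\subseteq F$ for some increasing sequence $(n_k)_{k\in\N}$. I would set $A_j=D_j((n_k)_{k\in\N})$ for $0\le j\le\ell$ (with $A_0=\{n_k\,|\,k\in\N\}$), so that condition (ii) of \cref{1.DeltaCharacterization} holds trivially. The implication (ii)$\Rightarrow$(i) then furnishes a non-principal $p$ with $D_\ell((n_k)_{k\in\N})=A_\ell\in p_\ell$. By hypothesis $E\in p_\ell$ as well, and since $p_\ell$ is a filter, $E\cap D_\ell((n_k)_{k\in\N})\in p_\ell$, so $E\cap F\neq\emptyset$. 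As $F$ was an arbitrary $\Delta_\ell$ set, \cref{1.WorkingDfn} shows $E$ is $\Delta_\ell^*$.

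I do not expect a genuine obstacle here, since all the combinatorial content is already packaged in \cref{1.DeltaCharacterization} and \cref{1.WorkingDfn}. The only point demanding care is the bookkeeping when invoking \cref{1.DeltaCharacterization}: one must supply the correct auxiliary data $A_0,\dots,A_{\ell-1}$ — padding with $\N$ in the forward direction and with the intermediate difference sets $D_j((n_k)_{k\in\N})$ in the converse — so that the hypotheses are met and the conclusion concerns $p_\ell$ as opposed to some $p_j$ with $j<\ell$.
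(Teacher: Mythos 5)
Your overall strategy (deduce the corollary from \cref{1.WorkingDfn} plus \cref{1.DeltaCharacterization}) is the intended one, but both of your applications of the implication (ii)$\implies$(i) of \cref{1.DeltaCharacterization} rest on a step that is unjustified and in fact false in general. That theorem is stated for sets $A_0,\dots,A_d\subseteq\N$, so to verify its condition (ii) you must exhibit a single increasing sequence \emph{all} of whose difference sets $D_j((n_k)_{k\in\N})$, $0\le j\le\ell$, land inside subsets of $\N$ --- not just the top one $D_\ell$. For $j=0,1$ this is automatic for an increasing sequence (which is why your argument is sound for $\ell\le 2$), but for $2\le j<\ell$ it can fail. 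Concretely, take $n=(1,3,4,5,10,20,40,80,\dots)$, i.e.\ gaps $g_1=2$, $g_2=g_3=1$, and $g_m=1+\sum_{i<m}g_i$ for $m\ge 4$. For any $j_1<\cdots<j_8$ one has
\begin{equation*}
\partial(n_{j_1},\ldots,n_{j_8})=\sum_{i=j_7}^{j_8-1}g_i-\sum_{i=j_5}^{j_6-1}g_i-\sum_{i=j_3}^{j_4-1}g_i+\sum_{i=j_1}^{j_2-1}g_i\;\ge\; g_{j_7}-\sum_{i=1}^{j_7-2}g_i+1\;\ge\; g_{j_7-1}+2\;\ge\;3
\end{equation*}
(the first sum is at least $g_{j_7}$, the last is at least $1$, the two subtracted blocks of indices are disjoint subsets of $\{1,\dots,j_7-2\}$, and $g_{j_7}\ge 1+\sum_{i=1}^{j_7-1}g_i$ since $j_7\ge 7$). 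Hence $D_3((n_k)_{k\in\N})\subseteq\N$, so it is a legitimate $\Delta_3$ set; yet $\partial(n_1,n_2,n_3,n_4)=(5-4)-(3-1)=-1$, so $D_2((n_k)_{k\in\N})\not\subseteq\N$. For $\ell=3$ and this witnessing sequence, your forward direction asserts ``$D_j((n_k)_{k\in\N})\subseteq\N=A_j$ for $j<\ell$'', which is false, and in your converse direction the sets $A_j=D_j((n_k)_{k\in\N})$ are not all subsets of $\N$; in both cases \cref{1.DeltaCharacterization} cannot be invoked as you do.

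The gap is repairable, but the repair is precisely the missing idea, and it comes from the \emph{other} implication (i)$\implies$(ii): given any increasing sequence $(n_k)_{k\in\N}$, pick a non-principal $p$ with $\{n_k\,|\,k\in\N\}\in p$ and apply (i)$\implies$(ii) with $A_0=\{n_k\,|\,k\in\N\}$ and $A_j=\N$ for $1\le j\le\ell$; this yields a subsequence all of whose difference sets of all orders lie in $\N$ (this is exactly the device the paper uses in its proof of \cref{1.WorkingDfn}). Since difference sets of a subsequence are contained in those of the original sequence, pre-processing your sequences this way legitimizes both of your applications of (ii)$\implies$(i). Alternatively --- and this is the shorter route the paper presumably regards as ``immediate'' --- you can dispense with (ii)$\implies$(i) entirely: if $E$ is $\Delta_\ell^*$ but $E\notin p_\ell$ for some non-principal $p$, then $\N\setminus E\in p_\ell$, and (i)$\implies$(ii) with $A_j=\N$ ($j<\ell$), $A_\ell=\N\setminus E$ produces an increasing sequence with $D_\ell\subseteq\N\setminus E$, contradicting the definition of $\Delta_\ell^*$; conversely, if $E\in p_\ell$ for every non-principal $p$ and $(n_k)_{k\in\N}$ is any increasing sequence, apply (i)$\implies$(ii) with $A_0=\{n_k\,|\,k\in\N\}$, $A_j=\N$ ($0<j<\ell$), $A_\ell=E$ to obtain a subsequence whose $\ell$-th iterated differences all lie in $E$.
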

\begin{cor}
For any $N\in\N$ and any $\Delta_\ell^*$ sets $E_1,...,E_N$, the set $E_1\cap\cdots\cap E_N$ is also  $\Delta_\ell^*$.
\end{cor}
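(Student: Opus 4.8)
The plan is to reduce the statement entirely to the ultrafilter characterization of $\Delta_\ell^*$ sets recorded in \cref{1.DeltaL*characterization}, which asserts that a set $E\subseteq\N$ is $\Delta_\ell^*$ if and only if $E\in p_\ell$ for every non-principal ultrafilter $p\in\beta\N$. The essential observation is that, for each fixed non-principal $p$, the iterated difference $p_\ell$ is itself an ultrafilter, and every ultrafilter is in particular a filter, hence closed under finite intersections by property (ii) of the definition of a filter.

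Concretely, I would fix $\Delta_\ell^*$ sets $E_1,\dots,E_N$ together with an arbitrary non-principal ultrafilter $p\in\beta\N$. By \cref{1.DeltaL*characterization}, each $E_j$ belongs to $p_\ell$. Since $p\in\N^*=\beta\N\setminus\N$ and $\N^*$ is a left ideal of $(\beta\Z,+)$, the recursion $p_\ell=-p_{\ell-1}+p_{\ell-1}$ keeps us inside $\N^*$ at every stage, so $p_\ell$ is again a non-principal ultrafilter. Being a filter, $p_\ell$ contains $\bigcap_{j=1}^N E_j$, as a finite intersection of members of a filter is again a member.

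Finally, since the non-principal ultrafilter $p$ was arbitrary, I would conclude that $\bigcap_{j=1}^N E_j\in p_\ell$ for every such $p$, and a second application of \cref{1.DeltaL*characterization} yields that $\bigcap_{j=1}^N E_j$ is $\Delta_\ell^*$, as desired. Note that no induction on $N$ is needed: once one passes to ultrafilter membership, the filter property handles all finite intersections simultaneously.

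The proof is short precisely because all the combinatorial difficulty has already been absorbed into \cref{1.DeltaCharacterization} and its corollary \cref{1.DeltaL*characterization}. The only point requiring a moment of care is the justification that $p_\ell$ is genuinely an ultrafilter, which rests on the left-ideal property of $\N^*$ ensuring that the iterated differences never degenerate into a principal ultrafilter; beyond this bookkeeping there is no real obstacle.
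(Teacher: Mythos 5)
Your proof is correct and follows essentially the route the paper intends: the paper records this statement as an immediate corollary of \cref{1.DeltaCharacterization} and \cref{1.WorkingDfn}, and the natural derivation is exactly yours, namely applying \cref{1.DeltaL*characterization} in both directions together with the fact that each $p_\ell$ is an ultrafilter (hence a filter, closed under finite intersections). Your care about $p_\ell$ being a non-principal ultrafilter is already settled in the paper's discussion of $-p+p$ and the left-ideal property of $\N^*$, so nothing is missing.
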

We conclude this section by noting that if $D$ is a  $\Delta_\ell^*$  set, then it is syndetic (i.e. there exist $n_1,...,n_N\in\N$ such that $\N\subseteq \bigcup_{j=1}^ND-n_j$).
\begin{lem}\label{1.Delta*IsSyndetic}
For any $\ell\in\N$, any $\Delta_\ell^*$ set is syndetic.
\end{lem}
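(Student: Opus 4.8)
The plan is to prove the contrapositive: if $D\subseteq\N$ is not syndetic, then $D$ is not $\Delta_\ell^*$. Recall that $D$ is syndetic precisely when its gaps are bounded, so a non-syndetic $D$ has the property that $\N\setminus D$ contains arbitrarily long intervals; moreover these intervals may be taken arbitrarily far out, since any sufficiently long interval contains a subinterval of prescribed length lying beyond any prescribed bound. By the definition of $\Delta_\ell^*$ (recalled just before \cref{1.WorkingDfn}), it therefore suffices to produce a single increasing sequence $(n_k)_{k\in\N}$ in $\N$ such that $\partial(n_{j_1},\dots,n_{j_{2^\ell}})\notin D$ for every choice $j_1<\cdots<j_{2^\ell}$.

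The construction rests on one structural remark about $\partial$. Unwinding the recursion, $\partial(m_1,\dots,m_{2^\ell})=\sum_{i=1}^{2^\ell}\epsilon_i m_i$ is a signed sum with each $\epsilon_i\in\{-1,+1\}$, and an easy induction on $\ell$ (using that the second block is the minuend in clause (2) of the definition of $\partial$, together with the base case $\partial(m_1,m_2)=m_2-m_1$) shows that the coefficient of the last argument is $\epsilon_{2^\ell}=+1$. Consequently, whenever $j_1<\cdots<j_{2^\ell}$ with $j_{2^\ell}=k+1$, the value $\partial(n_{j_1},\dots,n_{j_{2^\ell}})$ equals $n_{k+1}+c$, where $c$ is a signed sum of $2^\ell-1$ of the earlier terms. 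Thus, once $n_1<\cdots<n_k$ have been chosen, every iterated difference that involves the new term $n_{k+1}$ has the form $n_{k+1}+c$ with $c$ ranging over a fixed finite set $S_k\subseteq\Z$.

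I would then build $(n_k)_{k\in\N}$ greedily. Having chosen $n_1<\cdots<n_k$, I pick an interval $[a,a+M]\subseteq\N\setminus D$ with $M\geq \max S_k-\min S_k$ and with $a$ large. Since $M$ dominates the diameter of $S_k$, there is an integer $n_{k+1}$ with $n_{k+1}+c\in[a,a+M]$ for every $c\in S_k$ simultaneously, and taking $a$ large enough guarantees $n_{k+1}>n_k$. By construction every iterated difference involving $n_{k+1}$ lands in $[a,a+M]\subseteq\N\setminus D$, while every iterated difference not involving $n_{k+1}$ was already placed in $\N\setminus D$ at an earlier stage; non-positive iterated differences are irrelevant because $D\subseteq\N$. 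Hence no $\partial(n_{j_1},\dots,n_{j_{2^\ell}})$ lies in $D$, so $D$ is not $\Delta_\ell^*$, completing the contrapositive.

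Conceptually this is the assertion that every thick set (here $\N\setminus D$) contains a $\Delta_\ell$ set, combined with the duality between the syndetic and thick families. The point demanding the most care, and the crux of the argument, is the simultaneous placement step: a single interval of $\N\setminus D$ must absorb all the finitely many new differences $n_{k+1}+c$ at once, which is exactly why the interval length $M$ must dominate the diameter of $S_k$ and why thickness (arbitrarily long intervals), rather than mere infinitude of $\N\setminus D$, is what is needed. Verifying $\epsilon_{2^\ell}=+1$ and that the $n_{k+1}$ can be kept strictly increasing are the remaining routine checks.
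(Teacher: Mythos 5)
Your proof is correct and takes essentially the same route as the paper: both argue the contrapositive by exploiting the fact that $\N\setminus D$ contains arbitrarily long intervals and that every iterated difference whose largest index is new equals the newest term plus a signed sum of earlier terms (coefficient $+1$ on the last argument), so all new differences can be trapped inside a single long gap. The only cosmetic difference is that the paper takes the sequence to be the right endpoints $R_{k+1}$ of the gaps themselves, imposing the sparsity condition $\sum_{s=1}^k R_s + L_{k+1} < R_{k+1}$ to force $\partial(R_{j_1},\dots,R_{j_{2^\ell-1}},R_{k+1})$ into $(L_{k+1},R_{k+1})$, whereas you place each $n_{k+1}$ greedily so that the translate $n_{k+1}+S_k$ lands in a chosen gap.
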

\begin{proof}
Let $\ell\in\N$. We will show that if $D\subseteq \N$ is not syndetic, then it is not $\Delta_\ell^*$. If $D$ is not syndetic, then there exist increasing sequences of natural numbers $(L_k)_{k\in\N}$ and $(R_k)_{k\in\N}$ such that (1) for each $k\in\N$, $L_k<R_k<L_{k+1}$; (2) $\lim_{k\rightarrow\infty}R_k-L_k=\infty$ and (3) $D\cap \bigcup_{k\in\N}[L_k,R_k]=\emptyset$. Without loss of generality we can assume that for any $k\in\N$ 
$$\sum_{s=1}^kR_s+L_{k+1}<R_{k+1}.$$
So, for any $k\geq 2^\ell-1$ and any $1\leq j_1<\cdots<j_{2^\ell-1}\leq k$,
\begin{multline*}
L_{k+1}<R_{k+1}-\sum_{s=1}^kR_s\leq\partial(R_{j_1},...,R_{j_{2^{\ell}-1}},R_{k+1})\\
\leq R_{k+1}-R_{j_{2^\ell-1}}+\sum_{s=1}^{j_{2^\ell-1}-1}R_s<R_{k+1}-L_{j_{2^\ell-1}}<R_{k+1}.
\end{multline*}
This  shows that $$D_\ell((R_k)_{k\in\N})\subseteq\bigcup_{k\in\N}[L_k,R_k].$$
Hence, $D$ is not a $\Delta_\ell^*$ set.  
\end{proof}
%%%%%%%%%%%%%%%%%%%%%%%%%%%%%%%%%%%%%%%%%%%%%%%%%%%%%%%%%%%%%%%%%%%%%%%%%%%%%%%%%%%%%%%%%%%%%%%%%%%%%%%%%%%%%%%%%%%%%%%%%%%%%%%%%%%%%%%%%%%%%%%%%%%%%%%%%%%%%%%%%%%%%%%%%%%%%%%%%%%%%%%%%%%%%%%%%%%%%%%%%%%%%%%%%%%%%%%%%%%%%%%%%%%%%%%%%%%%%%%%%%%%%%%%%%%%%%%%%%%%%%%%%%%%%%%%%%%%%%%%%%%%%%%%%%%%%%%%%%%%%%%%%%%%%%%%%%%%%%%%%%%%%%%%%%%%%%%%%%%%%%%%%%%%%%%%%%%%%%%%%%%%%%%%%%%%%%%%%%%%%%%%%%55      Section 3
%%%%%%%%%%%%%%%%%%%%%%%%%%%%%%%%%%%%%%%%%%%%%%%%%%%%%%%%%%%%%%%%%%%%%%%%%%%%%%%%%%%%%%%%5

\section{$\Delta_\ell^*$ sets and diophantine inequalities}
As was mentioned in the introduction, we have two approaches to proving \cref{0.OddDegreeRecurrence}: an ultrafilter approach which is, so to say, soft and clean, and an elementary  approach which is based on Ramsey's theorem and which, while being more cumbersome, allows to obtain somewhat stronger finitistic results. The first approach is implemented in Subsection \ref{SubsectionUltrafilterApproach}, the second --- in Subsection \ref{SubsectionFinitisticProof}. 
\subsection{The ultrafilter approach}\label{SubsectionUltrafilterApproach}
In order to establish the $\Delta_\ell^*$ property of the set 
$$\mathcal R(v,\epsilon)=\{n\in\N\,|\,\|v(n)\|<\epsilon\}$$
we will find it convenient to work with the torus $\mathbb T=\R/\Z$ (which is identified with the unit interval $[0,1]$ with the endpoints glued up). In what follows,  while considering limits of the form $\plimgG{p}{n}{\N} v(n)$, we will think of the sequence $(v(n))_{n\in\N}$  as corresponding to the sequence $(v(n)\text{ mod 1})_{n\in\N}\in \mathbb T$. In particular
$$\plimgG{p}{n}{\N} v(n)=\alpha\text{ if and only if }\plimgG{p}{n}{\N}\|v(n)-\alpha\|=0,$$
where $\|\cdot\|$ denotes the distance to the closest integer.\\

We start with proving \cref{0.CubicCase} from the introduction (which in this section becomes \cref{2.CubicCase}). While this result forms a special case of \cref{0.OddDegreeRecurrence} (=\cref{2.OddDegreeRecurrence} below), we believe that its short proof will help the reader to better understand the underlying ideas. 
\begin{prop}\label{2.CubicCase}
For any real number  $\alpha$ and any $\epsilon>0$, the set 
$$\mathcal R(n^3\alpha,\epsilon)=\{n\in\N\,|\,\|n^3\alpha\|<\epsilon\}$$
is $\Delta_2^*$.
\end{prop}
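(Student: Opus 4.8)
The plan is to use the ultrafilter characterization of $\Delta_2^*$ sets furnished by \cref{1.DeltaL*characterization}: a set $E\subseteq\N$ is $\Delta_2^*$ if and only if $E\in p_2$ for every non-principal ultrafilter $p\in\beta\N$. Thus, fixing an arbitrary non-principal ultrafilter $p\in\beta\N$, it suffices to show that
$$\mathcal R(n^3\alpha,\epsilon)=\{n\in\N\,|\,\|n^3\alpha\|<\epsilon\}\in p_2,$$
where $p_2=-p_1+p_1$ and $p_1=-p+p$. Recall from the preamble that $\|v(n)\|$ is to be read on the torus $\mathbb T=\R/\Z$, so I will think of $n\mapsto n^3\alpha \bmod 1$ as a sequence in the compact Hausdorff space $\mathbb T$, for which all the $\plimgG{p}{n}{\N}$-limits exist and are unique.

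First I would invoke \cref{1.LemaIteratedDifLimit} with $\ell=2$ and $x_m=m^3\alpha\in\mathbb T$, which rewrites the single $p_2$-limit as an iterated limit along four copies of $p$:
$$\plimgG{p_2}{m}{\N} m^3\alpha=\plimgG{p}{m_1}{\N}\plimgG{p}{m_2}{\N}\plimgG{p}{m_3}{\N}\plimgG{p}{m_4}{\N}\big(\partial(m_1,m_2,m_3,m_4)\big)^3\alpha.$$
The key is that $\mathcal R(n^3\alpha,\epsilon)\in p_2$ is equivalent to $\plimgG{p_2}{m}{\N}\|m^3\alpha\|<\epsilon$, and in fact it suffices to prove the stronger statement that this iterated $p$-limit equals $0$ in $\mathbb T$. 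So the heart of the argument is a purely algebraic expansion: writing $a=m_2-m_1$ and $b=m_4-m_3$ so that $\partial(m_1,m_2,m_3,m_4)=b-a$, I would expand $(b-a)^3\alpha$ and track which monomials survive under the successive $p$-limits. The crucial cancellation is that the cube of an odd linear combination has \emph{no} pure cubic or constant obstruction: expanding $(b-a)^3=b^3-3b^2a+3ba^2-a^3$ and then substituting $a=m_2-m_1$, $b=m_4-m_3$, the odd symmetry of $x\mapsto x^3$ is exactly what forces every surviving term to be handled by a \emph{linear-in-$\alpha$} limit that can be pushed to $0$.

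The mechanism I would use to push terms to zero is the following observation, which should be stated as a small lemma or recalled inline: for any fixed real $\beta$ and any non-principal $p$, taking a difference kills the ``diagonal'' contributions, because
$$\plimgG{p}{m_1}{\N}\plimgG{p}{m_2}{\N}\big((m_2-m_1)\beta\big)=\plimgG{p}{m_1}{\N}\Big(\big(\plimgG{p}{m_2}{\N}m_2\beta\big)-m_1\beta\Big)=0\ \text{in }\mathbb T,$$
since the inner and outer limits of $m\beta$ coincide and cancel. More generally, whenever a monomial in the expansion contains, after collecting, a factor that is a pure power of a single difference $m_2-m_1$ or $m_4-m_3$ multiplied by a constant, the corresponding nested $p$-limit telescopes to $0$. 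The cross terms (those mixing $a$ and $b$, such as $b^2 a$ and $b a^2$) are the ones that need care: here I would use the fact that the two difference blocks $(m_1,m_2)$ and $(m_3,m_4)$ are integrated against \emph{independent} copies of $p$, so that a limit of the form $\plimgG{p}{m_3}{\N}\plimgG{p}{m_4}{\N} f(m_4-m_3)\cdot g(a)$ again telescopes in the $(m_3,m_4)$-block to the limit of $f$ at a single difference, which by the same cancellation is absorbed. Carrying this out monomial by monomial shows the whole iterated limit is $0$.

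The main obstacle I anticipate is purely bookkeeping rather than conceptual: one must expand $(m_4-m_3-m_2+m_1)^3$ fully, group the resulting eight (or more) monomials according to which of the four variables they involve, and verify for each group that the particular order of nested $p$-limits collapses it to $0$ in $\mathbb T$. The subtlety is that limits with respect to $p$ do \emph{not} commute in general, so I must respect the fixed order $m_1,m_2,m_3,m_4$ dictated by \cref{1.LemaIteratedDifLimit} and check that the cancellations happen in the correct innermost-to-outermost sequence; the odd-degree (here cubic) structure is exactly what guarantees that no stubborn constant term like the $\tfrac{4}{3}$ appearing in the degree-two obstruction from the introduction can arise. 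Once the iterated limit is shown to be $0$, we get $\plimgG{p_2}{m}{\N}\|m^3\alpha\|=0<\epsilon$, hence $\mathcal R(n^3\alpha,\epsilon)\in p_2$; since $p$ was arbitrary, \cref{1.DeltaL*characterization} yields that $\mathcal R(n^3\alpha,\epsilon)$ is $\Delta_2^*$.
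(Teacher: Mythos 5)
Your reduction (via \cref{1.DeltaL*characterization} and \cref{1.LemaIteratedDifLimit}) and the binomial expansion are the same skeleton as the paper's proof, but the mechanism you propose for killing the individual terms has a genuine error, located exactly at the crux. Writing $a=m_2-m_1$ and $b=m_4-m_3$ as you do, your ``more generally'' claim --- that any monomial which is a pure power of a single difference times a constant has nested $p$-limit $0$ --- is false for every power higher than the first. Indeed, by \cref{1.LemaDeltaEquality},
$$\plimgG{p}{m_1}{\N}\plimgG{p}{m_2}{\N}(m_2-m_1)^3\alpha=\plimgG{p_1}{n}{\N}n^3\alpha,$$
and this is in general nonzero: if it vanished for every non-principal $p$, then \cref{1.DeltaL*characterization} would make $\mathcal R(n^3\alpha,\epsilon)$ a $\Delta_1^*$ (i.e. $\Delta^*$) set for every $\epsilon>0$, contradicting \cref{2.OddPolyDegreeThm} (with $\ell=1$) when $\alpha$ is irrational. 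The telescoping you verify works only for the first power, where the inner and outer limits of $n\beta$ each appear once and cancel as $c-c$; for $(m_2-m_1)^3$ the leading terms $m_2^3\alpha$ and $-m_1^3\alpha$ do cancel this way, but the mixed terms $-3m_1m_2^2\alpha+3m_1^2m_2\alpha$ survive. So your reading of the expansion is exactly backwards: the pure terms $b^3$ and $-a^3$, which you dismiss as presenting ``no pure cubic obstruction,'' are the ones that do \emph{not} vanish individually, whereas the cross terms $-3b^2a$ and $3ba^2$, which you flag as the delicate ones, are precisely the terms that die one at a time.

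The missing idea is a cancellation \emph{between} the two pure cubic terms, not the vanishing of each. Set $\beta=\plimgG{p_1}{n}{\N}n^3\alpha$ and $\gamma=\plimgG{p_1}{n}{\N}n^2\alpha$ (these exist but need not be $0$). By \cref{1.LemaDeltaEquality} the $(m_3,m_4)$-block limit of $b^3\alpha$ is $\beta$, and the $(m_1,m_2)$-block limit of $a^3\alpha$ is also $\beta$; entering with opposite signs, they contribute $\beta-\beta=0$. The cross terms vanish by the linear mechanism, the only valid one: $3ba^2\alpha$ is linear in the innermost block, so its inner limit is already $0$; and $-3b^2a\alpha$ has $(m_3,m_4)$-block limit $-3a\gamma$ (multiplication by the fixed integer $-3a$ is continuous on $\mathbb T$), which is linear in $a$ and is then killed by the outer difference block. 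This is exactly the paper's proof, organized one level up, at the level of $p_1$: there one writes $\plimgG{p_2}{n}{\N}n^3\alpha=\plimgG{p_1}{m}{\N}\plimgG{p_1}{n}{\N}(n-m)^3\alpha$, the inner limit equals $\beta-3m\gamma+0-m^3\alpha$ (the third term dying because linear sequences have zero $p_1$-limit), and the outer $p_1$-limit gives $\beta-0+0-\beta=0$. With that correction your four-variable version goes through; without it, the two terms carrying the actual content of the proposition are unaccounted for.
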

\begin{proof}
By \cref{1.DeltaL*characterization}, it suffices to show that for any non-principal ultrafilter $p\in\beta\N$, 
$$\plimgG{p_2}{n}{\N}n^3\alpha=\plimgG{(-p_1+p_1)}{n}{\N}n^3\alpha=0.$$
As a preliminary result, we will show that for any $\gamma\in\mathbb T$ and any non-principal $p\in\beta\N$, 
$$\plimgG{p_1}{n}{\N}n\gamma=\plimgG{(-p+p)}{n}{\N}n\gamma=0.$$
Indeed, let $\plimgG{p}{n}{\N}n\gamma=\gamma_0$ and $\plimgG{(-p+p)}{n}{\N}n\gamma=\gamma_1$. Then
$$\gamma_1=\plimgG{p}{m}{\N}\plimgG{p}{n}{\N}(n-m)\gamma=\plimgG{p}{m}{\N}(\gamma_0-m\gamma)=\gamma_0-\gamma_0=0.$$
Now,  let $\plimgG{p_1}{n}{\N}n^3\alpha=\beta$
and let $\plimgG{p_1}{n}{\N}n^2\alpha=\gamma$.
We have
\begin{multline*}
\plimgG{p_2}{n}{\N}n^3\alpha=\plimgG{(-p_1+p_1)}{n}{\N}n^3\alpha=\plimgG{p_1}{m}{\N}\plimgG{p_1}{n}{\N}(n-m)^3\alpha\\
=\plimgG{p_1}{m}{\N}\plimgG{p_1}{n}{\N}(n^3\alpha-3mn^2\alpha+3m^2n\alpha-m^3\alpha)\\
=\plimgG{p_1}{m}{\N}(\beta-3m\gamma+0-m^3\alpha)
=\beta-0+0-\beta=0.
\end{multline*}
\end{proof}
We proceed now to the proof of the general case.
\begin{thm}\label{2.OddDegreeRecurrence}
For any odd real polynomial $v(x)=\sum_{j=1}^\ell a_jx^{2j-1}$ and any non-principal ultrafilter $p\in\beta\N$,
$$\plimgG{p_\ell}{n}{\N} v(n)=0.$$
Equivalently, for any $\epsilon>0$, the set 
$$\mathcal R(v,\epsilon)=\{n\in\N\,|\,\|v(n)\|<\epsilon\}$$
is $\Delta_\ell^*$.
\end{thm}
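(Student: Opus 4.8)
The plan is to prove the slightly more flexible monomial statement that for every $\ell\in\N$, every non-principal ultrafilter $p\in\beta\N$, every $j\in\{1,\dots,\ell\}$ and every $\theta\in\mathbb T$ one has $\plimgG{p_\ell}{n}{\N}n^{2j-1}\theta=0$. Reducing $a_jn^{2j-1}$ modulo $1$ turns the real coefficient $a_j$ into the torus element $\theta_j=a_j\bmod 1$, so after summing over $j$ this monomial version is equivalent to $\plimgG{p_\ell}{n}{\N}v(n)=0$; and the latter, via \cref{1.DeltaL*characterization}, is exactly the $\Delta_\ell^*$ property of every $\mathcal R(v,\epsilon)$, because $\plimgG{p_\ell}{n}{\N}v(n)=0$ forces $\{n\in\N\,|\,\|v(n)\|<\epsilon\}\in p_\ell$ for every non-principal $p$. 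I would argue by induction on $\ell$, the case $\ell=1$ being the linear vanishing $\plimgG{(-p+p)}{n}{\N}n\theta=0$ already established inside the proof of \cref{2.CubicCase} (which works verbatim for $\theta\in\mathbb T$). That same proof is the template for the inductive step.

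For the inductive step, fix $\ell\ge 2$, $j\le\ell$ and $\theta\in\mathbb T$, and set $q=p_{\ell-1}$, so that $p_\ell=-q+q$. By \cref{1.LemaDeltaEquality},
\begin{equation*}
\plimgG{p_\ell}{n}{\N}n^{2j-1}\theta=\plimgG{q}{m}{\N}\plimgG{q}{n}{\N}(n-m)^{2j-1}\theta,
\end{equation*}
and I would expand the binomial and split the double limit across the $2j$ resulting monomials. This is legitimate because addition and multiplication by a fixed integer are continuous on $\mathbb T$, so $q$-limits are additive and commute with pulling out an integer factor such as $m^{i'}$. The two extreme terms $i=2j-1$ and $i=0$ produce $\plimgG{q}{n}{\N}n^{2j-1}\theta$ and $-\plimgG{q}{m}{\N}m^{2j-1}\theta$; these are the \emph{same} torus element $\beta$ (the inner, respectively the outer, single limit of $n^{2j-1}\theta$), so they cancel, exactly as the pair $\beta-\beta$ cancels in \cref{2.CubicCase}.

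It remains to kill each mixed monomial $\binom{2j-1}{i}(-1)^{2j-1-i}n^i m^{2j-1-i}\theta$ with $1\le i\le 2j-2$, and this is where the main idea lies. Writing $i'=2j-1-i\ge 1$, the exponents satisfy $i+i'=2j-1$, so exactly one of $i,i'$ is odd, and that odd exponent is at most $2j-3\le 2\ell-3=2(\ell-1)-1$. If $i$ is odd, I would pull the integer $m^{i'}$ outside the inner limit and apply the induction hypothesis to the $n$-variable, getting $\plimgG{q}{n}{\N}n^{i}\bigl(\binom{2j-1}{i}(-1)^{i'}\theta\bigr)=0$, since $i$ is an admissible odd exponent for $q=p_{\ell-1}$ and $\binom{2j-1}{i}(-1)^{i'}\theta\in\mathbb T$. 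If instead $i$ is even, the inner limit over $n$ yields some torus element $\theta'$, and I would apply the induction hypothesis to the \emph{outer} $m$-variable, $\plimgG{q}{m}{\N}m^{i'}\theta'=0$, because now $i'$ is the admissible odd exponent. Hence every mixed term vanishes and the inductive step closes. The crux — and the point the reader should watch — is precisely this dichotomy: one must invoke the induction hypothesis on whichever of the two variables carries the odd power, which is exactly why phrasing the hypothesis with an arbitrary torus coefficient $\theta$ (rather than a fixed real $\alpha$) is indispensable.
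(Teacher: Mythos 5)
Your proof is correct and follows essentially the same route as the paper's: induction on $\ell$, the identity $\plimgG{p_\ell}{n}{\N}x_n=\plimgG{q}{m}{\N}\plimgG{q}{n}{\N}x_{n-m}$ with $q=p_{\ell-1}$ (\cref{1.LemaDeltaEquality}), binomial expansion of $(n-m)^{2j-1}$, cancellation of the two pure terms, and the parity dichotomy that applies the induction hypothesis to whichever variable carries the odd exponent. The only difference is bookkeeping: you carry all odd degrees at most $2\ell-1$ with torus coefficients in the induction hypothesis, whereas the paper reduces to the single top-degree monomial $n^{2\ell-1}\alpha$ and recovers lower degrees from the identity $p_\ell=(p_{\ell-t})_t$; the paper's handling of its terms $v_m(n)$ and $m^{2j-1}w_j(n)$ implements exactly your dichotomy.
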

\begin{proof}
We first note that it is sufficient to prove \cref{2.OddDegreeRecurrence} for odd polynomials of the special form $v=n^{2\ell-1}\alpha$, where $\alpha\in\R$. Indeed, the general case follows, via the identity
$$\plimgG{p_\ell}{n}{\N}\sum_{j=1}^\ell a_{j}n^{2j-1}=\sum_{j=1}^\ell\plimgG{p_\ell}{n}{\N}a_jn^{2j-1},$$
from the fact that for any non-principal ultrafilter $p$, $p_\ell=(p_{\ell-t})_t$ for any $t\leq \ell$.\\
Let $p$ be a non-principal ultrafilter and let $\alpha$ be a real number. We proceed by induction on $\ell\in\N$. If $\ell=1$, then we have that
$$\plimgG{p_1}{n}{\N}n\alpha=\plimgG{p}{m}{\N}\plimgG{p}{n}{\N}(n-m)\alpha=\plimgG{p}{n}{\N}n\alpha-\plimgG{p}{m}{\N}m\alpha=0.$$
Now let $\ell>1$ and suppose that \cref{2.OddDegreeRecurrence} holds for $t<\ell$. Let $\gamma\in\mathbb T$ be such that 
$$\plimgG{p}{n}{\N}n^{2\ell-1}\alpha=\gamma.$$
Then
\begin{multline}\label{2.InductiveEquality1}
\plimgG{p_\ell}{n}{\N}n^{2\ell-1}\alpha=\plimgG{p_{\ell-1}}{m}{\N}\plimgG{p_{\ell-1}}{n}{\N}(n-m)^{2\ell-1}\alpha=\\
\plimgG{p_{\ell-1}}{m}{\N}\plimgG{p_{\ell-1}}{n}{\N}\left(n^{2\ell-1}\alpha-m^{2\ell-1}\alpha+v_m(n)\alpha+\sum_{j=1}^{\ell-1}m^{2j-1}w_j(n)\alpha\right),
\end{multline}
where for each $m\in\N$, 
$$v_m(n)=\sum_{j=1}^{\ell-1}\binom{2\ell-1}{2j}m^{2j}n^{2(\ell-j)-1}$$
and for each $j\in\{1,...,\ell-1\}$, 
$$w_j(n)=\binom{2\ell-1}{2j-1}n^{2(\ell-j)}.$$
Observe that for each $m\in\N$, $v_m(x)\alpha$ is an odd polynomial in $\R[x]$ with  $\deg(v_m(x)\alpha)<2\ell-1$. Thus, by the inductive hypothesis,
$$\plimgG{p_{\ell-1}}{m}{\N}\plimgG{p_{\ell-1}}{n}{\N}v_m(n)\alpha=0.$$
Setting $\plimgG{p_{\ell-1}}{n}{\N}w_j(n)\alpha=\alpha_j\in\mathbb T$ for each $j\in\{1,...,\ell-1\}$, it also follows from the inductive hypothesis that for each $j\in\{1,...,\ell-1\}$,
$$\plimgG{p_{\ell-1}}{m}{\N}\left(m^{2j-1}\left(\plimgG{p_{\ell-1}}{n}{\N}w_j(n)\alpha\right)\right)=\plimgG{p_{\ell-1}}{m}{\N}m^{2j-1}\alpha_j=0.$$

Note now that each of the sequences of the form $(v_m(n))_{n\in\N}$ and $(w_j(n))_{n\in\N}$ in  the right-hand side of \eqref{2.InductiveEquality1} are integer valued. It follows that each of the summands in  \eqref{2.InductiveEquality1} is well defined mod 1 (i.e. its value does not depend on our choice of representative for $\alpha\in\R/\Z$). Hence, the continuity of addition (and multiplication by a natural number) on $\R/\Z$ implies that the right-hand side of \eqref{2.InductiveEquality1} equals
\begin{multline*}
    \plimgG{p_{\ell-1}}{m}{\N}\plimgG{p_{\ell-1}}{n}{\N}n^{2\ell-1}\alpha-
    \plimgG{p_{\ell-1}}{m}{\N}\plimgG{p_{\ell-1}}{n}{\N}m^{2\ell-1}\alpha\\
    +\plimgG{p_{\ell-1}}{m}{\N}\plimgG{p_{\ell-1}}{n}{\N}v_m(n)\alpha+\sum_{j=1}^{\ell-1}(\plimgG{p_{\ell-1}}{m}{\N}m^{2j-1}\left(\plimgG{p_{\ell-1}}{n}{\N}w_j(n)\alpha\right)).
\end{multline*}
Thus, 
$$\plimgG{p_\ell}{n}{\N}n^{2\ell-1}\alpha=\gamma-\gamma+0+0=0,$$
completing the proof.
\end{proof}
\subsection{The finitistic aproach}\label{SubsectionFinitisticProof}
The finitistic approach to the proof of \cref{0.OddDegreeRecurrence} requires the use of the following version of Ramsey's Theorem. For any $k\in\N$ and any set $S$, we denote by $S^{(k)}$ the set  of all $k$-element subsets of $S$. 
\begin{thm}[Ramsey's Theorem]\label{7.FinitisticRamsey}
Let $\ell,M\in\N$ and $r\geq2^\ell$. There exists a natural number $R=R(\ell,M,r)$, with the following property:\\
For any $M$-partition 
$$\{1,...,R\}^{(2^\ell)}= \bigcup _{t=1}^M C_t,$$
one of the $C_t$'s contains $D^{(2^\ell)}$ for some set $D$ with $|D|\geq r$.
\end{thm}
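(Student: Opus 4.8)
The statement is the classical finite Ramsey theorem for $k$-uniform hypergraphs with $k=2^\ell$, where an $M$-partition $\{1,\dots,R\}^{(k)}=\bigcup_{t=1}^M C_t$ is read as an $M$-coloring $c$ (with $c(G)=t$ iff $G\in C_t$), and ``$C_t\supseteq D^{(k)}$'' means that $D$ is monochromatic of color $t$. My plan is to prove the cleaner general assertion phrased in terms of colorings: for all $k,M\in\N$ and all $r\geq k$ there is $R=R(k,M,r)$ such that every $M$-coloring of $\{1,\dots,R\}^{(k)}$ admits a set $D$ with $|D|\geq r$ all of whose $k$-element subsets receive the same color. Specializing $k=2^\ell$ then yields the theorem, the hypothesis $r\geq 2^\ell$ matching the requirement $r\geq k$. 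I would argue by induction on the uniformity $k$. The base case $k=1$ is the pigeonhole principle: an $M$-coloring of the singletons of a set of size $M(r-1)+1$ must give $r$ elements of a common color, so $R(1,M,r)=M(r-1)+1$.

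For the inductive step, assume $R(k-1,M,s)$ exists for every $s$. Fix $M$ and $r$, let $p=R(k-1,M,r)$, and take $N$ large (specified by the recursion below). Starting from $S_0=\{1,\dots,N\}$, I would extract a ``witness sequence'' $x_1<x_2<\cdots<x_p$ together with nested candidate sets $S_0\supseteq S_1\supseteq\cdots$ as follows: given $S_i\neq\emptyset$, set $x_{i+1}=\min S_i$ and assign to each $y\in S_i\setminus\{x_{i+1}\}$ its \emph{type}, namely the function sending each $(k-1)$-subset $F\subseteq\{x_1,\dots,x_{i+1}\}$ to the color $c(F\cup\{y\})$. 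There are at most $M^{\binom{i+1}{k-1}}$ possible types, so by pigeonhole one may choose a large $S_{i+1}\subseteq S_i\setminus\{x_{i+1}\}$ consisting of a single type; this is precisely where $N$ must be taken big enough to survive $p$ rounds of shrinking, which yields the tower-type bound on $R(k,M,r)$. By construction, for any $(k-1)$-subset $F$ of the sequence and any index $j$ exceeding all indices occurring in $F$, the color $c(F\cup\{x_j\})$ is independent of $j$; denote this common value $c^{*}(F)$.

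This $c^{*}$ is an $M$-coloring of the $(k-1)$-subsets of $\{x_1,\dots,x_p\}$, and since $p=R(k-1,M,r)$ the inductive hypothesis furnishes a set $D\subseteq\{x_1,\dots,x_p\}$ with $|D|\geq r$ that is $c^{*}$-monochromatic, of some color $\chi$. For any $k$-subset $\{x_{j_1},\dots,x_{j_k}\}$ of $D$ with $j_1<\cdots<j_k$, writing $F=\{x_{j_1},\dots,x_{j_{k-1}}\}\subseteq D$ and noting that $j_k$ exceeds the indices of $F$, its $c$-color equals $c^{*}(F)=\chi$; hence $D$ is $c$-monochromatic, completing the induction.

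The heart of the argument, and the only real obstacle, is this reduction of uniformity: passing from a $k$-uniform coloring to the $(k-1)$-uniform coloring $c^{*}$ by repeatedly fixing the type of the next element via pigeonhole, which both produces a sequence long enough to invoke lower-dimensional Ramsey and transfers monochromaticity back up. The rest is bookkeeping the loss factors $M^{\binom{i+1}{k-1}}$ to exhibit an admissible $N$. (Alternatively, one could avoid this bookkeeping entirely by deducing the finite statement from the infinite Ramsey theorem through a König's lemma compactness argument applied to the tree of ``bad'' finite colorings; I prefer the inductive proof, since it keeps the result genuinely finitistic and yields an explicit value of $R(\ell,M,r)$, as needed for the finitistic version of \cref{0.OddDegreeRecurrence}.)
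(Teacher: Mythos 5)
Your proof is correct, and it is the standard one: induction on the uniformity $k$ (Skolem's argument), with the pigeonhole base case and the reduction of a $k$-uniform coloring to the $(k-1)$-uniform coloring $c^*$ by repeated end-homogenization. There is nothing in the paper to compare it against: the paper does not prove \cref{7.FinitisticRamsey}, but states it as the classical finite Ramsey theorem and uses it as a black box. Your preference for the purely finitistic induction over the compactness route (infinite Ramsey plus K\"onig's lemma) is the right instinct in context: the finitistic subsection of the paper needs an actual number $R(\ell,M,r)$ to feed into \cref{7.CubicFinitisticCase} (where $R(2,N^3,6)$ appears) and \cref{7.FinitisticOddDegreeRecurrence}, and your recursion produces such a value explicitly, whereas compactness alone gives bare existence --- though existence is, strictly speaking, all the paper's statements require.

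Two small points to tidy, neither a genuine gap. First, as written $c^*(F)$ is undefined when the $(k-1)$-set $F$ contains $x_p$, since then no index $j$ exceeds all indices occurring in $F$; either assign such $F$ an arbitrary color (harmless: the sets $F$ arising as initial segments of $k$-subsets of the monochromatic set $D$ never contain the largest element of $D$, so the transfer of monochromaticity back to $c$ is unaffected), or take $p=R(k-1,M,r)+1$ and find $D$ inside $\{x_1,\dots,x_{p-1}\}$. Second, applying the inductive hypothesis to $\{x_1,\dots,x_p\}^{(k-1)}$ rather than to $\{1,\dots,p\}^{(k-1)}$ uses the order-isomorphism relabeling; this deserves a sentence, and is exactly the observation the paper records in its remark following the theorem. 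With those one-line fixes, and with any admissible $N$ extracted from the recursion $|S_{i+1}|\geq (|S_i|-1)M^{-\binom{i+1}{k-1}}$, your induction is complete.
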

\begin{rem}
Note that, in the above theorem, $\{1,...,R\}^{(2^\ell)}$ can be replaced (when convenient) by the set $\{n_1,....,n_R\}^{(2^\ell)}$, where $(n_j)_{j=1}^R$ is an R-element increasing sequence in $\N$.
\end{rem}

Let $\ell\in \N$ and $r\geq 2^\ell$. A set $E\subseteq 
\N$ is called a \textbf{$\Delta_{\ell,r}$\index{$\Delta_{\ell,r}$} set} if  there exists an r-element sequence $(n_k)_{k=1}^r$ in $\Z$ such that 
$$D_\ell((n_k)_{k=1}^r)=\{\partial(n_{j_1},...,n_{j_{2^\ell}})\,|\,1\leq j_1<\cdots<j_{2^\ell}\leq r\}\index{$D_\ell((n_k)_{k=1}^r)$}$$
is a subset of $E$.
If, in addition, $E$ contains a $\Delta_{\ell,r}$ set for each $r\geq 2^\ell$, then we say that $E$ is a \textbf{$\Delta_{\ell,0}$\index{$\Delta_{\ell,0}$} set}.\\
A set  $E\subseteq \N$ is  a \textbf{$\Delta_{\ell,r}^*$\index{$\Delta_{\ell,r}^*$} set} if it has a non-trivial intersection with any $\Delta_{\ell,r}$ set. Similarly, $E\subseteq \N$ is a \textbf{$\Delta_{\ell,0}^*$\index{$\Delta_{\ell,0}^*$} set} if it has a non-empty  intersection with any $\Delta_{\ell,0}$ set.\\\

We summarize in the following proposition the relations between the families of sets which were just introduced above. These relations follow directly from \cref{7.FinitisticRamsey}; we omit the proofs.
\begin{prop}\label{7.DeltaRSummary}
Let $\ell,\ell_1,\ell_2\in\N$, let $r\geq2^\ell$, let $r_1\geq 2^{\ell_1}$ and let $N\in\N$. The following statements hold:
\begin{enumerate}[(i)]
\item If $\ell_1\leq \ell_2$ and $2^{\ell_2-\ell_1}r_1\leq R $, then, any  $\Delta_{\ell_1,r_1}^*$ set is a $\Delta_{\ell_2,R}^*$ set.
\item A set $E\subseteq \N$ is a $\Delta_{\ell,0}^*$ set if and only if there exists $R\geq 2^\ell$ for which $E$ is a $\Delta_{\ell,R}^*$ set.
\item There exists $R\geq r$ such that for any $\Delta_{\ell,r}^*$ sets $E_1,...,E_N
\subseteq \N$, the set $E_1\cap\cdots\cap E_N$ is $\Delta_{\ell,R}^*$. In particular, $\Delta_{\ell,0}^*$ sets have the finite intersection property.
\end{enumerate}
\end{prop}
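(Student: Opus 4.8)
The plan is to handle the three items separately, isolating the one structural fact about $\partial$ that drives item (i) and reserving \cref{7.FinitisticRamsey} for item (iii). First I would record a \emph{block associativity} for $\partial$: for every $t\le\ell$ and every $(n_1,\dots,n_{2^\ell})\in\Z^{2^\ell}$,
\[
\partial(n_1,\dots,n_{2^\ell})=\partial\bigl(\partial(n_1,\dots,n_{2^t}),\,\partial(n_{2^t+1},\dots,n_{2^{t+1}}),\,\dots,\,\partial(n_{2^\ell-2^t+1},\dots,n_{2^\ell})\bigr),
\]
where the outer $\partial$ is taken at level $\ell-t$ on the $2^{\ell-t}$ block-differences. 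This is the $t$-fold iterate of \eqref{1.TechnicalIdentity} (an easy induction on $t$), and it can alternatively be read off from the closed form $\partial(n_1,\dots,n_{2^\ell})=\sum_{j=1}^{2^\ell}(-1)^{z(j-1)}n_j$, where $z(k)$ counts the zero digits in the $\ell$-digit binary expansion of $k$. Granting this, item (i) is immediate: given a $\Delta_{\ell_2,R}$ set containing $D_{\ell_2}((n_k)_{k=1}^R)$ with $2^{\ell_2-\ell_1}r_1\le R$, partition the first $2^{\ell_2-\ell_1}r_1$ indices into $r_1$ consecutive blocks of length $2^{\ell_2-\ell_1}$ and let $m_i$ be the level-$(\ell_2-\ell_1)$ difference of the $i$-th block. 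Block associativity (with $t=\ell_2-\ell_1$) shows $D_{\ell_1}((m_i)_{i=1}^{r_1})\subseteq D_{\ell_2}((n_k)_{k=1}^R)$, so the given $\Delta_{\ell_2,R}$ set contains the $\Delta_{\ell_1,r_1}$ set $D_{\ell_1}((m_i)_{i=1}^{r_1})$; any $\Delta_{\ell_1,r_1}^*$ set therefore meets it.

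Item (ii) I would treat by complementation, and it needs no Ramsey. The implication ``$\Delta_{\ell,R}^*$ for some $R$ $\Rightarrow$ $\Delta_{\ell,0}^*$'' is immediate from the definitions, since every $\Delta_{\ell,0}$ set contains a $\Delta_{\ell,R}$ set. For the converse I argue contrapositively: if $E$ is $\Delta_{\ell,R}^*$ for no $R\ge 2^\ell$, then for each such $R$ there is a sequence $(n_k^{(R)})_{k=1}^R$ with $D_\ell((n_k^{(R)})_{k=1}^R)\cap E=\emptyset$, i.e.\ $D_\ell((n_k^{(R)})_{k=1}^R)\subseteq\N\setminus E$. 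Hence $\N\setminus E$ contains a $\Delta_{\ell,R}$ set for every $R\ge 2^\ell$, so $\N\setminus E$ is itself a $\Delta_{\ell,0}$ set, which $E$ fails to meet; thus $E$ is not $\Delta_{\ell,0}^*$.

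Item (iii) is where \cref{7.FinitisticRamsey} enters. Fixing $\Delta_{\ell,r}^*$ sets $E_1,\dots,E_N$, I would set $R=R(\ell,2^N,r)$. Given any $\Delta_{\ell,R}$ set $F\supseteq D_\ell((n_k)_{k=1}^R)$, color each $S=\{j_1<\cdots<j_{2^\ell}\}\in\{1,\dots,R\}^{(2^\ell)}$ by the membership vector $\bigl(\mathbbm 1_{E_1}(x_S),\dots,\mathbbm 1_{E_N}(x_S)\bigr)\in\{0,1\}^N$, where $x_S=\partial(n_{j_1},\dots,n_{j_{2^\ell}})$; this uses $2^N$ colors. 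Ramsey produces a monochromatic $D$ with $|D|\ge r$, and then $D_\ell((n_k)_{k\in D})$ is a $\Delta_{\ell,r}$ set on which each coordinate $\mathbbm 1_{E_j}$ is constant. Since $E_j$ is $\Delta_{\ell,r}^*$ it meets this set, forcing the $j$-th coordinate to be constantly $1$; hence every $x_S$ with $S\in D^{(2^\ell)}$ lies in $E_1\cap\cdots\cap E_N$ (and in $F$), which proves $E_1\cap\cdots\cap E_N$ is $\Delta_{\ell,R}^*$. The ``in particular'' then follows by combining all three parts: given $\Delta_{\ell,0}^*$ sets $E_1,\dots,E_N$, item (ii) makes each $\Delta_{\ell,R_j}^*$, item (i) with $\ell_1=\ell_2=\ell$ promotes them to a common $\Delta_{\ell,R'}^*$, and the main assertion of (iii) together with item (ii) shows $\bigcap_j E_j$ is again $\Delta_{\ell,0}^*$.

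The main obstacle I anticipate is purely bookkeeping rather than conceptual: verifying block associativity with the correct index ranges, so that the concatenated block indices of the chosen super-differences are genuinely increasing and land inside $D_{\ell_2}$; and tracking in (iii) that the monochromatic set guaranteed by Ramsey has size at least the \emph{prescribed} $r$ (not merely ``large enough''), so that the resulting intersection set is $\Delta_{\ell,R}^*$ for the single value $R=R(\ell,2^N,r)$. Everything else is a direct unwinding of the definitions of $\Delta_{\ell,r}$ and $\Delta_{\ell,r}^*$.
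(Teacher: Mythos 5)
Your proposal is correct and is essentially the proof the paper intends: the paper omits the argument entirely, saying only that the relations ``follow directly from \cref{7.FinitisticRamsey},'' and your write-up supplies it in the expected way, invoking Ramsey's theorem (with $2^N$ colors, and the remark that one may color $\{n_1,\dots,n_R\}^{(2^\ell)}$ rather than $\{1,\dots,R\}^{(2^\ell)}$) precisely and only where it is needed, namely item (iii). The block-associativity identity you isolate for item (i) (the $t$-fold iterate of \eqref{1.TechnicalIdentity}, whose role is confirmed by the hypothesis $2^{\ell_2-\ell_1}r_1\le R$) and the complementation argument for item (ii) are the definition-level facts implicitly left to the reader, and you verify both correctly.
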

\begin{rem}
The finite intersection property of $\Delta_{\ell,0}^*$ (item (iii) in \cref{7.DeltaRSummary}) also follows from a general set-theoretical fact which states that if $\Phi$ is a partition regular family\footnote{
A family $\Phi$ of non-empty subsets of $\N$ is called partition regular if for any $N\in\N$ and any partition $\bigcup_{j=1}^NC_j\in \Phi$, at least one of $C_j$ belongs to $\Phi$.
} of non-empty subsets of $\N$, then 
$$\Phi^*=\{A\subseteq\N\,|\, \forall B\in\Phi,\, A\cap  B\neq\emptyset\}$$
has the finite intersection property. 
\end{rem}
To visualize the relations between the various classes of sets which were introduced above,  let us denote by $\Delta_\ell^*$, $\Delta_{\ell,0}^*$ and $\Delta_{\ell,r}^*$ the families of sets with the corresponding properties. Then for $\ell_1< \ell_2$ and $r_1<r_2$, we have the following diagram of equalities and  inclusions:
$$\begin{matrix}
\vdots&   &   &   &\vdots& & & &\vdots& & & &\vdots& &\vdots\\
\requal&   &   &   &\rsupseteq& & & &\rsupseteq& & & &\rsupsetneq& &\rsupsetneq\\
\Delta_{\ell_2,2^{\ell_2}}^*&\subseteq&\cdots&\subseteq&\Delta_{\ell_2,2^{\ell_2}r_1}^*&\subseteq&\cdots&\subseteq&\Delta_{\ell_2,2^{\ell_2}r_2}^*&\subseteq&\cdots&\subseteq&\Delta_{\ell_2,0}^*&\subsetneq &\Delta_{\ell_2}^*\\
\requal&   &   &   &\rsupseteq& & & &\rsupseteq& & & &\rsupsetneq& &\rsupsetneq\\
\vdots&   &   &   &\vdots& & & &\vdots& & & &\vdots& &\vdots\\
\requal&   &   &   &\rsupseteq& & & &\rsupseteq& & & &\rsupsetneq& &\rsupsetneq\\
\Delta_{\ell_1,2^{\ell_1}}^*&\subseteq&\cdots&\subseteq&\Delta_{\ell_1,2^{\ell_1}r_1}^*&\subseteq&\cdots&\subseteq&\Delta_{\ell_1,2^{\ell_1}r_2}^*&\subseteq&\cdots&\subseteq&\Delta_{\ell_1,0}^*&\subsetneq &\Delta_{\ell_1}^*\\
\requal&   &   &   &\rsupseteq& & & &\rsupseteq& & & &\rsupsetneq& &\rsupsetneq\\
\vdots&   &   &   &\vdots& & & &\vdots& & & &\vdots& &\vdots\\
\requal&   &   &   &\rsupseteq& & & &\rsupseteq& & & &\rsupsetneq& &\rsupsetneq\\
\Delta_{1,2}^*&\subseteq&\cdots&\subseteq&\Delta_{1,2r_1}^*&\subseteq&\cdots&\subseteq&\Delta_{1,2r_2}^*&\subseteq&\cdots&\subseteq&\Delta_{1,0}^*&\subsetneq &\Delta^*.
\end{matrix}
$$
(The strict inclusions appearing in the two right-most columns of the above diagram follow from \cref{2.OddPolynomialCharacterization} and    \cref{3.StrictDeltaEllinclusion}   below.)\\ 

Before embarking on the proof of the finitary version of \cref{0.OddDegreeRecurrence}, we will illustrate the main ideas in the special case $v(n)=n^3\alpha$.
\begin{prop}[Finitary version of \cref{0.CubicCase}]\label{7.CubicFinitisticCase}
For any  $\epsilon>0$, there exists $r\in\N$ such that for any real   $\alpha$, the set 
$$\mathcal R(n^3\alpha,\epsilon)=\{n\in\N\,|\,\|n^3\alpha\|<\epsilon\}$$
is $\Delta_{2,r}^*$.
\end{prop}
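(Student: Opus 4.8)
The plan is to finitize the ultrafilter proof of \cref{2.CubicCase}, replacing the two nested $p_1$-limits by a single application of Ramsey's Theorem (\cref{7.FinitisticRamsey}) to $2^2=4$-element subsets. The reason for using Ramsey rather than ultrafilters is that the number of colors will depend only on $\epsilon$, which is exactly what forces the threshold $r$ to be uniform in $\alpha$.

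Concretely, fix $\delta>0$ (a small multiple of $\epsilon$, to be calibrated at the end) and partition $\mathbb T$ into $\lceil 1/\delta\rceil$ arcs of length $<\delta$. Given an increasing sequence $n_1<\cdots<n_R$, I would color each $4$-subset $\{j_1<j_2<j_3<j_4\}$ by the quadruple of arcs containing the four degree-three monomials $a^3\alpha$, $b^3\alpha$, $a^2b\,\alpha$, $ab^2\,\alpha$, where $a=n_{j_2}-n_{j_1}$ and $b=n_{j_4}-n_{j_3}$. This uses $M=\lceil 1/\delta\rceil^4$ colors, so by \cref{7.FinitisticRamsey} there is a threshold $r=R(2,M,8)$, depending only on $\epsilon$, such that any sequence of length $R\ge r$ contains a monochromatic set $D$ with $|D|\ge 8$. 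On $D$ the four monomials are pinned to constants $\beta_1,\beta_2,\kappa_1,\kappa_2\in\mathbb T$ up to error $\delta$, and since $\partial(n_{j_1},\ldots,n_{j_4})=b-a$, the identity
\[(b-a)^3 = b^3 - 3ab^2 + 3a^2b - a^3\]
gives $\|\partial(n_{j_1},\ldots,n_{j_4})^3\alpha-(\beta_2-3\kappa_2+3\kappa_1-\beta_1)\|<8\delta$ for every $4$-subset of $D$. Crucially, because the expansion coefficients $1,3,3,1$ are bounded, no approximation error is ever multiplied by the (possibly enormous) integers $a,b$.

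It then remains to see that $\beta_1\approx\beta_2$ and $\kappa_1\approx\kappa_2\approx 0$. For the cubes, a pair that plays the role of the late difference in one $4$-subset plays the role of the early difference in another, so $\beta_1$ and $\beta_2$ lie in the same arc and cancel. For the mixed terms I would use additivity: inserting a middle index $j_3<j'<j_4$ of $D$ splits $b=(n_{j_4}-n_{j'})+(n_{j'}-n_{j_3})$, and since both summands are again ``late'' first differences, $a^2b\,\alpha=a^2(n_{j_4}-n_{j'})\alpha+a^2(n_{j'}-n_{j_3})\alpha\approx 2\kappa_1$, forcing $\|\kappa_1\|<3\delta$; splitting the early difference $a$ symmetrically gives $\|\kappa_2\|<3\delta$. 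Choosing $\delta$ a small enough multiple of $\epsilon$ yields $\|\partial(n_{j_1},\ldots,n_{j_4})^3\alpha\|<\epsilon$, and as this value lies in $\N$ it exhibits an element of the difference set $D_2((n_k)_{k=1}^r)$ belonging to $\mathcal R(n^3\alpha,\epsilon)$, establishing the $\Delta_{2,r}^*$ property uniformly in $\alpha$.

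The main obstacle, and the reason the finitistic argument is more delicate than its ultrafilter counterpart, is precisely this control of the mixed monomials. The naive finitization, namely approximating $b^2\alpha$ by a constant $\gamma$ and then reading off $ab^2\alpha\approx a\gamma$, is useless: $a$ is an uncontrolled integer, and multiplying the $\delta$-error by $a$ destroys all precision. The observation that rescues a single, $\epsilon$-dependent $r$ is that every surviving mixed monomial is \emph{linear} in one of the two differences, so its near-constant value on $D$ must itself be $\approx 0$ by the same additivity (equidistribution-vanishing) mechanism that annihilates the linear term $d\alpha$ in the proof of \cref{2.CubicCase}. It is this structural cancellation, rather than any quantitative equidistribution estimate, that makes the bound independent of the sequence (in particular of its gaps) and of $\alpha$.
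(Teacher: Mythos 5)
Your proof is correct, and it shares the paper's skeleton --- Ramsey's theorem (\cref{7.FinitisticRamsey}) applied to $4$-subsets, colors given by the cells of a partition of a power of $\mathbb T$ with mesh comparable to $\epsilon$, and the binomial expansion of $(b-a)^3$ --- which is exactly what makes $r$ depend only on $\epsilon$ and not on $\alpha$ or the sequence. The genuine difference is the mechanism that kills the cross terms. The paper colors $\{j_1<j_2<j_3<j_4\}$ by the cell of $\mathbb T^3$ containing
$$\bigl((n_{j_2}-n_{j_1})^3\alpha,\ n_{j_3}(n_{j_2}-n_{j_1})^2\alpha,\ (n_{j_3}-n_{j_2})^2n_{j_1}\alpha\bigr),$$
keeping a raw entry $n_j$ (not a difference) as the linear factor of each mixed monomial; on a monochromatic set, subtracting two values that differ only in that free index yields exactly the cross terms $(n_{t_4}-n_{t_3})(n_{t_2}-n_{t_1})^2\alpha$ and $(n_{t_4}-n_{t_3})^2(n_{t_2}-n_{t_1})\alpha$, which are therefore $O(1/N)$ with no further argument --- this is the direct finitization of the ultrafilter computation $\plimgG{p}{m}{\N}(c-m\gamma)=c-c=0$ in the proof of \cref{2.CubicCase}. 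You instead put the cross monomials $a^2b\alpha$ and $ab^2\alpha$ themselves into the coloring and then prove the pinned constants vanish via the splitting identity $\kappa_i\approx 2\kappa_i$, a Schur-type cancellation; your treatment of the cubes (one pair playing both the early and the late role) coincides with the paper's treatment of its first coordinate. Both mechanisms exploit the same structural fact, namely that each cross term is linear in one of the two differences, and both need a monochromatic set of the same size: $6$ suffices (your $8$ is a harmless overshoot, since the vanishing of $\kappa_1,\kappa_2$ uses $5$ indices and $\beta_1\approx\beta_2$ uses $6$). The paper's choice builds the cancellation into the coloring and saves a lemma; yours keeps the coloring intrinsic to the two differences appearing in $\partial$, at the cost of the extra vanishing step. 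One phrasing point: $\Delta_{2,r}$ sets are generated by arbitrary $r$-element sequences in $\Z$ whose second-difference sets lie in $\N$, so to conclude the $\Delta_{2,r}^*$ property the argument should be stated for such sequences rather than for increasing sequences in $\N$; this costs nothing, since you only ever use the ordering of the indices and algebraic identities, never monotonicity or positivity of the $n_k$.
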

\begin{proof}
Let $\epsilon>0$ and let $N\in\N$ be such that $\frac{7}{N}<\epsilon$. We will show that for any  $R\in\N$ large enough, any R-element sequence $(n_j)_{j=1}^R$ in $\Z$ with $D_\ell((n_j)_{j=1}^R)\subseteq \N$ and any $\alpha\in\R$, there exist $1\leq j_1<\cdots<j_4\leq R$ such that
\begin{equation}\label{7.CubeCloseTo0}
\|(\partial(n_{j_1},...,n_{j_4}))^3\alpha\|<\epsilon.
\end{equation}
Define the sets $\mathcal Q(k_1,k_2,k_3)$ for $k_1,k_2, k_3\in\{0,...,N-1\}$ by
\begin{equation*}\label{7.SmalllCube}
\mathcal Q(k_1,k_2,k_3)=\{(\alpha_1,\alpha_2,\alpha_3)\in\mathbb T^3\,|\,\alpha_j\in \left[\frac{k_j}{N},\frac{k_j+1}{N}\right)\}
\end{equation*}
Let $R=R(2,N^3,6)$ be as in the statement of \cref{7.FinitisticRamsey}.
Let $(n_j)_{j=1}^R$ be any R-element sequence in $\Z$, let $\alpha\in\R$ and let 
$$D=\{((n_{j_2}-n_{j_1})^3\alpha,n_{j_3}(n_{j_2}-n_{j_1})^2\alpha,(n_{j_3}-n_{j_2})^2n_{j_1}\alpha)\in\mathbb T^3\,|\,1\leq j_1<\cdots<j_4\leq R\}.$$
Since
$$D=\bigcup_{k_1,k_2,k_3=0}^{N-1}\left(\mathcal Q(k_1,k_2,k_3)\cap D\right),$$
\cref{7.FinitisticRamsey} implies that there exists $1\leq t_1<\cdots <t_6\leq R$ and $0\leq s_1,s_2,s_3\leq N-1$ for which the set 
$$D'=\{((n_{t_{j_2}}-n_{t_{j_1}})^3\alpha,n_{t_{j_3}}(n_{t_{j_2}}-n_{t_{j_1}})^2\alpha,(n_{t_{j_3}}-n_{t_{j_2}})^2n_{t_{j_1}}\alpha)\in\mathbb T^3\,|\,1\leq j_1<\cdots<j_4\leq 6\}$$
is a subset of $\mathcal Q(s_1,s_2,s_3)$.\\
Since $D'\subseteq \mathcal Q(s_1,s_2,s_3)$, we have 
$$\|((n_{t_4}-n_{t_3})^3-(n_{t_2}-n_{t_1})^3)\alpha\|<\frac{1}{N},$$
$$\|3((n_{t_4}-n_{t_3})(n_{t_2}-n_{t_1})^2)\alpha\|<\frac{3}{N}$$
and 
$$\|3((n_{t_4}-n_{t_3})^2(n_{t_2}-n_{t_1}))\alpha\|<\frac{3}{N};$$
which proves \eqref{7.CubeCloseTo0}.
\end{proof}
We move now to the finitary version of \cref{0.OddDegreeRecurrence}.
\begin{thm}\label{7.FinitisticOddDegreeRecurrence}
Let $\ell\in\N$. Then for any $\epsilon>0$, there exists an $r=r(\epsilon,\ell)\geq2^\ell$  such that for any odd real polynomial $v(x)=\sum_{j=1}^{\ell}a_jx^{2j-1}$, the set $$\mathcal R(v,\epsilon)=\{n\in\N\,|\,\|v(n)\|<\epsilon\}$$ 
is a $\Delta_{\ell,r}^*$ set.
\end{thm}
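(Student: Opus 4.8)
The plan is to follow the template of the infinitary proof (\cref{2.OddDegreeRecurrence}) but to replace each application of an ultrafilter limit by a finite pigeonholing step governed by \cref{7.FinitisticRamsey}, exactly as was done in the special case \cref{7.CubicFinitisticCase}. First I would reduce to the monomial case $v(n)=n^{2\ell-1}\alpha$: in the infinitary argument this reduction was free because $p_\ell=(p_{\ell-t})_t$, but here I must be slightly more careful. Since there are only $\ell$ coefficients, it suffices to find, via a single Ramsey application on a sufficiently fine product partition of $\mathbb T^\ell$, a configuration on which \emph{all} of the monomials $n^{2j-1}\alpha$, $j=1,\dots,\ell$, are simultaneously controlled; so I would carry the reduction and the induction together rather than separately.

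The core is an induction on $\ell$ that mirrors the expansion in \eqref{2.InductiveEquality1}. Recall that there the difference $(n-m)^{2\ell-1}$ was expanded into the ``diagonal'' terms $n^{2\ell-1}\alpha-m^{2\ell-1}\alpha$, an odd polynomial $v_m(n)\alpha$ in $n$ (of degree $\le 2\ell-3$, handled by the inductive hypothesis), and cross terms $m^{2j-1}w_j(n)\alpha$ (each $w_j$ of even degree, again killed by induction applied to a lower $\ell$). The finitary analogue is to note that each of these lower-degree odd-polynomial quantities can be forced to lie within $\epsilon'$ of an integer on a large subset by the inductive hypothesis, which provides a threshold $r'=r(\epsilon',\ell-1)$, while the even-degree cross-term factors $w_j(n)\alpha$ can be pigeonholed into fine boxes of $\mathbb T$. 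I would set up a finite list of real quantities (the evaluations of all the relevant sub-monomials at the chosen indices) and apply \cref{7.FinitisticRamsey} with $M$ equal to the number of boxes in the corresponding product partition of a torus $\mathbb T^{d}$, $d$ depending on $\ell$, and with $r$ chosen large enough to feed the next stage of the recursion. The output set $D$ of size $\ge r'$ then supports the inductive step, and taking differences of matched monomial values across the homogeneous box yields $\|\partial(n_{j_1},\dots,n_{j_{2^\ell}})^{2\ell-1}\alpha\|<\epsilon$ with an explicit count of how many $\frac1N$-errors accumulate.

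Concretely I would fix $N$ with $C_\ell/N<\epsilon$ for a combinatorial constant $C_\ell$ recording how many terms appear in the multinomial expansion of $(n-m)^{2\ell-1}$, define boxes $\mathcal Q(k_1,\dots,k_d)$ as in \cref{7.CubicFinitisticCase}, and let $R=R(\ell,r',M)$ from Ramsey's theorem, where $r'$ is the radius demanded by the inductive hypothesis at level $\ell-1$ (inflated through \cref{7.DeltaRSummary}(i),(iii) so that simultaneous control of the several lower-order odd polynomials $v_m$ and $w_j$ is available on a common set). Given any $R$-element sequence $(n_j)_{j=1}^R$ with $D_\ell((n_j))\subseteq\N$ and any $\alpha$, I partition the relevant configuration space by $\mathcal Q$, extract a homogeneous set $D$ of size $\ge r'$, run the inductive hypothesis inside $D$ to handle $v_m$ and the $w_j$, and add up the resulting small errors to obtain \eqref{7.CubeCloseTo0}'s analogue for degree $2\ell-1$.

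The hard part will be \textbf{bookkeeping the recursion of radii and the dimension $d$ of the pigeonholing box}, rather than any single estimate. In the infinitary proof the inductive hypothesis is invoked on the \emph{same} ultrafilter $p$ with no loss, so the nested limits $\plimgG{p_{\ell-1}}{m}{\N}\plimgG{p_{\ell-1}}{n}{\N}$ cost nothing; in the finitary world each nested application shrinks the usable index set and therefore demands that the outer Ramsey radius $R$ be chosen as a (rapidly growing) function of the inner radius $r'=r(\epsilon',\ell-1)$. Making sure that a \emph{single} homogeneous set $D$ simultaneously controls the diagonal term, the odd polynomial $v_m(n)$ for the relevant values of $m$, and all $\ell-1$ of the cross terms $m^{2j-1}w_j(n)$ — rather than needing $\ell$ separate incompatible sets — is precisely what forces the product-box partition of $\mathbb T^d$ and the inflation of $r'$ via \cref{7.DeltaRSummary}; getting this dependency internally consistent is where the proof becomes cumbersome, exactly as the introduction warned.
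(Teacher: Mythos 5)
Your overall architecture coincides with the paper's own proof of \cref{7.FinitisticOddDegreeRecurrence}: reduce to monomials $n^{2\ell-1}\alpha$ using uniformity in $\alpha$ together with \cref{7.DeltaRSummary}, induct on $\ell$, apply \cref{7.FinitisticRamsey} to a box-partition coloring of a torus, run the level-$(\ell-1)$ hypothesis (inflated via \cref{7.DeltaRSummary} so that it handles arbitrary odd polynomials of degree at most $2\ell-3$ with a radius independent of the coefficients) inside the homogeneous set, and finish with the binomial expansion and an accumulation-of-$\tfrac1N$-errors count; your diagnosis that the real pain is the bookkeeping of radii is also the paper's.

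There is, however, one step which, as you literally describe it, would fail, and it is precisely the crux of the finitization. You propose to handle the mixed terms $m^{2j-1}w_j(n)\alpha$ by pigeonholing the \emph{factors} $w_j(n)\alpha$ into fine boxes of $\mathbb T$ and then feeding $m^{2j-1}\cdot(\text{approximate constant})$ to the inductive hypothesis. This mirrors the infinitary computation, where one first takes the exact limit $\plimgG{p_{\ell-1}}{n}{\N}w_j(n)\alpha=c_j$ and then kills $m^{2j-1}c_j$ by induction in $m$; but finitarily, $\|w_j(n)\alpha-c_j\|<\tfrac1N$ only gives $\|m^{2j-1}w_j(n)\alpha-m^{2j-1}c_j\|<\tfrac{m^{2j-1}}{N}$, and $m$ ranges over iterated differences of an adversarial sequence chosen \emph{after} $N$ and $R$ are fixed, so this error is unbounded. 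The paper never pigeonholes a factor: the colored coordinates are the full cross-term sums, the quantities appearing in \eqref{7.NearbyLargestPower}, \eqref{7.NearbyPolysLargeOdd} and \eqref{7.NearbyPolysSmallOdd}, each viewed as a single element of $\mathbb T$. Homogeneity makes each full sum approximately constant ($\beta_1$, $\beta_2$); to see that the constant is near $0$, one freezes one half of the configuration and regards the sum as an odd polynomial in the other half's iterated difference, whose real coefficients involve the frozen half --- here the coefficient-uniformity of the inductive radius $R_1$ is exactly what is needed --- so that one good instance exists, and approximate constancy spreads the smallness to all instances. The same mechanism is what must replace your claim that the inductive hypothesis forces the lower-degree quantities to be small ``on a large subset'': the hypothesis yields only one configuration per sequence, and it is Ramsey homogeneity that upgrades one to all. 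Since you do propose coloring all sub-monomial evaluations (the full products), the correct route is available inside your framework, but the argument has to be channeled through it rather than through pigeonholing $w_j(n)\alpha$ alone.
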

\begin{proof}
By \cref{7.DeltaRSummary}, it suffices to show that for any  $\ell\in\N$ and any $\epsilon>0$ there exists an $R\in\N$ such that for any $\alpha\in\R$ the set 
$\mathcal R( n^{2\ell-1}\alpha,\epsilon)$ 
is $\Delta_{\ell,R}^*$.\\
We proceed by induction on $\ell\in\N$. The case $\ell=1$ follows from the pigeon hole principle. Now let $\ell\geq 2$ and suppose that the result holds for any $t<\ell$. Let $\epsilon>0$ and let $R_1\geq 2^{\ell-1}$  be a natural number guaranteeing that for any $\alpha_1,...,\alpha_{\ell-1}\in\R$, and any  $R_1$-element sequence $(n_k)_{k=1}^{R_1}$ in $\Z$ with $D_{\ell}((n_k)_{k=1}^{R_1})\subseteq \N$, the set
$$\{\partial(n_{k_1},...,n_{k_{2^{\ell-1}}})\,|\,1\leq k_1<\cdots<k_{2^{\ell-1}}\leq R_1\}$$
has a non-empty intersection with the set
$\mathcal R(\sum_{j=1}^{\ell-1}n^{2j-1}\alpha_j,\frac{\epsilon}{5})$.\\
By \cref{7.FinitisticRamsey}, there exists  $R\in\N$ such that for any $\alpha\in\R$ and any R-element sequence $(n_k)_{k=1}^{R}$ in $\Z$ with $D_\ell((n_k)_{k=1}^R)\subseteq \N$, there exist $\beta_1,\beta_2\in\mathbb T$ and a 2R$_1$-element subsequence $(m_s)_{s=1}^{2R_1}$ of $(n_k)_{k=1}^R$ such that for any $1\leq s_1<\cdots<s_{2^{\ell-1}}\leq R_1$ and any  $R_1+1\leq t_1<\cdots<t_{2^{\ell-1}}\leq 2R_1$, 
\begin{equation}\label{7.NearbyLargestPower}
\|(\partial(m_{t_1},...,m_{t_{2^{\ell-1}}}))^{2\ell-1}\alpha-(\partial(m_{s_1},...,m_{s_{2^{\ell-1}}}))^{2\ell-1}\alpha\|<\frac{\epsilon}{5},
\end{equation}
\begin{equation}\label{7.NearbyPolysLargeOdd} 
\|\sum_{j=1}^{\ell-1}\binom{2\ell-1}{2j}(-\partial(m_{s_1},...,m_{s_{2^{\ell-1}}}))^{2j}(\partial(m_{t_1},...,m_{t_{2^{\ell-1}}}))^{2(\ell-j)-1}\alpha
-\beta_1\|<\frac{\epsilon}{10},
\end{equation}
and 
\begin{equation}\label{7.NearbyPolysSmallOdd} 
\|\sum_{j=1}^{\ell-1}\binom{2\ell-1}{2(\ell-j)-1}(-\partial(m_{s_1},...,m_{s_{2^{\ell-1}}}))^{2(\ell-j)-1}(\partial(m_{t_1},...,m_{t_{2^{\ell-1}}}))^{2j}\alpha
-\beta_2\|<\frac{\epsilon}{10}.
\end{equation}
 By our choice of $R_1$, we have  that  for any  $1\leq s_1<\cdots<s_{2^{\ell-1}}\leq R_1$ there exists $R_1+1\leq t_1<\cdots<t_{2^{\ell-1}}\leq 2R_1$, such that
$$\|\sum_{j=1}^{\ell-1}\binom{2\ell-1}{2j}
(-\partial(m_{s_1},...,m_{s_{2^{\ell-1}}}))^{2j}(\partial(m_{t_1},...,m_{t_{2^{\ell-1}}}))^{2(\ell-j)-1}\alpha\|<\frac{\epsilon}{5},$$
which, together with \eqref{7.NearbyPolysLargeOdd}, implies that
for any  $1\leq s_1<\cdots<s_{2^{\ell-1}}\leq R_1$ and any  $R_1+1\leq t_1<\cdots<t_{2^{\ell-1}}\leq 2R_1$,
$$\|\sum_{j=1}^{\ell-1}\binom{2\ell-1}{2j}(-\partial(m_{s_1},...,m_{s_{2^{\ell-1}}}))^{2j}(\partial(m_{t_1},...,m_{t_{2^{\ell-1}}}))^{2(\ell-j)-1}\alpha\|<\frac{2\epsilon}{5}.$$
Similarly, by \eqref{7.NearbyPolysSmallOdd}, we have that for any  $1\leq s_1<\cdots<s_{2^{\ell-1}}\leq R_1$ and any  $R_1+1\leq t_1<\cdots<t_{2^{\ell-1}}\leq 2R_1$,
$$\|\sum_{j=1}^{\ell-1}\binom{2\ell-1}{2(\ell-j)-1}(-\partial(m_{s_1},...,m_{s_{2^{\ell-1}}}))^{2(\ell-j)-1}(\partial(m_{t_1},...,m_{t_{2^{\ell-1}}}))^{2j}\alpha\|<\frac{2\epsilon}{5}.$$
So, by \eqref{7.NearbyLargestPower}, we have that for any  $1\leq s_1<\cdots<s_{2^{\ell-1}}\leq R_1$ and any $R_1+1\leq t_1<\cdots<t_{2^{\ell-1}}\leq 2R_1$,
$$\|(\partial(m_{t_1},...,m_{t_{2^{\ell-1}}})-\partial(m_{s_1},...,m_{s_{2^{\ell-1}}}))^{2\ell-1}\alpha\|<\epsilon;$$
completing the proof.
\end{proof}
As we mentioned in the Introduction, for any real polynomial $v$, with $v(0)\in\Z$, the set $\mathcal R(v,\epsilon)$ is an IP$^*$ set (see \cite[Theorem 1.21]{FBook}). As a matter of fact, $\mathcal R(v,\epsilon)$ is actually an IP$^*_r$ set. Given $r\in\N$, a set $E\subseteq \N$ is called an \textbf{IP$_r$ set}\index{IP$_r$} if it contains the finite sums set of the form $$\text{FS}((n_k)_{k=1}^r)=\{n_{k_1}+\cdots+n_{k_m}\,|\,1\leq k_1<\cdots<k_m\leq r,\,1\leq m\leq r\},\index{FS$((n_k)_{k=1}^r)$}$$
where $(n_k)_{k=1}^r$ is an r-element sequence in $\N$. A set $E\subseteq \N$ is an \textbf{IP$_r^*$\index{IP$_r^*$} set} if it has a non-trivial intersection with any IP$_r$ set. It is of interest to juxtapose \cref{7.FinitisticOddDegreeRecurrence} with the following result.
\begin{thm}[see the proof of  Theorem 7.7 in \cite{BerUltraAcrossMath}]\label{3.IP0Returns} 
Let $N\in\N$. Then for any $\epsilon>0$ there exists $r=r(N,\epsilon)\in\N$  such that for any real polynomial $v(x)=\sum_{j=1}^{N}a_jx^j$ the set $\mathcal R(v,\epsilon)$
is an \text{\rm{IP$_r^*$}} set.
\end{thm}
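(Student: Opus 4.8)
The plan is to induct on the degree $N$, replacing the idempotent–ultrafilter argument that yields the infinite statement by a finitary (Folkman-type) version of Hindman's theorem so as to obtain a bound $r$ that is uniform in the coefficients. As a first, cost-free reduction I would normalize: since $s\in\N$ implies $\sum_j m_js^j\in\Z$ for integers $m_j$, replacing each $a_j$ by its fractional part leaves $\|v(s)\|$ unchanged, so one may assume $a_j\in[0,1)$. It is worth recording the soft reason the statement holds with no parity hypothesis on $v$: for any idempotent $p=p+p\in\beta\N$ one has $\plimgG{p}{n}{\N}v(n)=0$ whenever $v(0)=0$ (induct on $\deg v$ via $v(n+m)=v(n)+v(m)+C(n,m)$ and idempotency), so $\mathcal R(v,\epsilon)$ lies in every idempotent and is thus IP$^*$. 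This is the additive shadow of \cref{2.OddDegreeRecurrence}, and it explains why \emph{finite sums} (IP), rather than iterated differences ($\Delta_\ell$), are the correct setting. That argument, however, is not uniform in the coefficients and yields no bound; indeed, closeness of coefficients does \emph{not} control $\|v(n)\|$ for large $n$, so soft compactness fails and a genuinely Ramsey-theoretic argument is unavoidable.

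For the base case $N=1$, $v(x)=a_1x$, I would take partial sums $S_0=0,S_1,\dots,S_R$ and color $S_i$ by the cell of a mesh-$\epsilon$ grid of $\mathbb T$ containing $a_1S_i$; if $R+1>\lceil 1/\epsilon\rceil$, two partial sums collide and $S_j-S_i=n_{i+1}+\cdots+n_j$ is a finite sum in $\mathcal R(v,\epsilon)$. For the inductive step ($N\ge 2$), given an $r$-element sequence I would apply the finite sums theorem to the coloring of its FS-set by the cell of a fine grid of $\mathbb T$ containing the value of $v$: this produces a block system $b_1,\dots,b_m$ (blocks of consecutive $n_k$'s, with disjoint supports) such that \emph{all} nonempty finite sums $\sum_{i\in I}b_i$ have $v$-value lying in one cell of diameter $<\epsilon/2$. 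Now the elementary identity
\[
v(b_1+y)-v(y)=v(b_1)+C(y,b_1),\qquad C(y,w):=v(y+w)-v(y)-v(w),
\]
is the engine of the degree reduction: as a polynomial in $y$, $P(y):=C(y,b_1)$ has degree $\le N-1$ and $P(0)=0$. Applying the inductive hypothesis at degree $N-1$ to $P$ and to the sequence $b_2,\dots,b_m$ (of length $m-1\ge r(N-1,\epsilon/2)$), I obtain a finite sum $y^\ast=\sum_{i\in J}b_i$, $J\subseteq\{2,\dots,m\}$, with $\|P(y^\ast)\|<\epsilon/2$. Since $\{1\}\cup J$ and $J$ are finite sums of the block system, the stabilization gives $\|v(b_1+y^\ast)-v(y^\ast)\|<\epsilon/2$, hence $\|v(b_1)\|\le\|v(b_1)+P(y^\ast)\|+\|P(y^\ast)\|<\epsilon$. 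Thus $s=b_1$, a nonempty finite sum of the original sequence, lies in $\mathcal R(v,\epsilon)$.

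This yields the recursion $r(N,\epsilon)=F\bigl(\lceil 2/\epsilon\rceil,\ r(N-1,\epsilon/2)+1\bigr)$, where $F(c,m)$ is the finite-sums (Folkman) number; crucially the colorings depend only on a fixed grid of $\mathbb T$, not on the coefficients, so all bounds are uniform over $v$ as required. The main obstacle is precisely this inductive step: one must effect the degree reduction through the \emph{additive} cross-term $C$ (forcing the use of the finite sums theorem, since consecutive-block differences form only a $\Delta_1$ set and $\mathcal R(v,\epsilon)$ is not $\Delta_1^*$ for $\deg v\ge 2$), and must arrange the stabilization so that the leftover polynomial $P$ is of strictly smaller degree with zero constant term and is handled by the inductive hypothesis. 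This is the IP counterpart of the Ramsey-plus-degree-induction scheme of \cref{7.FinitisticOddDegreeRecurrence}, with Folkman's theorem playing the role that \cref{7.FinitisticRamsey} plays there.
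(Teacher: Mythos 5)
Your proof is correct, and it is worth noting that the paper itself supplies no proof of \cref{3.IP0Returns}: the theorem is quoted with a pointer to the proof of Theorem 7.7 in \cite{BerUltraAcrossMath}, which runs through idempotent ultrafilters (for $p=p+p$ one has $\plimgG{p}{n}{\N}v(n)=0$ in $\mathbb T$ whenever $v(0)=0$ --- precisely the ``soft'' argument you sketch as motivation and then discard). Your route is genuinely different: a fully finitary induction on the degree, in which one colors the finite-sums set of a long sequence by a fixed grid of $\mathbb T$ according to the value of $v$, extracts via the finite (compactness) form of Hindman's theorem a block system $b_1,\dots,b_m$ all of whose finite sums have $v$-value in one cell, and then reduces degree through the cross term $C(y,b_1)=v(y+b_1)-v(y)-v(b_1)$, which has degree at most $N-1$ and zero constant term, so that the inductive hypothesis --- crucially, uniform over coefficients, since the coefficients of $C(\,\cdot\,,b_1)$ depend on $b_1$ --- applies to it; monochromaticity applied to the two finite sums $y^\ast$ and $b_1+y^\ast$, together with the triangle inequality for $\|\cdot\|$, then yields $\|v(b_1)\|<\epsilon$. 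I checked the details (base case by pigeonhole, well-definedness of the coloring, the recursion $r(N,\epsilon)=F(\lceil 2/\epsilon\rceil,\,r(N-1,\epsilon/2)+1)$) and found no gap; the only cosmetic point is that the finite unions theorem gives blocks with disjoint (or increasing) supports rather than literally consecutive ones, which is all your argument uses. What your approach buys is exactly what the citation leaves implicit: transparent uniformity in the coefficients and an effective, if enormous, bound through the finite-unions number $F$; what the ultrafilter route buys is brevity, at the price that the coefficient-uniform finitistic statement must be extracted by a separate compactness step, which --- as you rightly observe --- cannot proceed naively through convergence of coefficients. Your proof is also, as you note, the precise IP-analogue of the paper's own finitistic proof of \cref{7.FinitisticOddDegreeRecurrence}, with the finite Hindman/Folkman theorem playing the role there played by \cref{7.FinitisticRamsey}.
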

It is natural to ask what is the relation between \cref{7.FinitisticOddDegreeRecurrence} and \cref{3.IP0Returns} when $v$ is an odd real polynomial. We will show in Section 8 that the families of sets $\Delta_{\ell,r}^*$ and IP$_r^*$ are, so to say, in general position and so \cref{7.FinitisticOddDegreeRecurrence} provides a new diophantine approximation result when $v$ is an odd real polynomial.

%%%%%%%%%%%%%%%%%%%%%%%%%%%%%%%%%%%%%%%%%%%%%%%%%%%%%%%%%%%%%%%%%%%%%%%%%%%%%%%%%%%%%%%%%%%%%%%%%%%%%%%%%%%%%%%%%%%%%%%%%%%%%%%%%%%%%%%%%%%%%%%%%%%%%%%%%%%%%%%%%%%%%%%%%%%%%%%%%%%%%%%%%%%%%%%%%%%%%%%%%%%%%%%%%%%%%%%%%%%%%%%%%%%%%%%%%%%%%%%%%%%%%%%%%%%%%%%%%%%%%%%%%%%%%%%%%%%%%%%%%%%%%%%%%%%%%%%%%%%%%%%%%%%%%%%%%%%%%%%%%%%%%%%%%%%%%%%%%%%%%%%%%%%%%%%%%%%%%%%%%%%%%%%%%%%%%%%%%%%%%%%%%%%%%%%%%%%%%%%%%%%%%%%%%%%%%%%%%%%%%%%%%%%%%%%%%%%%%%%%%%%%%%%%%%%%%%%%%%%%%%%%%%%%%%%%%%%%%%               SECTION 4 
\section{A converse to \cref{0.OddDegreeRecurrence}}\label{SectionAConverseResult}
In this section we prove the following converse of \cref{0.OddDegreeRecurrence}.
\begin{thm}\label{2.OddPolyDegreeThm}
Let $\ell\in\N$ and let $v(x)$ be an odd real polynomial with irrational leading coefficient. If  for each $\epsilon>0$ the set
$$\mathcal R(v,\epsilon)=\{n\in\N\,|\,\|v(n)\|<\epsilon\}$$
is $\Delta_\ell^*$, then $\deg (v)\leq 2\ell-1$.
\end{thm}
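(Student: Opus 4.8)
The plan is to prove the contrapositive: assuming $v$ is an odd real polynomial with irrational leading coefficient and $\deg(v) = 2k-1 \geq 2\ell+1$ (so $k > \ell$), I will exhibit an increasing sequence $(n_j)_{j\in\N}$ in $\N$ and an $\epsilon > 0$ such that $\mathcal R(v,\epsilon)$ misses the entire $\ell$-th difference set $D_\ell((n_j)_{j\in\N})$, which by \cref{1.WorkingDfn} shows $\mathcal R(v,\epsilon)$ is not $\Delta_\ell^*$. The guiding intuition comes from the computation in the introduction that defeated $\Delta_2^*$ for $n^2\alpha$: there one finds a sequence along which the relevant difference expression concentrates away from $0$ on the torus. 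Here the obstruction is a degree mismatch — applying the $\partial$ operator $\ell$ times to a polynomial of degree $2k-1 > 2\ell-1$ does not annihilate the leading behavior, and I want to pin this residual mass to a fixed nonzero value on $\mathbb T$.

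First I would use \cref{1.LemaIteratedDifLimit} as the organizing device: for any non-principal ultrafilter $p$, we have
$$\plimgG{p_\ell}{n}{\N} v(n) = \plimgG{p}{m_1}{\N}\cdots\plimgG{p}{m_{2^\ell}}{\N} v\bigl(\partial(m_1,\ldots,m_{2^\ell})\bigr).$$
Expanding $v(\partial(m_1,\ldots,m_{2^\ell}))$ as a polynomial in the $m_i$'s, the calculation underlying \cref{2.OddDegreeRecurrence} shows that all terms of degree $\le 2\ell-1$ get killed, but since $\deg(v) = 2k-1 > 2\ell-1$ a nonzero multilinear "top part" survives. The strategy is to choose a sequence $(n_j)$ — via multidimensional Weyl equidistribution, exactly as in the introduction's footnote construction for the $n^2\alpha$ example — so that the various monomials $n_{j}^a$ and mixed products $n_{j}n_{k}\alpha$ (and higher analogues up to the degrees appearing after expansion) simultaneously converge to prescribed points of $\mathbb T$, with the lower-degree "full-weight" coordinates tending to $0$ while a genuinely surviving top-degree coordinate is forced to approach a fixed value $c \neq 0 \pmod 1$. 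Then for all sufficiently large, widely separated $j_1 < \cdots < j_{2^\ell}$ one gets $\|v(\partial(n_{j_1},\ldots,n_{j_{2^\ell}})) - c\| < \epsilon$, so with $\epsilon < \|c\|/2$ the difference set avoids $\mathcal R(v,\epsilon)$.

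The main obstacle — and where the real work lies — is the bookkeeping of the expansion of $v(\partial(m_1,\ldots,m_{2^\ell}))$ and the verification that one can \emph{independently} prescribe the limits of the surviving top-degree cross-terms while simultaneously forcing all the lower-degree terms that carry full ultrafilter weight to vanish. Concretely, $\partial(m_1,\ldots,m_{2^\ell})$ is an alternating $\pm 1$ combination of the $m_i$, and raising this to the power $2k-1$ produces a large collection of multinomial terms; I must identify which of these survive the iterated $p$-limit (those are the terms where no single variable appears with "too high" a localized power relative to the telescoping cancellations) and arrange a single target sequence making the aggregate limit equal to a nonzero constant. The cleanest route is to first handle the pure monomial $v(x) = \alpha x^{2k-1}$ with $\alpha$ irrational (the general odd polynomial reduces to its leading term plus lower-degree odd terms, the latter being $\Delta_\ell^*$-harmless by \cref{2.OddDegreeRecurrence} since they have degree $\le 2\ell-1$ only when their own degree is small — so care is needed and the genuinely dangerous contribution is isolated to the top degree $2k-1$). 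Establishing the existence of the prescribed sequence is then a direct appeal to Weyl's theorem on the joint equidistribution of $(\,n\beta_1, n^2\beta_2, \ldots\,)$-type systems, generalizing the two-dimensional instance invoked in the introduction, and I expect this existence step plus the combinatorial identification of the surviving term to be the crux of the argument.
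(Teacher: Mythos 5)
Your overall frame --- proving the contrapositive by building, via joint equidistribution, a sequence whose $\ell$-th difference set avoids $\mathcal R(v,\epsilon)$, with iterated ultrafilter limits as in \cref{1.LemaIteratedDifLimit} organizing the computation --- is the same as the paper's. But the proposal stops exactly where the proof content lies, and on both of the points you defer, the difficulty is of a different nature than you suggest. The "surviving term" problem cannot be settled by "a direct appeal to Weyl's theorem" for powers of the leading coefficient $\alpha$ alone. When you evaluate an iterated limit of a cross term such as $m_1^{a_1}m_2^{a_2}\alpha$, the innermost limit gives $m_1^{a_1}c$, where $c=\plimgG{p}{m}{\N}m^{a_2}\alpha$ is merely some point of $\mathbb T$; the next limit then requires controlling $\plimgG{p}{m}{\N}m^{a_1}c$ --- a diophantine quantity attached to the \emph{limit} $c$, not to $\alpha$, and equidistribution theorems say nothing about it unless you have arranged in advance that $c$ is one of finitely many prescribed irrationals. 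This is what forces the recursive mechanism the paper encodes in \cref{2.TechnicalLemma}: one introduces rationally independent irrationals $\alpha_0,\dots,\alpha_{k-1}$ (with $\alpha_{k-1}$ the leading coefficient, $2k-1=\deg v$) and constructs the sequence so that $n\alpha_j\to 0$ while a specified integer multiple of $n^2\alpha_j$ tends to $\alpha_{j-1}$; each differencing step then lowers the degree by exactly two and trades $\alpha_j$ for $\alpha_{j-1}$, terminating at $\plimgG{p}{n}{\N}n\alpha_0=\tfrac{1}{2}\neq 0$. This closed chain of auxiliary irrationals is the key idea of the proof, and it is absent from your plan.

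Second, your reduction to the top monomial does not work at level $\ell$. If $\deg v=2k-1$ with $k>\ell$, the odd terms of degree $2j-1$ with $\ell<j<k$ are \emph{not} killed by $p_\ell$-limits (\cref{2.OddDegreeRecurrence} only covers degree at most $2\ell-1$), so at level $\ell$ the dangerous contribution is not isolated in the leading term; you flag this ("care is needed") but never resolve it. The paper's resolution is to run the whole cascade at level $k-1$, where every non-leading term of $v$ does die by \cref{2.OddDegreeRecurrence}, obtaining $\plimgG{q_{k-1}}{n}{\N}v(n)=\tfrac{1}{2}$ for the ultrafilter $q$ generated by the constructed sequence, and only then to descend: setting $p=q_{k-1-\ell}$ one has $p_\ell=q_{k-1}$, so $\plimgG{p_\ell}{n}{\N}v(n)=\tfrac{1}{2}$, and \cref{1.DeltaL*characterization} kills $\Delta_\ell^*$. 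Note that the witness is therefore not the constructed sequence itself but an iterated-difference ultrafilter built from it; a direct construction of a sequence whose $2^\ell$-fold differences avoid $\mathcal R(v,\epsilon)$, as you propose, would have to incorporate this extra $(k-1-\ell)$-fold differencing by hand. Without these two ingredients --- the chain of auxiliary irrationals and the level-$(k-1)$-then-descend device --- the proposal remains a plan rather than a proof.
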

We will derive \cref{2.OddPolyDegreeThm} from the following Lemma which will be also used below in Section 8. 
\begin{lem}\label{2.TechnicalLemma}
Let $p\in\beta\N$ be a non-principal ulltrafilter, let $\ell\in\N$ and let $\alpha_0,...,\alpha_\ell\in\mathbb T$ be such that for $j\in\{1,...,\ell\}$,
\begin{equation}\label{2.LemmaCondition1}
\plimgG{p}{n}{\N}n\alpha_j=0
\end{equation}
and
\begin{equation}\label{2.LemmaCondition2}
-2^{j-1}\binom{2j+1}{2}\plimgG{p}{n}{\N}n^2\alpha_j=\alpha_{j-1}.
\end{equation}
Then 
\begin{equation}\label{2.LemmaConsequence}
\plimgG{p_\ell}{n}{\N} n^{2\ell+1}\alpha_\ell=\plimgG{p}{n}{\N}n\alpha_0.
\end{equation}
\end{lem}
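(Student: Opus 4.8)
The plan is to prove the identity \eqref{2.LemmaConsequence} by induction on $\ell$, using \cref{1.LemaIteratedDifLimit} to unfold one layer of the iterated-difference $p$-limit at each step. The base case $\ell=0$ is trivial (it reads $\plimgG{p}{n}{\N}n\alpha_0=\plimgG{p}{n}{\N}n\alpha_0$), so I would focus on the inductive step. Writing $q=p_{\ell-1}$, \cref{1.LemaIteratedDifLimit} gives
\[
\plimgG{p_\ell}{n}{\N}n^{2\ell+1}\alpha_\ell=\plimgG{q}{m}{\N}\plimgG{q}{n}{\N}(n-m)^{2\ell+1}\alpha_\ell,
\]
so the first real step is to expand $(n-m)^{2\ell+1}$ by the binomial theorem and evaluate the inner $\plimgG{q}{n}{\N}$ term by term.

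The key simplification I expect is that most binomial terms vanish under the iterated limit. Each term is of the form $\binom{2\ell+1}{k}(-m)^{k}n^{2\ell+1-k}\alpha_\ell$. When the exponent of $n$ is \emph{odd}, the factor $n^{2\ell+1-k}\alpha_\ell$ is an odd monomial, and since $q=p_{\ell-1}=(p)_{\ell-1}$, \cref{2.OddDegreeRecurrence} forces $\plimgG{q}{n}{\N}n^{\text{odd}}\alpha_\ell=0$ for every odd exponent that is at most $2(\ell-1)+1=2\ell-1$ — that is, for every odd exponent except the top one $2\ell+1$ itself. The surviving pieces are therefore: the leading term $n^{2\ell+1}\alpha_\ell$ (which folds back into the induction via the $m$-variable), the pure $m$-term $(-m)^{2\ell+1}\alpha_\ell$ (whose inner limit is a constant and which then vanishes under $\plimgG{q}{m}{\N}$ because $2\ell+1$ is odd), and the terms where $n$ carries an \emph{even} exponent. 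Among those even-exponent terms, condition \eqref{2.LemmaCondition1} kills every one whose $n$-exponent equals $1$, and more importantly I would argue that every even power $n^{2i}\alpha_\ell$ with $i\ge 2$ also limits to $0$ under $q$; the crucial surviving contribution is the single term with $n^{2}$, namely $\binom{2\ell+1}{2\ell-1}(-m)^{2\ell-1}n^{2}\alpha_\ell=\tfrac{(2\ell+1)!}{2!(2\ell-1)!}(-m)^{2\ell-1}n^2\alpha_\ell$.

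Pulling this surviving term out, its inner limit is $\tfrac{(2\ell+1)!}{2!(2\ell-1)!}(-m)^{2\ell-1}\plimgG{q}{n}{\N}n^2\alpha_\ell$, and here is where hypothesis \eqref{2.LemmaCondition2} enters: it is calibrated precisely so that this equals $(-1)^{2\ell-1}2^{-(\ell-1)}m^{2\ell-1}\cdot(-2^{\ell-1}\tfrac{(2\ell+1)!}{2!(2\ell-1)!}\plimgG{p}{n}{\N}n^2\alpha_\ell)$, which I would reconcile with the constant $\alpha_{\ell-1}$ from \eqref{2.LemmaCondition2}. The outer $\plimgG{q}{m}{\N}$ then produces $\plimgG{p_{\ell-1}}{m}{\N}m^{2(\ell-1)+1}\alpha_{\ell-1}$ (after tracking the sign and the $2^{\ell-1}$ factor), to which I apply the induction hypothesis to conclude it equals $\plimgG{p}{n}{\N}n\alpha_0$. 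The main obstacle — and the one point demanding genuine care rather than bookkeeping — is verifying that the hypothesis $\plimgG{q}{n}{\N}n^2\alpha_\ell$ is in fact a well-defined constant that can be pulled through the $m$-limit (constants commute with $p$-limits in the compact Hausdorff space $\mathbb T$) and, above all, matching the exact combinatorial constant $2^{j-1}\tfrac{(2j+1)!}{2!(2j-1)!}$ and the sign $(-1)^{2\ell-1}=-1$ so that the $m^{2\ell-1}$-coefficient lines up with $\alpha_{\ell-1}$ as in \eqref{2.LemmaCondition2}; a sign or power-of-two slip here would break the induction, so I would check the normalization explicitly in the first nontrivial case $\ell=1$ before trusting the general step.
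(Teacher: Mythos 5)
Your overall skeleton --- induction, unfolding one difference layer via \cref{1.LemaIteratedDifLimit}, binomial expansion of $(n-m)^{2\ell+1}$, isolating $\binom{2\ell+1}{2}m^{2\ell-1}n^{2}\alpha_\ell$ as the surviving term, and feeding it through \eqref{2.LemmaCondition2} into the induction hypothesis --- is exactly the paper's approach. But the two vanishing claims that do all the bookkeeping are wrong as stated, and one of them is the heart of the proof. You claim that \cref{2.OddDegreeRecurrence}, applied with $q=p_{\ell-1}$, kills $\plimgG{q}{n}{\N}n^{2d-1}\alpha_\ell$ for every odd exponent up to $2\ell-1$. That theorem kills an odd monomial of degree $2d-1$ only under an ultrafilter of the form $(p')_d$; since $p_{\ell-1}=(p_{\ell-1-d})_d$ requires $d\leq\ell-1$, you only get the exponents $1,3,\dots,2\ell-3$. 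The exponent $2\ell-1$, i.e.\ the term $\binom{2\ell+1}{2}m^2n^{2\ell-1}\alpha_\ell$, is precisely the one \emph{not} covered, and no unconditional argument can cover it: this very lemma is used in the proof of \cref{2.OddPolyDegreeThm} to manufacture $p$ and irrational $\alpha$'s with $\plimgG{p_\ell}{n}{\N}n^{2\ell+1}\alpha_\ell=\frac{1}{2}$, so ``one level too early'' vanishing of top-degree odd monomials is false in general. Under the hypotheses \eqref{2.LemmaCondition1}--\eqref{2.LemmaCondition2} the vanishing $\plimgG{p_{\ell-1}}{n}{\N}n^{2\ell-1}\alpha_\ell=0$ does hold, but proving it is a genuine step: the paper applies the induction hypothesis a \emph{second} time, to the rescaled chain $\alpha_1\cdot\frac{3!}{1!},\dots,\alpha_\ell\cdot 2^{\ell-1}\frac{(2\ell+1)!}{(2\ell-1)!}$ (which again satisfies the hypotheses of the lemma), yielding $2^{\ell-1}\frac{(2\ell+1)!}{(2\ell-1)!}\plimgG{p_{\ell-1}}{n}{\N}n^{2\ell-1}\alpha_\ell=\plimgG{p}{n}{\N}n\left(\alpha_1\cdot\frac{3!}{1!}\right)=0$. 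This second, rescaled application of the inductive hypothesis is entirely absent from your proposal.

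Second, your claim that every even power $n^{2i}\alpha_\ell$ with $i\geq 2$ has zero limit under $q=p_{\ell-1}$ is false, even granting the hypotheses. Already for $\ell=2$: $\plimgG{p_1}{n}{\N}n^4\alpha_2=\plimgG{p}{a}{\N}\plimgG{p}{b}{\N}(b-a)^4\alpha_2$, and the resulting expression involves $\plimgG{p}{b}{\N}b^4\alpha_2$ and $\plimgG{p}{b}{\N}b^3\alpha_2$, which are completely unconstrained by \eqref{2.LemmaCondition1}--\eqref{2.LemmaCondition2}; generically the limit is nonzero. These terms die for a different reason: each carries an odd power $m^{2s-1}$ with $s\leq\ell-1$, so after taking the inner $n$-limit (a constant $c\in\mathbb T$, in general nonzero), the outer limit $\plimgG{p_{\ell-1}}{m}{\N}m^{2s-1}c$ vanishes --- here \cref{2.OddDegreeRecurrence} (equivalently, the paper's auxiliary inductive statement) legitimately applies, because $2s-1\leq 2\ell-3$. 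In other words, the even-in-$n$ terms must be killed by the outer $m$-limit, not the inner $n$-limit. Relatedly, your treatment of the extreme terms is off: $\plimgG{p_{\ell-1}}{m}{\N}m^{2\ell+1}\alpha_\ell$ does not vanish ``because $2\ell+1$ is odd'' (same off-by-one issue); rather, the $j=0$ and $j=2\ell+1$ terms survive individually and cancel each other, both limits being equal to $\plimgG{p_{\ell-1}}{k}{\N}k^{2\ell+1}\alpha_\ell$. So while you identified the correct surviving term and the correct role of \eqref{2.LemmaCondition2}, the proposal is missing the two arguments that actually make all the other terms disappear.
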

\begin{proof}
The proof is by  induction on $\ell\in\N$.  When $\ell=1$,  note that
$$\plimgG{(-p+p)}{n}{\N}n^3\alpha_1=\plimgG{p}{m}{\N}\plimgG{p}{n}{\N}\left((n^3-m^3)\alpha_1+3m^2n\alpha_1-3mn^2\alpha_1\right).$$
So by \eqref{2.LemmaCondition1} and \eqref{2.LemmaCondition2}, we have that
$$\plimgG{(-p+p)}{n}{\N}n^3\alpha_1=\plimgG{p}{m}{\N}m\alpha_0$$
as desired.\\
Let $\ell\geq 2$, let $p\in\beta\N$ be a non-principal ultrafilter and  let  $\alpha_0,...,\alpha_\ell\in\mathbb T$ satisfy \eqref{2.LemmaCondition1} and \eqref{2.LemmaCondition2}.  Suppose that \cref{2.TechnicalLemma} holds for all $\ell_0<\ell$.  For any $\alpha\in \mathbb T$ and any $d\in\{1,...,\ell-1\}$, let $q^{(d)}=p_{\ell-d}$ and define  $\beta^{(d)}_{d-1},...,\beta^{(d)}_0\in \mathbb T$ by letting $\beta^{(d)}_{d-1}=\alpha$ and setting
$$\beta^{(d)}_{j-1}=-2^{j-1}\binom{2j+1}{2}\plimgG{q^{(d)}}{n}{\N}n^2\beta_{j}^{(d)}$$
for each $j\in\{1,...,d-1\}$.\\
Since for any non-principal ultrafilter $q\in\beta\N$ and any $\beta\in\mathbb T$,
$$\plimgG{ q_1}{n}{\N}n\beta=\plimgG{q}{m}{\N}\plimgG{q}{n}{\N}\big((n-m)\beta\big)=0,$$
it follows from the inductive hypothesis that for any   $d\in\{1,...,\ell-1\}$, 
\begin{equation}\label{2.InductiveCorollary}
\plimgG{p_{\ell-1}}{n}{\N}n^{2d-1}\alpha=\plimgG{q^{(d)}_{d-1}}{n}{\N}n^{2d-1}\beta^{(d)}_{d-1}=\plimgG{q^{(d)}}{n}{\N}n\beta^{(d)}_0=0.
\end{equation}
Note now that for each $j\in\{2,...,\ell\}$,
\begin{multline*}
-2^{j-2}\binom{2j-1}{2}\plimgG{p}{n}{\N}\left(n^2\alpha_j\cdot 2^{j-1}\frac{(2j+1)!}{(2j-1)!}\right)\\
=-2^{j-2}\frac{(2j-1)!}{2!(2j-3)!}\plimgG{p}{n}{\N}\left(n^2\alpha_j\cdot 2^{j-1}\frac{(2j+1)!}{(2j-1)!}\right)=\alpha_{j-1}\cdot 2^{j-2}\frac{(2j-1)!}{(2j-3)!}.
\end{multline*}
Thus, the  inductive hypothesis applied to 
$$\alpha_{1}\cdot \frac{3!}{1!},...,\alpha_{\ell}\cdot 2^{\ell-1}\frac{(2\ell+1)!}{(2\ell-1)!}$$
implies that 
\begin{equation}\label{2.Inductie2}
2^{\ell-1}\frac{(2\ell+1)!}{(2\ell-1)!}\plimgG{p_{\ell-1}}{n}{\N}n^{2\ell-1}\alpha_\ell=\plimgG{p}{n}{\N}\left(n\alpha_1\cdot\frac{3!}{1!}\right)=0.
\end{equation}
So, \eqref{2.InductiveCorollary} and \eqref{2.Inductie2} imply that for any $t\in\{1,...,\ell\}$,
\begin{equation}\label{2.Lemma4(t)}
\plimgG{p_{\ell-1}}{n}{\N}n^{2(\ell-t)+1}\alpha_\ell=0.
\end{equation}
By the Binomial Theorem, the left-hand side of \eqref{2.LemmaConsequence} equals
\begin{equation}\label{2.Lemma3}
\plimgG{p_{\ell-1}}{m}{\N}\plimgG{p_{\ell-1}}{n}{\N}\sum_{j=1}^{2\ell}\binom{2\ell+1}{j}(-m)^jn^{2\ell+1-j}\alpha_\ell,
\end{equation}
which in turn  equals
\begin{multline*}
\plimgG{p_{\ell-1}}{m}{\N}\plimgG{p_{\ell-1}}{n}{\N}\sum_{s=1}^{\ell}\binom{2\ell+1}{2s-1}(-m)^{2s-1}n^{2(\ell-s)+2}\alpha_\ell\\
+\plimgG{p_{\ell-1}}{m}{\N}\plimgG{p_{\ell-1}}{n}{\N}\sum_{t=1}^{\ell}\binom{2\ell+1}{2t}(-m)^{2t}n^{2(\ell-t)+1}\alpha_\ell\\
\end{multline*}
By \eqref{2.InductiveCorollary},
$$\plimgG{p_{\ell-1}}{m}{\N}\plimgG{p_{\ell-1}}{n}{\N}\sum_{s=1}^{\ell-1}\binom{2\ell+1}{2s-1}(-m)^{2s-1}n^{2(\ell-s)+2}\alpha_\ell=0$$
and by \eqref{2.Lemma4(t)}, 
$$\plimgG{p_{\ell-1}}{m}{\N}\plimgG{p_{\ell-1}}{n}{\N}\sum_{t=1}^{\ell}\binom{2\ell+1}{2t}(-m)^{2t}n^{2(\ell-t)+1}\alpha_\ell=0.$$
So, the expression in \eqref{2.Lemma3} equals
\begin{equation*}\label{2.Lemma7}
\plimgG{p_{\ell-1}}{m}{\N}\plimgG{p_{\ell-1}}{n}{\N}\binom{2\ell+1}{2}(-1)^{2\ell-1}m^{2\ell-1}n^{2}\alpha_\ell.
\end{equation*}
Finally, by \eqref{2.LemmaCondition1}, we have that for any $m\in\N$,  
\begin{multline*}
\plimgG{p_{\ell-1}}{n}{\N}\binom{2\ell+1}{2}(-1)^{2\ell-1}m^{2\ell-1}n^{2}\alpha_\ell\\
=\plimgG{p}{n_1}{\N}\cdots\plimgG{p}{n_{2^{\ell-1}}}{\N}\binom{2\ell+1}{2}(-1)^{2\ell-1}m^{2\ell-1}(\partial(n_1,\cdots,n_{2^{\ell-1}}))^{2}\alpha_\ell\\
=\plimgG{p}{n_1}{\N}\cdots\plimgG{p}{n_{2^{\ell-1}}}{\N}\binom{2\ell+1}{2}(-1)^{2\ell-1}m^{2\ell-1}(\sum_{j=1}^{2^{\ell-1}}n_j^{2})\alpha_\ell\\
=m^{2\ell-1}\left(-2^{\ell-1}\binom{2\ell+1}{2}\plimgG{p}{n}{\N}n^2\alpha_\ell\right).
\end{multline*}
Thus, by \eqref{2.LemmaCondition2}  and the inductive hypothesis,
\begin{multline*}
\plimgG{p_\ell}{m}{\N}m^{2\ell+1}\alpha_\ell=\plimgG{p_{\ell-1}}{m}{\N}m^{2\ell-1}\left(-2^{\ell-1}\binom{2\ell+1}{2}\plimgG{p}{n}{\N}n^2\alpha_\ell\right)\\=\plimgG{p_{\ell-1}}{m}{\N}m^{2\ell-1}\alpha_{\ell-1}=\plimgG{p}{m}{\N}m\alpha_0,
\end{multline*}
 completing the proof.
\end{proof}
Now we proceed to the proof of \cref{2.OddPolyDegreeThm}.
\begin{proof}[Proof of \cref{2.OddPolyDegreeThm}]
Let $\ell\in\N$ and let $v(x)=\sum_{j=1}^{\ell'}a_jx^{2j-1}$ be an odd polynomial with irrational leading coefficient. In order to prove the contrapositive of \cref{2.OddPolyDegreeThm}, it suffices to  show that if $\ell'>\ell$, then there exists a non-principal ultrafilter $p\in \beta\N$ such that
$$\plimgG{p_\ell}{n}{\N}v(n)\neq0.$$
To prove this, suppose that $\ell'>\ell$. Choose  $\ell'$ irrational numbers $\alpha_0,...,\alpha_{\ell'-1}$ with $\alpha_{\ell'-1}=a_{\ell'}$ and with the property that $1,\alpha_0,\alpha_1,...,\alpha_{\ell'-1}$ are rationally independent. By a classical result of Hardy and Littlewood \cite{HardyLittlewood1914some}, the set
$$\{(n\alpha_0,...,n\alpha_{\ell'-1},n^2\alpha_0,...,n^2\alpha_{\ell'-1}) \,|\,n\in\N\}$$ 
is dense in $\mathbb T^{2(\ell')}$ and hence there exists an increasing sequence $(n_k)_{k\in\N}$ such that, in $\mathbb T$, 
\begin{enumerate}[(1)]
\item $\lim_{k\rightarrow\infty}n_k\alpha_0=\frac{1}{2}.$
\item For any $j\in\{1,...,\ell'-1\}$,
$$\lim_{k\rightarrow\infty}n_k\alpha_j=0.$$
\item For any $j\in\{1,...,\ell'-1\}$, $$\lim_{k\rightarrow\infty}n^2_k\left(-2^{j-1}\binom{2j+1}{2}\alpha_j\right)=\alpha_{j-1}.$$
\end{enumerate}
Let $q\in\beta\N$ be a non-principal ultrafilter with $\{n_k\,|\,k\in\N\}\in q$. By \cref{2.OddDegreeRecurrence},
$$\plimgG{q_{(\ell'-1)}}{n}{\N}v(n)=\plimgG{q_{(\ell'-1)}}{n}{\N}n^{2\ell'-1}\alpha_{\ell'-1}.$$
So, by \cref{2.TechnicalLemma}, we have 
$$\plimgG{q_{(\ell'-1)}}{n}{\N}v(n)=\plimgG{q}{n}{\N}n\alpha_0=\frac{1}{2}.$$
Finally, let $t\geq 0$ be such that $t+\ell=\ell'-1$. Letting $p=q_t$, we have 
$$q_{(\ell'-1)}=q_{(t+\ell)}=(q_t)_\ell=p_\ell.$$ 
It follows that $\plimgG{p_\ell}{n}{\N}v(n)=\frac{1}{2}$. We are done. 
\end{proof}
%%%%%%%%%%%%%%%%%%%%%%%%%%%%%%%%%%%%%%%%%%%%%%%%%%%%%%%%%%%%%%%%%%%%%%%%%%%%%%%%%%%%%%%%%%%%%%%%%%%%%%%%%%%%%%%%%%%%%%%%%%%%%%%%%%%%%%%%%%%%%%%%%%%%%%%%%%%%%%%%%%%%%%%%%%%%%%%%%%%%%%%%%%%%%%%%%%55
%%%%%%%%%%%%%%%%%%%%%%%%%%%%%%%%%%%%%%%%%%%%%%%%%%%%%%%%%%%%%%%%%%%%%%%%%%%%%%%55555
%%%%%%%%%%%%%%%%%%%%%%%%%%%%%%%%%%%%%%%%%%%%%%%%%%%%%%%%%%%%%%%%%%%%%%%%%%%%%%%%%%%55
%%%%%%%%%%%%%%%%%%%%%%%%%%%%%%%%%%%%%%%%%%%%%%%%%%%%%%%%%%%%%%%%%%%5
%%%%%%%%%%%%%%%%%%%%%55       Section 5

\section{Odd polynomials and the combinatorial properties of  sets of the form $\mathcal R(v,\epsilon)$}\label{SectionCharPol}
In this section we will show that, roughly speaking,  odd real polynomials are the only polynomials $v(x)$  such that for any $\epsilon>0$, the set  
$$\mathcal R(v,\epsilon)=\{n\in\N\,|\,\|v(n)\|<\epsilon\}$$ is $\Delta_\ell^*$ for some $\ell\in\N$. More precisely:
\begin{thm}\label{2.OddPolynomialCharacterization}
Let $\ell\in\N$ and let $v(x)$ be a real polynomial. The following are equivalent:
\begin{enumerate}[(i)]
    \item There exists a polynomial $w\in\Q[x]$ such that $w(0)\in\Z$ and $v-w$ is an odd polynomial of degree at most $2\ell-1$.
    \item For any $\epsilon>0$, there exists $r\in\N$ for which $\mathcal R(v,\epsilon)$ 
    is $\Delta_{\ell,r}^*$.
    \item For any $\epsilon>0$, $\mathcal R(v,\epsilon)$ is $\Delta_\ell^*$.
\end{enumerate}
\end{thm}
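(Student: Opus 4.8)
The plan is to establish the cycle (i)$\Rightarrow$(ii)$\Rightarrow$(iii)$\Rightarrow$(i); the first two implications are short, and essentially all the work is in the converse (iii)$\Rightarrow$(i).

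For (i)$\Rightarrow$(ii), write $v=w+u$ with $w\in\Q[x]$, $w(0)\in\Z$ and $u=\sum_{i=1}^{\ell}b_ix^{2i-1}$ odd of degree $\le 2\ell-1$. Let $q\in\N$ be a common denominator of the coefficients of $w$. Since $q$ divides $(n+q)^k-n^k$ for every $k$ and every integer $n$, the sequence $w(n)\bmod 1$ is $q$-periodic, and because $w(0)\in\Z$ we get $w(qn)\equiv 0\pmod 1$ for all $n$; hence $\|v(m)\|=\|u(m)\|$ whenever $q\mid m$. Apply \cref{7.FinitisticOddDegreeRecurrence} to the odd polynomial $\tilde u(x):=u(qx)$ (still of degree $\le 2\ell-1$) to obtain $r_0=r(\epsilon,\ell)$ with $\mathcal R(\tilde u,\epsilon)$ being $\Delta_{\ell,r_0}^*$, and set $r=qr_0$. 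Given any generating sequence $(n_k)_{k=1}^{r}$ with $D_\ell((n_k))\subseteq\N$, pigeonhole yields $r_0$ indices on which $n_k\equiv c\pmod q$; writing $n_{k_i}=qa_i+c$ and using that $\partial$ is a signed sum with equally many $+$ and $-$ signs, each $\partial(n_{k_{i_1}},\dots,n_{k_{i_{2^\ell}}})=q\,\partial(a_{i_1},\dots,a_{i_{2^\ell}})$ is a positive multiple of $q$. Thus $D_\ell((a_i)_{i=1}^{r_0})$ is a $\Delta_{\ell,r_0}$ set, so it meets $\mathcal R(\tilde u,\epsilon)$; multiplying the witness by $q$ lands it in $\mathcal R(v,\epsilon)\cap D_\ell((n_k))$, proving $\mathcal R(v,\epsilon)$ is $\Delta_{\ell,r}^*$.

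The implication (ii)$\Rightarrow$(iii) is immediate from \cref{1.WorkingDfn}: every $\Delta_\ell$ set contains the $\Delta_{\ell,r}$ set generated by the first $r$ terms of its generating sequence, so a set meeting all $\Delta_{\ell,r}$ sets meets all $\Delta_\ell$ sets. For the converse I would use \cref{1.DeltaL*characterization} to restate (iii) as: $\plimgG{p_\ell}{n}{\N}v(n)=0$ in $\mathbb T$ for every non-principal $p$. Since $\plimgG{p_\ell}{n}{\N}$ is additive over the monomials of $v$ and, by \cref{2.OddDegreeRecurrence}, annihilates every odd monomial of degree $\le 2\ell-1$, only the ``bad part'' $\bar v$ (the even-degree terms together with the odd-degree terms of degree $>2\ell-1$) survives: (iii) is equivalent to $\plimgG{p_\ell}{n}{\N}\bar v(n)=0$ for all $p$, while a short algebraic check shows (i) is equivalent to $\bar v\in\Q[x]$ with $\bar v(0)\in\Z$. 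I argue the contrapositive. If $\bar v\in\Q[x]$ but $\bar v(0)=c_0\notin\Z$, then $\bar v-c_0\in\Q[x]$ has integer constant term, so by (i)$\Rightarrow$(iii) (already proved) $\plimgG{p_\ell}{n}{\N}(\bar v(n)-c_0)=0$; as $\plimgG{p_\ell}{n}{\N}c_0=c_0\neq0$ in $\mathbb T$, every $p$ witnesses the failure of (iii).

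The substantial case is $\bar v\notin\Q[x]$. Let $D$ be the largest degree carrying an irrational coefficient $c_D$; all higher terms are rational and hence vanish under every $\plimgG{p_\ell}{n}{\N}$ (again by (i)$\Rightarrow$(iii)). I would then manufacture a single bad ultrafilter $p$ from a sequence $(n_k)$ whose polynomial behaviour is prescribed by the density theorem of Hardy and Littlewood \cite{HardyLittlewood1914some}, following the scheme of \cref{2.OddPolyDegreeThm}: choosing rationally independent irrationals attached to the coefficients of $\bar v$, I would force the degree-$D$ contribution to equal a fixed nonzero value (say $\tfrac12$) while arranging every lower contribution to vanish. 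When $D$ is odd this is precisely the computation encapsulated in \cref{2.TechnicalLemma}. The main obstacle is the even case: when $D$ is even, $\partial$ does not reduce $x^{D}$ to lower odd powers, so the lower even-degree monomials are \emph{not} killed automatically by \cref{2.OddDegreeRecurrence}. I expect to need an even-degree analogue of \cref{2.TechnicalLemma} — reducing $\plimgG{p_\ell}{n}{\N}n^{D}\beta$ to a first-order limit $\plimgG{p}{n}{\N}n\,\beta'$ under side conditions of the form $\plimgG{p}{n}{\N}n^{j}\gamma=0$ — together with a simultaneous equidistribution statement strong enough to impose all these side conditions at once. Verifying that the side conditions are mutually consistent, i.e. that the relevant linear combinations of the coefficients of $\bar v$ remain rationally independent so the Hardy–Littlewood input applies, is the crux of the whole direction.
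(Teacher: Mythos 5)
Your (i)$\Rightarrow$(ii) (a congruence/pigeonhole argument built on \cref{7.FinitisticOddDegreeRecurrence}, in place of the paper's splitting $\mathcal R(v,\epsilon)\supseteq\mathcal R(w,\frac{\epsilon}{2})\cap\mathcal R(v-w,\frac{\epsilon}{2})$ combined with the finite intersection property from \cref{7.DeltaRSummary}) and your (ii)$\Rightarrow$(iii) are both correct, and your treatment of the case $\bar v\in\Q[x]$ with $\bar v(0)\notin\Z$ is fine --- in fact more careful about the constant term than the paper's own text. The genuine gap is that the remaining case $\bar v\notin\Q[x]$, which is the real content of (iii)$\Rightarrow$(i), is not proved: you never construct the bad ultrafilter, you only announce that you ``expect to need'' an even-degree analogue of \cref{2.TechnicalLemma} plus a simultaneous equidistribution statement, and you yourself label the verification of the rational-independence side conditions ``the crux of the whole direction''. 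That crux is exactly the missing proof. Moreover, the one sub-case you do claim to close, $D$ odd with $D>2\ell-1$, is not actually closed: \cref{2.TechnicalLemma} and \cref{2.OddPolyDegreeThm} concern \emph{odd} polynomials, so the ultrafilter they produce gives no control over even-degree irrational terms of $\bar v$ lying below $D$; at level $\ell$ such terms are not annihilated by \cref{2.OddDegreeRecurrence}, so you cannot conclude $\plimgG{p_\ell}{n}{\N}\bar v(n)\neq 0$ from that computation alone.

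The paper avoids any ``mutual consistency'' problem by a different decomposition and, crucially, by working at two levels. It writes $v=v_e+v_o+v_r$ with $v_e$ even and $v_o$ odd having only irrational (or zero) coefficients and $v_r\in\Q[x]$, and picks $\ell'\geq\ell$ with $\deg(v)\leq 2\ell'-1$. Since (iii) passes from $p_\ell$ to $p_{\ell'}=(p_{\ell'-\ell})_\ell$, and since at level $\ell'$ \cref{2.OddDegreeRecurrence} annihilates \emph{every} odd term of $v$ regardless of its degree, hypothesis (iii) forces $\plimgG{p_{\ell'}}{n}{\N}v_e(n)=0$ for all non-principal $p$; then \cref{2.LackOfRecurrenceForEvenPowers} yields $v_e=0$. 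Note that this even-degree lemma needs no analogue of \cref{2.TechnicalLemma}: the pure powers $m_j^{2s}$ appear in the expansion of $\left(\partial(m_1,...,m_{2^{\ell'}})\right)^{2s}$ with coefficient $(\pm 1)^{2s}=1$, so (after writing the coefficients as integer combinations of rationally independent irrationals) it suffices to make every power below the top vanish along a Hardy--Littlewood sequence and the top power cluster near $\frac{1}{3}$; the $2^{\ell'}$ pure top-degree terms then add up to $\frac{2^{\ell'}}{3}\not\equiv 0$. Only after $v_e=0$ is established does the paper return to level $\ell$ and invoke \cref{2.OddPolyDegreeThm} to get $\deg(v_o)\leq 2\ell-1$. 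If you want to salvage your plan, the repair is precisely this two-level device: first kill the even part at level $\ell'$, where all odd monomials are dead, and only then treat the high-degree odd part at level $\ell$.
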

In order to prove \cref{2.OddPolynomialCharacterization} we will need the following two lemmas.
The first lemma deals with polynomials with rational coefficients and is an easy consequence of the pigeon hole principle. The second more technical lemma emphasises the distinct properties of $\mathcal R(v,\epsilon)$ for even and odd polynomials. 
\begin{lem}\label{2.Q[x]Recurrence}
Let $v(x)$ be a polynomial with rational coefficients satisfying $v(0)\in\Z$. Then there exists $r\in\N$ such that for any $\epsilon>0$,
$\mathcal R(v,\epsilon)$
is $\Delta_{1,r}^*$.
\end{lem}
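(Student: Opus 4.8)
The plan is to first prove the reformulation --- that $a\N$ is $\Delta_{1,a+1}^*$ for every $a\in\N$ --- by a direct pigeonhole argument, and then to deduce the general statement by observing that any $v\in\Q[x]$ with $v(0)\in\Z$ is integer-valued along a suitable arithmetic progression $a\N$, which forces $a\N\subseteq\mathcal R(v,\epsilon)$ for all $\epsilon>0$.

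For the pigeonhole claim, I would unwind the definition of $\Delta_{1,a+1}^*$: it suffices to show that $a\N$ meets every $\Delta_{1,a+1}$ set. By definition, such a set contains $D_1((n_k)_{k=1}^{a+1})=\{n_j-n_i\,|\,1\le i<j\le a+1\}$ for some $(a+1)$-element sequence $(n_k)_{k=1}^{a+1}$ in $\Z$ with $D_1((n_k)_{k=1}^{a+1})\subseteq\N$. Considering the $a+1$ residues $n_1,\dots,n_{a+1}$ modulo $a$, two of them, say $n_i$ and $n_j$ with $i<j$, must coincide, so that $n_j-n_i$ is a positive multiple of $a$, i.e. $n_j-n_i\in a\N$. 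This element lies in $D_1((n_k)_{k=1}^{a+1})$, establishing the required nonempty intersection.

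Next, for a general $v(x)=\sum_{j=0}^d c_jx^j$ with $c_0=v(0)\in\Z$ and $c_1,\dots,c_d\in\Q$, I would choose $a\in\N$ to be a common denominator of $c_1,\dots,c_d$, so that $ac_j\in\Z$ for each $j$. Then $a^jc_j=a^{j-1}(ac_j)\in\Z$, whence $v(am)=c_0+\sum_{j=1}^d c_j(am)^j=c_0+\sum_{j=1}^d (a^jc_j)m^j\in\Z$ for every $m\in\N$. Consequently $\|v(n)\|=0$ for all $n\in a\N$, so $a\N\subseteq\mathcal R(v,\epsilon)$ for every $\epsilon>0$.

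Finally, I would invoke the elementary fact that the family of $\Delta_{1,r}^*$ sets is closed under passing to supersets: a set meeting every $\Delta_{1,r}$ set has the property that so does every superset of it. Since $a\N$ is $\Delta_{1,a+1}^*$ by the first step and $a\N\subseteq\mathcal R(v,\epsilon)$, the set $\mathcal R(v,\epsilon)$ is $\Delta_{1,a+1}^*$; taking $r=a+1$ completes the proof. No step presents a genuine obstacle; the only point requiring care is the choice of $a$, where one must ensure not merely that $ac_j\in\Z$ but that $a^jc_j\in\Z$, which the identity $a^jc_j=a^{j-1}(ac_j)$ guarantees.
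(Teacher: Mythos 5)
Your proof is correct and follows essentially the same route as the paper's: both rest on the pigeonhole principle applied to residues modulo a common denominator $b$ of the coefficients, together with the observation that $b\mid n$ forces $v(n)\in\Z$, hence $n\in\mathcal R(v,\epsilon)$ for every $\epsilon>0$. The only cosmetic difference is that you factor the argument through the intermediate set $a\N$ and superset-closure of $\Delta_{1,r}^*$, which is exactly the reformulation the lemma itself states as ``equivalently,'' whereas the paper merges the two steps into one.
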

\begin{proof}
Let $v(x)=\sum_{j=0}^N\frac{a_j}{b_j}x^j$, where $a_j\in\Z$, $b_j\in\N$ and $b_0=1$. Let $b=\prod_{j=1}^Nb_j$ and let $(n_k)_{k=1}^{b+1}$ be a  (b+1)-element sequence  in $\Z$. Since there exists $s,t\in\{1,...,b+1\}$ with $s<t$ for which $b|(n_t-n_s)$, we have that $v(n_t-n_s)\in\Z$.
Thus,
$$\{n\in\N\,|\,v(n)\in\Z\}$$
is $\Delta_{1,b+1}^*$.
\end{proof}

\begin{lem}\label{2.LackOfRecurrenceForEvenPowers}
Let $v(x)=\sum_{j=0}^{s}a_jx^{2j}$ be a non-zero even  polynomial such that  each $a_j$ is either  zero or irrational. Then there exists an $\epsilon>0$ such that for any  $\ell\in\N$, the set 
$\mathcal R(v,\epsilon)$
is not $\Delta_\ell^*$.
\end{lem}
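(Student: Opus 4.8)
The plan is to use the ultrafilter characterization of $\Delta_\ell^*$ sets from \cref{1.DeltaL*characterization}: to show that $\mathcal R(v,\epsilon)$ is \emph{not} $\Delta_\ell^*$ it suffices to produce, for each $\ell\in\N$, a non-principal ultrafilter $p\in\beta\N$ with $\mathcal R(v,\epsilon)\notin p_\ell$. If $v$ is constant then $v=a_0$ is irrational and $\|v(n)\|=\|a_0\|$, so any $\epsilon<\|a_0\|$ works immediately; assuming from now on that $v$ is non-constant, I would fix $\epsilon=\tfrac14$ and, for each $\ell$, find $p$ so that $\plimgG{p_\ell}{n}{\N}v(n)=\tfrac12$ in $\mathbb T$. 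Since then $\{n\,|\,\|v(n)-\tfrac12\|<\tfrac14\}\in p_\ell$ forces $\{n\,|\,\|v(n)\|>\tfrac14\}\in p_\ell$, this yields $\mathcal R(v,\tfrac14)\notin p_\ell$, and hence $\mathcal R(v,\tfrac14)$ is not $\Delta_\ell^*$. Writing $v(x)=\sum_{j}a_jx^{2j}$ with $s$ the largest index with $a_s\neq0$ (so $a_s$ is irrational and $s\geq1$), the whole problem reduces to computing $\plimgG{p_\ell}{n}{\N}v(n)$ for a well-chosen $p$.

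To compute this limit I would invoke \cref{1.LemaIteratedDifLimit}, which rewrites it as the iterated limit $\plimgG{p}{m_1}{\N}\cdots\plimgG{p}{m_{2^\ell}}{\N}v(\partial(m_1,\dots,m_{2^\ell}))$, and use that $\partial(m_1,\dots,m_{2^\ell})=\sum_i\varepsilon_im_i$ is a signed sum with each $\varepsilon_i\in\{-1,1\}$. Expanding $v(\sum_i\varepsilon_im_i)$ by the multinomial theorem and using that $p$-limits respect the (finite) sum and the continuous group operations of $\mathbb T$, the computation splits over monomials $\prod_im_i^{d_i}$ with $\sum_id_i=2j$. The key algebraic point is that the only monomials surviving the iterated limit are the pure powers $m_i^{2s}$ coming from the leading term, provided $p$ is chosen so that $\plimgG{p}{m}{\N}a_{j'}m^d=0$ for every pair $(j',d)$ with $a_{j'}\neq0$, $j'\geq1$ and $1\leq d\leq 2j'$, \emph{except} for $(j',d)=(s,2s)$, where I demand $\plimgG{p}{m}{\N}a_sm^{2s}=\gamma$ for a prescribed $\gamma\in\mathbb T$. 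Indeed, for any mixed or lower-degree monomial the largest-index variable appearing has exponent $\leq 2s-1$, so the corresponding innermost limit contributes a factor $0$ and kills the monomial, while each of the $2^\ell$ pure powers $m_i^{2s}$ contributes $\gamma$. Together with the constant term this gives $\plimgG{p_\ell}{n}{\N}v(n)=a_0+2^\ell\gamma$, and since $t\mapsto a_0+2^\ell t$ is onto $\mathbb T$ I can solve $a_0+2^\ell\gamma=\tfrac12$ for $\gamma$.

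The one step requiring real work is the existence of an ultrafilter $p$ realizing all these prescribed $p$-limit values simultaneously; equivalently, the existence of a sequence $m_k\to\infty$ along which $a_{j'}m_k^d\to0$ for all the listed pairs and $a_sm_k^{2s}\to\gamma$. I would obtain this from the theory of orbit closures of polynomial sequences: by Weyl's theorem the closure $G$ of $\{(a_{j'}m^d)_{(j',d)}\,|\,m\in\N\}$ in the appropriate torus $\mathbb T^{S}$ is a closed subgroup, so by Pontryagin duality $G$ is cut out by the characters annihilating it, i.e. by the integer vectors $(c_{j',d})$ for which $\sum_{j',d}c_{j',d}a_{j'}m^d\in\Z$ for all $m$. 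I then need to check that the target point $P$ (with $\gamma$ in coordinate $(s,2s)$ and $0$ elsewhere) lies in $G$: for any such annihilating vector, the value-polynomial is integer-valued on $\N$ and hence has rational coefficients, so its $m^{2s}$-coefficient $c_{s,2s}a_s$ is rational; as $a_s$ is irrational this forces $c_{s,2s}=0$, whence the pairing of the vector with $P$ is $c_{s,2s}\gamma=0\in\Z$. Thus $P\in G$, the desired sequence exists, and pushing a non-principal ultrafilter forward along $k\mapsto m_k$ (note $m_k\to\infty$) produces the required $p$.

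The main obstacle is exactly this achievability step, and the subtlety it must overcome is that the coefficients $a_{j'}$ are only assumed to be zero or irrational, \emph{not} rationally independent; a naive appeal to equidistribution in the full torus fails because rational relations among the $a_{j'}$ confine the orbit to a proper subtorus. The duality computation above is what lets me sidestep this: it shows that, no matter what relations hold among the lower coefficients, the irrationality of the single leading coefficient $a_s$ already guarantees that the coordinate $(s,2s)$ is free, so $P$ lies in the orbit closure. Once this is in hand, the conclusion that $\epsilon=\tfrac14$ witnesses the failure of the $\Delta_\ell^*$ property for every $\ell$ is immediate.
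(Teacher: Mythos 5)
Your overall strategy is sound, and two of its three steps are correct: the reduction via \cref{1.DeltaL*characterization} (produce, for each $\ell$, a non-principal $p$ with $\plimgG{p_\ell}{n}{\N}v(n)=\tfrac12$, after dispensing with the constant case), and the multinomial computation via \cref{1.LemaIteratedDifLimit} (since $p$-limits in $\mathbb T$ commute with finite sums and with multiplication by a fixed integer, every mixed or lower-degree monomial dies under the prescribed conditions $\plimgG{p}{m}{\N}a_jm^d=0$, leaving $a_0+2^\ell\gamma$). This is the same mechanism as in the paper's proof. The genuine gap is in the achievability step: your claim that the orbit closure $G$ of $\{(a_jm^d)_{(j,d)}\,|\,m\in\N\}$ is a closed subgroup of $\mathbb T^S$ is false, and it fails precisely in the situation you set out to handle, namely when there are rational relations among the $a_j$. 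Take $s=2$, $a_2=\sqrt2$, $a_1=\sqrt2+\tfrac14$ (both irrational, as the lemma permits), and let $c$ be the integer vector with $c_{(1,2)}=1$, $c_{(2,2)}=-1$ and all other entries $0$. The corresponding character sends the orbit to $(a_1-a_2)m^2=\tfrac{m^2}{4}\bmod 1\in\{0,\tfrac14\}$ (because $m^2\equiv 0,1 \pmod 4$). A continuous homomorphic image of a group is a group, and $\{0,\tfrac14\}$ is not a subgroup of $\mathbb T$; hence $G$ is not a subgroup, so it cannot equal the double annihilator $\Lambda_0^\perp$ (which \emph{is} a group), where $\Lambda_0$ denotes your lattice of integer vectors with integer-valued value-polynomial. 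In general $G\subsetneq\Lambda_0^\perp$, so the inference ``$P$ is annihilated by every $c\in\Lambda_0$, hence $P\in G$'' is invalid.

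The good news is that your target conclusion $P\in G$ is true, and your coefficient computation is the core of a correct proof; what needs replacing is only the structural claim. Work with the larger lattice $\Lambda=\{c\,|\,Q_c(x):=\sum_{j,d}c_{j,d}a_jx^d\in\Q[x]\}$ (rational coefficients, not integer values). Your argument applies verbatim to every $c\in\Lambda$: the $x^{2s}$-coefficient of $Q_c$ is $c_{(s,2s)}a_s$, which can only be rational if $c_{(s,2s)}=0$, so $\langle c,P\rangle=0$; thus $P$ lies in the subtorus $H=\Lambda^\perp$. Now choose $q\in\N$ clearing all denominators of the $Q_c$ for $c$ in a finite generating set of $\Lambda$; then $Q_c(qt)\in\Z$ for all $c\in\Lambda$ and all $t$, so the restricted orbit $\{(a_j(qt)^d)_{(j,d)}\,|\,t\in\N\}$ lies in $H$, while for $c\notin\Lambda$ the polynomial $Q_c(qt)$ has an irrational non-constant coefficient, so by Weyl's criterion the restricted orbit equidistributes in (in particular, is dense in) $H$. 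This yields the desired sequence $m_k=qt_k$ and hence the ultrafilter $p$. Note that this repaired route is genuinely different from the paper's treatment of the same difficulty: the paper expresses the $a_j$'s as integer combinations of ``rationally independent'' irrationals $\beta_1,\dots,\beta_r$ and invokes Hardy--Littlewood density of $(n^l\beta_t)$, and it also fixes a single ultrafilter $p$ working for all $\ell$ at once, with limit values in the finite set $\{\tfrac{2^\ell}{3}+a_0\,|\,\ell\in\N\}$, whereas you use one ultrafilter per $\ell$ pinned at $\tfrac12$; either variant suffices, since only $\epsilon$ must be uniform in $\ell$.
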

\begin{proof}
It suffices to show that there exist a finite set $F\subseteq \mathbb T\setminus\{0\}$ and a non-principal ultrafilter $p\in\beta\N$ such that for any $\ell\in\N$,
\begin{equation}\label{2.EquationToProveInEvenPowers}
\plimgG{p_\ell}{n}{\N}v(n)\in F.
\end{equation}
Indeed, \eqref{2.EquationToProveInEvenPowers} implies that there is an $\epsilon>0$ with the property that for any $\ell\in\N$ the set $\mathcal R(v,\epsilon)\not\in p_\ell$. Hence, $\mathcal R(v,\epsilon)$ is not $\Delta_\ell^*$ for any $\ell\in\N$.

We now proceed to prove \eqref{2.EquationToProveInEvenPowers}. Let $s\in\N\cup\{0\}$ be such that $\deg(v)=2s$. If $s=0$, then  $v(x)=v(0)\in\R\setminus\Q$, and hence for any $\ell\in\N$ and any non-principal ultrafilter $p\in\beta\N$,
$$\plimgG{p_\ell}{n}{\N}v(n)=v(0)\neq 0.$$
To take care of the case $s\neq 0$, we invoke the fact that any vector space over $\Q$ has a basis to find irrational numbers $\beta_1,...,\beta_r$ such that (a) $1,\beta_1,...,\beta_r$ are rationally independent and (b) for  each $j\in\{1,...,s\}$,
$$a_j=q_j+b_1^{(j)}\beta_1+\cdots+b_r^{(j)}\beta_r,$$
where $b_1^{(j)},...,b_r^{(j)}\in\Z$ and $q_j\in\Q$.\\

By a result due to Hardy and Littlewood \cite{HardyLittlewood1914some}, the set
$$\{(n\beta_1,\dots,n\beta_r,n^2\beta_1,...,n^2\beta_r,...,n^{2s}\beta_1,...,n^{2s}\beta_r)\,|\,n\in\N\}$$
is dense in $\mathbb T^{(2s)r}$. Thus, we can find an increasing sequence $(n_k)_{k\in\N}$ in $\N$ such that for each $t\in\{1,...,r\}$ and $l\in\{1,...,2s-1\}$,
$$\lim_{k\rightarrow\infty}n_k^l\beta_t=0,$$
and
$$\lim_{k\rightarrow\infty}n_k^{2s}\beta_t=
\begin{cases}
\vspace{0.5em}\frac{1}{3}\frac{b_t^{(s)}}{|b_t^{(s)}|}\frac{1}{\sum_{t=1}^r|b_t^{(s)}|}\text{ if }b_{t}^{(s)}\neq 0,\\
0\text{ if }b_{t}^{(s)}=0.
\end{cases}$$
So for any $\ell\in\N$ and any  non-principal ultrafilter $p\in\beta\N$ for which $$\{n_k\,|\,k\in\N\}\in p,$$
we have 
\begin{multline*}
\plimgG{p_{\ell}}{m}{\N}v(m)=\plimgG{p}{m_1}{\N}\cdots\plimgG{p}{m_{2^\ell}}{\N}v(\partial(m_1,...,m_{2^\ell}))=\plimgG{p}{m_1}{\N}\cdots\plimgG{p}{m_{2^\ell}}{\N}\sum_{j=0}^sa_j(\partial(m_1,...,m_{2^\ell}))^{2j}\\
=\plimgG{p}{m_1}{\N}\cdots\plimgG{p}{m_{2^\ell}}{\N}(\sum_{j=1}^s\left(q_j(\partial(m_1,...,m_{2^\ell}))^{2j}+\sum_{t=1}^rb_t^{(j)}\beta_t(\partial(m_1,...,m_{2^\ell}))^{2j}\right)+a_0)\\
=\plimgG{p}{m_1}{\N}\cdots\plimgG{p}{m_{2^\ell}}{\N}(\sum_{j=1}^s\left(\sum_{t=1}^rb_t^{(j)}\beta_t(\partial(m_1,...,m_{2^\ell}))^{2j}\right)+a_0)\\
=\plimgG{p}{m_1}{\N}\cdots\plimgG{p}{m_{2^\ell}}{\N}(\sum_{h=1}^{2^\ell}\sum_{t=1}^rb_t^{(s)}\beta_t m_h^{2s})+a_0
=2^\ell\plimgG{p}{m}{\N}\left(\sum_{t=1}^rb_t^{(s)}\beta_tm^{2s}\right)+a_0\\
=2^\ell\sum_{b_t^{(s)}\neq 0}\left(b_t^{(s)}\frac{1}{3}\frac{b_t^{(s)}}{|b_t^{(s)}|}\frac{1}{\sum_{t=1}^r|b_t^{(s)}|}\right)+a_0=
\frac{2^\ell}{3}\sum_{b_t^{(s)}\neq 0}\frac{|b_t^{(s)}|}{\sum_{t=1}^r|b_t^{(s)}|}+a_0
=\frac{2^{\ell}}{3}+a_0.
\end{multline*}
Since $a_0$ is either zero or irrational, the set $\{(\frac{2^\ell}{3}+a_0)\in\mathbb T\,|\,\ell\in\N\}$ has exactly two non-zero elements. This completes the proof. 
\end{proof}
\begin{rem}\label{2.ASingleSequenceForAllL}
Using a method similar to the one used in the proof of \cref{2.LackOfRecurrenceForEvenPowers}, one can actually show that  for any  $\epsilon\in(0,\frac{1}{3})$ and any real even polynomial $v(x)$ with $v(0)=0$ and with at least one irrational coefficient, there exists an increasing sequence $(n_k)_{k\in\N}$ such that for each $\ell\in\N$, 
$$D_\ell((n_k)_{k\in\N})\subseteq\{n\in\N\,|\,\|v(n)\|>\epsilon\}.$$
\end{rem}

\begin{proof}[Proof of \cref{2.OddPolynomialCharacterization}]
(i)$\implies$(ii): Assume that there exists a polynomial $w(x)$ with rational coefficients and $w(0)\in\Z$ such that $v-w$ is a non-zero odd polynomial of degree at most $2\ell-1$ (if $v-w=0$, there is nothing to prove). Let $\epsilon>0$. By \cref{2.Q[x]Recurrence}, there exists $r_1\in\N$ such that   $\mathcal R(w,\frac{\epsilon}{2})$ is $\Delta^*_{1,r_1}$. By \cref{7.FinitisticOddDegreeRecurrence}, there 
exists $r_2\in\N$ for which the set $\mathcal R(v-w,\frac{\epsilon}{2})$ is $\Delta_{\ell,r_2}^*$. So, since
$$\mathcal R(v,\epsilon)\supseteq\mathcal R(w,\frac{\epsilon}{2})\cap\mathcal R(v-w,\frac{\epsilon}{2}),$$
\cref{7.DeltaRSummary} implies that there exists $R\in\N$ for which $\mathcal R(v,\epsilon)$ is $\Delta_{\ell,R}^*$.\\
(ii)$\implies$(iii): Note that for any $r\in\N$, a $\Delta_{\ell,r}^*$ set is a $\Delta_\ell^*$ set.\\
(iii)$\implies$(i): Let $v_e,v_o,v_r$ be real polynomials such that:
\begin{enumerate}[(1)]
\item $v(x)=v_e(x)+v_o(x)+v_r(x)$. 
\item Each of the coefficients of $v_e$ and $v_o$ are either zero or an irrational number. \item $v_e$ is an even polynomial.
\item $v_0$ is an odd polynomial. 
\item $v_r\in\Q[x]$.
\end{enumerate}
Let $\ell'\geq \ell$ be such that $\deg(v)\leq2\ell'-1$. Note that it follows from (iii) that for any non-principal ultrafilter $p\in\beta\N$,  
$$\plimgG{p_{\ell'}}{n}{\N}v(n)=0.$$
We also have, by \cref{2.OddDegreeRecurrence}, 
$$\plimgG{p_{\ell'}}{n}{\N}v_o(n)=0$$
and, by \cref{2.Q[x]Recurrence}, 
$$\plimgG{p_{\ell'}}{n}{\N}v_r(n)=0.$$
So for any non-principal $p\in\beta\N$, 
$$\plimgG{p_{\ell'}}{n}{\N}v_e(n)=0.$$
Hence, by \cref{2.LackOfRecurrenceForEvenPowers}, $v_e=0$.\\
Furthermore, by (iii) and \cref{2.Q[x]Recurrence}, we have  that  for any non-principal ultrafilter $p\in\beta\N$,
$$0=\plimgG{p_\ell}{n}{\N}v(n)=\plimgG{p_\ell}{n}{\N}v_o(n).$$
So by \cref{2.OddPolyDegreeThm}, $\deg(v_o)$ is at most $2\ell-1$.
\end{proof} 
\begin{rem}
The natural number $r$ appearing in \cref{7.FinitisticOddDegreeRecurrence}, which  guarantees that $\mathcal R(v,\epsilon)$ is a  $\Delta_{\ell,r}^*$ set for any polynomial of degree at most $2\ell-1$,  while depending on $\epsilon$ and the degree of $v$, does not depend on $v$ itself.
The situation with Theorem 5.1 is different:  the number $r$ appearing in (ii) not only depends on   $\epsilon$ and  the degree of $v$,  but also on $v$ itself. \\
To see this, let $\alpha$ be an irrational number and let $\epsilon\in(0,\frac{1}{3})$. Note that by \cref{2.ASingleSequenceForAllL}, there is  an increasing sequence $(n_k)_{k\in\N}$ in $\N$ with the property that for any $\ell\in\N$,  $D_\ell((n_k)_{k\in\N})$ does not intersect the set  $R(n^2\alpha,\epsilon)$.\\
Thus; since for each $n\in\N$, the map $x\mapsto nx$ from $\R$ to $\mathbb T$ is continuous; for each $\ell\in\N$ and each $r\geq 2^\ell$, there is an increasing $r$-element subsequence $(n_{k_j})_{j=1}^r$ of $(n_k)_{k\in\N}$ and a rational number $\frac{a}{b}$ close enough to $\alpha$, for which $D_\ell((n_{k_j})_{j=1}^r)\subseteq\N$ and 
$$D_\ell((n_{k_j})_{j=1}^r)\cap\mathcal R(n^2\frac{a}{b},\epsilon)=\emptyset.$$
So $\mathcal R(n^2\frac{a}{b},\epsilon)$ is not $\Delta_{\ell,r}^*$.
\end{rem}

%%%%%%%%%%%%%%%%%%%%%%%%%%%%%%%%%%%%%%%%%%%%%%%%%%%%%%%%%%%%%%%%%%%%%%%%%%%%%%%%%%%%%%%%%
%%%%%%%%%%%%%%%%%%%%%%%%%%%%%%%%%%%%%%%%%%%%%%%%%%%%%%%%%%%%%%%%%%%%%%%%%%%%%%%%%%%%%%%5
%%%%%%%%%%%%%%%%%%%%%%%%%%%%%%%%%%%%%%%%%%%%%%%%%%%%%%%%%%%%%%%%%%%%%%%%%%%%%%%%%%%%%%%%
%%%%%%%%%%%%%%%%%%%%%%%%%%%%%%%%%%%%%%%%%%%%%%%%%%%%%%%%%%%%%%%%%%%%%%%%%%%%%%%%%%%%
%%%%%%%%5   Section 6
\section{Applications to polynomial recurrence}\label{SecHilbert}
The goal of this section is to prove (slightly amplified versions of) Theorems \ref{0.CompactHilbert}, \ref{0.AlmostMeasurableCase} and Corollaries \ref{0.CompactMeasurable}, \ref{0.SarkozyLike} and \ref{0.WeaklyMixingCase}.\\
We start with recalling the classical Koopman-von Neumann decomposition theorem (see \cite{KoopmanVonNeumannContinuousSpectra} and Theorem 3.4, page 96, in \cite{krengelErgodicTheorems}).
\begin{thm}
Given a unitary operator  $U:\mathcal H\rightarrow \mathcal H$ one has an orthogonal decomposition
\begin{equation}\label{4.DirectSum}
\mathcal H=\mathcal H_{\text{\rm{c}}}\oplus \mathcal H_{\text{\rm{wm}}},
\end{equation}
where the $U$-invariant  (and $U^{-1}$-invariant) subspaces $\mathcal H_{\text{\rm{c}}}$  and $\mathcal H_{\text{\rm{wm}}}$ are defined as follows: 
\begin{equation}\label{4.Hc}
\mathcal H_{\text{\rm{c}}}=\overline{\langle\{f\in\mathcal H\,|\,\exists \lambda\in\mathbb T,\;Uf=e^{2\pi i\lambda}f\}\rangle}
\end{equation}
and 
\begin{equation}\label{4.Hwm}
\mathcal H_{\text{\rm{wm}}}=\{f\in\mathcal H\,|\,\lim_{N-M\rightarrow\infty}\frac{1}{N-M}\sum_{n=M+1}^N|\langle U^nf,f\rangle|=0\},
\end{equation}
\end{thm}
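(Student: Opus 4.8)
The plan is to deduce the decomposition from the spectral theorem for unitary operators together with Wiener's lemma. First I would invoke the spectral theorem to write $U=\int_{\mathbb{T}}e^{2\pi i\lambda}\,dE(\lambda)$ for a projection-valued measure $E$ on $\mathbb{T}=\R/\Z$, and for each $f\in\mathcal H$ introduce the positive finite spectral measure $\mu_f(A)=\langle E(A)f,f\rangle$, so that the correlation sequence is a Fourier transform: for every $n\in\Z$ one has $\langle U^nf,f\rangle=\int_{\mathbb{T}}e^{2\pi in\lambda}\,d\mu_f(\lambda)$.

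The analytic heart is Wiener's lemma, which I would establish by the standard Fubini computation
\begin{multline*}
\frac{1}{N-M}\sum_{n=M+1}^N\bigl|\langle U^nf,f\rangle\bigr|^2\\
=\int_{\mathbb{T}}\int_{\mathbb{T}}\Bigl(\frac{1}{N-M}\sum_{n=M+1}^Ne^{2\pi in(\lambda-\lambda')}\Bigr)\,d\mu_f(\lambda)\,d\mu_f(\lambda').
\end{multline*}
The inner average is bounded by $1$ and, upon summing the geometric progression, converges to $\mathbf{1}_{\{\lambda=\lambda'\}}$ as $N-M\to\infty$; dominated convergence then yields the limit $\sum_{\lambda\in\mathbb{T}}\mu_f(\{\lambda\})^2$. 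Since $|\langle U^nf,f\rangle|\le\|f\|^2$ is bounded, the averages of the first powers and of the squares vanish together (one direction from $|a|^2\le\|f\|^2|a|$, the other from the Cauchy--Schwarz inequality), so $f\in\mathcal H_{\mathrm{wm}}$ if and only if $\mu_f$ has no atoms.

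It remains to identify the two pieces spectrally. The atoms of $E$, that is the $\lambda$ with $E(\{\lambda\})\neq0$, are exactly the eigenvalues, and $\operatorname{range}E(\{\lambda\})=\ker(U-e^{2\pi i\lambda})$; hence $\mathcal H_c=\overline{\bigoplus_\lambda\operatorname{range}E(\{\lambda\})}=\operatorname{range}(E_{\mathrm{pp}})$, where $E_{\mathrm{pp}}=\sum_\lambda E(\{\lambda\})$ is the pure-point part of $E$, while $\mathcal H_c^{\perp}=\operatorname{range}(E_{\mathrm{c}})$ is the range of the continuous part. Writing $f=f_c+f_{\mathrm{wm}}$ with $f_c=E_{\mathrm{pp}}f$ and $f_{\mathrm{wm}}=E_{\mathrm{c}}f$, the fact that these complementary reducing subspaces are $E$-invariant kills the cross terms and gives $\mu_f=\mu_{f_c}+\mu_{f_{\mathrm{wm}}}$ with $\mu_{f_c}$ purely atomic and $\mu_{f_{\mathrm{wm}}}$ continuous. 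Thus $\mu_f$ is atomless precisely when $f_c=0$, i.e. when $f\in\mathcal H_c^{\perp}$, proving $\mathcal H_{\mathrm{wm}}=\mathcal H_c^{\perp}$ and hence $\mathcal H=\mathcal H_c\oplus\mathcal H_{\mathrm{wm}}$. Finally each eigenspace is $U$- and $U^{-1}$-invariant, so $\mathcal H_c$ is invariant, and the orthogonal complement of an invariant subspace under a unitary is again $U$- and $U^{-1}$-invariant.

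The only genuine subtlety, and hence the step I would treat most carefully, is that $\mathcal H_{\mathrm{wm}}$ is defined purely analytically by a limit condition and is not a priori a subspace; bridging this to the spectral picture is exactly what Wiener's lemma accomplishes. One must also note that the averages run over arbitrary intervals $\{M+1,\dots,N\}$ with $N-M\to\infty$ rather than over $\{1,\dots,N\}$, but the dominated-convergence argument is insensitive to this, since the geometric-sum bound $\bigl|\frac{1}{N-M}\sum_{n=M+1}^Ne^{2\pi in\beta}\bigr|\le\min\bigl(1,\tfrac{1}{(N-M)|\sin\pi\beta|}\bigr)$ holds uniformly in the location of the interval.
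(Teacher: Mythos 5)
The paper does not prove this statement at all: it is quoted as the classical Koopman--von Neumann decomposition with a citation to the original paper, so there is no internal proof to compare against. Your argument is a correct and complete proof, and it follows the standard spectral-theoretic route: represent $\langle U^nf,f\rangle$ as the Fourier coefficients of the spectral measure $\mu_f$, prove Wiener's lemma by the Fubini/geometric-sum computation, conclude that $f\in\mathcal H_{\text{wm}}$ iff $\mu_f$ is atomless, and identify $\mathcal H_c$ with the range of the pure-point part of the projection-valued measure, so that $\mathcal H_{\text{wm}}=\mathcal H_c^{\perp}$. Two points you handle that are easy to get wrong and are genuinely needed here: first, the equivalence between the vanishing of the averages of $|\langle U^nf,f\rangle|$ and of $|\langle U^nf,f\rangle|^2$ (boundedness in one direction, Cauchy--Schwarz in the other), which is what lets Wiener's lemma, naturally stated for squares, characterize the set $\mathcal H_{\text{wm}}$ as defined in \eqref{4.Hwm}; second, the fact that the paper's definition uses averages over arbitrary windows $\{M+1,\dots,N\}$ with $N-M\to\infty$, and your observation that the geometric-sum bound is uniform in the location of the window makes the dominated-convergence argument valid in this stronger Besicovitch-type sense. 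This is exactly the version of the decomposition the paper needs later (e.g.\ for \eqref{4.PolynomialWMSet} and the upper-Banach-density arguments), so the proof is not merely correct but proves the statement in the precise uniform form stated.
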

Throughout this section we will be using the fact that for any non-constant polynomial $v$ with $v(\Z)\subseteq\Z$ 
\begin{equation}\label{4.PolynomialWMSet}
\mathcal H_{\text{\rm{wm}}}=\{f\in\mathcal H\,|\,\forall g\in\mathcal H,\,\lim_{N-M\rightarrow\infty}\frac{1}{N-M}\sum_{n=M+1}^N|\langle U^{v(n)}f,g\rangle|=0\}.\footnote{
See formula (1.7) in \cite[page 340]{WMPet}.}
\end{equation}
\begin{thm}[Cf. \cref{0.CompactHilbert}]\label{4.CompactHilbert}
Let $U:\mathcal H\rightarrow \mathcal H$ be a unitary operator and let $v(x)=\sum_{j=1}^\ell a_jx^{2j-1}$ be a non-zero odd polynomial with $v(\Z)\subseteq\Z$. The following are equivalent:
\begin{enumerate}[(i)]
\item $U$ has discrete spectrum (i.e. $\mathcal H$ is spanned by eigenvectors of $U$).
\item For any $f\in\mathcal H$ and any $\epsilon>0$, the set 
$$\{n\in\N\,|\,\|U^{v(n)}f-f\|_{\mathcal H}<\epsilon\}$$
is $\Delta_{\ell,0}^*$.
\item For any $f\in\mathcal H$ and any $\epsilon>0$, the set 
$$\{n\in\N\,|\,\|U^{v(n)}f-f\|_{\mathcal H}<\epsilon\}$$
is $\Delta_\ell^*$.
\end{enumerate}
\end{thm}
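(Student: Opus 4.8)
The plan is to prove the cycle of implications (i)$\implies$(ii)$\implies$(iii)$\implies$(i). The middle implication is immediate: as recorded in the diagram following \cref{7.DeltaRSummary}, every $\Delta_{\ell,0}^*$ set is a $\Delta_\ell^*$ set, so (ii) trivially yields (iii). The two substantial implications are (i)$\implies$(ii) and its converse (iii)$\implies$(i), and they exploit the two halves of the Koopman--von Neumann decomposition \eqref{4.DirectSum} in complementary ways.

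For (i)$\implies$(ii), suppose $U$ has discrete spectrum and fix $f\in\mathcal H$ and $\epsilon>0$. First I would approximate $f$ in norm by a finite combination $g=\sum_{k=1}^Kc_kf_k$ of unit eigenvectors, $Uf_k=e^{2\pi i\lambda_k}f_k$, with $\|f-g\|<\epsilon/4$. Since $v(\Z)\subseteq\Z$, one has $U^{v(n)}f_k=e^{2\pi i v(n)\lambda_k}f_k$, so that $\|U^{v(n)}f_k-f_k\|=|e^{2\pi i v(n)\lambda_k}-1|\leq 2\pi\|v(n)\lambda_k\|$. For each $k$ the polynomial $\lambda_k v(x)$ is odd, so by \cref{7.FinitisticOddDegreeRecurrence} there is an $r=r(\delta,\ell)$, uniform in $k$, for which $\mathcal R(\lambda_k v,\delta)=\{n\,|\,\|v(n)\lambda_k\|<\delta\}$ is $\Delta_{\ell,r}^*$. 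By the finite intersection property of $\Delta_{\ell,r}^*$ sets (\cref{7.DeltaRSummary}(iii)) the set $\bigcap_{k=1}^K\mathcal R(\lambda_k v,\delta)$ is $\Delta_{\ell,R}^*$ for some $R$, hence $\Delta_{\ell,0}^*$ by \cref{7.DeltaRSummary}(ii). Choosing $\delta$ small enough that $2\pi\delta\sum_{k=1}^K|c_k|<\epsilon/2$, on this intersection one has $\|U^{v(n)}g-g\|<\epsilon/2$ and therefore $\|U^{v(n)}f-f\|<\epsilon$. Thus $\{n\,|\,\|U^{v(n)}f-f\|<\epsilon\}$ contains a $\Delta_{\ell,0}^*$ set, and since the dual family $\Delta_{\ell,0}^*$ is upward closed, it is itself $\Delta_{\ell,0}^*$.

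For (iii)$\implies$(i) I would argue by contraposition. If $U$ does not have discrete spectrum, then $\mathcal H_c\neq\mathcal H$, so by \eqref{4.DirectSum} there is a nonzero $f\in\mathcal H_{\text{wm}}$, which we normalize to $\|f\|=1$. Writing $\phi(n)=\langle U^{v(n)}f,f\rangle$, the identity $\|U^{v(n)}f-f\|^2=2-2\operatorname{Re}\phi(n)$ shows that for $0<\epsilon<\sqrt2$ the returns set $E_\epsilon=\{n\,|\,\|U^{v(n)}f-f\|<\epsilon\}$ is contained in $\{n\,|\,|\phi(n)|>1-\epsilon^2/2\}$. Because $v$ is non-constant with $v(\Z)\subseteq\Z$, applying \eqref{4.PolynomialWMSet} with $g=f$ gives $\lim_{N-M\to\infty}\frac{1}{N-M}\sum_{n=M+1}^N|\phi(n)|=0$; a Markov-type estimate then forces $\{n\,|\,|\phi(n)|>1-\epsilon^2/2\}$, and hence $E_\epsilon$, to have upper Banach density zero. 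A set of Banach density zero cannot be syndetic, so by \cref{1.Delta*IsSyndetic} $E_\epsilon$ is not $\Delta_\ell^*$, contradicting (iii).

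The implication (i)$\implies$(ii) is where most of the bookkeeping lies: the point to get right is that \cref{7.FinitisticOddDegreeRecurrence} supplies an $r$ depending only on $\delta$ and $\ell$ and not on the individual eigenvalues $\lambda_k$, so that finitely many of the sets $\mathcal R(\lambda_k v,\delta)$ can be intersected within the $\Delta_{\ell,0}^*$ framework; invoking only \cref{2.OddDegreeRecurrence} would give the weaker conclusion (iii) directly but not (ii). By contrast, the converse (iii)$\implies$(i) is conceptually the cleanest step, since the syndeticity of $\Delta_\ell^*$ sets (\cref{1.Delta*IsSyndetic}) immediately obstructs recurrence along any nonzero weakly mixing vector.
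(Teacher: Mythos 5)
Your proof is correct and follows essentially the same route as the paper: (i)$\implies$(ii) via \cref{7.FinitisticOddDegreeRecurrence} (with $r$ independent of the eigenvalue) combined with the finite intersection property from \cref{7.DeltaRSummary}, and (iii)$\implies$(i) by playing the syndeticity of $\Delta_\ell^*$ sets (\cref{1.Delta*IsSyndetic}) against \eqref{4.PolynomialWMSet}. The only difference is that you spell out the approximation of a general $f\in\mathcal H_c$ by finite linear combinations of eigenvectors, a step the paper leaves implicit behind the phrase ``since $\Delta_{\ell,0}^*$ sets have the finite intersection property, (ii) follows.''
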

\begin{proof}
(i)$\implies$(ii): Note that $U$ has discrete spectrum if and only if $\mathcal H=\mathcal H_{\text{\rm{c}}}$. So, by \cref{7.FinitisticOddDegreeRecurrence}, we have that for any $\epsilon>0$, there exists an $r\in\N$ such that for any $f\in\mathcal H$ and $\lambda\in \mathbb T$ with the property that $Uf=e^{2\pi i\lambda}f$, the set  
$$\{n\in\N\,|\,\|U^{v(n)}f-f\|_{\mathcal H}<\epsilon\}=\{n\in\N\,|\,\|f\|_{\mathcal H}|e^{2\pi iv(n)\lambda}-1|<\epsilon\}$$
is $\Delta_{\ell,0}^*$. Since $\Delta_{\ell,0}^*$ sets have the finite intersection property, (ii) follows.\\
(ii)$\implies$(iii): Every $\Delta_{\ell,0}^*$ set is a $\Delta_\ell^*$ set by definition.\\
(iii)$\implies$(i): Suppose, by way of contradiction, that $U$ does not have discrete spectrum. Choose $f\in\mathcal H_{\text{\rm{wm}}}$ such that $f\neq0$. Note that if  $D$ is a $\Delta_{\ell}^*$ set, then, by \cref{1.Delta*IsSyndetic}, $D$ is syndetic. Thus, we have by (iii) that 
$$\limsup_{N\rightarrow\infty}\frac{1}{N}\sum_{n=1}^N|\langle U^{v(n)}f,f\rangle|>0,$$
which contradicts \eqref{4.PolynomialWMSet}, completing the proof.
\end{proof}
\begin{cor}[Cf. \cref{0.CompactMeasurable}]\label{4.CompactMeasurePreserving}
Let $(X,\mathcal A,\mu, T)$ be an ergodic invertible probability measure preserving system. The following are equivalent:
\begin{enumerate}[(i)]
\item $(X,\mathcal A,\mu, T)$ is isomorphic to an (ergodic) translation on a compact abelian group.
\item For any odd polynomial $v(x)=\sum_{j=1}^\ell a_jx^{2j-1}$ with $v(\Z)\subseteq\Z$, any  $A\in\mathcal A$ and any $\epsilon>0$, the set 
$$\{n\in\N\,|\, \mu(A\cap T^{-v(n)}A)>\mu(A)-\epsilon\}$$
is $\Delta_{\ell,0}^*$.
\item There exists a non-zero odd polynomial $v(x)=\sum_{j=1}^\ell a_jx^{2j-1}$ with $v(\Z)\subseteq\Z$ such that for any  $A\in\mathcal A$ and any $\epsilon>0$, the set 
$$\{n\in\N\,|\, \mu(A\cap T^{-v(n)}A)>\mu(A)-\epsilon\}$$
is $\Delta_\ell^*$.
\end{enumerate}
\end{cor}
\begin{proof}
The equivalence of (i), (ii), and (iii) follows by applying  \cref{4.CompactHilbert} to the unitary operator $U_T$ induced by $T$ on $L^2(\mu)$ via the formula
$$U_Tf=f\circ T.$$
Indeed, all we need to note is that  an ergodic invertible probability measure preserving system $(X,\mathcal A,\mu,T)$ is isomorphic to a translation on a compact abelian group if and only if $L^2(\mu)=\mathcal H_{\text{\rm{c}}}$ (see \cite{neumann1932operatorenmethode} or \cite[Theorem 3.6] {waltersIntroduction}).
\end{proof}
Given $\ell\in\N$, we will say that a set $D\subseteq \N$ is  \textbf{A-$\Delta_{\ell,0}^*$}\index{A-$\Delta_{\ell,0}^*$}, (or \textbf{almost $\Delta_{\ell,0}^*$}) if there exists a set $E\subseteq \N$ with $d^*(E)=0$,
such that $D\cup E$ is $\Delta_{\ell,0}^*$. 
\begin{thm}[Cf. \cref{0.AlmostMeasurableCase}]\label{4.AlmostMeasurableCase}
Let $(X,\mathcal A,\mu,T)$ be an invertible probability measure preserving system and let  $v(x)=\sum_{j=1}^\ell a_jx^{2j-1}$ be an odd polynomial with $v(\Z)\subseteq\Z$. For any $A\in\mathcal A$ and any $\epsilon>0$,
\begin{equation}\label{4.LargeIntersectionsInLemma}
\mathcal R_A(v,\epsilon)=\{n\in\N\,|\,\mu(A\cap T^{-v(n)}A)>\mu^2(A)-\epsilon\}
\end{equation}
is A-$\Delta_{\ell,0}^*$.
\end{thm}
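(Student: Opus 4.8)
The plan is to exploit the Koopman--von Neumann decomposition $L^2(\mu)=\mathcal H_c\oplus\mathcal H_{\text{wm}}$ for the unitary operator $U=U_T$ defined by $U_Tf=f\circ T$, separating the recurrence contribution of the compact part (which is governed by \cref{4.CompactHilbert}) from that of the weakly mixing part (which turns out to be negligible off a set of upper Banach density zero). We may assume $v\neq 0$, since otherwise $\mu(A\cap T^{-v(n)}A)=\mu(A)\geq\mu^2(A)$ for all $n$ and $\mathcal R_A(v,\epsilon)=\N$. Writing $\mathbbm 1_A=f+g$ with $f\in\mathcal H_c$ and $g\in\mathcal H_{\text{wm}}$, and noting that $\mu(A\cap T^{-v(n)}A)=\langle U^{v(n)}\mathbbm 1_A,\mathbbm 1_A\rangle$, I would first observe that the $U$-invariance and mutual orthogonality of $\mathcal H_c$ and $\mathcal H_{\text{wm}}$ annihilate the cross terms (since $U^{v(n)}f\in\mathcal H_c$ and $U^{v(n)}g\in\mathcal H_{\text{wm}}$), so that
\begin{equation*}
\mu(A\cap T^{-v(n)}A)=\langle U^{v(n)}f,f\rangle+\langle U^{v(n)}g,g\rangle.
\end{equation*}
Here $f$ and $g$ are real, because complex conjugation preserves each of the two subspaces, so both inner products are real.

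Next I would handle the compact piece. Since the constant function $\mathbbm 1$ lies in $\mathcal H_c$ while $g\perp\mathbbm 1$, one has $\int f\,d\mu=\mu(A)$, and projecting $f$ further onto the constants yields the key inequality
\begin{equation*}
\|f\|^2\geq\Big(\int f\,d\mu\Big)^2=\mu^2(A).
\end{equation*}
The restriction $U|_{\mathcal H_c}$ has discrete spectrum by the very definition of $\mathcal H_c$, so applying the implication (i)$\implies$(ii) of \cref{4.CompactHilbert} to $U|_{\mathcal H_c}$ and the vector $f$ shows that for every $\delta>0$ the set
\begin{equation*}
C_\delta=\{n\in\N\,|\,\|U^{v(n)}f-f\|_{\mathcal H}<\delta\}
\end{equation*}
is $\Delta_{\ell,0}^*$. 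On $C_\delta$ we have $|\langle U^{v(n)}f,f\rangle-\|f\|^2|\leq\|f\|\,\delta$, and hence $\langle U^{v(n)}f,f\rangle>\mu^2(A)-\|f\|\,\delta$.

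It remains to dispose of the weakly mixing piece and to combine the two estimates. By the polynomial mean ergodic identity \eqref{4.PolynomialWMSet}, the averages $\tfrac{1}{N-M}\sum_{n=M+1}^N|\langle U^{v(n)}g,g\rangle|$ tend to $0$ as $N-M\to\infty$; by Markov's inequality this forces the exceptional set $E_{\delta'}=\{n\in\N\,|\,|\langle U^{v(n)}g,g\rangle|\geq\delta'\}$ to satisfy $d^*(E_{\delta'})=0$ for every $\delta'>0$. Choosing $\delta,\delta'>0$ with $\|f\|\,\delta+\delta'<\epsilon$, the two bounds combine on $C_\delta\setminus E_{\delta'}$ to give $\mu(A\cap T^{-v(n)}A)>\mu^2(A)-\epsilon$, whence $C_\delta\subseteq\mathcal R_A(v,\epsilon)\cup E_{\delta'}$. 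Since $C_\delta$ is $\Delta_{\ell,0}^*$ and any superset of a $\Delta_{\ell,0}^*$ set is again $\Delta_{\ell,0}^*$, the set $\mathcal R_A(v,\epsilon)\cup E_{\delta'}$ is $\Delta_{\ell,0}^*$; as $d^*(E_{\delta'})=0$, this is exactly the assertion that $\mathcal R_A(v,\epsilon)$ is A-$\Delta_{\ell,0}^*$.

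The genuinely load-bearing ingredients are the sharp inequality $\|f\|^2\geq\mu^2(A)$, which is what pins down $\mu^2(A)$ (rather than some larger quantity) as the correct threshold, and the passage from the interval-uniform estimate \eqref{4.PolynomialWMSet} to $d^*(E_{\delta'})=0$. The latter is where I expect the main subtlety to lie: it is essential that \eqref{4.PolynomialWMSet} is a Banach-density (uniform over intervals $[M+1,N]$) statement rather than a plain Cesàro limit, since only this form lets the weakly mixing contribution be absorbed into a set of density zero instead of disrupting the $\Delta_{\ell,0}^*$ property.
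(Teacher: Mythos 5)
Your proof is correct and follows essentially the same route as the paper's: decompose via Koopman--von Neumann, apply \cref{4.CompactHilbert} (i)$\implies$(ii) to the compact component together with the Cauchy--Schwarz bound $\|f\|^2\geq\mu^2(A)$, kill the weakly mixing component off a set of upper Banach density zero using \eqref{4.PolynomialWMSet}, and absorb that exceptional set into the union defining A-$\Delta_{\ell,0}^*$. The only (harmless) differences are cosmetic: you work directly with $\mathbbm 1_A$ and the norm-recurrence sets $C_\delta$ rather than the paper's inner-product formulation for general $f\in L^2(\mu)$, and you explicitly dispose of the degenerate case $v=0$, which the paper leaves implicit.
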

\begin{proof}
It suffices to show that for each $f\in L^2(\mu)$ and any $\epsilon>0$,
\begin{equation}\label{4.LargeIntersectionProof}
\{n\in\N\,|\,\langle U_T^{v(n)}f, f\rangle>\left(\int_X f\text{d}\mu\right)^2-\epsilon\}
\end{equation}
is A-$\Delta_{\ell,0}^*$.\\
Let $f\in L^2(\mu)$, $f_{\text{\rm{c}}}\in\mathcal H_{\text{\rm{c}}}$ and $f_{\text{\rm{wm}}}\in\mathcal H_{\text{\rm{wm}}}$ be such that $f=f_{\text{\rm{c}}}+f_{\text{\rm{wm}}}$. Since $\mathcal H_{\text{\rm{c}}}$ and $\mathcal H_{\text{\rm{wm}}}$ are orthogonal,   $U_T$ and $U_T^{-1}$-invariant subspaces of $L^2(\mu)$, we have that  $\langle U_T^{v(n)}f,f\rangle =\langle U_T^{v(n)}f_{\text{\rm{c}}},f_{\text{\rm{c}}}\rangle +\langle U_T^{v(n)}f_{\text{\rm{wm}}},f_{\text{\rm{wm}}}\rangle$. Now let $\epsilon>0$. Note that by \cref{4.CompactHilbert}, the set 
$$\{n\in\N\,|\,|\langle U_T^{v(n)}f_{\text{\rm{c}}},f_{\text{\rm{c}}}\rangle -\|f_{\text{\rm{c}}}\|_{L^2}^2|<\frac{\epsilon}{2}\}$$
is $\Delta_{\ell,0}^*$. Applying  Cauchy-Schwarz inequality we get
$$\|f_{\text{\rm{c}}}\|_{L^2}^2=\langle f_{\text{\rm{c}}},f_{\text{\rm{c}}}\rangle\langle \mathbbm 1_X,\mathbbm 1_X\rangle\geq |\langle f_{\text{\rm{c}}},\mathbbm 1_X\rangle|^2=\left(\int_X f_{\text{\rm{c}}}\text{d}\mu\right)^2=\left(\int_X f\text{d}\mu\right)^2,$$
which implies that the set
\begin{equation*}\label{4.CompactSetInequality}
D=\{n\in\N\,|\,\langle U_T^{v(n)}f_{\text{\rm{c}}},f_{\text{\rm{c}}}\rangle >\left(\int_X f\text{d}\mu\right)^2-\frac{\epsilon}{2}\}
\end{equation*}
is $\Delta_{\ell,0}^*$.\\
On the other hand, it follows from \eqref{4.PolynomialWMSet} that the set 
\begin{equation*}\label{4.WeakMixingComponent}
E=\{n\in\N\,|\,|\langle U_T^{v(n)}f_{\text{\rm{wm}}},f_{\text{\rm{wm}}}\rangle |\geq \frac{\epsilon}{2}\}
\end{equation*}
has zero upper Banach density. So, since for any $n\in D\setminus E$,
$$\langle U_T^{v(n)}f,f\rangle>\left( \int_X f\text{d}\mu\right)^2-\epsilon,$$
we have that
$$D\subseteq\{n\in\N\,|\,\langle U_T^{v(n)}f,f\rangle > \left(\int_Xf\text{d}\mu\right)^2-\epsilon\}\cup E.$$
Since $D$ is $\Delta_{\ell,0}^*$, the set in \eqref{4.LargeIntersectionProof} is A-$\Delta_{\ell,0}^*$. We are done.
\end{proof}
We remark that the quantity $\mu^2(A)$ in \eqref{4.LargeIntersectionsInLemma} is optimal (consider any strongly mixing system).\\

Similarly to the situation with the sets $\mathcal R(v,\epsilon)$ which was discussed in Subsection 3.2, there is an IP-flavored result dealing with the sets $\mathcal R_A(v,\epsilon)$. We need first to introduce some terminology.  A set $E\subseteq \N$ is called an \textbf{IP$_0$\index{IP$_0$} set} if it is an IP$_r$ set for each $r\in\N$. The set $E\subseteq \N$ is an \textbf{IP$_0^*$\index{IP$_0^*$} set} if it has a non-trivial intersection with any  IP$_0$ set. Finally,  a set $D\subseteq \N$ is called  \textbf{A-IP$_0^*$}\index{A-IP$_0^*$} (or almost IP$_0^*$) if there exists a set $E\subseteq \N$ with $d^*(E)=0$ such that $D\cup E$ is IP$_0^*$.
\begin{thm}[Cf. \cite{AlmostIPBerLeib}, Theorem 1.8, case $k=1$]\label{3.AlmostIPRecurrence}\
Let $(X,\mathcal A,\mu,T)$ be an invertible probability measure preserving system and let $v(x)=\sum_{j=1}^Na_jx^j$ be a polynomial with $v(\Z)\subseteq\Z$. For any $\epsilon>0$, the set
$$\mathcal R_A(v,\epsilon)=\{n\in\N\,|\,\mu(A\cap T^{-v(n)}A)>\mu^2(A)-\epsilon\}$$
is \rm{A-IP$^*_0$}.
\end{thm}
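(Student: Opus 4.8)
The plan is to transcribe the proof of \cref{4.AlmostMeasurableCase}, replacing $\Delta_{\ell,0}^*$ by IP$_0^*$ throughout and using the diophantine estimate \cref{3.IP0Returns} in place of \cref{4.CompactHilbert}. Working with the Koopman operator $U=U_T$ on $L^2(\mu)$ and recalling that $\mu(A\cap T^{-v(n)}A)=\langle U^{v(n)}\mathbbm 1_A,\mathbbm 1_A\rangle$ while $\mu^2(A)=(\int_X\mathbbm 1_A\,\text{d}\mu)^2$, it suffices to prove that for every $f\in L^2(\mu)$ the set $S=\{n\in\N\,|\,\langle U^{v(n)}f,f\rangle>(\int_X f\,\text{d}\mu)^2-\epsilon\}$ is A-IP$_0^*$ and then specialize to $f=\mathbbm 1_A$. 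Decomposing $f=f_c+f_{\text{wm}}$ via Koopman-von Neumann and using the orthogonality and $U^{\pm1}$-invariance of $\mathcal H_c$ and $\mathcal H_{\text{wm}}$, one has $\langle U^{v(n)}f,f\rangle=\langle U^{v(n)}f_c,f_c\rangle+\langle U^{v(n)}f_{\text{wm}},f_{\text{wm}}\rangle$.

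The weakly mixing contribution is handled exactly as before: since $v$ is non-constant with $v(\Z)\subseteq\Z$, \eqref{4.PolynomialWMSet} shows that the uniform Ces\`aro averages of $|\langle U^{v(n)}f_{\text{wm}},f_{\text{wm}}\rangle|$ vanish, so $E=\{n\,|\,|\langle U^{v(n)}f_{\text{wm}},f_{\text{wm}}\rangle|\ge\epsilon/2\}$ satisfies $d^*(E)=0$. The crux is to show that $D=\{n\,|\,\langle U^{v(n)}f_c,f_c\rangle>(\int_X f\,\text{d}\mu)^2-\epsilon/2\}$ is IP$_0^*$. I would approximate $f_c$ in norm by a finite orthonormal combination $g=\sum_{j=1}^k c_j\phi_j$ of eigenvectors with $U\phi_j=e^{2\pi i\lambda_j}\phi_j$, so that $\langle U^{v(n)}g,g\rangle=\sum_{j=1}^k|c_j|^2 e^{2\pi i v(n)\lambda_j}$. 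For each $j$ the polynomial $\lambda_j v$ has zero constant term and degree at most $N$, so \cref{3.IP0Returns} makes $\{n\,|\,\|v(n)\lambda_j\|<\eta\}$ an IP$_r^*$ set, hence IP$_0^*$; intersecting these finitely many sets and invoking the finite intersection property of IP$_0^*$ yields an IP$_0^*$ set on which $\langle U^{v(n)}g,g\rangle$ lies within $2\pi\eta\|g\|^2$ of $\|g\|^2$. Taking $\|f_c-g\|$ and $\eta$ small and using $\|f_c\|^2\ge|\langle f_c,\mathbbm 1_X\rangle|^2=(\int_X f\,\text{d}\mu)^2$ (here $\int_X f_{\text{wm}}\,\text{d}\mu=0$), this IP$_0^*$ set is contained in $D$, so $D$ is IP$_0^*$.

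To finish, I observe that for $n\in D\setminus E$ one has $\langle U^{v(n)}f,f\rangle>(\int_X f\,\text{d}\mu)^2-\epsilon$, whence $D\subseteq S\cup E$; since $D$ is IP$_0^*$ and $d^*(E)=0$, the superset $S\cup E$ is IP$_0^*$ and therefore $S$ is A-IP$_0^*$. The one step that does not come for free from the $\Delta_{\ell,0}^*$ argument is the finite intersection property of IP$_0^*$ used in the compact step, and this is the main obstacle: I would justify it by the general principle recorded after \cref{7.DeltaRSummary}, after verifying that the family IP$_0$ is partition regular (if a finite union of sets is IP$_r$ for every $r$, then by the finitary Hindman/Folkman theorem some cell is IP$_r$ for infinitely many, and hence for all, $r$, i.e. is IP$_0$). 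Everything else is a routine transcription of the $\Delta_{\ell,0}^*$ proof.
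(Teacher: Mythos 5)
Your proof is sound, but note that it cannot coincide with ``the paper's proof'' for a simple reason: the paper never proves \cref{3.AlmostIPRecurrence}, it imports it wholesale from \cite{AlmostIPBerLeib} (Theorem 1.8, case $k=1$). What you have produced is a self-contained derivation from the paper's own toolkit, and it holds up. The Koopman--von Neumann splitting, the treatment of the weakly mixing component via \eqref{4.PolynomialWMSet}, and the final bookkeeping $D\subseteq S\cup E$ are a verbatim transcription of the proof of \cref{4.AlmostMeasurableCase}; the genuinely new content is your handling of the compact part, where the paper's appeal to \cref{4.CompactHilbert} (which rests on \cref{7.FinitisticOddDegreeRecurrence} and therefore needs $v$ odd) is replaced by orthonormal eigenfunction approximation, the identity $\langle U^{v(n)}g,g\rangle=\sum_{j}|c_j|^2e^{2\pi i v(n)\lambda_j}$, and the diophantine input \cref{3.IP0Returns}, which applies to \emph{every} real polynomial with zero constant term --- this substitution is exactly why the IP-flavored statement holds for all integer-valued $v$ with $v(0)=0$ while the $\Delta_{\ell,0}^*$-flavored one requires oddness. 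You also correctly isolate the single step that does not come from transcription, namely the finite intersection property of IP$_0^*$, and your justification is right: the finitary (compactness) form of Hindman's theorem shows that IP$_0$ is partition regular (for each $r$ some cell of the partition is IP$_r$, and a cell that is IP$_r$ for infinitely many $r$ is IP$_r$ for all $r$, hence IP$_0$), after which the general principle recorded after \cref{7.DeltaRSummary} yields the finite intersection property; your compact-part argument then mirrors the paper's own proof of \cref{4.CompactHilbert}, (i)$\implies$(ii). The trade-off against the paper's citation is clear: \cite{AlmostIPBerLeib} establishes the far more general multiparameter theorem (general $k$, via heavier IP ergodic machinery), whereas your argument is elementary, stays entirely inside the present paper's methods, and covers precisely the $k=1$ case that is actually used here.
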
 
We will show in Section 8 below (see  \cref{3.FirstImportantApplicationA}) that for each $\ell\in\N$, there exists an \rm{$\text{A-IP}_0^*$} set which is not $\text{A-}\Delta_{\ell,0}^*$. Thus, \cref{4.AlmostMeasurableCase} provides new information about sets of the form $\mathcal R_A(v,\epsilon)$.\\

We will give now two corollaries of \cref{4.AlmostMeasurableCase}. The first one is a variant of the Furstenberg-S{\'a}rk{\" o}zy theorem (see \cite{sarkozy1978difference} and \cite[Theorem 3.16]{FBook}). The second establishes a new recurrence property for weakly mixing systems.
\begin{cor}[Cf. \cref{0.SarkozyLike}]\label{4.SarkozyREsult}
Let $E\subseteq\N$ and assume that $d^*(E)>0$. Then for any odd polynomial  $v(x)=\sum_{j=1}^\ell a_jx^{2j-1}$  with $v(\Z)\subseteq\Z$, the set
$$\{n\in\N\,|\,v(n)\in E-E\}$$
is A-$\Delta_{\ell,0}^*$.
\end{cor}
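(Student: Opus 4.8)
The plan is to deduce this combinatorial statement from the dynamical result \cref{4.AlmostMeasurableCase} by means of Furstenberg's correspondence principle. The principle converts the hypothesis $d^*(E)>0$ into a measure preserving system in which the combinatorial difference set $E-E$ is controlled from below by dynamical return times, after which \cref{4.AlmostMeasurableCase} supplies the desired A-$\Delta_{\ell,0}^*$ largeness.

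First I would observe the elementary membership equivalence: for $m\in\Z$ one has $m\in E-E$ if and only if $E\cap(E-m)\neq\emptyset$, where $E-m=\{x\mid x+m\in E\}$. In particular, if $d^*\bigl(E\cap(E-m)\bigr)>0$ then $E\cap(E-m)$ is certainly non-empty and hence $m\in E-E$. Next, viewing $E$ as a subset of $\Z$, the correspondence principle produces an invertible probability measure preserving system $(X,\mathcal A,\mu,T)$ and a set $A\in\mathcal A$ with $\mu(A)=d^*(E)>0$ such that for every $m\in\Z$,
$$d^*\bigl(E\cap(E-m)\bigr)\geq\mu(A\cap T^{-m}A).$$
Now set $\epsilon=\tfrac{1}{2}\mu(A)^2$, which is positive since $\mu(A)=d^*(E)>0$. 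Applying \cref{4.AlmostMeasurableCase} to this system, the set
$$\mathcal R_A(v,\epsilon)=\{n\in\N\mid \mu(A\cap T^{-v(n)}A)>\mu^2(A)-\epsilon\}=\{n\in\N\mid \mu(A\cap T^{-v(n)}A)>\tfrac{1}{2}\mu(A)^2\}$$
is A-$\Delta_{\ell,0}^*$ (note $v(n)\in\Z$ since $v(\Z)\subseteq\Z$, and $T^{-v(n)}$ is defined for all integers because $T$ is invertible).

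To finish, I would establish the inclusion $\mathcal R_A(v,\epsilon)\subseteq\{n\in\N\mid v(n)\in E-E\}$. Indeed, for $n\in\mathcal R_A(v,\epsilon)$ we have $\mu(A\cap T^{-v(n)}A)>\tfrac{1}{2}\mu(A)^2>0$, so by the correspondence inequality $d^*\bigl(E\cap(E-v(n))\bigr)>0$, whence $E\cap(E-v(n))\neq\emptyset$ and therefore $v(n)\in E-E$. It then remains only to note that the family of A-$\Delta_{\ell,0}^*$ sets is closed under passing to supersets: if $D\subseteq D'$ and $D\cup E'$ is $\Delta_{\ell,0}^*$ with $d^*(E')=0$, then $D'\cup E'\supseteq D\cup E'$ is again $\Delta_{\ell,0}^*$ (a superset of a set meeting every $\Delta_{\ell,0}$ set also meets every $\Delta_{\ell,0}$ set), so $D'$ is A-$\Delta_{\ell,0}^*$ via the same null set $E'$. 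Combining this monotonicity with the inclusion above yields that $\{n\in\N\mid v(n)\in E-E\}$ is A-$\Delta_{\ell,0}^*$.

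This argument is essentially soft, so I do not anticipate a genuine obstacle; the only points requiring care are bookkeeping. Specifically, I would make sure to invoke the correspondence principle in the form that yields an \emph{invertible} system (so that \cref{4.AlmostMeasurableCase} applies) and to fix the sign convention so that the translate $E-v(n)$ corresponds to the return set under $T^{-v(n)}$; and I would explicitly record the superset-closure of the A-$\Delta_{\ell,0}^*$ family, since the target set is only known to \emph{contain} $\mathcal R_A(v,\epsilon)$ rather than to equal it.
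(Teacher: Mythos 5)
Your proof is correct and takes essentially the same route as the paper's: Furstenberg's correspondence principle to produce an invertible system with $d^*(E\cap(E-n))\geq\mu(A\cap T^{-n}A)$, then \cref{4.AlmostMeasurableCase} applied to $\mathcal R_A(v,\epsilon)$, then superset monotonicity of the A-$\Delta_{\ell,0}^*$ family. You merely make explicit two small steps the paper leaves implicit (the concrete choice $\epsilon=\tfrac{1}{2}\mu(A)^2$ and the closure of A-$\Delta_{\ell,0}^*$ under supersets), which is fine.
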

\begin{proof}
By Furstenberg's correspondence principle (see \cite[Theorem 1.1]{BerErgodicTheory1985}), there exists an  invertible probability measure preserving system $(X,\mathcal A,\mu, T)$ and a set $A\in\mathcal A$ with $\mu(A)=d^*(E)$ such that for all $n\in\Z$,\
$$d^*(E\cap (E-n))\geq \mu(A\cap T^{-n}A).$$
\cref{4.AlmostMeasurableCase} implies that the set 
$$D=\{n\in\N\,|\,d^*(E\cap (E-v(n)))>0\}$$
is A-$\Delta_{\ell,0}^*$. Since
$$D\subseteq\{n\in\N\,|\, v(n)\in E-E\},$$
we are done. 
\end{proof}
\begin{rem}
Actually, Furstenberg's correspondence principle allows us to get a finer result. Namely, given any $\epsilon>0$, any odd  polynomial $v(x)=\sum_{j=1}^\ell a_jx^{2j-1}$ with $v(\Z)\subseteq\Z$ and any set $E\subseteq \N$ with $d^*(E)>0$,
$$\{n\in\N\,|\,d^*(E\cap (E-v(n)))>(d^*(E))^2-\epsilon\}$$
is A-$\Delta_{\ell,0}^*$.
\end{rem}
\begin{cor}[Cf. \cref{0.WeaklyMixingCase}]\label{4.WeaklyMixingCase}
Let $v(x)=\sum_{j=1}^\ell a_jx^{2j-1}$ be a non-zero odd polynomial with $v(\Z)\subseteq\Z$ and let $(X,\mathcal A,\mu, T)$ be an invertible probability measure preserving system. The following are equivalent:
\begin{enumerate}[(i)]
\item $T$ is weakly mixing.
\item For any $A,B\in\mathcal A$ and any $\epsilon>0$,
$$\mathcal R_{A,B}(v,\epsilon)=\{n\in\N\,|\,|\mu(A\cap T^{-v(n)}B)-\mu(A)\mu(B)|<\epsilon\}$$
is A-$\Delta_{\ell,0}^*$.
\item For any $A,B\in\mathcal A$ and any $\epsilon>0$,
$$\mathcal R_{A,B}(v,\epsilon)=\{n\in\N\,|\,|\mu(A\cap T^{-v(n)}B)-\mu(A)\mu(B)|<\epsilon\}$$
is A-$\Delta_{\ell}^*$.
\end{enumerate}
\end{cor}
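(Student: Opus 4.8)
The plan is to establish the cycle of implications (i)$\implies$(ii)$\implies$(iii)$\implies$(i). The implication (ii)$\implies$(iii) is immediate, since $\Delta_{\ell,0}^*\subseteq\Delta_\ell^*$ forces A-$\Delta_{\ell,0}^*\subseteq$ A-$\Delta_\ell^*$; this is exactly why it is natural to prove the stronger A-$\Delta_{\ell,0}^*$ conclusion from the weak mixing hypothesis (i) and to require only the weaker A-$\Delta_\ell^*$ hypothesis in (iii) when deducing (i). Throughout I work with the unitary operator $U_T$ on $L^2(\mu)$ and the Koopman--von Neumann decomposition $L^2(\mu)=\mathcal H_c\oplus\mathcal H_{\text{wm}}$.

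For (i)$\implies$(ii), assume $T$ is weakly mixing, so that $\mathcal H_{\text{wm}}=\{f\in L^2(\mu)\,|\,\int_X f\,\mathrm d\mu=0\}$. Given $A,B\in\mathcal A$, set $g_A=\mathbbm 1_A-\mu(A)$ and $g_B=\mathbbm 1_B-\mu(B)$, both of which lie in $\mathcal H_{\text{wm}}$. Expanding the inner product $\langle U_T^{v(n)}\mathbbm 1_B,\mathbbm 1_A\rangle$ and using $\int g_A\,\mathrm d\mu=\int g_B\,\mathrm d\mu=0$ yields the identity $\mu(A\cap T^{-v(n)}B)-\mu(A)\mu(B)=\langle U_T^{v(n)}g_B,g_A\rangle$. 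By \eqref{4.PolynomialWMSet} the uniform Ces\`aro averages of $|\langle U_T^{v(n)}g_B,g_A\rangle|$ tend to $0$, so the set $E=\{n\in\N\,|\,|\langle U_T^{v(n)}g_B,g_A\rangle|\geq\epsilon\}$ has $d^*(E)=0$. Since $\mathcal R_{A,B}(v,\epsilon)=\N\setminus E$, we get $\mathcal R_{A,B}(v,\epsilon)\cup E=\N$, which is $\Delta_{\ell,0}^*$; hence $\mathcal R_{A,B}(v,\epsilon)$ is A-$\Delta_{\ell,0}^*$.

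The substance lies in (iii)$\implies$(i), which I prove by contraposition: assuming $T$ is not weakly mixing, I produce $A,B$ and $\epsilon>0$ for which $\mathcal R_{A,B}(v,\epsilon)$ is not A-$\Delta_\ell^*$. If $T$ is not ergodic, take a $T$-invariant $A$ with $0<\mu(A)<1$; then $\mu(A\cap T^{-v(n)}A)=\mu(A)$ for all $n$, so $\mathcal R_{A,A}(v,\epsilon)=\emptyset$ once $\epsilon<\mu(A)(1-\mu(A))$, and $\emptyset$ cannot be A-$\Delta_\ell^*$ because every $\Delta_\ell^*$ set is syndetic (\cref{1.Delta*IsSyndetic}) and therefore of positive upper Banach density. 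Otherwise $T$ is ergodic with a non-constant eigenfunction $g$, $U_Tg=e^{2\pi i\lambda}g$, $\lambda\neq 0$; normalizing $|g|\equiv 1$, the map $g$ becomes a factor map onto the rotation by $\lambda$ on a compact abelian group $Z$ (a circle for $\lambda$ irrational, a finite cyclic group for $\lambda$ rational). Choosing a non-trivial $A_0\subseteq Z$ and $A=B=g^{-1}(A_0)$, equivariance gives $\mu(A\cap T^{-v(n)}A)-\mu(A)^2=\phi(v(n)\lambda)$, where $\phi(t)=m(A_0\cap(A_0-t))-m(A_0)^2$ is continuous with $\phi(0)=\mu(A)(1-\mu(A))>0$. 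I then fix $\epsilon\in(0,\phi(0))$ and a $\delta>0$ with $\phi(t)>\epsilon$ whenever $\|t\|<\delta$.

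It remains to show that $\mathcal R_{A,A}(v,\epsilon)$ is not A-$\Delta_\ell^*$; unwinding the definition, this is equivalent to showing that for \emph{every} $E\subseteq\N$ with $d^*(E)=0$, the set $S\setminus E$ contains a $\Delta_\ell$ set, where $S=\N\setminus\mathcal R_{A,A}(v,\epsilon)$. Since $\lambda v$ is again an odd real polynomial, \cref{2.OddDegreeRecurrence} shows that $S_\delta:=\mathcal R(\lambda v,\delta)=\{n\in\N\,|\,\|v(n)\lambda\|<\delta\}$ is $\Delta_\ell^*$, and our choice of $\delta$ gives $S_\delta\subseteq S$. Now fix a density-zero $E$: being of density zero it is not syndetic, hence not $\Delta_\ell^*$, so by \cref{1.DeltaL*characterization} there is a non-principal ultrafilter $q$ with $E\notin q_\ell$, i.e. $\N\setminus E\in q_\ell$; as $S_\delta$ is $\Delta_\ell^*$ we also have $S_\delta\in q_\ell$, whence $S_\delta\setminus E\in q_\ell$. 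Applying (i)$\implies$(ii) of \cref{1.DeltaCharacterization} with $A_0=\cdots=A_{\ell-1}=\N$ and $A_\ell=S_\delta\setminus E$ produces an increasing sequence whose $\ell$-th difference set lies inside $S_\delta\setminus E\subseteq S\setminus E$. As $E$ was arbitrary, $\mathcal R_{A,A}(v,\epsilon)$ is not A-$\Delta_\ell^*$, completing the contrapositive. The main obstacle is precisely this last robustness step: defeating the ``almost'' in A-$\Delta_\ell^*$ requires a genuine $\Delta_\ell$ set avoiding both $\mathcal R_{A,A}(v,\epsilon)$ and the arbitrary density-zero exceptional set $E$, and the clean resolution is to find a single ultrafilter $q$ whose $\ell$-th iterate $q_\ell$ simultaneously contains the $\Delta_\ell^*$ set $\mathcal R(\lambda v,\delta)$ and the complement of $E$, the latter being possible exactly because a density-zero set is never $\Delta_\ell^*$.
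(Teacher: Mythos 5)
Your proof is correct, and your arguments for (i)$\implies$(ii) and (ii)$\implies$(iii) coincide with the paper's (the same use of \eqref{4.PolynomialWMSet} and the trivial inclusion A-$\Delta_{\ell,0}^*\subseteq$ A-$\Delta_\ell^*$). The genuine divergence is in (iii)$\implies$(i). The paper makes no ergodic/non-ergodic case split: it takes a non-constant $f\in\mathcal H_c$, forms a level set $A$ with $\mu(A)\in(0,1)$, invokes \cref{4.CompactHilbert} to conclude that $D_\epsilon=\{n\,|\,\mu(A\cap T^{-v(n)}A)>\mu(A)-\epsilon\}$ is $\Delta_\ell^*$, chooses $\epsilon$ so that $D_\epsilon$ is disjoint from $\mathcal R_{A,A}(v,\epsilon)$, and then kills every candidate $\mathcal R_{A,A}(v,\epsilon)\cup E$ using the finite intersection property of $\Delta_\ell^*$ sets together with the fact that $\Delta_\ell^*$ sets have positive upper density. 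You instead split into cases, and in the ergodic case pass through an eigenfunction factor map onto a group rotation, reducing matters to the diophantine statement that $\mathcal R(\lambda v,\delta)$ is $\Delta_\ell^*$ (\cref{2.OddDegreeRecurrence}); your robustness step then replaces the finite-intersection argument by an ultrafilter extraction: pick $q$ with $E\notin q_\ell$ (possible since density-zero sets are not $\Delta_\ell^*$ by \cref{1.Delta*IsSyndetic}), note $S_\delta\setminus E\in q_\ell$ by \cref{1.DeltaL*characterization}, and produce an explicit $\Delta_\ell$ set inside $S_\delta\setminus E$ via \cref{1.DeltaCharacterization}. What the paper's route buys is uniformity (no case analysis) and brevity, at the cost of leaning on \cref{4.CompactHilbert} and, implicitly, on the fact that indicators of level sets of Kronecker-measurable functions lie in $\mathcal H_c$; your route is more self-contained at the dynamical level, needing only the classical eigenfunction characterization of weak mixing for ergodic systems plus the paper's core diophantine theorem, and it makes the obstruction concrete by exhibiting the avoiding $\Delta_\ell$ set rather than arguing by contradiction. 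One cosmetic loose end on your side: for rational $\lambda$ the normalized eigenfunction maps onto a \emph{coset} of a finite cyclic group rather than the group itself (since $g^q$ is a constant of modulus one, not necessarily $1$), so one should multiply by a constant before speaking of a factor map onto a cyclic group; this does not affect the estimate on $\mu(A\cap T^{-v(n)}A)$.
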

\begin{proof}
(i)$\implies$(ii): Since $(X,\mathcal A,\mu, T)$ is weakly mixing, we have that 
$$L^2(\mu)=\mathbb C \mathbbm 1_X\oplus \mathcal H_{\text{\rm{wm}}}.$$ 
Hence, it follows from \eqref{4.PolynomialWMSet} that for any $\epsilon>0$ and any $f,g\in L^2(\mu)$, the set
$$E=\{n\in\N\,|\,|\langle U_T^{v(n)}f,g\rangle-\int_Xf\text{d}\mu\int_Xg\text{d}\mu|\geq \epsilon\}$$
satisfies $d^*(E)=0$, which, in turn, implies that 
$$\{n\in\N\,|\,|\langle U_T^{v(n)}f,g\rangle-\int_Xf\text{d}\mu\int_Xg\text{d}\mu|< \epsilon\}$$
is A-$\Delta_{\ell,0}^*$.\\
(ii)$\implies$(iii): Every A-$\Delta_{\ell,0}^*$ set is an  A-$\Delta_\ell^*$ set.\\
(iii)$\implies$(i): Suppose that  $(X,\mathcal A,\mu,T)$ is not weakly mixing. It suffices to show that for some $A\in\mathcal A$ and some $\epsilon>0$, the set
$$E_{\epsilon}(A)=\{n\in\N\,|\,|\mu(A\cap T^{-v(n)}A)-\mu^2(A)|<\epsilon\}$$
is not A-$\Delta_\ell^*$.\\
Since $T$ is not weakly mixing, there exists a non-constant function $f\in\mathcal H_{\text{\rm{c}}}$. Noting  that either the real or imaginary part of $f$ is non-constant, we assume without loss of generality that $f$ is real valued. Observe that we can find $a,c\in\R$, $a<c$, such that $\mu(f^{-1}([a,c)))\in(0,1)$. Furthermore, since $[a,c)$ is the union of closed intervals of the form $[a,c']$, $c'\in[a,c)$, we can find  $b\in [a,c)$ for which $\mu(f^{-1}([a,b]))\in(0,1)$. Let $A=f^{-1}([a,b])$.\\
For each $\delta>0$, let $A_\delta=f^{-1}([a-\delta,b+\delta])$. Note that for any $\gamma>0$ and any $\epsilon>0$ there exists an $r=r(\epsilon,\gamma)>0$ such that for any $n\in\{m\in\N\,|\,\|U_T^{v(m)}f-f\|_{L^2}<r\}$,
$$\mu(A)-\frac{\epsilon}{2}\leq \mu(A\cap \{x\in A\,|\,|U_T^{v(n)}f(x)-f(x)|<\gamma\}).$$
Since $\{x\in A\,|\,|U_T^{v(n)}f(x)-f(x)|<\gamma\}\subseteq T^{-v(n)}A_\gamma$, we have $\mu(A)-\frac{\epsilon}{2}\leq \mu(A\cap T^{-v(n)}A_\gamma)$. Noting that for any 
$\epsilon>0$ there exists a $\delta>0$ such that $\mu(A_\delta\setminus A)<\frac{\epsilon}{2}$, we obtain that for $r$ small enough and $n\in \{m\in\N\,|\,\|U_T^{v(m)}f-f\|_{L^2}<r\}$,
$$\mu(A)-\epsilon\leq \mu(A\cap T^{-v(n)}A_\delta)-\frac{\epsilon}{2}< \mu(A\cap T^{-v(n)}A).$$
By \cref{4.CompactHilbert}, (iii), for any $r>0$ the set $\{m\in\N\,|\,\|U_T^{v(m)}f-f\|_{L^2}<r\}$ is $\Delta_\ell^*$. It follows that for any $\epsilon>0$ the set
$$D_\epsilon=\{n\in\N\,|\,\mu(A\cap T^{-v(n)}A)>\mu(A)-\epsilon\}$$
is $\Delta_{\ell}^*$. Since $\mu(A)>\mu^2(A)$, we can find an $\epsilon>0$ such that for any $n\in D_\epsilon$,
$$\mu(A\cap T^{-v(n)}A)>\mu^2(A)+\epsilon.$$
It follows that given $E\subseteq\N$ with $d^*(E)=0$, $(E_{\epsilon}(A)\cup E)\cap D_\epsilon=E\cap D_\epsilon$ and hence 
$$d^*((E_{\epsilon}(A)\cup E)\cap D_\epsilon)=0.$$ 
By noting that the intersection of any two $\Delta_\ell^*$ sets is again $\Delta_\ell^
*$ and that for any $\Delta_\ell^*$ set $D$, $d^*(D)>0$, we conclude that $E_\epsilon(A)\cup E$ is not $\Delta_\ell^*$. Since $E$ was arbitrary,  $E_\epsilon(A)$ is not A-$\Delta_\ell^*$, completing the proof. 
\end{proof}
\begin{rem}
In the next section we will show that in the statement of \cref{4.WeaklyMixingCase}, A-$\Delta_{\ell,0}^*$ and A-$\Delta_\ell^*$ can not be replaced by $\Delta_\ell^*$ (see \cref{5.MainResultOfSection7} below).
\end{rem}
%%%%%%%%%%%%%%%%%%%%%%%%%%%%%%%%%%%%%%%%%%%%%%%%%%%%%%%%%%%%%%%%%%%%%%%%%%%%%%%%%%%%%%%%%%%%%%%%%%%%%%%%%%%%%%%%%%%%%%%%%%%%%%%%%%%%%%%%%%%%%%%%%%%%%%%%555
%%%%%%%%%%%%%%%%%%%%%%%%%%%%%%%%%%%%%%%%%%%%%%%%%%%%%%%%%%%%%%%%%%%%%%%%%%%%%%%%%%%%%%%%%%%%%%
%%%%%%%%%%%%%%%%%%%%%%%%%%%%%%%%%%%%%%%%%%%%%%%%%%%%%%%%%%%%%%%%%%%%%%%%%%%%%%%%%%%%%%%%%%%
%%%%%%%%%%%%%%%%%%%%%%%%%%%%%%%%%%%%%%%%%%%%%%%%%%%%%%%%%%%%%%%%%%%%%%%%%%%%%%%%%%
%%%%%%%%%%    Section 7

\section{\cref{0.WeaklyMixingCase} cannot be improved}\label{SecExample}
Our goal in this section is to prove the following result:
\begin{prop}\label{5.ExampleProposition}
For any odd polynomial $v(x)$ with $v(\Z)\subseteq\Z$, there exists a weakly mixing invertible probability measure preserving system $(X,\mathcal A,\mu, T)$, a set $A\in\mathcal A$ with $\mu(A)\in(0,1)$ and a non-principal ultrafilter $p\in\beta\N$ such that for any $\ell\in\N$,
\begin{equation}\label{5.MainObjective}
\plimgG{p_\ell}{n}{\N}\mu(A\cap T^{-v(n)}A)=\mu(A).
\end{equation}
\end{prop}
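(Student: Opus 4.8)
The plan is to deduce \cref{5.ExampleProposition} from a single, explicitly built continuous measure on the circle, realized spectrally. Writing $\mu(A\cap T^{-v(n)}A)=\langle U_T^{v(n)}\mathbbm 1_A,\mathbbm 1_A\rangle$, it suffices to construct a weakly mixing system that is \emph{rigid along $v$ in the $p_\ell$-sense}, i.e. $\plimgG{p_\ell}{n}{\N}\|U_T^{v(n)}F-F\|_{L^2}=0$ for every $F\in L^2(\mu)$ and every $\ell\in\N$; applying this to $F=\mathbbm 1_A$ for any fixed $A$ with $\mu(A)\in(0,1)$ yields \eqref{5.MainObjective}. I will produce such a system as a Gaussian system $G(\nu)$ with a continuous symmetric spectral measure $\nu$ on $\mathbb T$. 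For Gaussian systems it is classical that $G(\nu)$ is weakly mixing precisely when $\nu$ is continuous, and that the rigidity above holds as soon as it holds on the first chaos, i.e. as soon as $\plimgG{p_\ell}{n}{\N}\widehat\nu(v(n))=1$ (the $k$-th chaos carries the spectral type $\nu^{*k}$, whose Fourier transform is $\widehat\nu^{\,k}$, so $\widehat\nu(v(n))\to1$ along $p_\ell$ forces rigidity on every chaos and hence, by density, on all of $L^2(\mu)$). Thus everything reduces to building a continuous symmetric probability measure $\nu$ and a non-principal ultrafilter $p$ with $\plimgG{p_\ell}{n}{\N}\widehat\nu(v(n))=1$ for all $\ell$.

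To build $\nu$, set $D=\deg v$ and $g=D!$, so that $g\,v\in\Z[x]$ (this is the standard fact that $D!\binom{x}{m}\in\Z[x]$ for $m\le D$). Fix integers $R_1<R_2<\cdots$ with $R_{i+1}=R_i^{\,D+1}$; in particular $R_i\mid R_{i+1}$ (a divisibility chain) and the growth is super-lacunary. Let $\nu$ be the distribution of $\Theta=\sum_{i\ge1}\xi_i\,g/R_i$, where the $\xi_i$ are i.i.d. fair $\pm1$ signs. Then $\nu$ is a continuous (non-atomic), symmetric probability measure on $\mathbb T$, with $\widehat\nu(m)=\prod_{i\ge1}\cos(2\pi m g/R_i)$, which is real. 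Finally let $p$ be any non-principal ultrafilter with $\{R_k\,|\,k\in\N\}\in p$.

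The heart of the matter is the estimate: for every $\ell$ and all $k_1<\cdots<k_{2^\ell}$,
\begin{equation*}
0\le 1-\widehat\nu\big(v(\partial(R_{k_1},\dots,R_{k_{2^\ell}}))\big)\le C_{\ell,v}\,/\,R_{k_1}.
\end{equation*}
To prove it, fix a scale $R_i$ and reduce $\partial:=\partial(R_{k_1},\dots,R_{k_{2^\ell}})$ modulo $R_i$: by the divisibility chain every tuple entry $R_{k_j}\ge R_i$ is $\equiv0$, so $\partial\equiv\rho\pmod{R_i}$ for a residue $\rho$ built from the entries below $R_i$, whence $|\rho|\le 2R_{k^*}$, where $R_{k^*}$ is the largest tuple entry with $R_{k^*}<R_i$. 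Since $g\,v\in\Z[x]$, we get $g\,v(\partial)\equiv g\,v(\rho)\pmod{R_i}$, and therefore $\|v(\partial)\,g/R_i\|\le g\,|v(\rho)|/R_i\le C_v\,g\,R_{k^*}^{\,D}/R_i\le C_v\,g/R_{k^*}$, using $R_i\ge R_{k^*+1}=R_{k^*}^{\,D+1}$. Feeding these bounds into $1-\widehat\nu(v(\partial))\le\sum_i 2\pi^2\|v(\partial)g/R_i\|^2$ and summing the (geometrically decaying) contributions gives the displayed bound. By \cref{1.LemaIteratedDifLimit}, $\plimgG{p_\ell}{n}{\N}\widehat\nu(v(n))=\plimgG{p}{m_1}{\N}\cdots\plimgG{p}{m_{2^\ell}}{\N}\widehat\nu(v(\partial(m_1,\dots,m_{2^\ell})))$, with each $m_i$ ranging through $\{R_k\}$; since the nested $p$-limits are realized along tuples with $m_1<\cdots<m_{2^\ell}$ whose smallest index tends to infinity, the estimate forces this iterated limit to equal $1$, as required.

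The main obstacle is precisely the uniformity of this estimate over \emph{all} iterated-difference tuples --- in particular the spread-out ones, for which $v(\partial)$ is enormous ($\approx R_{k_{2^\ell}}^{\,D}$) yet, a priori, arithmetically unstructured; a naive rigid measure fails here because $v(\partial)$ need not be divisible by the intermediate scales. The construction overcomes this through the interplay of three features: the factor $g=(\deg v)!$, which linearizes the rational coefficients of $v$ by placing $g\,v$ in $\Z[x]$; the divisibility chain $R_i\mid R_{i+1}$, which lets one reduce $\partial$ to a bounded residue at each scale; and the super-lacunary growth $R_{i+1}=R_i^{\,D+1}$, which makes that residue negligible relative to its scale. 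In effect this is a measure-uniform, quantitative refinement of the pointwise recurrence in \cref{2.OddDegreeRecurrence}. Beyond this estimate, the only inputs are the standard structure theory of Gaussian systems quoted above.
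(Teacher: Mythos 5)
Your proposal is correct, and while it shares the paper's overall architecture, its technical core is genuinely different. Like the paper, you reduce \cref{5.ExampleProposition} to producing a continuous symmetric probability measure on $\mathbb T$ whose Fourier transform satisfies $\plimgG{p_\ell}{n}{\N}\widehat\nu(v(n))=1$ for all $\ell$, and you realize that measure spectrally through a weakly mixing Gaussian system; the paper does exactly this (its measure is the push-forward of the $(\frac12,\frac12)$-Bernoulli measure on $\{0,1\}^{\N}$ under $\omega\mapsto\sum_s\omega(s)/n_s^3$ with $n_k=2^{6^k}$, symmetrized at the end, so both measures are infinite-convolution-type measures supported on super-lacunary scales forming a divisibility chain). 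The difference lies in how the rigidity along $p_\ell$ is verified. The paper first reduces to $v\in\Z[x]$ by passing to $T^m$, then proves only the case $v(x)=x^3$ in detail, by induction on $\ell$: differencing produces the polynomial $n^3-(3m-M_1)n^2+(3m^2-2mM_1+M_2)n$, and the $m$-dependent factor is absorbed into a twisted test function $G_m(x)=g(x)e^{-2\pi i(m^3-M_1m^2+M_2m)x}$, so the inductive statement must be carried for the whole family of integrals indexed by $(g,M_1,M_2)$. You avoid the induction entirely: your uniform estimate $1-\widehat\nu\big(v(\partial(R_{k_1},\dots,R_{k_{2^\ell}}))\big)\le C_{\ell,v}/R_{k_1}$, proved scale-by-scale from the divisibility chain, the super-lacunary growth, and the integrality of $(\deg v)!\,v$, applies to all iterated-difference tuples at once, handles arbitrary $v$ with $v(\Z)\subseteq\Z$ without reduction to $x^3$ (a case the paper dispatches with ``can be done similarly''), and gives an explicit rate. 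The other divergence: the paper needs rigidity only for the single Gaussian generator $f$ and takes $A=f^{-1}([-1,1])$, whereas you invoke the chaos decomposition (maximal spectral type $\nu^{*k}$ on the $k$-th chaos) to upgrade to rigidity on all of $L^2(\mu)$, so that any $A$ with $\mu(A)\in(0,1)$ works; that step is classical but heavier than anything the paper uses, and if you wanted to stay lighter you could bypass it exactly as the paper does. Two cosmetic corrections that do not affect the conclusion: the residue bound should read $|\rho|\le 2^\ell R_{k^*}$ rather than $2R_{k^*}$ (up to $2^\ell$ entries can lie below the scale $R_i$), and the inequality $1-\prod_i c_i\le\sum_i(1-c_i)$ you implicitly use holds for all $c_i\in[-1,1]$, so no positivity caveat is needed when feeding in the cosines.
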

\begin{rem}\label{5.MainResultOfSection7}
Let the invertible probability measure preserving system $(X,\mathcal A, \mu, T)$ and the set $A\in\mathcal A$ be as in the statement of  \cref{5.ExampleProposition}. Then for any small enough $\epsilon>0$,
$$\mathcal R_A(v,\epsilon)=\{n\in\N\,|\,|\mu(A\cap T^{-v(n)}A)-\mu^2(A)|<\epsilon\}$$
is not $\Delta_{\ell}^*$ for any $\ell\in\N$. In particular, in the statement of \cref{4.WeaklyMixingCase}, A-$\Delta_\ell^*$ and  A-$\Delta_{\ell,0}^*$ can not be replaced by $\Delta_{\ell}^*$ (or $\Delta_{\ell,0}^*$).
\end{rem}
In the proof of \cref{5.ExampleProposition}, we will be using the fact that for any continuous symmetric probability measure $\gamma$ on $\mathbb T$,\footnote{
A Borel probability measure $\gamma$ on $\mathbb T$ is called symmetric if for any $n\in\Z$
$$\int_\mathbb T e^{2\pi i nx}\text{d}\gamma(x)=\int_\mathbb T e^{-2\pi inx}\text{d}\gamma(x).$$
}
there exists a weakly mixing invertible probability measure preserving system $(X,\mathcal A, \mu, T)$ called a Gaussian system. Such a system has the property that for some  $f\in L^2(\mu)$:
\begin{enumerate}[(1)]
\item  For any Borel-measurable $B\subseteq \R$,
\begin{equation}\label{5.GaussianDistributioon}
\mu(f^{-1}(B))=\frac{1}{\sqrt{2\pi}}\int_Be^{-\frac{x^2}{2}}\text{d}x
\end{equation}
(i.e. $f$ has a Gaussian distribution with mean 0 and variance 1).
\item For any $n\in\Z$,
\begin{equation}\label{5.InnerProductOff}
\langle U_T^nf,f\rangle=\int_\mathbb T e^{2\pi inx}\text d\gamma(x).
\end{equation}
\end{enumerate}
Note that for such a function $f$, $\|f\|_{L^2}=1$.
For information on Gaussian systems see for example \cite[Chapters 8.2 and 14]{cornfeld1982ergodic}.\\
\begin{proof}[Proof of \cref{5.ExampleProposition}] 
Let $v(x)$ be an odd polynomial with $v(\Z)\subseteq \Z$. It is not hard to check that $v(x)\in\Q[x]$ and hence, there exists an $m\in\N$ for which $mv(x)\in\Z[x]$. Suppose that \cref{5.ExampleProposition} holds for the odd polynomial $mv(x)$. Then, there exists a weakly mixing invertible measure preserving transformation $T$ and a set $A\in\mathcal A$ satisfying \eqref{5.MainObjective}. By considering the weakly mixing transformation $T^m$, one sees that \cref{5.ExampleProposition} also holds for $v(x)\in\Q[x]$. Thus, without loss of generality, we can assume that $v(x)\in\Z[x]$.
For convenience, we will prove  \cref{5.ExampleProposition} for $v(x)=x^3$. The proof for a general odd polynomial with integer coefficients can be done similarly.\\
Note that it is enough to show that  there exists a continuous symmetric probability measure $\gamma$ on $\mathbb T$  with the property that for some non-principal ultrafilter $p\in\beta\N$ and any $\ell\in\N$,
\begin{equation}\label{5.ConditionWannaShow}
\plimgG{p_\ell}{n}{\N}\int_\mathbb T e^{2\pi i n^3 x}\text d\gamma(x)=1.
\end{equation}
Indeed, if such a probability measure $\gamma$ exists, we would be able to find a Gaussian system $(X,\mathcal A,\mu, T)$ and a function $f\in L^2(\mu)$ satisfying  \eqref{5.GaussianDistributioon} and \eqref{5.InnerProductOff}. For such a function and each $\ell\in\N$ we will have that 
$$\plimgG{p_\ell}{n}{\N}\langle U_T^{n^3}f,f\rangle=\plimgG{p_\ell}{n}{\N}\int_\mathbb T e^{2\pi i n^3 x}\text d\gamma(x)=1=\|f\|_{L^2}^2.$$
So,
$$\plimgG{p_\ell}{n}{\N}U_T^{n^3}f=f$$
in the norm-topology of $L^2(\mu)$.
Thus, for $A=f^{-1}([-1,1])$, we will have 
$$\plimgG{p_\ell}{n}{\N}\mu(A\cap T^{-n^3}A)=\mu(A),$$
which proves \eqref{5.MainObjective}.\\
Note also that, in order to achieve our goal, it is enough to find a not necessarily symmetric continuous Borel probability measure $\rho$ on $\mathbb T$ such that for some non-principal ultrafilter $p\in\beta\N$ and any $\ell\in\N$, 
\begin{equation}\label{5.TrueConditionToFind}
\plimgG{p_\ell}{n}{\N}\int_\mathbb T e^{2\pi i n^3 x}\text d\rho(x)=1.
\end{equation}
Indeed, let $\rho$ be such a measure. Define  $\tilde\rho$ to be the unique probability measure satisfying
$$\int_\mathbb T e^{2\pi i nx}\text{d}\tilde\rho(x)=\int_{\mathbb T}e^{-2\pi i nx}\text{d}\rho(x)$$
for each $n\in\Z$. Then, the measure $\gamma=\frac{\rho+\tilde\rho}{2}$ is a symmetric continuous Borel probability measure on $\mathbb T$ for which \eqref{5.ConditionWannaShow} holds.\\

Let $\mathcal C=\{0,1\}^\N$ be  endowed with the product topology. Let  $\nu$ be the $(\frac{1}{2},\frac{1}{2})$-probability measure on $\mathcal C$. We will introduce now  a continuous function $F:\mathcal C\rightarrow \mathbb T$ such that the measure $\rho=\nu\circ F^{-1}$ is a continuous Borel probability measure on $\mathbb T$  satisfying \eqref{5.TrueConditionToFind}.\\
For each $k\in\N$, let $n_k=2^{6^k}$ and let $F:\mathcal C\rightarrow \mathbb T$ be defined by 
$$F(\omega)=\sum_{s\in\N}\frac{\omega(s)}{n_s^3},\text{ }\omega\in\mathcal C.$$
Note that $F$ is continuous, injective and for each $\omega\in\mathcal C$,
\begin{multline*}
\limsup_{k\rightarrow\infty}\|n^3_kF(\omega)\|=\limsup_{k\rightarrow\infty}\|n^3_k\sum_{s\in\N}\frac{\omega(s)}{n^3_s}\|=\limsup_{k\rightarrow\infty}\|\sum_{s\in\N}\frac{n^3_k}{n^3_s}\omega(s)\| \\
=\limsup_{k\rightarrow\infty}\|\sum_{s=1}^{k-1}\frac{2^{3\cdot 6^k}}{2^{3\cdot6^s}}\omega(s)+\omega(k)+\sum_{s=k+1}^\infty\frac{2^{3\cdot6^k}}{2^{3\cdot6^s}}\|\\
\leq\limsup_{k\rightarrow\infty}\left(\|\sum_{s=1}^{k-1}2^{3(6^k-6^s)}\omega(s)\|+\|\omega(k)\|+\|\sum_{s=k+1}^\infty\frac{\omega(s)}{2^{3(6^s-6^k)}}\|\right).
\end{multline*}
So, since 
$$\sum_{s=1}^{k-1}2^{3(6^k-6^s)}\omega(s)\in \Z$$ and
$$|\sum_{s=k+1}^\infty\frac{\omega(s)}{2^{3(6^s-6^k)}}|\leq \frac{1}{2^{6^k}}\sum_{s\in\N}\frac{1}{2^s}<\frac{1}{2^{6^k}},$$ 
we have that $\lim_{k\rightarrow\infty}\|n_k^3F(\omega)\|=0$.\\
We also have that for any $M\in\Z$ and any $\omega\in\mathcal C$,
\begin{multline*}
    \limsup_{k\rightarrow\infty}\|Mn^2_kF(\omega)\|=\limsup_{k\rightarrow\infty}\|M\sum_{s\in\N}\frac{2^{2\cdot 6^k}}{2^{3\cdot6^s}}\omega(s)\| \\
    \leq|M|\limsup_{k\rightarrow\infty}\left(\|\sum_{s=1}^{k-1}2^{2\cdot6^k-3\cdot 6^s}\omega(s)\|+\|\frac{\omega(k)}{2^{6^k}}\|+\|\sum_{s=k+1}^\infty\frac{\omega(s)}{2^{3\cdot 6^s- 2\cdot 6^k}}\|\right)=0,
\end{multline*}
which implies that $\lim_{k\rightarrow \infty}\|Mn_k^2F(\omega)\|=0$.\\ 
Similarly, for any $M\in\Z$ and any $\omega \in \mathcal C$,
$$\lim_{k\rightarrow\infty}\|Mn_kF(\omega)\|=0.$$
Thus, for any continuous function $g:\mathbb T\rightarrow\mathbb C$ and any $M_1,M_2\in\Z$,
\begin{equation}\label{5.SequentialLimit}
\lim_{k\rightarrow\infty}\int_\mathcal C g(F(\omega))e^{2\pi i(n_k^3+M_1n_k^2+M_2n_k)F(\omega)}\text{d}\nu(\omega)=\int_\mathcal C g(F(\omega))\text{d}\nu(\omega).
\end{equation}

Now let $p\in\beta\N$ be a non-principal ultrafilter with $\{n_k\,|\,k\in\N\}\in p$. We will show that for any $\ell\in\N$, any $M_1,M_2\in\Z$ and any continuous function $g:\mathbb T\rightarrow \mathbb C$, we have
\begin{equation}\label{5.StrongerResultInduction}
\plimgG{p_\ell}{n}{\N}\int_\mathcal C g(F(\omega))e^{2\pi i(n^3+M_1n^2+M_2n)F(\omega)}\text{d}\nu(\omega)=\int_\mathcal C g(F(\omega))\text{d}\nu(\omega).
\end{equation}
We proceed by induction on $\ell\in\N\cup\{0\}$. When $\ell=0$, $p_0=p$ and,  by \eqref{5.SequentialLimit}, we have that
$$\plimgG{p}{n}{\N}\int_{\mathcal C}g(F(\omega))e^{2\pi i(n^3+M_1n^2+M_2n)F(\omega)}\text{d}\nu(\omega)=\int_\mathcal Cg(F(\omega))\text{d}\nu(\omega).$$
Next, fix $\ell\in\N$ and suppose that \eqref{5.StrongerResultInduction} holds for $\ell'<\ell$. Then, by \cref{1.LemaDeltaEquality}, we have that the left-hand side of \eqref{5.StrongerResultInduction} equals the expression
\begin{equation*}\label{5.FirstStepInductiveStep}
\plimgG{p_{\ell-1}}{m}{\N}\plimgG{p_{\ell-1}}{n}{\N}\int_\mathcal C g(F(\omega))e^{2\pi i((n-m)^3+M_1(n-m)^2+M_2(n-m))F(\omega)}\text{d}\nu(\omega),
\end{equation*}
which  in turn equals 
\begin{multline}\label{5.SecondStepInductiveStep}
\plimgG{p_{\ell-1}}{m}{\N}\plimgG{p_{\ell-1}}{n}{\N}\int_\mathcal C g(F(\omega))e^{-2\pi i(m^3-M_1m^2+M_2m)F(\omega)}\\
e^{2\pi i(n^3-(3m-M_1)n^2+(3m^2-2mM_1+M_2)n)F(\omega)}\text{d}\nu(\omega)
\end{multline}
By applying the inductive hypothesis to the function
$$G_m(x)=g(x)e^{-2\pi i(m^3-M_1m^2+M_2m)x},$$
we have that \eqref{5.SecondStepInductiveStep} equals
$$\plimgG{p_{\ell-1}}{m}{\N}\int_\mathcal C G_m(F(\omega))\text{d}\nu(\omega).$$
It follows that
\begin{multline*}
\plimgG{p_\ell}{n}{\N}\int_\mathcal C g(F(\omega))e^{2\pi i(n^3+M_1n^2+M_2n)F(\omega)}\text{d}\nu(\omega)\\
=\plimgG{p_{\ell-1}}{m}{\N}\int_\mathcal C g(F(\omega))e^{-2\pi i(m^3-M_1m^2+M_2m)F(\omega)}\text{d}\nu(\omega)\\
=\plimgG{p_{\ell-1}}{m}{\N}\overline{\int_\mathcal C \overline{g(F(\omega))}e^{2\pi i(m^3-M_1m^2+M_2m)F(\omega)}\text{d}\nu(\omega)}
=\int_\mathcal C g(F(\omega))\text{d}\nu(\omega),
\end{multline*}
completing the induction.\\
Finally, since $\rho=\nu\circ F^{-1}$, we have that for any $\ell\in\N$,
$$\plimgG{p_\ell}{m}{\N}\int_\mathbb T e^{2\pi i m^3x}\text {d}\rho(x)=\plimgG{p_\ell}{m}{\N}\int_\mathcal C e^{2\pi im^3F(\omega)}\text{d}\nu(\omega)=1,$$
showing that \eqref{5.TrueConditionToFind} holds for any non-principal ultrafilter $p\in\beta\N$ for which 
$$\{n_k\,|\,k\in\N\}\in p.$$
\end{proof}
%%%%%%%%%%%%%%%%%%%%%%%%%%%%%%%%%%%%%%%%%%%%%%%%%%%%%%%%%%%%%%%%%%%%%%%%%%%%%%%%%%%%%%%%%%%%%55\\
%%%%%%%%%%%%%%%%%%%%%%%%%%%%%%%%%%%%%%%%%%%%%%%%%%%%%%%55555555555555555555555555
%%%%%%%%%%%%%%%%%%%%%%%%%%%%%%%%%%%%%%%%%%%%%%%%%%%%%%%%%%%%%%%%%%%%%%%%%%%%%%%%%%%%%%%%%%%%%%
%%%%%%%%%%%%%%%%%%%%%%%%%%%%%%%%%%%%%%%%%%%%%%%%%%%%%%%%%%%%%%%%%%%%%%%%%%%%%%%%%%%%%%%%%%%%%%%%%%%%
%%%%%%%%%%%%%%%%%       Section 8
\section{Hierarchy of notions of largeness}\label{SectionNotionsOfLargness}
In this section we will review the relations between various  notions of largeness which played an instrumental role in the formulations and proofs of the results concerning the sets $\mathcal R(v,\epsilon)$ and $\mathcal R_A(v,\epsilon)$. 
In particular, we will supply the proofs of the results mentioned in  Subsection 3.2 and Section 6 which juxtapose the $\Delta^*$-flavored theorems \ref{7.FinitisticOddDegreeRecurrence}  and \ref{4.AlmostMeasurableCase} with the \rm{IP$^*$}-flavored theorems \ref{3.IP0Returns} and \ref{3.AlmostIPRecurrence} (see \cref{3.FirstImportantApplicationA}  and \cref{3.SecondImportantApplication} below). 
\subsection{Some classes of subsets of $\N$}
In this subsection we review the definitions and properties of the families of sets (such as, say $\Delta_{\ell,r}$) which appeared before in this paper and which were employed in the formulations and proofs of various results dealing with diophantine approximation and recurrence. The material 
presented in this subsection will facilitate the discussion in Subsection 8.2, where the relations between these families of sets are discussed and summarised.\\ 
The following table presents in a compact form the pertinent definitions.\\
\begin{center}
 \begin{tabular}{| M{2cm} | M{2.5cm} |m{7cm} |} 
 \hline
 \textbf{Family} & \textbf{Parameters} & \textbf{Each member contains a set of the form...}  \\  
 \hline\hline
   $\Delta_{\ell,r}$ & $\ell\in\N$, $r\geq 2^\ell$ & $D_\ell((n_k)_{k=1}^r)$, where $(n_k)_{k=1}^r$ is an r-element sequence in $\Z$. \\ 
 \hline
    $\Delta_{\ell,0}$ & $\ell\in\N$ & $D_\ell((n_k)_{k=1}^r)$ for each $r\geq 2^\ell$. Here, for each $r\geq 2^\ell$, $(n_k)_{k=1}^r$ is an r-element sequence in $\Z$. \\ 
 \hline
     $\Delta_{\ell}$ & $\ell\in\N$ & $D_\ell((n_k)_{k\in\N})$, where $(n_k)_{k\in\N}$ is an increasing sequence in $\N$. \\ 
\hline
  $\Delta_{\ell,0}$-rich\index{$\Delta_{\ell,0}$-rich} & $\ell\in\N$ & A set $D\subseteq\N$ such that for any $E\subseteq \N$ with $d^*(E)=0$, $D\setminus E$ is a $\Delta_{\ell,0}$ set \\ 
\hline
  $\Delta_\ell$-rich\index{$\Delta_\ell$-rich} & $\ell\in\N$ & A set $D\subseteq\N$ such that for any $E\subseteq \N$ with $d^*(E)=0$, $D\setminus E$ is a $\Delta_\ell$ set \\
\hline
 IP$_r$& $r\in\N$ & $\text{FS}((n_k)_{k=1}^r)$, where $(n_k)_{k=1}^r$ is an r-element sequence in $\N$.\\
 \hline
  IP$_0$& -- & $\text{FS}((n_k)_{k=1}^r)$ for each $r\in\N$. Here, for each $r\in\N$, $(n_k)_{k=1}^r$ is an r-element sequence in $\N$.\\
  \hline
  IP\index{IP}& -- & $\text{FS}((n_k)_{k\in\N})$ for some increasing sequence $(n_k)_{k\in\N}$  in $\N$.\\
   \hline
  IP$_0$-rich\index{IP$_0$-rich}& -- &  A set $\Gamma\subseteq \N$  such that for any $E\subseteq\N$ with $d^*(E)=0$, $\Gamma\setminus E$ is an IP$_0$ set.\\
     \hline
  IP-rich\index{IP-rich}& -- &  A set $\Gamma\subseteq \N$  such that for any $E\subseteq\N$ with $d^*(E)=0$, $\Gamma\setminus E$ is an IP set.\\
  \hline
\end{tabular}
\end{center}
If $\Phi$ is a family of subsets of $\N$,  $\Phi^*$ customarily stands for the family of subsets of $\N$ having a non-trivial intersection with any member of $\Phi$ (see for example \cite[Chapter 9, Section 1]{FBook}). For instance, IP$^*$ denotes the family of all subsets of $\N$ having a non-trivial intersection with any IP set. The following lemma provides useful characterizations for the families $(\Delta_\ell\text{-rich})^*$,  $(\Delta_{\ell,0}\text{-rich})^*$, (IP-rich)$^*$, and (IP$_0$-rich)$^*$.\footnote{The families $(\Delta_{\ell,0}\text{-rich})^*$, $(\Delta_{\ell,0}\text{-rich})^*$, and (IP$_0$-rich)$^*$ were introduced in the previous sections using a different notation. See \cref{3.RemarkAfterNotationalLemma}.} 
\begin{lem}\label{3.AlmostAnd*}
Let $D\subseteq \N$ and let $\ell\in\N$. Then:
\begin{enumerate}[(i)]
    \item $D$ is \text{\rm{$(\Delta_{\ell,0}\text{-rich})^*$}} if and only if there exists a set $E\subseteq\N$ with $d^*(E)=0$ such that $D\cup E$ is a $\Delta_\ell^*$ set.\index{A-$\Delta_\ell^*$} 
    \item $D$ is \text{\rm{$(\Delta_{\ell,0}\text{-rich})^*$}} if and only if there exists a set $E\subseteq\N$ with $d^*(E)=0$ such that $D\cup E$ is a $\Delta_{\ell,0}^*$ set.\index{A-$\Delta_{\ell,0}^*$} 
    \item (See \cite{mccutcheonDsetRich}) $D$ is \text{\rm{(IP-rich)$^*$}} if and only if there exists a set $E\subseteq\N$ with $d^*(E)=0$ such that $D\cup E$ is an \text{\rm{IP$^*$}} set.
    \item $D$ is \text{\rm{(IP$_0$-rich)$^*$}} if and only if there exists a set $E\subseteq\N$ with $d^*(E)=0$ such that $D\cup E$ is an \text{\rm{IP$_0^*$}} set. \index{A-IP$_0^*$}
\end{enumerate}
\end{lem}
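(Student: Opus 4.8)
The plan is to prove all four equivalences simultaneously. Write $\Phi$ for any one of the four families $\Delta_\ell$, $\Delta_{\ell,0}$, IP, IP$_0$. Each of these is \emph{upward closed}: if $A\in\Phi$ and $A\subseteq B$, then $B\in\Phi$, since membership in $\Phi$ requires only that the set contain a subset of a prescribed shape (see the table above). Recall that, by the notational convention fixed just before the lemma, the symbol A-$\Phi^*$ stands for $(\Phi\text{-rich})^*$, the family of all $D\subseteq\N$ meeting every $\Phi$-rich set, and that a set $\Gamma$ is $\Phi$-rich exactly when $\Gamma\setminus E\in\Phi$ for every $E$ with $d^*(E)=0$. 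With this dictionary, each of (i)--(iv) is the single assertion
\[(\Phi\text{-rich})^*=\{D\subseteq\N\mid \exists\,E\subseteq\N,\ d^*(E)=0,\ D\cup E\in\Phi^*\},\]
which is what I would establish.

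The whole argument rests on one elementary remark: since $\Phi$ is upward closed, a set $A\subseteq\N$ fails to be $\Phi^*$ if and only if $\N\setminus A\in\Phi$. Indeed, $A\notin\Phi^*$ means some member of $\Phi$ is disjoint from $A$, hence contained in $\N\setminus A$, and by upward closure this is equivalent to $\N\setminus A$ itself lying in $\Phi$. For the inclusion "$\supseteq$", suppose $D\cup E\in\Phi^*$ for some $E$ with $d^*(E)=0$, and let $R$ be any $\Phi$-rich set. Then $R\setminus E\in\Phi$ by richness, so $D\cup E$ meets $R\setminus E$; since $E$ is disjoint from $R\setminus E$, this forces $D\cap(R\setminus E)\neq\emptyset$, and a fortiori $D\cap R\neq\emptyset$. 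Thus $D$ meets every $\Phi$-rich set, i.e.\ $D\in(\Phi\text{-rich})^*$.

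For the inclusion "$\subseteq$" I would argue by contraposition. Assume that no density-zero $E$ makes $D\cup E$ a $\Phi^*$ set. Then for every $E$ with $d^*(E)=0$ we have $D\cup E\notin\Phi^*$, so by the remark $\N\setminus(D\cup E)=(\N\setminus D)\setminus E\in\Phi$. This is precisely the statement that $C:=\N\setminus D$ is $\Phi$-rich. As $C\cap D=\emptyset$, the set $D$ misses the $\Phi$-rich set $C$, whence $D\notin(\Phi\text{-rich})^*$. Specializing $\Phi$ to $\Delta_\ell$, $\Delta_{\ell,0}$, IP and IP$_0$ yields (i), (ii), (iii) and (iv) respectively, with (iii) recovering the result of \cite{mccutcheonDsetRich}.

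I do not expect a genuine analytic obstacle here: the proof is purely formal once the upward-closure remark is isolated. The only points requiring care are checking that each of the four families is genuinely upward closed, so that the remark applies uniformly, and keeping the complement bookkeeping straight—in particular the identity $\N\setminus(D\cup E)=(\N\setminus D)\setminus E$, which is exactly what converts "$D\cup E$ is not $\Phi^*$ for every density-zero $E$" into "$\N\setminus D$ is $\Phi$-rich".
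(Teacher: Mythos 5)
Your proof is correct and follows essentially the same route as the paper's: the forward inclusion intersects $D\cup E$ with $R\setminus E$ exactly as the paper does, and the converse is the same contrapositive argument hinging on the identity $\N\setminus(D\cup E)=(\N\setminus D)\setminus E$. The only difference is presentational — you isolate the upward-closure remark and handle all four families uniformly, whereas the paper proves (i) and declares (ii)--(iv) similar.
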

\begin{proof}
Statements (i) through (iv) follow from the following general fact about any family $\Phi$ of subsets of $\N$ with the property that if $A\in \Phi$, $B\subseteq \N$, and $A\subseteq B$, $B\in\Phi$. We will say that a set $F\subseteq\N$ is $\Phi$-rich if for any $E\subseteq \N$ with $d^*(E)=0$, $F\setminus E\in \Phi$.\\
\begin{adjustwidth}{0.5cm}{0.5cm}
Let $D\subseteq \N$. $D$ is ($\Phi$-rich)$^*$ if and only if there exists an $E\subseteq \N$ with $d^*(E)=0$ such that $D\cup E$ is a $\Phi^*$ set.\\
\end{adjustwidth}
First, suppose that there exists a set $E\subseteq \N$ with $d^*(E)=0$ such that  $D\cup E$ is $\Phi^*$. Let $S$ be a $\Phi$-rich set. Since $S\setminus E$ is a $\Phi$ set, we have that
$$\emptyset\neq (D\cup E)\cap (S\setminus E)\subseteq D\cap S.$$
This shows that $D$ has a non-trivial intersection with every $\Phi$-rich set and hence it is   ($\Phi$-rich)$^*$.\\
For the other direction, suppose that for any $E\subseteq \N$ with $d^*(E)=0$, $D\cup E$ is not $\Phi^*$. This implies that for any given $E\subseteq \N$ with $d^*(E)=0$, $B=\N\setminus (D\cup E)$ contains an $A\in \Phi$ and hence $B\in \Phi$. Noting that for any $E\subseteq \N$, $(\N\setminus D)\setminus E=\N\setminus(D\cup E)$, we see that $\N\setminus D$ is a $\Phi$-rich set. So, $D$  is not ($\Phi$-rich)$^*$.
\end{proof}
\begin{rem}\label{3.RemarkAfterNotationalLemma}
It follows from \cref{3.AlmostAnd*} that the families A-$\Delta_\ell^*$, A-$\Delta_{\ell,0}^*$, and A-IP$_0^*$ introduced in the previous sections coincide, respectively, with the families $(\Delta_\ell\text{-rich})^*$,  $(\Delta_{\ell,0}\text{-rich})^*$, and (IP$_0$-rich)$^*$.
For the rest of the paper, when dealing with the families $(\Delta_\ell\text{-rich})^*$,  $(\Delta_{\ell,0}\text{-rich})^*$, (IP-rich)$^*$, and (IP$_0$-rich)$^*$, we will find it convenient to  denote these families, correspondingly, by A-$\Delta_\ell^*$, A-$\Delta_{\ell,0}^*$, A-IP$^*$, \index{A-IP$^*$}  and A-IP$_0^*$ (where "A" stands for "almost").
\end{rem}
\subsection{Relations between various notions  of largeness}
The following diagram (where $\ell_1,\ell_2\in\N$ and $\ell_1<\ell_2$) presents in a unified way the relations between the various families of sets which were introduced in the previous sections. 
$$\begin{matrix}
\text{IP}_0& \not\supseteq  &   \Delta_{1}\text{-rich}
&   \subsetneq   &   \Delta_1   &   \subsetneq & \Delta_{1,0}    & \supsetneq &\text{IP}_0&&\\
\requal&  &   \rsupsetneq
&      &   \rsupsetneq   &   & \rsupsetneq    &  & \rsupsetneq&& \\
\text{IP}_0&     \not\supseteq  &   \Delta_{2}\text{-rich}
&   \subsetneq   &   \Delta_2   &   \subsetneq & \Delta_{2,0}    &  \not\supseteq & \text{IP}&\subsetneq&\text{IP}_0\\
\requal&  &   \rsupsetneq
&      &   \rsupsetneq   &   & \rsupsetneq    &  & \requal&&\requal \\
\vdots&  &   \vdots
&      &   \vdots   &   & \vdots   &  & \vdots&&\vdots \\
\requal&  &   \rsupsetneq
&      &   \rsupsetneq   &   & \rsupsetneq    &  & \requal&&\requal \\
\text{IP}_0&     \not\supseteq  &   \Delta_{\ell_1}\text{-rich}
&   \subsetneq   &   \Delta_{\ell_1}   &   \subsetneq & \Delta_{\ell_1,0}    &  \not\supseteq & \text{IP}&\subsetneq &\text{IP}_0\\
\requal&  &   \rsupsetneq
&      &   \rsupsetneq   &   & \rsupsetneq    &  & \requal&&\requal \\
\vdots&  &   \vdots&
       & \vdots &   & \vdots &  & \vdots&&\vdots \\
\requal&  &   \rsupsetneq
&      &   \rsupsetneq   &   & \rsupsetneq    &  & \requal&&\requal \\
\text{IP}_0&     \not\supseteq  &   \Delta_{\ell_2}\text{-rich}
&   \subsetneq   &   \Delta_{\ell_2}   &   \subsetneq & \Delta_{\ell_2,0}    &  \not\supseteq & \text{IP}&\subsetneq &\text{IP}_0\\
\requal&  &   \rsupsetneq
&      &   \rsupsetneq   &   & \rsupsetneq    &  & \requal&&\requal \\
\vdots&  &   \vdots&
       & \vdots &   & \vdots &  & \vdots&&\vdots 
\end{matrix}
$$
In what follows we will provide explanations/proofs for the non-obvious inclusions presented in the diagram above.\\ 

We begin with noting that by \cref{2.OddPolynomialCharacterization}, for any $\ell_1,\ell_2\in\N$ with $\ell_1<\ell_2$, we can find an odd polynomial $v(x)\in\R[x]$ with $\deg(v_1)=2\ell_2-1$ and an $\epsilon>0$ such that $\mathcal R(v,\epsilon)$ is $\Delta_{\ell_2,0}^*$ but not $\Delta_{\ell_1}^*$. It follows that $\N\setminus \mathcal R(v,\epsilon)$ is a $\Delta_{\ell_1}$ (and hence, a $\Delta_{\ell_1,0}$) set but not a $\Delta_{\ell_2,0}$ set. Hence, 
by the definitions of the families $\Delta_\ell$-rich, $\Delta_\ell$ and $\Delta_{\ell,0}$, the following diagram holds for any $\ell_1<\ell_2$:
$$\begin{matrix}
\Delta_{\ell_1}\text{-rich}&\subseteq&\Delta_{\ell_1}&\subseteq&\Delta_{\ell_1,0}\\
\rsupseteq& &\rsupsetneq& &\rsupsetneq \\
\Delta_{\ell_2}\text{-rich}&\subseteq&\Delta_{\ell_2}&\subseteq&\Delta_{\ell_2,0}
\end{matrix}
$$
We will show now that the following relations hold for any $\ell\in\N$: 
$$\begin{matrix}
\text{IP}_0&\not\supseteq&\Delta_{\ell}\text{-rich}\\
& &\rsupsetneq\\
& &\Delta_{\ell+1}\text{-rich}
\end{matrix}
$$
\begin{lem}\label{3.Delta^lRichNoIPs}
For each $\ell\in\N$,  there exists a $\Delta_\ell\text-\text{rich}$ set which is not an $\text{IP}_0$ set nor a $\Delta_{\ell'}$ set for any $\ell'>\ell$.
\end{lem}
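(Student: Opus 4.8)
The plan is to take the odd polynomial $w(x)=\alpha x^{2\ell+1}$ with $\alpha$ irrational and to show that the set
$$D=\{n\in\N\,|\,\|w(n)\|>\tfrac14\}$$
is simultaneously $\Delta_\ell$-rich, not IP$_0$, and not $\Delta_{\ell'}$ for any $\ell'>\ell$. The heuristic behind this choice is that odd polynomials of degree $2\ell+1$ are exactly those whose level-$(\ell+1)$ iterated differences recur to $0$ while some level-$\ell$ iterated differences stay bounded away from $0$; the set $D$ captures the latter phenomenon. The two non-containment properties follow quickly from results already proved, whereas the richness is the crux.

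First I would dispose of the negative properties. For the failure of $\Delta_{\ell'}$ when $\ell'>\ell$: by \cref{2.OddDegreeRecurrence}, $\plimgG{p_{\ell+1}}{n}{\N}w(n)=0$ for every non-principal $p$, so $\mathcal R(w,\tfrac14)=\{n\,|\,\|w(n)\|<\tfrac14\}$ is $\Delta_{\ell+1}^*$ by \cref{1.DeltaL*characterization}; since every $\Delta_{\ell'}$ set with $\ell'>\ell$ is in particular a $\Delta_{\ell+1}$ set (the inclusion $\Delta_{\ell'}\subseteq\Delta_{\ell+1}$ noted in the diagram preceding this lemma), $\mathcal R(w,\tfrac14)$ is also $\Delta_{\ell'}^*$, hence lies in $p_{\ell'}$ for all non-principal $p$. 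As $D$ is disjoint from $\mathcal R(w,\tfrac14)$, we get $D\notin p_{\ell'}$ for all non-principal $p$, and \cref{1.DeltaCharacterization} then shows $D$ is not $\Delta_{\ell'}$. For the failure of IP$_0$: by \cref{3.IP0Returns} there is an $r$ for which $\mathcal R(w,\tfrac14)$ is IP$_r^*$; any IP$_r$ set contained in $D$ would be disjoint from $\mathcal R(w,\tfrac14)$, contradicting the IP$_r^*$ property, so $D$ is not IP$_r$ and hence not IP$_0$.

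The heart of the matter is $\Delta_\ell$-richness. Fix $E\subseteq\N$ with $d^*(E)=0$; I must produce an increasing sequence $(n_k)$ with $D_\ell((n_k))\subseteq D\setminus E$. The idea is to realize, through a single non-principal ultrafilter $q$, both $D\in q_\ell$ and $E^c\in q_\ell$, and then invoke \cref{1.DeltaCharacterization} with $A_\ell=D\cap E^c$ and $A_j=\N$ for $j<\ell$ to extract the sequence. To build $q$ I would re-run the construction from the proof of \cref{2.OddPolyDegreeThm}: the Hardy--Littlewood theorem yields a sequence $(n_k)$ along which $\plimgG{q_\ell}{n}{\N}w(n)=\tfrac12$ for any non-principal $q$ containing $\{n_k\,|\,k\in\N\}$, and since $\tfrac12$ is at distance $\tfrac12>\tfrac14$ from $0$ this gives $D\in q_\ell$. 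Crucially, the Diophantine conditions defining the Hardy--Littlewood sequence only require $n_{k+1}$ to lie in a set of positive density (by Weyl equidistribution), so the sequence may be chosen greedily; at the $(k+1)$-st step I would additionally forbid the finitely many values $n_{k+1}\in E-c$, one constant $c$ per completed level-$\ell$ difference $\partial(n_{j_1},\dots,n_{j_{2^\ell-1}},n_{k+1})$ (here $n_{k+1}$ enters $\partial$ with coefficient $+1$). These forbidden sets have density zero, so an admissible $n_{k+1}>n_k$ always remains. The resulting sequence keeps every level-$\ell$ difference out of $E$, i.e.\ $D_\ell((n_k))\subseteq E^c$, which via \cref{1.LemaIteratedDifLimit} gives $E^c\in q_\ell$; combined with $D\in q_\ell$ this yields $D\setminus E=D\cap E^c\in q_\ell$, completing the argument.

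The main obstacle is exactly this simultaneous fulfilment in the previous paragraph: one must maintain the Diophantine constraints that force $\plimgG{q_\ell}{n}{\N}w(n)=\tfrac12$ while steering all level-$\ell$ differences away from the arbitrary density-zero set $E$. This is possible only because the Hardy--Littlewood constraints cut out positive-density target sets whereas $E$-avoidance discards a density-zero set at each stage, so the greedy choice never gets stuck; the one point requiring genuine care is verifying that these Diophantine target sets indeed have positive density, which is where Weyl/Hardy--Littlewood equidistribution enters.
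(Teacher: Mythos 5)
Your proposal is correct and follows essentially the same route as the paper's proof: the paper likewise takes $D=\{n\in\N\,|\,\|n^{2\ell+1}\alpha_\ell\|\geq\frac{1}{4}\}$, gets the negative properties from \cref{2.OddDegreeRecurrence} and \cref{3.IP0Returns}, and proves $\Delta_\ell$-richness by inductively choosing $n_{k+1}$ inside positive-density Weyl/Hardy--Littlewood target sets while avoiding the finitely many translates of $E$ arising from the coefficient $+1$ of the last entry of $\partial$, concluding via \cref{2.TechnicalLemma} and \cref{1.DeltaCharacterization}. The only cosmetic difference is that the paper extracts a subsequence whose level-$\ell$ differences land in $\{n\,|\,\|n^{2\ell+1}\alpha_\ell-\frac{1}{2}\|<\frac{1}{4}\}$ (these differences already avoid $E$ by construction), rather than intersecting $D$ and the complement of $E$ inside the ultrafilter $q_\ell$ as you do.
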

\begin{proof}
First we will show that, given $\ell\in\N$ and $D\subseteq \N$ with  $d^*(D)=\delta>0$, for any $M\geq 2^\ell$, $n_1,...,n_M\in\N$ and any $E\subseteq\N$ with $d^*(E)=0$, there exists an $n\in D$ for which 
\begin{equation}\label{3.DcapEisEmpty}
\{\partial(n_{j_1},...,n_{j_{2^\ell-1}},n)\,|\,1\leq j_{1}<j_2<j_3<\cdots< j_{2^\ell-1}\leq M\}\cap E=\emptyset.
\end{equation}
To prove the contrapositive, note that for any $m_1,m_2,m_3,\cdots,m_{2^\ell}\in\Z$,
\begin{multline*}
\partial(m_1,...,m_{2^\ell})=\partial(m_{2^{\ell}-2^{\ell-1}+1},...,m_{2^\ell})-\partial(m_1,...,m_{2^{\ell}-2^{\ell-1}})\\
=\partial(m_{2^{\ell}-2^{\ell-2}+1},...,m_{2^\ell})-\partial(m_{2^{\ell}-2^{\ell-1}+1},...,m_{2^\ell-2^{\ell-2}})-\partial(m_1,...,m_{2^{\ell}-2^{\ell-1}})\\
=\partial(m_{2^{\ell}-2^{\ell-2}+1},...,m_{2^\ell})-\sum_{t=0}^{1}\partial(m_{2^{\ell}-2^{\ell-t}+1},...,m_{2^{\ell}-2^{\ell-t-1}})\\
=\cdots=m_{2^\ell}-\sum_{t=0}^{\ell-1}\partial(m_{2^{\ell}-2^{\ell-t}+1},...,m_{2^{\ell}-2^{\ell-t-1}}).
\end{multline*}
So if \eqref{3.DcapEisEmpty} does not hold for any $n\in D$, we have 
$$D\subseteq\bigcup_{m\in E}\{m+\sum_{t=0}^{\ell-1}\partial(n_{j_{2^{\ell}-2^{\ell-t}+1}},...,n_{j_{2^{\ell}-2^{\ell-t-1}}})\,|\,1\leq j_1<\cdots<j_{2^\ell-1}\leq M\}$$
and hence $d^*(E)\geq \frac{\delta}{M^{2^\ell}}>0$.\\

Now let $\ell\in\N$, let $E\subseteq \N$ with $d^*(E)=0$ and let $\alpha_0,...,\alpha_\ell\in\R\cap [0,1)$ be irrational numbers such that $1,\alpha_0,...,\alpha_\ell$ are rationally independent. By Weyl's equidistribution theorem  \cite[Theorem 16]{weyl1916Mod1}, the sequence 
$$(n\alpha_0,n^2\alpha_0,n\alpha_1,n^2\alpha_1,...,n\alpha_\ell,n^2\alpha_\ell),\text{ }n=1,2,...$$
is uniformly distributed on $\mathbb T^{2(\ell+1)}$. Hence, by \eqref{3.DcapEisEmpty}, we can choose inductively an increasing sequence $(n_k)_{k\in\N}$ in $\N$ such that
\begin{enumerate}[(1)]
\item $D_\ell((n_k)_{k\in\N})\cap E=\emptyset$.
\item For each $j\in\{1,...,\ell\}$,
$$\lim_{k\rightarrow\infty}n_k\alpha_j=0.$$
\item For each $j\in\{1,...,\ell\}$, $$-2^{j-1}\binom{2j+1}{2}\lim_{k\rightarrow\infty}n^2_k\alpha_j=\alpha_{j-1}.$$
\item $\lim_{k\rightarrow\infty}n_k\alpha_0=\frac{1}{2}.$
\end{enumerate}
Let  $p\in\beta\N$ be a non-principal ultrafilter with  $\{n_k\,|\,k\in\N\}\in p$.
Since $p$ satisfies the hypothesis of \cref{2.TechnicalLemma}, we have
$$\plimgG{p_\ell}{n}{\N}n^{2\ell+1}\alpha_\ell=\plimgG{p}{n}{\N}n\alpha_0=\frac{1}{2}.$$
It follows that $\{n\in\N\,|\,\|n^{2\ell+1}\alpha_\ell-\frac{1}{2}\|<\frac{1}{4}\}\in p_\ell$. By \cref{1.DeltaCharacterization}, (ii),  there exists an increasing sequence $(m_k)_{k\in\N}$ such that $\{m_k\,|\,k\in\N\}\subseteq \{n_k\,|\,k\in\N\}$ and 
$$D_\ell((m_{k})_{k\in\N})\subseteq \{n\in\N\,|\,\|n^{2\ell+1}\alpha_\ell-\frac{1}{2}\|<\frac{1}{4}\}.$$ Thus, for each $n\in D_\ell((m_{k})_{k\in\N})$,
$$\|n^{2\ell+1}\alpha_\ell-\frac{1}{2}\|<\frac{1}{4}$$
and hence 
$$\|n^{2\ell+1}\alpha_\ell\|\geq\frac{1}{4}.$$
(Note that $(m_k)_{k\in\N}$ is a subsequence of $(n_k)_{k\in\N}$ and so $D_\ell((m_k)_{k\in\N})\subseteq  D_\ell((n_k)_{k\in\N})$.)\\

It follows that for any $E\subseteq \N$ with $d^*(E)=0$ we can find an increasing sequence $(m_k)_{k\in\N}$ in $\N$ for which
$$D_\ell((m_k)_{k\in\N})\subseteq \{n\in\N\,|\,\|n^{2\ell+1}\alpha_\ell\|\geq\frac{1}{4}\}\setminus E$$
This implies that the set 
$$\{n\in\N\,|\,\|n^{2\ell+1}\alpha_\ell\|\geq\frac{1}{4}\}$$
is $\Delta_\ell\text-$rich. However,  by \cref{2.OddDegreeRecurrence} and \cref{3.IP0Returns}, it is not an $\text{IP}_0$ set nor a $\Delta_{\ell'}$ set for any $\ell'>\ell$.
\end{proof}
\begin{rem}
Let  $v(x)$ be an even polynomial with no constant term and at least one irrational coefficient and let $\epsilon\in (0,\frac{1}{3})$. The  argument used in the  proof of \cref{2.LackOfRecurrenceForEvenPowers}  shows that for any $\ell\in\N$, there exists an increasing sequence $(n_k)_{k\in\N}$ in $\N$ such that $D_\ell((n_k)_{k\in\N})\cap \mathcal R(v,\epsilon)=\emptyset$. Furthermore,  the proof of \cref{3.Delta^lRichNoIPs} shows that for any given $E\subseteq\N$ with $d^*(E)=0$, one can choose $(n_k)_{k\in\N}$ so that $D_\ell((n_k)_{k\in\N})\cap E=\emptyset$. Thus,  the set 
$$\{n\in\N\,|\,\|v(n)\|>\epsilon\}$$ is $\Delta_\ell\text-$rich for each $\ell\in\N$. On the other hand, by \cref{3.IP0Returns}, this set is not an $\text{IP}_0$ set. 
\end{rem}
\begin{question}
In \cite[Section 2]{AlmostIPBerLeib}, it was shown that \text{\rm{A-IP$_0^*\not\supseteq$A-IP$^*$}}. Hence $\text{IP-rich}\not\supseteq\text{IP}_0\text{-rich}.$
Is it true that for any $\ell\in\N$, 
$\Delta_{\ell}\text{-rich}\not\supseteq \Delta_{\ell,0}\text{-rich}$?
\end{question}
\begin{cor}\label{3.FirstImportantApplicationA}
Let $\ell\in\N$. The following statements hold:
\begin{enumerate}[(i)]
\item For any $r\in\N$ large enough, there exists a set $E\subseteq \N$ such that $E$ is \text{\rm{IP$_r^*$}} but not $\Delta_{\ell,R}^*$ for any $R\geq 2^\ell$.
\item  There exists a set $E\subseteq \N$ such that $E$ is \text{\rm{A-IP$_0^*$}} but not  \text{\rm{A-$\Delta_{\ell,0}^*$}}. 
\end{enumerate}
\end{cor}
\begin{proof}
We will only show (i), the proof of (ii) is similar. By \cref{3.Delta^lRichNoIPs}, there exists a set $D\subseteq \N$ which is  a $\Delta_\ell$-rich set  but not an IP$_0$ set. In particular, $D$ is  $\Delta_{\ell,0}$ but not IP$_0$. Thus, there exists $r_0\in\N$ such that for any $r\geq r_0$, $D$ is not an IP$_{r_0}$ set. It follows that for any $r\geq r_0$, the set $E=\N\setminus D$ is an \text{\rm{IP$_{r}^*$}} set but not a $\Delta_{\ell,R}^*$ set for any $R\geq 2^\ell$.
\end{proof}
\begin{rem}
Let $v\in\R[x]$ be an odd polynomial of degree $2\ell-1$. The goal of this remark is to stress that, when $\epsilon$ is small enough, the  mere fact that $\mathcal R(v,\epsilon)$ is IP$_r^*$ for some $r\in\N$, does not necessarily imply that  $\mathcal R(v,\epsilon)$ is $\Delta_{\ell,R}^*$ for each $R\geq 2^\ell$.
Indeed, for any $r_0\in\N$ there exists a small enough $\epsilon>0$ and an $r>r_0$ for which the set $\mathcal R(v,\epsilon)$ is IP$_r^*$ but not IP$_{r_0}^*$. Let $r$ and $E$ be as in \cref{3.FirstImportantApplicationA},(i). By picking $\epsilon$ small enough, we see that for some $r'>r$, both  $\mathcal R(v,\epsilon)$ and $E$ are IP$_{r'}^*$,  but $E$ is not a  $\Delta_{\ell,R}^*$ for each $R\geq 2^\ell$.
\end{rem}
Now we prove that for each $\ell\in\N$,
$$\Delta_\ell\text{-rich}\subsetneq \Delta_\ell.$$
\begin{lem}\label{3.DensityZeroDeltaL}
Let $\ell\in\N$. Any $\Delta_\ell$ set contains a $\Delta_\ell$ set with zero upper Banach density.
\end{lem}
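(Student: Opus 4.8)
The plan is to reduce the statement to a single quantitative estimate about the difference set of a rapidly growing subsequence. Since $E$ is $\Delta_\ell$, it contains $D_\ell((n_k)_{k\in\N})$ for some increasing sequence $(n_k)_{k\in\N}$ in $\N$. Passing to a subsequence $(m_k)_{k\in\N}=(n_{k_i})_{i\in\N}$ only shrinks the difference set, i.e. $D_\ell((m_k)_{k\in\N})\subseteq D_\ell((n_k)_{k\in\N})\subseteq E$, and $D_\ell((m_k)_{k\in\N})$ is again a $\Delta_\ell$ set. So it suffices to choose $(m_k)_{k\in\N}$ growing so fast (say doubly exponentially, with $m_{k+1}\ge 2^{2^{\ell}}\sum_{i\le k}m_i$) that $d^*(D_\ell((m_k)_{k\in\N}))=0$. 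Fast growth also guarantees positivity: since the leading term dominates the sum of the others, $D_\ell((m_k)_{k\in\N})\subseteq\N$.

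The structural input I would use is that, by the recursive definition of $\partial$, one has a fixed sign pattern $\partial(m_{j_1},\dots,m_{j_{2^\ell}})=\sum_{i=1}^{2^\ell}\epsilon_i m_{j_i}$ with $\epsilon_i\in\{-1,+1\}$ depending only on the position $i$, and with $\epsilon_{2^\ell}=+1$. Writing $J(x)=\max\{k:\,m_k\le x\}$, I would control the upper Banach density through $a_L:=\sup_{M}\bigl|D_\ell((m_k)_{k\in\N})\cap\{M+1,\dots,M+L\}\bigr|$, recalling that subadditivity (Fekete) gives $d^*(D_\ell((m_k)_{k\in\N}))=\lim_{L\to\infty}a_L/L$. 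To bound $a_L$, I introduce, for $1\le s\le 2^\ell$, the quantity
$$N_s(L)=\sup_{A\in\Z}\#\Bigl\{(j_1<\cdots<j_s):\ \textstyle\sum_{i=1}^s\epsilon_i m_{j_i}\in[A,A+L]\Bigr\},$$
and prove the recursion $N_s(L)\le\beta(L)\,N_{s-1}(L)$ by conditioning on the largest free index $j_s$. Indeed, for fixed $j_s$ the value lies within $\sum_{i<j_s}m_i<m_{j_s}/2^{\,\ell+1}$ of $\epsilon_s m_{j_s}$, so distinct top indices give well-separated bands (fast growth), while for each admissible top index the remaining count is at most $N_{s-1}(L)$. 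The number $\beta(L)$ of top indices whose band meets a given interval of length $L$ is bounded uniformly: away from the origin only $O(1)$ geometrically spaced bands fit, and near the origin every contributing index satisfies $m_{j_s}\lesssim L$, hence $\beta(L)\le J(2L)+O(1)$. Since $N_1(L)\le J(L)+1\le\beta(L)$, iterating yields $a_L\le N_{2^\ell}(L)\le\beta(L)^{2^\ell}$.

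Putting this together, with doubly exponential growth one has $\beta(L)=J(2L)+O(1)=O(\log\log L)$, so $a_L/L\le\beta(L)^{2^\ell}/L\to0$, giving $d^*(D_\ell((m_k)_{k\in\N}))=0$ and completing the proof. I expect the genuine obstacle to be the inductive step bounding $N_s(L)$, and specifically the \emph{uniform-in-$j_s$} control of clustering: one must rule out that many admissible index tuples pile up inside a single short window. This is exactly where the separation of bands under rapid growth, together with the observation that windows near the origin can only involve indices with $m_{j}\lesssim L$, is used; everything else (the reduction to a subsequence and the passage from window counts to $d^*$ via subadditivity) is routine.
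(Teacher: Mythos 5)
Your proposal is correct, but it proves the lemma by a genuinely different mechanism than the paper. The paper's proof has no growth condition at all: it uses a pigeonhole/diagonalization argument to pass to a subsequence whose tail difference sets satisfy $D_\ell((n_{k_j})_{j=s}^\infty)\subseteq s^s\N$ for every $s$, and then exploits the translation invariance of $d^*$ --- every tuple whose first $t$ indices are ``small'' generates a set $A(j_1,\dots,j_t)$ that is an integer translate of a subset of the tail set, so the whole difference set (minus a finite set) is covered by at most $(s+2^\ell)^{2^\ell}$ sets each of upper Banach density at most $s^{-s}$, forcing $d^*=0$. Your route instead makes the subsequence lacunary and counts directly: using the fixed sign pattern $\partial(m_{j_1},\dots,m_{j_{2^\ell}})=\sum_i\epsilon_i m_{j_i}$ (which is correct, and is the same linearity the paper uses to define its translation constants $z(j_1,\dots,j_t)$), your recursion $N_s(L)\le\beta(L)N_{s-1}(L)$ is sound: conditioning on the top index, the remaining window is a translate of length $L$ and is absorbed by the supremum over $A\in\Z$ in the definition of $N_{s-1}$, while the count of admissible top indices is bounded by geometric spacing away from the origin and by $J(O(L))$ near it. One small inaccuracy: the growth condition you wrote, $m_{k+1}\ge 2^{2^\ell}\sum_{i\le k}m_i$, is only geometric, not doubly exponential, so $\beta(L)=O(\log L)$ rather than $O(\log\log L)$; this is harmless since $(\log L)^{2^\ell}/L\to 0$ anyway. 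As for what each approach buys: yours is more quantitative --- it shows the sparse $\Delta_\ell$ set can be taken with polylogarithmic counting in every window of length $L$, much stronger than density zero --- and avoids diagonalization; the paper's is computationally lighter, needs no case analysis on window position, and produces sparseness of an arithmetic kind (tails inside the progressions $s^s\N$), with the density estimate falling out of translation invariance of $d^*$ in two lines.
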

\begin{proof}
Let $D\subseteq\N$ be a $\Delta_\ell$ set and let $(n_k)_{k\in\N}$ be an increasing sequence such that $D_\ell((n_k)_{k\in\N})\subseteq D$. By passing to a subsequence, if needed, we assume without loss of generality that $10n_k<n_{k+1}$ for each $k\in\N$. We claim that for any $k\in\N$ and any $N,M\in\N$ with $0<N-M<n_k$, the set $I=\{M+1,...,N\}$ satisfies 
\begin{equation}\label{3.DensityZeroEquation}
|I\cap D_\ell((n_k)_{k\in\N})|\leq 3^k.
\end{equation}
Indeed, fix $k,N,M\in\N$ with $0<N-M<n_k$, and suppose, for the sake of contradiction, that \eqref{3.DensityZeroEquation} does not hold. Observe that there exists an $R>k$ such that for each $n\in I\cap D_\ell((n_k)_{k\in\N})$ one can find  $\xi^{(n)}_1,...,\xi^{(n)}_R\in\{-1,0,1\}$ satisfying
$$n=\sum_{t=1}^R\xi^{(n)}_tn_t.$$
Note that the set $\{\sum_{t=1}^k\xi_tn_t\,|\,\xi_1,...,\xi_k\in\{-1,0,1\}\}$ has at most $3^k$ elements. Thus, by the  pigeonhole principle, there exist distinct $a,b\in I\cap D_\ell((n_k)_{k\in\N})$ such that
$$a=\sum_{t=k+1}^R\xi^{(a)}_tn_t+\sum_{t=1}^k\xi^{(a)}_tn_t\text{ and }b=\sum_{t=k+1}^R\xi^{(b)}_tn_t+\sum_{t=1}^k\xi^{(a)}_tn_t.$$
So, since $b\neq a$, $|b-a|=|\sum_{t=k+1}^R(\xi^{(b)}_t-\xi^{(a)}_t)n_t|>n_k$. This contradicts the fact that for any $n,m\in I$, $|n-m|<n_k$.\\

It now follows from \eqref{3.DensityZeroEquation}, that for each $k\in\N$, if $n_k\leq N-M<n_{k+1}$, then 
$$\frac{|D_\ell((n_k)_{k\in\N})\cap\{M+1,...,N\}|}{N-M}\leq \frac{3^{k+1}}{n_{k}}$$
and hence
$$d^*(D_\ell((n_k)_{k\in\N})=\limsup_{R-L\rightarrow\infty}\frac{|D_\ell((n_k)_{k\in\N})\cap\{L+1,...,R\}|}{R-L}\leq \frac{3^{k+1}}{n_{k}}.$$
Thus, $d^*(D_\ell((n_k)_{k\in\N})=0$, completing the proof.
\end{proof}
Next we show that for any $\ell\in\N$,  $$\Delta_\ell\subsetneq \Delta_{\ell,0}.$$ 
\begin{lem}\label{3.StrictDeltaEllinclusion}
There exists a set $E\subseteq\N$ that is a  $\Delta_{\ell,0}$ set for each $\ell\in\N$ but  is not a $\Delta_\ell$ set for any $\ell\in\N$.
\end{lem}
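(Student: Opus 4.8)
The plan is to reduce the claim to the case $\ell=1$ and then to write down an explicit union of finite difference configurations. The conceptual crux is the observation that $\Delta_\ell\subseteq\Delta_1$ for every $\ell$: if $E\supseteq D_\ell((a_j)_{j\in\N})$ for an increasing sequence $(a_j)$, then, since $a_j\to\infty$, I can greedily extract indices $j_1<j_2<\cdots$ so that the consecutive-pair differences $b_i:=a_{j_{2i}}-a_{j_{2i-1}}$ are strictly increasing (each new pair is chosen far enough out to exceed the previous gap). Using the identity \eqref{1.TechnicalIdentity}, every value $\partial(b_{i_1},\dots,b_{i_{2^{\ell-1}}})$ equals a value $\partial(a_{j_{2i_1-1}},a_{j_{2i_1}},\dots)$ lying in $D_\ell((a_{j_k})_k)\subseteq E$, so $D_{\ell-1}((b_i))\subseteq E$ and $E$ is $\Delta_{\ell-1}$. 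Iterating down to $\Delta_1$ gives the inclusion. Consequently it suffices to produce a single set $E$ that is $\Delta_{\ell,0}$ for every $\ell$ yet is \emph{not} a $\Delta_1$ set.

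\textbf{The construction.} Enumerate all pairs $(\ell_m,r_m)_{m\in\N}$ with $r_m\geq 2^{\ell_m}$. For each $m$ I would fix a super-increasing integer sequence $c^{(m)}_1<\cdots<c^{(m)}_{r_m}$ (say $c^{(m)}_{k}\geq 2\sum_{k'<k}c^{(m)}_{k'}$); because $\partial$ is a signed sum in which the top entry has coefficient $+1$, super-increasing generators force every value $\partial(c^{(m)}_{j_1},\dots,c^{(m)}_{j_{2^{\ell_m}}})$ to be a positive integer, so $C_m:=D_{\ell_m}((c^{(m)}_k)_{k=1}^{r_m})$ is a genuine finite configuration contained in $\{1,\dots,K_m\}$ for some $K_m$. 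Now pick positive integers $d_m\to\infty$ growing rapidly enough that $d_{m+1}>K_m d_m$ and $d_m-K_{m-1}d_{m-1}\to\infty$ (e.g.\ $d_m\geq 2K_{m-1}d_{m-1}$), and set $B_m:=d_m C_m$ and $E:=\bigcup_{m}B_m$. Since $\partial$ is homogeneous, $B_m=D_{\ell_m}\big((d_m c^{(m)}_k)_{k=1}^{r_m}\big)$ is a $\Delta_{\ell_m,r_m}$-configuration; as $(\ell_m,r_m)$ ranges over all admissible pairs, $E$ contains a $\Delta_{\ell,r}$-configuration for every $\ell$ and every $r\geq 2^\ell$, i.e.\ $E$ is $\Delta_{\ell,0}$ for each $\ell$. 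The key structural feature is that every element of $B_m$ is a multiple of $d_m$, so within a block the elements are spaced at least $d_m$ apart, and the blocks sit in disjoint intervals with $\max B_m<\min B_{m+1}$.

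\textbf{Ruling out $\Delta_1$, and the main obstacle.} Suppose toward a contradiction that $E$ were $\Delta_1$, witnessed by an increasing sequence $(c_i)$ with $c_j-c_i\in E$ for all $i<j$, and put $g:=c_2-c_1>0$. For $j\geq 3$ both $x_j:=c_j-c_1$ and $x_j-g=c_j-c_2$ lie in $E$. As $x_j\to\infty$ it lands in blocks $B_m$ with $m=m(j)\to\infty$; choose $j$ so large that $d_m>g$ and $d_m-K_{m-1}d_{m-1}>g$. Then $x_j=d_m c$ for some $c\in C_m$ with $c\geq 1$, and $x_j-g=d_m c-g$ lies strictly between the consecutive multiples $d_m(c-1)$ and $d_m c$, hence is not a multiple of $d_m$ and so is not in $B_m$; moreover $x_j-g>\max B_{m-1}$ (by the choice of $d_m$ in both the $c=1$ and $c\geq 2$ cases) while $x_j-g<x_j\leq\max B_m<\min B_{m+1}$. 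Thus $x_j-g$ falls in the interval $(\max B_{m-1},\min B_{m+1})$, which meets $E$ only in $B_m$, giving $x_j-g\notin E$ — a contradiction. Combined with the first paragraph, $E$ is then not $\Delta_\ell$ for any $\ell$. I expect the only delicate point to be the bookkeeping of the growth rates: the gap $g$ is not known in advance, so the sequence $(d_m)$ must be fixed first with $d_m-K_{m-1}d_{m-1}\to\infty$, after which, for any hypothetical witness, one selects $j$ large enough to absorb that particular $g$. Arranging each block to live on the lattice $d_m\Z$ (via the scaling $B_m=d_mC_m$) is exactly what makes $x_j-g$ miss $E$, and is the mechanism I would emphasize.
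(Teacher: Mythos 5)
Your proposal is correct and takes essentially the same route as the paper's proof: there, $E=\bigcup_k (2k)!\,\{1,\dots,k\}$ is likewise a union of finite blocks supported on rapidly growing lattices, with each long arithmetic-progression block supplying the required $\Delta_{\ell,r}$ configurations, just as your scaled blocks $d_mC_m$ do. The paper also rules out only $\Delta_1$ (invoking, without proof, the same pair-extraction reduction from $\Delta_\ell$ to $\Delta_1$ that you spell out via \eqref{1.TechnicalIdentity}), using the identical mechanism: a difference set $D_1((n_k)_{k\in\N})$ contains pairs of elements at the fixed distance $n_2-n_1$ arbitrarily far out, while the gaps of $E$ tend to infinity.
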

\begin{proof}
For each $k\in\N$, let $E_k=\{(2k)!,(2k)!2...,(2k)!k\}$
and let 
$$E=\bigcup_{k\in\N}E_k.$$
Let $\ell\in\N$. Since for any $r\geq 2^\ell$, there exists an $R\geq r$ for which  $E_{R}$ is a $\Delta_{\ell,r}$ set, $E$  is a $\Delta_{\ell,0}$ set. It only remains to show that $E$ does not contain any $\Delta_1$ set (this will imply that $E$ contains no $\Delta_\ell$ set for any $\ell\in\N$).\\
Given a  $\Delta_1$ set $D$ there exists an increasing sequence $(n_k)_{k\in\N}$ in $\N$ such that  $D_1((n_k)_{k\in\N})\subseteq D$. Note that for such a sequence
\begin{equation}\label{3.LargeGap0NotNormalProof}
(n_{k}-n_{1})-(n_{k}-n_2)=n_2-n_1
\end{equation}
for each $k\geq 3$. Since  $\max E_s<\min E_{s+1}-(2s)!$ for any $s\in\N$, we have that for $n,m\in E$ large enough, $|n-m|>n_2-n_1$. It follows from \eqref{3.LargeGap0NotNormalProof} that $D$ can not be a subset of $E$, which completes the proof.
\end{proof}
\begin{rem}
The set $E$ in the proof of \cref{3.StrictDeltaEllinclusion}, is an IP$_0$ set which is not an IP set. Hence
$$\text{IP}\subsetneq \text{IP}_0.$$
\end{rem}
The next two results show that
$$ \text{IP}_0\subsetneq \Delta_{1,0}.$$
\begin{lem}\label{3.EveryDeltaIsIP}
Any IP$_0$ set is a $\Delta_{1,0}$ set.
\end{lem}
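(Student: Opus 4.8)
The plan is to exploit the elementary observation that a finite differences set can always be realized inside a finite sums set by passing to partial sums. Recall that $E$ being an $\text{IP}_0$ set means that for every $r\in\N$ there is an $r$-element sequence $(n_k)_{k=1}^r$ in $\N$ with $\text{FS}((n_k)_{k=1}^r)\subseteq E$, whereas $E$ being a $\Delta_{1,0}$ set means that for every $r\geq 2$ there is an $r$-element sequence $(m_k)_{k=1}^r$ in $\Z$ with $D_1((m_k)_{k=1}^r)=\{m_j-m_i\,|\,1\leq i<j\leq r\}\subseteq E$. Thus the goal is to manufacture, for each $r\geq 2$, a suitable difference-generating sequence out of a finite-sums sequence supplied by the hypothesis.

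First I would fix $r\geq 2$ and, using the $\text{IP}_0$ hypothesis, select an $(r-1)$-element sequence $(n_k)_{k=1}^{r-1}$ in $\N$ with $\text{FS}((n_k)_{k=1}^{r-1})\subseteq E$. Then I would pass to the sequence of partial sums, setting $s_0=0$ and $s_j=n_1+\cdots+n_j$ for $1\leq j\leq r-1$, which produces an $r$-element sequence $(s_j)_{j=0}^{r-1}$ in $\N\cup\{0\}\subseteq\Z$.

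The crux of the argument is the identity that for $0\leq i<j\leq r-1$,
$$s_j-s_i=n_{i+1}+n_{i+2}+\cdots+n_j,$$
whose right-hand side is a finite sum over the increasing index set $\{i+1,\dots,j\}\subseteq\{1,\dots,r-1\}$ and hence lies in $\text{FS}((n_k)_{k=1}^{r-1})\subseteq E$. This shows that $D_1((s_j)_{j=0}^{r-1})\subseteq E$, so $E$ contains a $\Delta_{1,r}$ set; since $r\geq 2$ was arbitrary, $E$ is a $\Delta_{1,0}$ set, which completes the proof.

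I do not expect any genuine obstacle here; the argument is a one-line trick once the partial-sum reformulation is in place. The only points requiring care are the bookkeeping of indices (an $(r-1)$-element $\text{FS}$-sequence yields an $r$-element partial-sum sequence, matching the parameter of a $\Delta_{1,r}$ set) and the remark that each difference $s_j-s_i$ is a positive integer, so that the resulting differences set indeed sits inside $\N$ as required.
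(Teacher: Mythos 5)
Your proof is correct and is essentially the same argument as the paper's: both convert a finite-sums set into a differences set by passing to partial sums, using the identity $s_j-s_i=n_{i+1}+\cdots+n_j\in\text{FS}((n_k))$. The only (inessential) difference is that you anchor the sequence at $s_0=0$, extracting a $\Delta_{1,r}$ set from an $\text{IP}_{r-1}$ set, whereas the paper uses the $r$ partial sums $s_1,\dots,s_r$ of an $\text{IP}_r$ set.
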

\begin{proof}
The proof is similar to the well known fact that any IP set is a $\Delta$ set (see for example \cite[Lemma 9.1]{FBook}). We will actually show that any IP$_r$ set contains a $\Delta_{1,r}$ set.\\
Let $r\geq2$  and let $\Gamma$ be an IP$_r$ set containing FS$((n_k)_{k=1}^r)$ for some $n_1,...,n_r\in\N$. For each $k\in\{1,...,r\}$, let  $s_k=n_1+n_2+\cdots+n_k$. Then for any  $k>l$, $s_k-s_l\in \Gamma$. Thus, $\Gamma$ is a $\Delta_{1,r}$ set.  
\end{proof}
The result contained in the following short lemma is similar to a remark made in the Introduction to \cite{BergelsonErdosDifferences} . 
\begin{lem}\label{3.DeltaWithNoIP}
The set
$$D=D_1((10^k)_{k\in\N})=\{9\sum_{s=i}^j10^s\,|\,1\leq i\leq j\},$$
is a $\Delta_1$ set but not an IP$_3$ set.
\end{lem}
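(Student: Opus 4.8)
The plan is to treat the two assertions separately, dealing with the $\Delta$-property by inspection and reserving the real work for the non-IP$_3$ claim, which I would attack through a clean base-$10$ description of $D$. First, $D$ is a $\Delta$ set essentially by definition, since $D=D_1((10^k)_k)$ is the first difference set of an increasing sequence. The starting observation for everything else is that $9\sum_{s=i}^{j}10^s=10^{j+1}-10^{i}$, so a natural number lies in $D$ if and only if its decimal expansion is a single contiguous block of nines with zeros in all other positions. Equivalently, writing $N(p,q)=10^{q}-10^{p}$ for $p<q$, one has $D=\{N(p,q):p<q\}$, where $N(p,q)$ carries the digit $9$ in positions $p,\dots,q-1$ and $0$ elsewhere. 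I would then work entirely with this "single block of nines" picture.

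The key step is a pairwise lemma: if $u,v\in D$ and $u+v\in D$, then the nine-blocks of $u$ and $v$ are disjoint and adjacent, meaning one block ends exactly where the other begins. To prove it I would compare lowest nonzero digits. If $u$ and $v$ have their lowest nine in the same position $p$, that position of $u+v$ reads $9+9=18$, leaving a digit $8$ there, so $u+v\notin D$. Hence the lowest nine-positions differ; say $u$ lies below $v$. If the blocks overlap, then tracing the addition upward from the first position $r$ where both blocks are active gives $9+9=18$, producing a digit $8$ strictly inside the digit range of $u+v$, again forcing $u+v\notin D$. If instead there is a genuine gap between the blocks, then $u+v$ displays two separated runs of nines and is not a single block. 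The only surviving possibility is exact adjacency, and there $u+v=N(p,s)\in D$. The main obstacle of the whole proof is this carry bookkeeping: one must verify that below position $r$ there are no carries (the digits there are plain nines contributed by a single summand), so that the offending $8$ genuinely survives in $u+v$.

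Finally I would deduce the non-IP$_3$ property. Suppose for contradiction that $a,b,c\in D$ generate an IP$_3$ set contained in $D$; in particular $a+b$, $a+c$ and $b+c$ all lie in $D$. Applying the pairwise lemma to each of the three pairs forces the three nine-blocks to be pairwise disjoint and pairwise adjacent. But three pairwise disjoint integer intervals are linearly ordered by position, say $I_1<I_2<I_3$, and then the nonempty middle block $I_2$ lies strictly between $I_1$ and $I_3$, creating a gap of length at least $|I_2|\ge 1$ between them; this prevents $I_1$ and $I_3$ from being adjacent, contradicting the adjacency forced by the outermost pairwise sum. Hence no such $a,b,c$ exist, and $D$ is not an IP$_3$ set. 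I note that only the three pairwise sums are used, so the argument is robust and does not even require $a,b,c$ to be distinct.
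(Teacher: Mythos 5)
Your proof is correct, and it rests on the same underlying idea as the paper's --- reading elements of $D$ as contiguous blocks of nines and checking what addition does to the digits --- but your organization is genuinely different and, in fact, tighter. The paper argues asymmetrically: taking $x\le y\le z$ in $D$, it asserts that $y+z\in D$ forces ``$x+z\notin D$ or $x=y$,'' and then disposes of the case $x=y$ by noting $2x\notin D$. Read literally, that intermediate dichotomy is false: for $x=900$, $y=990$, $z=99000$ one has $y+z=99990\in D$ and $x+z=99900\in D$ with $x\ne y$; this triple is nonetheless harmless because the remaining pairwise sum $x+y=1890$ leaves $D$ --- precisely the sum the paper's dichotomy does not mention. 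Your symmetric treatment avoids this pitfall: the pairwise lemma (for $u,v\in D$, the sum $u+v$ lies in $D$ if and only if the two nine-blocks are disjoint and adjacent) applies uniformly to all three pairs, and the contradiction then follows from the purely combinatorial fact that three pairwise disjoint intervals cannot be pairwise adjacent, since the middle one separates the outer two. Your carry bookkeeping is sound: below the lowest position occupied by both blocks only one summand contributes a nonzero digit, so no carry enters that position and the digit $8$ created there genuinely survives in $u+v$, which is incompatible with membership in $D$ since elements of $D$ have only the digits $0$ and $9$; likewise the ``gap'' case yields two separated runs of nines. Finally, your remark that the lemma also excludes repeated generators --- $u+u$ has a digit $8$ at the lowest nine of $u$ --- handles the case of a non-injective generating sequence, which the paper had to treat separately. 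In short, what your route buys is a proof that is complete and correct as stated; the paper's terser case analysis would achieve the same only after its dichotomy is repaired (e.g., to ``$x+z\notin D$, or else the blocks of $x$ and $y$ both end where the block of $z$ begins, in which case $x+y\notin D$'').
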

\begin{proof}
Let $x,y,z\in D$. Suppose that $x\leq y\leq z$. If $x+y\in D$ and $y+z\in D$, we have, by analysing the decimal expansions of $x$, $y$, $z$, $x+y$, and $y+z$, that $x+z\not\in D$. So, $D$ is not an IP$_3$ set.
\end{proof}
Finally, we prove that for $\ell\geq 2$,
$$\Delta_{\ell,0}\not\supseteq \text{IP}.$$
We will denote by $\mathcal F$ the set of all finite non-empty subsets of $\N$.
\begin{thm}\label{3.IPWithNoDelta2}
Let $(n_k)_{k\in\N}$ be an increasing sequence of natural numbers. Suppose that for any $\alpha,\beta,\gamma\in\mathcal F$, 
\begin{equation}\label{3.injectiveCondition}
\sum_{j\in\alpha}n_j+\sum_{j\in\beta}n_j=\sum_{j\in\gamma}n_j\text{ if and only if }\alpha\cup\beta=\gamma\text{ and }\alpha\cap\beta=\emptyset.
\end{equation}
Then for any $\ell\geq 2$, $\text{FS}((n_k)_{k\in\N})$ is not a $\Delta_{\ell,2^{\ell-2}14}$ set.\footnote{
Condition \eqref{3.injectiveCondition} holds for any sequence $(n_k)_{k\in\N}$ in $\N$ which satisfies $\frac{n_{k+1}}{n_k}\geq 3$ for every $k\in\N$.
}
\end{thm}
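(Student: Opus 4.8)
The plan is to reduce the statement for general $\ell$ to the single case $\ell=2$, and then to attack that base case using the rigidity of representations forced by \eqref{3.injectiveCondition}.

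First I would set up the reduction. Suppose toward a contradiction that $\text{FS}((n_k)_{k\in\N})$ is a $\Delta_{\ell,2^{\ell-2}14}$ set, witnessed by an $r$-element sequence $(m_k)_{k=1}^{r}$ in $\Z$ with $r=2^{\ell-2}\cdot 14$ and $D_\ell((m_k)_{k=1}^{r})\subseteq\text{FS}((n_k)_{k\in\N})$. Partition $\{1,\dots,r\}$ into $14$ consecutive blocks of length $2^{\ell-2}$ and let $G_t=\partial$ applied to the $m$'s indexed by the $t$-th block. Iterating the block-nesting identity \eqref{1.TechnicalIdentity} (twice repeatedly), one sees that for any $t_1<t_2<t_3<t_4$ the quantity $\partial$ applied to the $2^\ell$ points lying in blocks $t_1,t_2,t_3,t_4$ equals $\partial(G_{t_1},G_{t_2},G_{t_3},G_{t_4})$, because those four full blocks are exactly the four consecutive quarters of the chosen $2^\ell$ points. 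Hence $D_2((G_t)_{t=1}^{14})\subseteq\text{FS}((n_k)_{k\in\N})$, i.e. $\text{FS}((n_k)_{k\in\N})$ would be $\Delta_{2,14}$. Thus it suffices to show that $\text{FS}((n_k)_{k\in\N})$ is not a $\Delta_{2,14}$ set.

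Next I would extract from \eqref{3.injectiveCondition} the combinatorial tools for the base case. Taking $\beta$ disjoint and iterating \eqref{3.injectiveCondition} yields that every element of $\text{FS}((n_k)_{k\in\N})$ has a \emph{unique} representing set, which I denote $\operatorname{supp}(x)$. Then \eqref{3.injectiveCondition} gives the key dichotomy: for $a,b\in\text{FS}((n_k)_{k\in\N})$, if $a+b\in\text{FS}((n_k)_{k\in\N})$ then $\operatorname{supp}(a)\cap\operatorname{supp}(b)=\emptyset$ (the supports are \emph{disjoint}), whereas if $a>b$ and $a-b\in\text{FS}((n_k)_{k\in\N})$ then $\operatorname{supp}(b)\subsetneq\operatorname{supp}(a)$ and $\operatorname{supp}(a-b)=\operatorname{supp}(a)\setminus\operatorname{supp}(b)$ (the supports are \emph{nested}). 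Finally, since $\text{FS}((n_k)_{k\in\N})\subseteq\N$, any value that is $\le 0$ is automatically excluded.

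The base case I would prove by contradiction: assume $14$ integers $m_1,\dots,m_{14}$ with every second difference $\partial(m_{j_1},m_{j_2},m_{j_3},m_{j_4})=(m_{j_4}-m_{j_3})-(m_{j_2}-m_{j_1})\in\text{FS}((n_k)_{k\in\N})$. Writing $e_i=m_{i+1}-m_i$, positivity of the ``separated'' second differences $\partial(m_i,m_{i+1},m_j,m_{j+1})=e_j-e_i$ (for $j\ge i+2$) first forces the convexity inequalities $e_i<e_j$, and the support dichotomy applied along these differences produces chains of nested (respectively disjoint-union) supports. The goal is then to locate, among the $\binom{14}{4}$ forced membership facts, a family of $\text{FS}$-elements simultaneously constrained by a \emph{sum}-identity among second differences (forcing two of their supports to be disjoint) and by a \emph{difference}-identity (forcing one of those same supports to sit inside the other); since a nonempty set cannot be both disjoint from and contained in another, this is the contradiction. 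The main obstacle — and the place where the constant $14$ is actually spent — is precisely this step: a single pair of second differences cannot realize both $b+c$ and $c-b$ as further second differences (the shared gap acquires coefficient $2$), so one must instead pass through a genuinely multi-term linear relation among second differences and control the possible carries/overlaps of supports, using \eqref{3.injectiveCondition} and a pigeonhole over the $14$ points to guarantee the incompatible configuration occurs. The reduction of the first paragraph and the lemma of the second paragraph are routine; this combinatorial core is where the real work lies.
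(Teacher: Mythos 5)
Your first two paragraphs are correct and coincide with what the paper does: the block-grouping reduction from $\Delta_{\ell,2^{\ell-2}14}$ to $\Delta_{2,14}$ via \eqref{1.TechnicalIdentity} is exactly the (implicit) reduction in the paper, and the unique-support/disjoint-vs-nested dichotomy is the right reading of \eqref{3.injectiveCondition}. The problem is your third paragraph: it is a statement of intent, not a proof, and you say so yourself (``this combinatorial core is where the real work lies''). You never exhibit the configuration that produces the contradiction: which second differences are to be combined, how the supports involved are shown to be pairwise disjoint (rather than merely ``controlled''), and why $14$ points suffice are all left open. Since the entire content of the theorem is precisely this construction, the proposal has a genuine gap.

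For comparison, here is how the paper closes it, and the one idea your sketch is missing. Anchor every representation at the single base difference $c_2-c_1$: for $3\le k_1<k_2\le 14$ let $\alpha(k_1,k_2)\in\mathcal F$ be the support of $(c_{k_2}-c_{k_1})-(c_2-c_1)\in\text{FS}$. Adding the FS-element $(c_{k_4}-c_{k_3})-(c_{k_2}-c_{k_1})$ to the identity $\bigl[(c_{k_2}-c_{k_1})-(c_2-c_1)\bigr]+\bigl[(c_{k_4}-c_{k_3})-(c_{k_2}-c_{k_1})\bigr]=(c_{k_4}-c_{k_3})-(c_2-c_1)$ and invoking \eqref{3.injectiveCondition} gives the nesting $\alpha(k_1,k_2)\subseteq\alpha(k_3,k_4)$ for all $3\le k_1<k_2<k_3<k_4\le 14$. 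Now use the four triples $(3,4,5)$, $(6,7,8)$, $(9,10,11)$, $(12,13,14)$: the consecutive gaps $c_5-c_4$, $c_8-c_7$, $c_{11}-c_{10}$, $c_{14}-c_{13}$ are pairwise distinct (else $0\in D_2\subseteq\N$), so two of them, say $c_5-c_4$ and $c_8-c_7$, have the same sign. Writing $c_5-c_4=\sum_{j\in\lambda_1}n_j-\sum_{j\in\rho_1}n_j$ with $\lambda_1=\alpha(3,5)\setminus\alpha(3,4)$, $\rho_1=\alpha(3,4)\setminus\alpha(3,5)$, and similarly $c_8-c_7$ via $\lambda_2,\rho_2$ from $\alpha(6,8),\alpha(6,7)$, the nesting gives $\alpha(3,5)\cup\alpha(3,4)\subseteq\alpha(6,7)\cap\alpha(6,8)$, which makes $\lambda_1,\rho_1,\lambda_2,\rho_2$ pairwise disjoint. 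Then, with $\alpha$ the support of the FS-element $(c_8-c_7)-(c_5-c_4)$, one gets $\sum_{j\in\alpha}n_j+\sum_{j\in\lambda_1\cup\rho_2}n_j=\sum_{j\in\lambda_2\cup\rho_1}n_j$, and \eqref{3.injectiveCondition} would force $\lambda_1\subseteq\lambda_2\cup\rho_1$, impossible since $\lambda_1$ is nonempty (positivity of $c_5-c_4$; in the negative case $\rho_2$ plays this role) and disjoint from $\lambda_2\cup\rho_1$. Note two things your sketch lacks: the anchoring trick, which linearizes all the constraints into one nested family of supports, and the fact that the objects carrying the contradiction are the \emph{consecutive} gaps $c_5-c_4$, $c_8-c_7$ --- which are not themselves second differences, need not lie in $\text{FS}$, and may be negative --- represented as differences $\sum_{\lambda_i}n_j-\sum_{\rho_i}n_j$ over pairwise disjoint index sets. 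This also explains the constant: $14=2+4\cdot 3$, two anchor points plus four triples for the sign pigeonhole.
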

\begin{proof}
By the definition of $\Delta_{\ell,r}$ sets, all we need to show is that given a 14-element sequence $(c_k)_{k=1}^{14}$ in $\Z$  with $D_2((c_k)_{k=1}^{14})\subseteq\N$, 
$D_2((c_k)_{k=1}^{14})\not\subseteq \text{FS}((n_k)_{k\in\N})$. Assume for contradiction that 
$D_2((c_k)_{k=1}^{14})\subseteq \text{FS}((n_k)_{k\in\N})$. For any 
$k_1,k_2\in\{3,...,14\}$ with $k_1 < k_2$, let $\alpha (k_1,k_2)\in\mathcal F$ be such that
\begin{equation}\label{3.SubstractFirstTerms}
(c_{k_2}-c_{k_1})-(c_2-c_1)=\sum_{j\in\alpha(k_1,k_2)}n_j.
\end{equation}
Since $D_2((c_k)_{k=1}^{14})\subseteq \text{FS}((n_k)_{k\in\N})$, for any $k_1,...,k_4$ which satisfy  $3\leq k_1<k_2<k_3<k_4\leq 14$, there exists $\alpha(k_1,k_2,k_3,k_4)\in\mathcal F$ such that
\begin{equation}\label{3.FallingInside}
(c_{k_4}-c_{k_3})-(c_{k_2}-c_{k_1})=\sum_{j\in\alpha(k_1,k_2,k_3,k_4)}n_j.
\end{equation}
It follows from \eqref{3.SubstractFirstTerms} and \eqref{3.FallingInside} that 
\begin{equation}\label{3.GeneralDifference}
\sum_{j\in\alpha(k_3,k_4)}n_j=\sum_{j\in\alpha(k_1,k_2,k_3,k_4)}n_j+\sum_{j\in\alpha(k_1,k_2)}n_j.
\end{equation}
Thus, we get from \eqref{3.GeneralDifference} and \eqref{3.injectiveCondition} that $$\alpha(k_1,k_2)\subseteq \alpha(k_3,k_4).$$
Consider now the set $A=\{c_5-c_4,c_8-c_7,c_{11}-c_{10},c_{14}-c_{13}\}$ and note that $|A|= 4$, (otherwise we would have that $0\in\N$). Hence, at least two of the elements of $A$ are either strictly positive or  strictly negative. Without loss of generality, we will assume that $c_5-c_4,c_8-c_7>0$.\\
Let $\lambda_1=\alpha(3,5)\setminus\alpha(3,4)$ and let $\rho_1=\alpha(3,4)\setminus\alpha(3,5)$, then 
$$c_5-c_4=(c_5-c_3)-(c_4-c_3)=\sum_{j=\alpha(3,5)}n_j-\sum_{j=\alpha(3,4)}n_j=\sum_{j\in\lambda_1}n_j-\sum_{j\in\rho_1}n_j.$$
(Note that a priori $\lambda_1$ or $\rho_1$ could be empty. We follow the convention that $\sum_{j\in\emptyset}n_j=0$.)\\
Since $c_5-c_4>0$, we must have that $\lambda_1\neq\emptyset$. A similar argument shows that if we let $\lambda_2=\alpha(6,8)\setminus\alpha(6,7)$ and $\rho_2=\alpha(6,7)\setminus\alpha(6,8)$, then 
$$c_8-c_7=\sum_{j\in\lambda_2}n_j-\sum_{j\in\rho_2}n_j$$
and hence $\lambda_2\neq \emptyset$.\\
Since $\alpha(3,5)\cup\alpha(3,4)\subseteq\alpha(6,8)\cap\alpha(6,7)$, the sets $\lambda_1,\lambda_2,\rho_1,\rho_2$ are pairwise disjoint. Let $\beta\in\mathcal F$ be such that 
$$\sum_{j\in\beta}n_j=(c_8-c_7)-(c_5-c_4)\in\text{FS}((n_k)_{k\in\N}),$$
then 
\begin{equation}\label{3.DisjointSum}
\sum_{j\in\beta}n_j+\sum_{j\in\lambda_1\cup\rho_2}n_j=\sum_{j\in\lambda_2\cup\rho_1}n_j.
\end{equation}
By noting that $\lambda_1\cup\rho_2\not\subseteq \lambda_2\cup\rho_1$, we see that \eqref{3.DisjointSum} contradicts \eqref{3.injectiveCondition}. This completes the proof. 
\end{proof}
\begin{cor}\label{3.SecondImportantApplication}
Let $\ell\geq 2$. For any $r\in\N$ large enough there exists a set $E\subseteq \N$ such that $E$ is $\Delta_{\ell,R}^*$ but not \text{\rm{IP$_{R}^*$}} for any $R\in\N$.
\end{cor}
\begin{proof}
The proof is similar to the proof of \cref{3.FirstImportantApplicationA}.
\end{proof}
\begin{question}
Is it true that $\text{A-}\Delta_{\ell,0}^*\not\subseteq\text{A-IP}^*$?
\end{question}
\printindex
\bibliography{Bib}
\bibliographystyle{plain}
\end{document}